%% file: main.tex
\documentclass{amsart}

\input{preamble}
\input{macros}

\title{Moment Polyptychs and the Equivariant Quantisation of Hypertoric Varieties}

\author{Benjamin C. W. Brown}
\address{The University of Edinburgh}

\begin{document}

\begin{abstract}
	We study the equivariant geometric quantisation of a non-compact hypertoric variety of complex dimension $2n$ using symplectic cutting.
	When the defining arrangement is cooriented so that its base region is non-empty and bounded, fibre rotation has proper moment map and produces compact K\"ahler orbifold cuts carrying the residual torus action.
	We introduce the moment polyptych of such a cut, a polytopal complex assembled from truncated arrangement regions, and show that its vertices and local lattice data determine the fixed points, tangent weights, and finite orbifold groups required for equivariant orbifold Riemann--Roch localisation.
	Even for a smooth hypertoric manifold, a reducible core forces every cut in the outer cut chamber to have non-trivial cyclic isotropy, while the cuts are smooth if and only if the manifold is the cotangent bundle of its distinguished compact toric subvariety, which is then a product of projective spaces.
	For a positive regular integral level $d$ in the outer cut chamber, algebraic cutting identifies the cut's quantisation with a finite filtration piece of the circle-graded quantisation of the original hypertoric variety, whose successive quotients recover the individual circle-weight spaces.
	We prove that the higher cohomology of the pre-quantum line orbibundle vanishes at every such level, so that the localised index is the character of the actual space of sections.
	Finally, a residual weight $x$ occurs at level $d$ if and only if $\mcS(x) \leq d$, where $\mcS$ is the arrangement slack function, with multiplicity $\binom{d - \mcS(x) + n}{n}$.
	Thus, each finite stage admits a weighted lattice-point description whose multiplicity function is supported on the lattice points in the support of the level-$d$ moment polyptych.
\end{abstract}

\maketitle

\input{sections/1-introduction}

\input{sections/2-hypertoric-varieties}

\input{sections/3-hyperplanes-subvarieties}

\input{sections/4-cut-spaces}

\input{sections/5-localisation-formulae-isotropy-data}

\input{sections/6-hypertoric-equivariant-quantisation}

\input{sections/7-algebraic-cutting-combinatorial-quantisation}

\input{sections/8-conclusion}

\printbibliography

\end{document}

%% file: preamble.tex
\usepackage[T1]{fontenc}
\usepackage[utf8]{inputenc}
\usepackage[lining]{ebgaramond}
\usepackage{microtype}
\usepackage{dsfont}

\usepackage{amsmath,amssymb,amsthm}
\usepackage{stmaryrd} 
\usepackage{braket}   

\usepackage{geometry}
\usepackage[skip=0pt,indent=20pt]{parskip}

\usepackage[shortlabels]{enumitem}

\usepackage{graphicx}
\usepackage{float}
\usepackage{tikz}
\usepackage{tikz-cd}
\usetikzlibrary{calc,arrows.meta}

\usepackage{subcaption}

\usepackage[
	backend=biber,
	style=alphabetic,
	sorting=nyt,
	maxalphanames=4,
	minalphanames=4,
	maxbibnames=99
]{biblatex}

\usepackage{hyperref}

\hypersetup{
	hidelinks
}

\tikzset{
	polyboundary/.style={line width=0.9pt},
	polyedge/.style={line width=0.45pt},
	vertex/.style={circle,fill,inner sep=1.35pt},
	region/.style={font=\scriptsize,inner sep=1pt},
}

\theoremstyle{plain}
\newtheorem{theorem}{Theorem}[section]
\newtheorem{proposition}[theorem]{Proposition}
\newtheorem{corollary}[theorem]{Corollary}
\newtheorem{lemma}[theorem]{Lemma}

\theoremstyle{definition}
\newtheorem{definition}[theorem]{Definition}
\newtheorem{example}[theorem]{Example}

\theoremstyle{remark}
\newtheorem{remark}[theorem]{Remark}
\newtheorem{notation}[theorem]{Notation}

%% file: macros.tex
\newcommand{\ra}{\rightarrow}
\newcommand{\lra}{\longrightarrow}

\newcommand{\w}{\omega}
\newcommand{\W}{\Omega}

\newcommand{\PP}{\mathbb{P}}
\newcommand{\RR}{\mathbb{R}}
\newcommand{\CC}{\mathbb{C}}
\newcommand{\QQ}{\mathbb{Q}}
\newcommand{\FF}{\mathbb{F}}
\newcommand{\ZZ}{\mathbb{Z}}
\newcommand{\HH}{\mathbb{H}}
\newcommand{\one}{\mathds{1}}

\newcommand{\mcA}{\mathcal{A}}
\newcommand{\mcB}{\mathcal{B}}
\newcommand{\mcC}{\mathcal{C}}
\newcommand{\mcE}{\mathcal{E}}
\newcommand{\mcF}{\mathcal{F}}
\newcommand{\mcFc}{\mathcal{F}^{c}}
\newcommand{\mcG}{\mathcal{G}}
\newcommand{\mcI}{\mathcal{I}}

\newcommand{\mcJ}{\mathcal{J}}
\newcommand{\mcL}{\mathcal{L}}

\newcommand{\mcN}{\mathcal{N}}
\newcommand{\mcO}{\mathcal{O}}
\newcommand{\mcP}{\mathcal{P}}
\newcommand{\mcR}{\mathcal{R}}
\newcommand{\mcS}{\mathcal{S}}
\newcommand{\mcV}{\mathcal{V}}

\newcommand{\mfk}{\mathfrak{k}}
\newcommand{\mft}{\mathfrak{t}}

\makeatletter
\newcount\wb@n
\newdimen\wb@delta
\newcommand{\wb@xht}[1]{%
	\ifx#1\displaystyle\fontdimen5\textfont2 \else
	\ifx#1\textstyle\fontdimen5\textfont2 \else
	\ifx#1\scriptstyle\fontdimen5\scriptfont2 \else
	\fontdimen5\scriptscriptfont2 \fi\fi\fi}
\newsavebox{\wb@nuc}
\newsavebox{\wb@acc}
\def\wb@rep{%
	\ifnum\wb@n>\z@
	\copy\wb@acc \kern-\wd\wb@acc \kern\wb@delta
	\advance\wb@n\m@ne
	\expandafter\wb@rep
	\fi}
\newcommand{\wb@draw}[2]{%
	\sbox\wb@nuc{$\m@th#1#2$}%
	\sbox\wb@acc{$\m@th#1\mathchar"0016$}%
	\@tempdimb=\ht\wb@acc
	\dimen@=\ht\wb@nuc \advance\dimen@-\wb@xht{#1}\relax
	\ifdim\dimen@>\z@ \advance\@tempdimb\dimen@ \fi
	\advance\@tempdimb-\ht\wb@acc
	\ifdim\wd\wb@nuc>\wd\wb@acc
	\wb@delta=.25\wd\wb@acc                
	\wb@n=\@ne \@tempdimc=\wd\wb@acc
	\@whiledim\@tempdimc<\wd\wb@nuc\do{%
		\advance\@tempdimc\wb@delta \advance\wb@n\@ne}%
	\wb@delta=\dimexpr(\wd\wb@nuc-\wd\wb@acc)/\numexpr\wb@n-1\relax\relax
	\advance\wb@n\m@ne                     
	\else
	\wb@n=\z@ \wb@delta=\z@
	\fi
	\mathord{\rlap{\copy\wb@nuc}%
		\raise\@tempdimb\hbox to\wd\wb@nuc{\hfil\wb@rep\copy\wb@acc\hfil}}}
\newcommand{\widebar}[1]{\mathpalette\wb@draw{#1}}
\makeatother

\newcommand{\sssslash}{\mathbin{/\!\!/\!\!/\!\!/}}

\newcommand{\Cout}{\mathfrak{C}_{\mathrm{out}}}
\newcommand{\orb}{\mathrm{orb}}
\newcommand{\diff}{\mathrm{diff}}
\newcommand{\aux}{\mathrm{aux}}
\renewcommand{\div}{\mathrm{div}}
\newcommand{\coarse}{\mathrm{coarse}}
\newcommand{\alg}{\mathrm{alg}}
\newcommand{\bd}{\mathrm{bd}}
\newcommand{\eff}{\mathrm{eff}}
\newcommand{\std}{\mathrm{std}}
\renewcommand{\aux}{\mathrm{aux}}

\newcommand{\aff}{\mathrm{aff}}
\newcommand{\tang}{\mathrm{tan}}
\newcommand{\fib}{\mathrm{fib}}

\DeclareMathOperator{\CritVal}{CritVal}

\DeclareMathOperator{\sign}{sgn}

\DeclareMathOperator{\Tot}{Tot}
\DeclareMathOperator{\supp}{supp}
\DeclareMathOperator{\mult}{mult}
\DeclareMathOperator{\Conv}{Conv}
\DeclareMathOperator{\Sym}{Sym}
\DeclareMathOperator{\Verts}{Vert}

\DeclareMathOperator{\Identity}{Id}

\DeclareMathOperator{\Hom}{Hom}

\DeclareMathOperator{\Image}{im}
\DeclareMathOperator{\Span}{Span}

\DeclareMathOperator{\Td}{Td}
\DeclareMathOperator{\Ch}{Ch}

\DeclareMathOperator{\rk}{rk}
\DeclareMathOperator{\crk}{crk}
\DeclareMathOperator{\codim}{codim}

\DeclareMathOperator{\HK}{HK}
\DeclareMathOperator{\cut}{cut}
\DeclareMathOperator{\Proj}{Proj}
\DeclareMathOperator{\Spec}{Spec}
\DeclareMathOperator{\sstable}{ss}
\DeclareMathOperator{\ustable}{us}

\DeclareMathOperator{\diag}{diag}
\DeclareMathOperator{\wt}{wt}
\DeclareMathOperator{\Faces}{Faces}

\DeclareMathOperator{\SU}{SU}
\DeclareMathOperator{\SL}{SL}

%% file: sections/1-introduction.tex
\section{Introduction}
\label{section:section-1-introduction}

For a compact K\"ahler manifold $M$ equipped with a pre-quantum line bundle $\mcL$, the cohomology groups $H^{q}(M ; \mcL)$ are finite-dimensional, and Riemann--Roch computes the holomorphic Euler characteristic of $\mcL$.
On a non-compact space, however, the section space may be infinite-dimensional, so its dimension alone is no longer an adequate description of the quantisation.
If a compatible circle action exists, then this suggests an equivariant approach: when the resulting weight spaces are finite-dimensional, their dimensions can be assembled into a formal character.
In this paper, we study the equivariant geometric quantisation of hypertoric varieties through compact cuts, relating the fibre-rotation grading to their section spaces and comparing the orbifold localisation with a weighted lattice-point count.

For this approach, one motivation comes from the equivariant Verlinde formula for moduli spaces of Higgs bundles, proposed for $\SL(n,\CC)$ by Gukov and Pei \cite{GukovPei2017}.
Andersen, Gukov, and Pei generalise and prove the formula for simple and simply connected reductive groups \cite{AndersenGukovPei2016}, while Halpern-Leistner establishes it independently in an algebro-geometric setting \cite{HalpernLeistner2016}.
Here, the circle action scaling the Higgs field provides the grading through which the non-compact quantisation is studied.

A related motivation is the connection between real polarisations, toric geometry, and the ordinary Verlinde formula.
For the moduli space of flat $\SU(2)$-connections on a compact Riemann surface, Weitsman constructs a map whose generic fibres are Lagrangian, giving a real polarisation away from the singular locus \cite{Weitsman1992}.
Jeffrey and Weitsman identify the corresponding Bohr--Sommerfeld fibres and show that their number is given by the Verlinde formula \cite{Verlinde1988,JeffreyWeitsman1992}; a separate argument identifies that formula with the dimension of the K\"ahler quantisation \cite[\S1]{JeffreyWeitsman1992}.
Their later work describes the fibres as orbits of commuting Hamiltonian circle actions and gives a dense open subset of the moduli space the structure, as a symplectic manifold, of a non-compact toric variety \cite{JeffreyWeitsman1994}.
They construct a natural toric compactification, but its complex structure does not agree with the one inherited from the moduli space, so the lattice-point count does not directly yield an algebraic comparison of section spaces \cite[p.~153]{JeffreyWeitsman1994}.

These constructions suggest a broader question: can the circle action scaling the Higgs field be incorporated into an analogous geometric description of equivariant quantisation?
Here, we work with hypertoric varieties~---~hyperk\"ahler quotients by tori~---~where the fibre-rotation action, its moment map, and the combinatorics of the hyperplane arrangement can be analysed explicitly.
Our section-space comparison concerns compact cuts of these varieties; we do not address any extension to Higgs-bundle moduli.

Martens's calculation of equivariant integrals over non-compact symplectic and hyperk\"ahler quotients is a more direct antecedent for our method \cite{Martens2008}.
To compute equivariant volumes, Martens passes to compact symplectic cuts, applies the Jeffrey--Kirwan residue formula in the compact setting, and studies the limit in which the original non-compact problem is recovered.
The cuts introduce additional fixed-point components at infinity, whose contributions are essential to the calculation.
For equivariant geometric quantisation, this suggests retaining the individual cuts as finite stages of the non-compact problem.
We make this idea precise through algebraic cutting: at positive regular integral levels in the outer cut chamber, the section spaces of the cuts realise finite filtration pieces, without requiring an asymptotic limit of the fixed-point expressions.

Hausel's compactification of Higgs-bundle moduli provides a second antecedent.
For stable rank-two Higgs bundles with fixed determinant of odd degree over a compact Riemann surface of genus at least two, the moduli space is hyperk\"ahler and the circle action that scales the Higgs field has a proper moment map \cite[Theorem~6.7 and Proposition~7.1]{Hitchin1987}.
Hausel applies Lerman's symplectic-cut construction \cite{Lerman1995} to obtain a compact K\"ahler orbifold which compactifies this moduli space \cite{Hausel1998}.
He observes that the construction applies more generally and suggests studying it for the toric hyperk\"ahler manifolds of Bielawski and Dancer \cite[\S1]{Hausel1998}.
We carry out that suggestion for hypertoric varieties \cite{BielawskiDancer2000} and use the resulting cuts to study their equivariant geometric quantisation.

Fix a subtorus $K\subset T^{N}$ as in Section~\ref{section:section-2-hypertoric-varieties}, and let $M_{\nu}$ be the hyperk\"ahler reduction of $T^{\ast}\CC^{N}$ by $K$ at a generic parameter $(\nu , 0)$ with a non-empty quotient, and write $n = N - \dim K$.
Then $M_{\nu}$ has real dimension $4n$ and carries a residual action of $T^{n} =T^{N} / K$; finite orbifold stabilisers are allowed unless smoothness is explicitly assumed.
The primitive lattice vectors $u_{i}$ defining $K$, together with a fixed lift $\lambda$ of $\nu$, determine the affine hyperplane arrangement
\[
	\mcA = \{ H_{1}, \ldots, H_{N} \}, \qquad
	H_{i} = \{ x \in (\mft^{n})^{\ast} ~|~ \ell_{i}(x) = 0 \},
	\qquad
	\ell_{i}(x) = \langle x ,\, u_{i} \rangle + \lambda_{i}.
\]
For the quantisation statements, we assume that the lift $\lambda$ is integral and use the corresponding $T^{n}$-equivariant pre-quantum line orbibundle $\mcL_{\nu}\to M_{\nu}$.

For $A \subseteq \{ 1 , \ldots , N \}$, let $\Delta_{A}$ be the closed region defined by $\ell_{i} \leq 0$ for $i\in A$ and $\ell_{i} \geq 0$ for $i \not\in A$.
We choose the coorientation so that the distinguished base region $\Delta_{\emptyset}$ is non-empty and bounded.
The non-empty regions are the real moment images of the toric orbifolds $\mcE_{A}$ forming the extended core $\mcE_{\nu} = \mu_{\CC}^{-1}(0)$.
Those components whose regions are bounded form the compact core $\mcC$, and the distinguished component is $X_{\nu} = \mcE_{\emptyset}$.

The fibre-rotation circle acts by $\tau\cdot[z , w]=[z , \tau w]$ for $\tau \in U_{1}$ and is Hamiltonian for the distinguished K\"ahler form $\w_{\RR}$ with moment map $\Phi[z , w] = \tfrac{1}{2}\|w\|^{2}$.
Boundedness of $\Delta_{\emptyset}$ ensures that $\Phi$ is proper \cite[Proposition~1.3 and the following remark]{HaradaProudfoot2004}.
We write $\delta > 0$ for a regular cut level, using $d$ when the level is also integral.
Reducing $M_{\nu} \times \CC$ by the diagonal cutting circle at level $\delta$ gives the compact K\"ahler orbifold $M_{\nu}^{\leq \delta}$.
Its cut interior $M_{\nu}^{< \delta}$ is $T^{n}$-equivariantly symplectomorphic to $\{\Phi < \delta\}$, while the cut divisor is $Z_{\nu}^{\delta} \cong \Phi^{-1}(\delta) / U_{1}$.
The cut inherits a K\"ahler structure from the product reduction, but is not generally hyperk\"ahler.
At a positive regular integral level $d$, the pre-quantum construction also gives the descended line orbibundle $\mcL_{\nu}^{\leq d} \ra M_{\nu}^{\leq d}$.

The geometry of the truncation is encoded by the piecewise-linear \textbf{slack function}
\[
	\mcS(x) = \sum\nolimits_{i = 1}^{N} \max\{ 0 ,\, -\ell_{i}(x) \}.
\]
On each extended-core component $\mcE_{A}$, we have $\Phi = \mcS \circ \mu_{\RR}$, so Proposition~\ref{prop:cut-component-image} identifies the moment image of its cut component $\mcE_{A}^{\leq \delta}$ with $\Delta_{A}^{\leq \delta} = \Delta_{A} \cap \{ \mcS \leq \delta \}$.
These truncated polytopes agree on common faces and form the \textbf{moment polyptych} $\Delta_{\nu}^{\leq \delta}$, and its support is the residual moment image of the full cut: $|\Delta_{\nu}^{\leq \delta}| = \mu_{\RR}(M_{\nu}^{\leq \delta}) = \{ \mcS \leq \delta \}$ by Proposition~\ref{prop:slack-support}.
The distinction between the polyptych and its support is important: the subdivision, together with the arrangement and cutting lattice data, retains information needed for the local orbifold calculations.

Proposition~\ref{prop:critical-values-slack} identifies the critical values of $\Phi$ with the finitely many values $\mcS(v)$ at arrangement vertices.
The cut chambers are the connected components of $(0,\infty)\setminus\CritVal(\Phi)$, and the \textbf{outer cut chamber} is $\Cout=(c_{\max},\infty)$, where $c_{\max}:=\max\CritVal(\Phi)$, as in \eqref{eqn:outer-cut-chamber}.
For $\delta \in \Cout$, the cut divisor is disjoint from the compact core, and $M_{\nu}^{<\delta}$ is biholomorphic to all of $M_{\nu}$, as explained in Subsection~\ref{subsection:subsection-4.2-cut-spaces}.

The arrangement description leads to two structural results used in the quantisation calculations.
First, Theorem~\ref{thm:hypertoric-subvariety} associates a closed hypertoric subvariety $M_{\mcF}\subseteq M_{\nu}$ to each arrangement flat $H_{\mcF}$.
For local calculations, we retain the local groups of the ambient quotient, even when some elements act trivially on the restricted charts; removing this ineffective isotropy gives the effective orbifold structure.
Proposition~\ref{prop:cut-subspace-subpolyptych} makes the construction compatible with cutting and restriction of the moment polyptych, allowing local questions to be reduced to lower-dimensional hypertoric geometry.
The construction for a single arrangement hyperplane in the smooth setting is given by Konno \cite[\S7, Claim~7.1 and its proof]{Konno2000}.

Second, the orbifold setting is necessary even when $M_{\nu}$ itself is smooth.
Theorem~\ref{thm:reducible-core-orbifold} shows that, for a smooth hypertoric manifold with reducible core, every outer-chamber cut contains a boundary point with non-trivial cyclic isotropy and a singular coarse local quotient.
For smooth $M_{\nu}$, Corollary~\ref{corollary:smooth-cut-space-characterisation} shows that an outer-chamber cut is smooth if and only if the distinguished open subset identified with $T^{\ast}X_{\nu}$ is all of $M_{\nu}$.
In that case, $X_{\nu}$ is a product of projective spaces, and every positive level is regular and gives a smooth cut \cite[Theorem~7.2]{BielawskiDancer2000}.

These geometric calculations lead to one of the main results of the paper: Theorem~\ref{theorem:character-hypertoric-orbifold} gives an explicit Kawasaki localisation formula for the equivariant holomorphic Euler characteristic of
$\mcL_{\nu}^{\leq d}$ on $M_{\nu}^{\leq d}$.
By Lemma~\ref{lemma:cut-space-fixed-points}, the residual $T^{n}$-fixed points correspond to vertices of the moment polyptych.
The edges and lattice data at each vertex determine the corresponding finite orbifold group and tangent representation; together with the pre-quantum fibre data, these specify the numerator and denominator factors of the summands averaged over that group to obtain the fixed-point contribution.
At a boundary fixed point, the cutting weight occurs twice in the full tangent representation, giving a squared denominator factor even when the local group is trivial.
The formula computes the virtual class $\sum_{q\geq0}(-1)^{q} [H^{q}(M_{\nu}^{\leq d} ; \mcL_{\nu}^{\leq d})]$ in $R(T^{n})$, and we later identify it with $[H^{0}(M_{\nu}^{\leq d} ; \mcL_{\nu}^{\leq d})]$ by proving higher-cohomology vanishing in the outer cut chamber in Corollary~\ref{cor:outer-chamber-vanishing}.

To relate these compact section spaces to the original non-compact quantisation, we use an algebraic description of cutting following Edidin and Graham \cite{EdidinGraham1998}.
Here $H^{0}(M_{\nu} ; \mcL_{\nu})$ denotes the space of algebraic sections in the complex structure $J_{1}$, with the pullback action fixed in \eqref{eqn:pullback-action-on-sections}.
The fibre-rotation $\CC^{\ast}$-action gives a weight decomposition and hence an increasing exhaustive filtration
\[
	F_{d}H^{0}(M_{\nu} ; \mcL_{\nu}) = \bigoplus\nolimits_{i \leq d} H^{0}(M_{\nu} ; \mcL_{\nu})_{i}, \qquad d \in \ZZ_{\geq0}.
\]
Theorem~\ref{theorem:combinatorial-multiplicity} shows that the grading is non-negative and every filtration piece is finite-dimensional.

For every positive regular integral level $d$ in the outer cut chamber, Theorem~\ref{theorem:algebraic-cut-weight-space} gives the natural $T^{n}$-equivariant isomorphism
\[
	\Psi_{d}:F_{d}H^{0}(M_{\nu} ; \mcL_{\nu})
	\overset{\sim}{\longrightarrow}
	H^{0}(M_{\nu}^{\leq d} ; \mcL_{\nu}^{\leq d})
\]
from \eqref{eqn:filtration-cut-sections}, hence the compact cut space realises a finite stage of the fibre-rotation filtration.
For every $d \geq 1$, each successive quotient yields
\[
	\frac{F_{d} H^{0}(M_{\nu} ; \mcL_{\nu})}{F_{d - 1} H^{0}(M_{\nu} ; \mcL_{\nu})} \cong H^{0}(M_{\nu} ; \mcL_{\nu})_{d}.
\]
When both $d$ and $d-1$ are positive regular integral levels in the outer cut chamber, this becomes a quotient of the spaces of sections on the two cut spaces.
Under the identification of the two cuts with their common complex orbifold, the injection from the level-$(d-1)$ section space to the level-$d$ section space is multiplication by the canonical section of the cut divisor, as constructed in the proof of Lemma~\ref{lemma:variation-polarisation-lemma}.

The vanishing argument in Proposition~\ref{prop:Nadel-vanishing} compares the cut polarisations at levels $d$ and $d + n + 1$ on their common complex orbifold and establishes the higher-cohomology vanishing whenever $d > 0$ is a regular integral level and both levels lie in the same cut chamber.
Note that this is a sufficient chamber condition, not a characterisation of all vanishing levels~--~however, it is automatic in the outer cut chamber, giving Corollary~\ref{cor:outer-chamber-vanishing}.
Thus, at every positive regular integral level in that chamber, the localisation formula computes the character of $H^{0}(M_{\nu}^{\leq d} ; \mcL_{\nu}^{\leq d})$.
The argument takes place in the orbifold category and does not require the pre-quantum line orbibundle to descend to a line bundle on the coarse space.

The same filtration has a direct combinatorial description.
For a residual weight $x\in(\mft_{\ZZ}^{n})^{\ast}$, the exponent differences determine a minimal monomial $p_{x}$ of fibre-rotation weight $\mcS(x)$.
After imposing the complex moment-map equations, the subspace of $H^{0}( M_{\nu} ; \mcL_{\nu} )$ of residual $T^{n}$-weight $x$ is generated by the section represented by $p_{x}$ over the polynomial ring $\CC[q_{1}, \ldots, q_{n}]$, whose generators have cut degree one.
Together with the auxiliary coordinate $y$, these give $n + 1$ variables among which the remaining cut degree $d - \mcS(x)$ is distributed.
Theorem~\ref{theorem:combinatorial-multiplicity} therefore shows, for every integer $d \geq 0$, that $x$ occurs in $F_{d}H^{0}(M_{\nu} ; \mcL_{\nu})$ if and only if $\mcS(x)\leq d$, with multiplicity $\mult_{d}(x) = \tbinom{d-\mcS(x) + n}{n}$.

At every positive regular integral level in the outer cut chamber, Corollary~\ref{cor:weighted-lattice-point-character} gives the weighted lattice-point character of \eqref{eqn:weighted-lattice-point-character}:
\[
	\operatorname{char}_{T^{n}} H^{0}(M_{\nu}^{\leq d} ; \mcL_{\nu}^{\leq d})(e^{\xi}) = \sum_{x \in |\Delta_{\nu}^{\leq d}| \cap (\mft_{\ZZ}^{n})^{\ast}} \binom{d - \mcS(x) + n}{n} e^{\langle x ,\, \xi \rangle}.
\]
Moreover, the formula shows that the support $|\Delta_{\nu}^{\leq d}|$ determines which residual weights occur, while the slack function $\mcS$ determines their multiplicities.
This formula gives a hypertoric counterpart to the familiar toric description in which each integral point of the moment polytope contributes once with multiplicity one \cite{Hamilton2008}.

The localisation formula, algebraic cutting, and the weighted lattice-point count thus describe the same finite-dimensional quantisation at outer-chamber levels.
We compare these descriptions in the recurring examples of $T^{\ast}\CC\PP^{1}$, the $A_{r}$-resolution family, $T^{\ast}\CC\PP^{2}$, and $T^{\ast}(\CC\PP^{1} \times \CC\PP^{1})$.
Together, they illustrate both the smooth cotangent-bundle case and the finite-isotropy contributions that are always present at some boundary vertices for outer-chamber cuts of a smooth hypertoric manifold with a reducible core.

\subsection*{Acknowledgements}
\label{subsection:subsection-1.1-acknowledgements}

This article grew out of the author's doctoral research at the University of Edinburgh.
The author is especially grateful to Johan Martens for suggesting the problem that became the starting point for this work, and for his continued guidance throughout its development.
The author also thanks Andrew Dancer for many helpful conversations and for hosting a visit to the Mathematical Institute at the University of Oxford, whose hospitality and support are gratefully acknowledged.

\subsection*{Organisation}
\label{subsection:subsection-1.2-organisation}

Section~\ref{section:section-2-hypertoric-varieties} fixes the differential-geometric and algebro-geometric quotient descriptions, together with the moment-map, lattice, pre-quantum, and representation-theoretic conventions.
Section~\ref{section:section-3-hyperplanes-subvarieties} develops the arrangement geometry and the hypertoric subvarieties associated with its flats.

Section~\ref{section:section-4-cutting-hypertoric-variety} constructs the compact cuts and their moment polyptychs, introduces the recurring examples, and establishes the smoothness and orbifold-singularity results.
Section~\ref{section:section-5-localisation-formulae-isotropy-data} extracts the fixed-point, tangent, lattice, and fibre data needed for localisation.
Section~\ref{section:section-6-equivariant-quantisation-hypertoric-algebraic-cutting} assembles the explicit Kawasaki localisation formula and evaluates it in the examples.

Section~\ref{section:section-7-algebraic-cutting-combinatorial-quantisation} identifies the outer-chamber cut section spaces with filtration pieces, proves the required higher-cohomology vanishing, and derives the weighted lattice-point character.
It then compares the algebraic and localisation calculations in the same examples.
Section~\ref{section:section-8-conclusion} discusses the resulting picture and questions for further study.

%% file: sections/2-hypertoric-varieties.tex
\section{Hypertoric Varieties}
\label{section:section-2-hypertoric-varieties}

This section fixes the differential-geometric and algebro-geometric descriptions of hypertoric varieties used throughout the paper, together with the moment-map, pre-quantum, and representation-theoretic conventions needed later for cutting and quantisation.
The hyperk\"ahler quotient construction makes the residual torus action and its moment maps explicit, while the GIT description provides the holomorphic line orbibundles and the section rings required later for algebraic cutting and quantisation.

\subsection{Differential-Geometric Hypertoric Varieties}
\label{subsection:subsection-2.1-differential-hypertoric}

The differential-geometric point of view uses the hyperk\"ahler quotient construction of Hitchin, Karlhede, Lindstr\"om, and Ro\v{c}ek \cite{HitchinKarlhedeLindstromRocek1987}.
For every compact torus $T = \mft / \mft_{\ZZ}$, we adopt the period-one lattice convention: an integral weight $\alpha \in \mft_{\ZZ}^{\ast}$ determines the character $\chi_{\alpha}([\xi]) = \exp \, ( 2\pi\sqrt{-1}\langle \alpha ,\, \xi \rangle )$.

Write $(J_{1}, J_{2}, J_{3})$ for the standard hyperk\"ahler structure on $T^{\ast}\CC^{N} \cong \HH^{N}$, rescaled so that $\w_{\RR}$ is the standard flat K\"ahler form divided by $2 \pi$.
With respect to the complex structure $J_{1}$ and the standard complex coordinates $( z, w ) \in T^{\ast}\CC^{N}$, set
\begin{equation}
	\label{eqn:standard-hyperkahler-forms}
	\w_{\RR} := \frac{\sqrt{-1}}{4 \pi} \sum_{i = 1}^{N}
	\left(
		dz_{i} \wedge d\overline{z}_{i} +
		dw_{i} \wedge d\overline{w}_{i}
	\right), \qquad
	\w_{\CC} := \frac{1}{2 \pi} \sum_{i = 1}^{N} dz_{i} \wedge dw_{i}.
\end{equation}
This fixes $\w_{\RR} = \w_{1}$ as the K\"ahler form of $J_{1}$ and $\w_{\CC} = \w_{2} + \sqrt{-1}\w_{3}$ as the corresponding holomorphic-symplectic form.
Throughout the paper, we use the moment-map convention
\begin{equation}
	\label{eqn:moment-map-sign-convention}
	d\langle \mu ,\, \xi \rangle = \w(\, - \,, X_{\xi}),
\end{equation}
where $X_{\xi}$ is the fundamental vector field of $\xi$, normalised according to the period-one convention above.

The real torus $T^{N}$ acts linearly on $T^{\ast}\CC^{N}$ by
\begin{equation}
	\label{eqn:real-TN-action}
	( t_{1} , \ldots , t_{N} ) \cdot ( z_{1} , \ldots , z_{N} , w_{1} , \ldots , w_{N} ) =
	( t_{1} z_{1} , \ldots , t_{N} z_{N} , t_{1}^{-1} w_{1} , \ldots , t_{N}^{-1} w_{N} ),
\end{equation}
which preserves the hyperk\"ahler structure and is tri-Hamiltonian.
The subtorus $K \subset T^{N}$ used in the hyperk\"ahler quotient is defined as follows.
Let $n < N$, choose a full-rank lattice $\mft_{\ZZ}^{n} \subset \mft^{n}$, and set $T^{n} := \mft^{n} / \mft_{\ZZ}^{n}$ and $\mft_{\QQ}^{n} := \mft_{\ZZ}^{n} \otimes_{\ZZ} \QQ$.
Choose a set of primitive non-zero vectors $u_{1} , \ldots , u_{N} \in \mft_{\ZZ}^{n}$ which generate $\mft_{\ZZ}^{n}$ as a $\ZZ$-module.

With the standard basis $e_{1} , \ldots , e_{N}$ of $\mft_{\ZZ}^{N} \cong \ZZ^{N}$, define the surjective lattice map $\pi_{\ZZ} : \mft_{\ZZ}^{N} \ra \mft_{\ZZ}^{n}$ by $\pi_{\ZZ}(e_{i}) = u_{i}$, and set $\mfk_{\ZZ} := \ker( \pi_{\ZZ} )$, $\mfk := \mfk_{\ZZ} \otimes_{\ZZ} \RR$ and $\mfk_{\ZZ}^{\ast} := \Hom_{\ZZ}( \mfk_{\ZZ} ; \ZZ )$.
Let $\imath_{\ZZ} : \mfk_{\ZZ} \hookrightarrow \mft_{\ZZ}^{N}$ denote the inclusion and write $\epsilon_{1} , \ldots , \epsilon_{N}$ for the dual basis of $( \mft_{\ZZ}^{N} )^{\ast}$.
All of these define a cocharacter-lattice sequence and its scalar extension to $\RR$, which are exact:
\begin{equation}
	\label{eqn:lie-algebra-seq}
	\begin{tikzcd}[ampersand replacement=\&, row sep=small]
		0 \ar[r] \& \mfk_{\ZZ} \ar[r, hook, "\imath_{\ZZ}"] \& \mft_{\ZZ}^{N} \ar[r, "\pi_{\ZZ}"] \& \mft_{\ZZ}^{n} \ar[r] \& 0,
		\\
		0 \ar[r] \& \mfk \ar[r, hook, "\imath"] \& \mft^{N} \ar[r, "\pi"] \& \mft^{n} \ar[r] \& 0.
	\end{tikzcd}
\end{equation}
Now set $K := \mfk / \mfk_{\ZZ}$; since $\mft_{\ZZ}^{n}$ is torsion-free, $\mfk_{\ZZ}$ is saturated in $\mft_{\ZZ}^{N}$, so $\imath$ embeds the connected torus $K$ as a subtorus of $T^{N}$, and the surjectivity of $\pi_{\ZZ}$ identifies this subtorus with the full kernel of $\pi : T^{N} \ra T^{n}$.
The induced sequence of compact tori is exact:
\begin{equation}
	\label{eqn:torus-seq}
	\begin{tikzcd}[ampersand replacement=\&]
		1 \ar[r] \& K \ar[r, hook, "\imath"] \& T^{N} \ar[r, "\pi"] \& T^{n} \ar[r] \& 1.
	\end{tikzcd}
\end{equation}
Note that we use the same symbols $\imath$ and $\pi$ for the induced maps on lattices, Lie algebras, and tori whenever no confusion can arise.
Dualising the Lie-algebra sequence \eqref{eqn:lie-algebra-seq} gives
\begin{equation}
	\label{eqn:dual-lie-algebra-seq}
	\begin{tikzcd}[ampersand replacement=\&]
		0 \ar[r] \& ( \mft^{n} )^{\ast} \ar[r, "\pi^{\ast}"] \& ( \mft^{N} )^{\ast} \ar[r, "\imath^{\ast}"] \& \mfk^{\ast} \ar[r] \& 0.
	\end{tikzcd}
\end{equation}
Throughout the paper, we identify $\ker( \imath^{\ast} )$ with
$( \mft^{n} )^{\ast}$ via $\pi^{\ast}$, and write $\imath_{\CC}^{\ast}$ and $\pi_{\CC}^{\ast}$ for the complexifications of $\imath^{\ast}$ and $\pi^{\ast}$.

The induced $K$-action on $T^{\ast}\CC^{N}$ has weight $+\alpha_{i}$ on $z_{i}$ and $-\alpha_{i}$ on $w_{i}$, where $\alpha_{i} := \imath_{\ZZ}^{\ast}(\epsilon_{i}) \in \mfk_{\ZZ}^{\ast}$.
With the convention \eqref{eqn:moment-map-sign-convention}, the real moment map is the map $\phi_{\RR}$ displayed below.
For the holomorphic component $\phi_{\CC}$, we instead retain the usual hypertoric normalisation:
\[
	\phi_{\RR}(z , w) = \frac{1}{2}\sum\nolimits_{i = 1}^{N}(|z_{i}|^{2} - |w_{i}|^{2})\alpha_{i},
	\qquad
	\phi_{\CC}(z , w) = \sum\nolimits_{i = 1}^{N}(z_{i}w_{i})\alpha_{i}.
\]
With the period-one normalisation of $\w_{\CC}$ in \eqref{eqn:standard-hyperkahler-forms}, the displayed map $\phi_{\CC}$ differs by a fixed non-zero scalar from the holomorphic moment map obtained by applying \eqref{eqn:moment-map-sign-convention} directly.
We write $\mfk_{\CC} := \mfk \otimes_{\RR} \CC$ and $\phi_{\HK} := (\phi_{\RR} , \phi_{\CC} ) : T^{\ast}\CC^{N} \ra \mfk^{\ast} \oplus \mfk_{\CC}^{\ast}$ for the normalised hyperkähler moment map.

Following \cite[Proposition~2.1]{Konno2000}, we call $\nu \in \mfk^{\ast}$ \textbf{generic} if
\begin{equation}
	\label{eqn:genericity-span-condition}
	\nu \not\in \Span_{\RR}\{ \alpha_{j} ~|~ j \in J \} \qquad
	\text{for every } J \subseteq \{ 1 , \ldots , N \} \text{ with } |J| < N - n.
\end{equation}
Konno's criterion says that $\nu$ is generic if and only if $( \nu , 0 )$ is a regular value of $\phi_{\HK}$.
When $\nu$ is integral, the same regularity condition is also equivalent to the equality of the semistable and stable loci for the corresponding GIT problem, so there are no strictly semistable points \cite[\S2.1]{Proudfoot2004}.
In the language of hyperplane arrangements which will be introduced in Section~\ref{section:section-3-hyperplanes-subvarieties}, this condition is equivalent to the associated arrangement being simple \cite[Theorem~3.3]{BielawskiDancer2000}.
We use the words \emph{generic}, \emph{regular}, and \emph{simple} for these equivalent conditions in their respective settings, and we assume throughout that $\nu$ is generic and that the resulting quotient is non-empty.

\begin{definition}
	\label{def:hypertoric-variety}
	The \textbf{(differential-geometric) hypertoric variety} associated with the value $\nu \in \mfk^{\ast}$ is the hyperk\"ahler quotient
	\begin{equation}
		\label{eqn:hypertoric-definition}
		M_{\nu} := T^{\ast}\CC^{N} \sssslash_{( \nu , 0 )} K :=
		\phi_{\HK}^{-1}( \nu , 0 ) / K.
	\end{equation}
\end{definition}

Since $(\nu , 0)$ is a regular value of $\phi_{\HK}$, the group $K$ acts locally freely on $\phi_{\HK}^{-1}(\nu , 0)$, and so $M_{\nu}$ is a hyperk\"ahler orbifold of real dimension $4N - 4\dim_{\RR}K = 4n$.
The quotient is a manifold if and only if the $K$-action is free, or equivalently when the associated arrangement is smooth in the sense recalled in Section~\ref{section:section-3-hyperplanes-subvarieties}; see \cite[Theorems~3.2--3.3]{BielawskiDancer2000}.
Note that we allow finite orbifold stabilisers unless smoothness is explicitly assumed.

Via \eqref{eqn:torus-seq}, the quotient torus $T^{N} / K$ is identified with $T^{n}$ and acts hyperhamiltonianly on $M_{\nu}$.
Fix, once and for all, a lift
\begin{equation}
	\label{eqn:fixed-lambda-lift}
	\lambda = \sum\nolimits_{i = 1}^{N} \lambda_{i}\epsilon_{i} \in (\mft^{N})^{\ast},
	\qquad
	\imath^{\ast}(\lambda) = \nu,
\end{equation}
and, whenever $\nu$ is integral and a GIT or pre-quantum construction is invoked, the fixed lift $\lambda$ will also be integral.
The existence of such an integral lift follows from \eqref{eqn:character-lattice-sequence} below.

Write $[z , w]$ for the $K$-orbit of $(z , w) \in \phi_{\HK}^{-1}(\nu , 0)$.
The residual hyperk\"ahler moment map $\mu_{\HK} = \mu_{\RR} \oplus \mu_{\CC} : M_{\nu} \ra (\mft^{n})^{\ast} \oplus (\mft_{\CC}^{n})^{\ast}$ is specified by
\[
	(\pi^{\ast} \circ \mu_{\RR})[z , w] = \frac{1}{2} \sum\nolimits_{i = 1}^{N} (|z_{i}|^{2} - |w_{i}|^{2}) \epsilon_{i} - \lambda, \qquad
	(\pi_{\CC}^{\ast} \circ \mu_{\CC})[z , w] = \sum\nolimits_{i = 1}^{N} (z_{i}w_{i}) \epsilon_{i}.
\]
The two right-hand sides lie respectively in $\ker(\imath^{\ast})$ and $\ker(\imath_{\CC}^{\ast})$, so the injective maps $\pi^{\ast}$ and $\pi_{\CC}^{\ast}$ determine $\mu_{\RR}$ and $\mu_{\CC}$ uniquely.

\begin{lemma}
	\label{lemma:effective-Tn-action}
	The residual $T^{n}$-action is effective.
\end{lemma}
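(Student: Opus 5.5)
The plan is to show directly that if $t \in T^{n}$ acts trivially on $M_{\nu}$, then $t = 1$. Lift $t$ to $\tilde t = (t_{1}, \ldots, t_{N}) \in T^{N}$ using the exact sequence \eqref{eqn:torus-seq}. Acting trivially means that for every $(z,w) \in \phi_{\HK}^{-1}(\nu,0)$ there is some $k \in K$, which may depend on the point, with $\tilde t \cdot (z,w) = k\cdot(z,w)$. We then want to conclude $\tilde t \in K$. The approach is to find enough points of $\phi_{\HK}^{-1}(\nu,0)$ with small stabilisers and exploit the coordinate-wise form \eqref{eqn:real-TN-action} of the action.

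First consider the open dense subset of points whose $T^{N}$-stabiliser is trivial. Such points are exactly those with, for each $i$, $z_{i} \neq 0$ or $w_{i} \neq 0$. If this subset is non-empty, then $\tilde t k^{-1}$ fixes such a point, forcing $\tilde t k^{-1} = 1$ and hence $\tilde t \in K$, so $t = 1$.

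The main step is therefore to produce a point of $\phi_{\HK}^{-1}(\nu,0)$ with $(z_{i}, w_{i}) \neq (0,0)$ for all $i$. I would argue via the residual moment map. Pick $x \in (\mft^{n})^{\ast}$ in the interior of a non-empty region $\Delta_{A}$ with $\ell_{i}(x) \neq 0$ for all $i$; this needs non-emptiness of the quotient and simplicity of the arrangement. Then take $\mu_{\RR} = x$ and $\mu_{\CC} = 0$ on the extended-core component $\mcE_{A}$.

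Concretely, using the formula for $\pi^{\ast}\circ\mu_{\RR}$, this means solving
\[
	\tfrac12(|z_{i}|^{2} - |w_{i}|^{2}) = \ell_{i}(x), \qquad z_{i}w_{i} = 0.
\]
Choose $|z_{i}|^{2} = 2\ell_{i}(x)$ and $w_{i} = 0$ when $\ell_{i}(x) > 0$. Otherwise choose $z_{i} = 0$ and $|w_{i}|^{2} = -2\ell_{i}(x)$. Since $\ell_{i}(x) = \langle x, u_{i}\rangle + \lambda_{i}$, the vector $\sum_{i}\ell_{i}(x)\epsilon_{i} = \pi^{\ast}x + \lambda$ maps to $\nu$ under $\imath^{\ast}$. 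The equation $\phi_{\CC} = 0$ holds because each product $z_{i}w_{i}$ vanishes. So this point lies in $\phi_{\HK}^{-1}(\nu,0)$, and each pair $(z_{i},w_{i})$ is non-zero.

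The remaining point is existence of such an $x$. By assumption some region, for instance $\Delta_{\emptyset}$, is non-empty, and a generic point of it avoids every hyperplane. That region has non-empty interior, since otherwise it would lie in a hyperplane, contradicting simplicity. I expect this to be the only step needing care; if the paper's region conventions make it awkward, I would instead invoke $\mu_{\RR}$ surjectivity (Bielawski--Dancer) to obtain a point over any $x$ off the arrangement.

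Finally, this point has trivial $T^{N}$-stabiliser, so the argument above gives $\tilde t \in K$. Alternatively, effectivity holds on the open orbit-type stratum, and this suffices.
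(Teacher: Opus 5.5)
Your proof is correct and follows the paper's argument. You pick $x$ off the arrangement and build the representative with $(z_{i},w_{i}) = (\sqrt{2\ell_{i}(x)},0)$ or $(0,\sqrt{-2\ell_{i}(x)})$, so every coordinate pair is non-zero; from $\widetilde{t}\cdot(z,w) = k\cdot(z,w)$ you then conclude $\widetilde{t} = k \in K$. The detour through regions and simplicity to find $x$ is unnecessary: each $u_{i} \neq 0$, so the $H_{i}$ are finitely many affine hyperplanes and cannot cover $(\mft^{n})^{\ast}$, and this is exactly how the paper chooses $x$.
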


\begin{proof}
	In anticipation of the notation used in Section~\ref{section:section-3-hyperplanes-subvarieties}, write $\ell_{i}(y) := \langle y ,\, u_{i} \rangle + \lambda_{i}$ for the affine-linear functionals on $(\mft^{n})^{\ast}$ that define each hyperplane.
	Since each $u_{i}$ is non-zero, each zero set $\{ y \in (\mft^{n})^{\ast} ~|~ \ell_{i}(y) = 0 \}$ is an affine hyperplane.
	Choose $x \in (\mft^{n})^{\ast}$ outside the union of these hyperplanes, so that $\ell_{i}(x) \neq 0$ for each $i$.
	For each $i$, define
	\[
		(z_{i},w_{i}) =
		\begin{cases}
			( \sqrt{2\ell_{i}(x)} , 0 ),  & \ell_{i}(x) > 0, \\
			( 0 , \sqrt{-2\ell_{i}(x)} ), & \ell_{i}(x) < 0.
		\end{cases}
	\]
	Then $(z_{i}, w_{i}) \neq (0, 0)$ for each $i$, so $\phi_{\CC}(z , w)=0$, and
	\[
		\frac{1}{2} \sum\nolimits_{i = 1}^{N} (|z_{i}|^{2} - |w_{i}|^{2}) \epsilon_{i} = \lambda + \pi^{\ast}(x),
	\]
	thus $(z , w) \in \phi_{\HK}^{-1}(\nu , 0)$.
	If $t \in T^{n}$ fixes $[z , w]$ and $\widetilde{t} \in T^{N}$ is a lift of $t$, then
	$\widetilde{t} \cdot (z , w) = k \cdot (z , w)$ for some $k \in K$.
	Since one entry of every pair $(z_{i} , w_{i})$ is non-zero, this forces $\widetilde{t}_{i} = k_{i}$ for every $i$, hence $\widetilde{t} = k$ and $t = 1$.
\end{proof}

The choice of lift $\lambda$ fixes the normalisation of the residual real moment map and the position of the associated hyperplane arrangement, introduced in Section~\ref{section:section-3-hyperplanes-subvarieties}.
If $\lambda$ is replaced by $\lambda + \pi^{\ast}(y)$, with $y \in (\mft^{n})^{\ast}$, then $\mu_{\RR}$ is replaced by $\mu_{\RR} - y$ while the arrangement and its regions are translated by $-y$.
We will henceforth continue to use the single choice of lift \eqref{eqn:fixed-lambda-lift} throughout the paper.


\subsection{Algebro-Geometric Hypertoric Varieties}
\label{subsection:subsection-2.2-algebraic-hypertoric}

For the algebro-geometric and pre-quantum constructions of this subsection, assume that $\nu \in \mfk_{\ZZ}^{\ast}$ is integral.
Applying $\Hom_{\ZZ}(- ; \ZZ)$ to the cocharacter-lattice sequence in \eqref{eqn:lie-algebra-seq} gives the exact sequence of character lattices
\begin{equation}
	\label{eqn:character-lattice-sequence}
	\begin{tikzcd}[ampersand replacement=\&]
		0 \ar[r] \& ( \mft_{\ZZ}^{n} )^{\ast} \ar[r, "\pi_{\ZZ}^{\ast}"] \& ( \mft_{\ZZ}^{N} )^{\ast} \ar[r, "\imath_{\ZZ}^{\ast}"] \& \mfk_{\ZZ}^{\ast} \ar[r] \& 0.
	\end{tikzcd}
\end{equation}
Since $\imath_{\ZZ}^{\ast}$ is surjective, every integral $\nu$ admits an integral lift (which justifies our choice of integral lift $\lambda$ following \eqref{eqn:fixed-lambda-lift}).
Applying the contravariant functor $\Hom_{\ZZ}( - ; \CC^{\ast})$ to this character-lattice sequence gives the exact sequence of complex algebraic tori:
\begin{equation}
	\label{eqn:complex-tori-sequence}
	\begin{tikzcd}[ampersand replacement=\&]
		1 \ar[r] \& K_{\CC} \ar[r, hook, "\imath_{\CC}"] \& T_{\CC}^{N} \ar[r, "\pi_{\CC}"] \& T_{\CC}^{n} \ar[r] \& 1,
	\end{tikzcd}
\end{equation}
where $T_{\CC}^{N} = \Hom_{\ZZ}( (\mft_{\ZZ}^{N})^{\ast} ; \CC^{\ast} ) \cong (\CC^{\ast})^{N}$ and $K_{\CC} = \Hom_{\ZZ}( \mfk_{\ZZ}^{\ast} ; \CC^{\ast} )$.
Note that the functor is exact because $\CC^{\ast}$ is divisible and hence injective as a $\ZZ$-module.
The complex torus $T_{\CC}^{N}$ acts on $T^{\ast}\CC^{N}$ by the complexification of \eqref{eqn:real-TN-action}, and the $K_{\CC}$-action preserves the zero fibre $\mcN := \phi_{\CC}^{-1}(0)$ of the complex moment map.

Let $\chi_{\nu} : K_{\CC} \ra \CC^{\ast}$ and $\chi_{\lambda} : T_{\CC}^{N} \ra \CC^{\ast}$ be the characters determined respectively by $\nu$ and $\lambda$, and write $L_{\nu} := \mcN \times \CC \ra \mcN$ for the trivial holomorphic line bundle equipped with the geometric $K_{\CC}$-linearisation
\begin{equation}
	\label{eqn:positive-character-linearisation}
	k \cdot ( x , v ) := ( k \cdot x , \chi_{\nu}(k) v ),
	\qquad
	k \in K_{\CC}.
\end{equation}
It carries a compatible $T_{\CC}^{N}$-equivariant extension determined by $\chi_{\lambda}$, which records the residual $T^{n}$-equivariant structure after descent.

\begin{notation}[Representation Convention]
	\label{notation:representation-convention}
	If a torus $T$ acts geometrically on a line bundle $L$, with the fibre map associated with $t \in T$ denoted by $t_{L}$, then its induced action on sections is the \emph{pullback action}
	\begin{equation}
		\label{eqn:pullback-action-on-sections}
		(t \cdot s)(x) := t_{L}^{-1} \big( s ( t \cdot x) \big).
	\end{equation}
	For the linearisation \eqref{eqn:positive-character-linearisation} and an $m \in \ZZ_{\geq 0}$, a section $s_{f}(x) = ( x , f(x) )$ of $L_{\nu}^{\otimes m}$ is $K_{\CC}$-invariant if and only if $f( g \cdot x ) = \chi_{\nu}(g)^{m} f(x)$ for every $g \in K_{\CC}$ and $x \in \mcN$.

	Henceforth, we write $\CC[\mcN]_{m\nu} := \{ f \in \CC[\mcN] ~|~ f( g \cdot x ) = \chi_{\nu}(g)^{m}f(x) \}$ for the positive $m\nu$-semi-invariant piece.
	For $m = 1$, if $f = z^{a}w^{b}$ satisfies $\imath_{\ZZ}^{\ast}( a - b ) = \nu$, then there is a unique $x \in ( \mft_{\ZZ}^{n} )^{\ast}$ such that $a - b = \lambda + \pi_{\ZZ}^{\ast}(x)$.
	Thus, under the pullback action \eqref{eqn:pullback-action-on-sections}, the corresponding descended section has residual $T^{n}$-weight $+x$.
	This convention is used in the monomial calculation of Section~\ref{section:section-7-algebraic-cutting-combinatorial-quantisation}.
\end{notation}

As noted after \eqref{eqn:genericity-span-condition}, the semistable and stable loci coincide for a generic integral weight $\nu$; we denote the common locus by $\mcN^{\sstable}(\nu)$, and an explicit description of it relative to the hypertoric coordinates is given in \cite[Lemma~3.4]{Konno2008}.

\begin{definition}
	\label{def:GIT-hypertoric-variety}
	The \textbf{(algebro-geometric) hypertoric variety} associated with the generic integral character $\nu$ is
	\begin{equation}
		\label{eqn:GIT-hypertoric-definition}
		M_{\nu}^{\alg} := \mcN^{\sstable}(\nu) \sslash K_{\CC} \cong \Proj R_{\nu},
		\qquad \text{ where } \qquad
		R_{\nu} := \bigoplus\nolimits_{m \geq 0} \CC[\mcN]_{m\nu}.
	\end{equation}
\end{definition}

As the degree-zero piece is $R_{\nu , 0} = \CC[\mcN]^{K_{\CC}}$, the natural morphism
\begin{equation}
	\label{eqn:hypertoric-affinisation}
	M_{\nu}^{\alg} = \Proj R_{\nu} \longrightarrow \Spec \CC[\mcN]^{K_{\CC}}
\end{equation}
is projective; in the terminology of \cite[\S6]{HauselSturmfels2002}, this is the hypertoric affinisation morphism.

With respect to the complex structure $J_{1}$, the inclusion of the hyperk\"ahler level set into the semistable locus induces the hypertoric Kempf--Ness biholomorphism of complex orbifolds
\begin{equation}
	\label{eqn:hypertoric-kempf-ness}
	( \phi_{\HK}^{-1}( \nu , 0 ) / K , J_{1} )
	\overset{\sim}{\longrightarrow}
	\mcN^{\sstable}(\nu) / K_{\CC}.
\end{equation}
The geometric quotient on the right has the underlying scheme $M_{\nu}^{\alg}$; see \cite[Theorem~6.3]{HauselSturmfels2002} for the hypertoric Kempf--Ness biholomorphism and \cite[Theorem~2.3]{Proudfoot2004} for the general Kempf--Ness and regularity--stability statements.
Since the compact $T^{N}$-action commutes with $K_{\CC}$ and preserves both the hyperk\"ahler level set and the semistable locus, the inclusion is $T^{N}$-equivariant, and the induced identification is equivariant for the residual torus $T^{n} = T^{N} / K$.
We henceforth denote the common complex orbifold given by Definitions~\ref{def:hypertoric-variety} and \ref{def:GIT-hypertoric-variety} simply by $M_{\nu}$.

Let us now show that a hypertoric variety $M_{\nu}$ is \textbf{semiprojective} in the sense of \cite[\S2]{HauselSturmfels2002}: it is projective over its affinisation and possesses a fixed point of the residual algebraic torus.
The projectivity is supplied by the morphism \eqref{eqn:hypertoric-affinisation} and, since the vectors $u_{1}, \ldots, u_{N}$ span $\mft^{n}$, the associated hyperplane arrangement introduced in Section~\ref{section:section-3-hyperplanes-subvarieties} has a vertex.
By the stabiliser description in \cite[Theorem~3.1(ii)]{BielawskiDancer2000}, a point of the hyperk\"ahler quotient lying over this vertex is fixed by the residual torus $T^{n}$ hence, under the biholomorphism \eqref{eqn:hypertoric-kempf-ness}, it therefore corresponds to a fixed point of the residual algebraic torus and thus $M_{\nu}$ is semiprojective.

Since $u_{i} \neq 0$ for every $i$, the complex moment-map fibre $\mcN = \phi_{\CC}^{-1}(0)$ is a reduced irreducible complete intersection \cite[Lemma~4.7]{BellamyKuwabara2012}.
As its semistable locus is a non-empty open subset of $\mcN$, it is irreducible, so its GIT quotient $M_{\nu}$ is also irreducible.
Furthermore, for the generic parameter $\nu$, the quotient has only finite quotient singularities \cite[Proposition~6.2]{HauselSturmfels2002}, hence is normal.
Together, these facts show that $M_{\nu}$ is connected, irreducible, normal, and semiprojective as a $J_{1}$-complex variety.

Finally, let us fix the pre-quantum normalisation used in Sections~\ref{section:section-5-localisation-formulae-isotropy-data}--\ref{section:section-7-algebraic-cutting-combinatorial-quantisation}.
Equip $L_{\nu} = \mcN \times \CC$ with the Gaussian Hermitian metric whose squared norm in the standard holomorphic frame is $\|((z , w) , v)\|^{2} := |v|^{2} \exp(-\tfrac{1}{2}( \|z\|^{2} + \|w\|^{2} ) )$.
For a Hermitian holomorphic line bundle, let $h$ denote the squared norm of a local holomorphic frame, and let $F_{\nabla} = \overline{\partial} \partial \log h$ denote the (Chern) curvature.
With the normalisation \eqref{eqn:standard-hyperkahler-forms}, the Chern connection $\nabla$ of $L_{\nu} $ then has the curvature
\begin{equation}
	\label{eqn:prequantum-curvature-convention}
	F_{\nabla} = (2\pi / \sqrt{-1}) \, \w_{\RR}|_{\mcN}, \qquad
	c_{1}(L_{\nu}) = [ \w_{\RR}|_{\mcN} ].
\end{equation}
This is the standard Gaussian construction, rewritten for the period-one lattice convention fixed above; compare \cite[\S3.1]{Konno2008}.
The compact torus $T^{N}$, and hence the subtorus $K$, preserves the Hermitian metric, the Chern connection, and the holomorphic structure.
Restricting to the hyperk\"ahler level set and reducing by $K$ gives the residual $T^{n}$-equivariant $J_{1}$-holomorphic pre-quantum line orbibundle
\begin{equation}
	\label{eqn:prequantum-orbibundle}
	\mcL_{\nu} := \left. L_{\nu} \right|_{\phi_{\HK}^{-1}(\nu , 0)} / K
	\longrightarrow M_{\nu},
\end{equation}
whose descended Chern connection has curvature $(2\pi / \sqrt{-1}) \w_{\RR}$.
Under the Kempf--Ness identification \eqref{eqn:hypertoric-kempf-ness}, $\mcL_{\nu}$ agrees with the line orbibundle descended from the linearisation \eqref{eqn:positive-character-linearisation} on the semistable locus.
When finite stabilisers are present, we do not assume that $\mcL_{\nu}$ is an ordinary line bundle on the coarse quotient.

\begin{notation}[Weight Conventions]
	\label{notation:weight-conventions}
	We continue to use the $+\chi_{\lambda}$ geometric lift of \eqref{eqn:positive-character-linearisation} and we will always compute characters of section spaces using the pullback action \eqref{eqn:pullback-action-on-sections}, which means that the geometric fibre weights and section weights will have opposite signs.
	At a residual $T^{n}$-fixed point $p$, the Kostant formula \cite[p.~241, (7)]{Meinrenken1998}, with our conventions, identifies the geometric fibre weight with $-\mu_{\RR}(p)$ and the corresponding weight in the representation on sections with $+\mu_{\RR}(p)$.
	This is the convention underlying the numerators and positive-exponent denominator factors in the localisation formulae of Sections~\ref{section:section-5-localisation-formulae-isotropy-data}--\ref{section:section-6-equivariant-quantisation-hypertoric-algebraic-cutting}.
\end{notation}

%% file: sections/3-hyperplanes-subvarieties.tex
\section{Hyperplane Arrangements and Hypertoric Subvarieties}
\label{section:section-3-hyperplanes-subvarieties}

This section recalls how the geometry of $M_{\nu}$ is translated into the combinatorics of its associated hyperplane arrangement.
The regions of the arrangement describe the extended-core components, while its flats determine lower-dimensional hypertoric subvarieties.
These subvarieties will later allow us to answer local questions about the cut spaces by reducing them to ones on one- and two-dimensional arrangement configurations.
Throughout this section, we retain the data, hypotheses, and conventions fixed in Section~\ref{section:section-2-hypertoric-varieties}.


\subsection{Hyperplane Arrangements}
\label{subsection:subsection-3.1-hyperplane-arrangements}

Recall from Subsection~\ref{subsection:subsection-2.1-differential-hypertoric} the primitive lattice vectors $u_{1} , \ldots , u_{N} \in \mft_{\ZZ}^{n}$ and the chosen lift $\lambda \in ( \mft^{N} )^{\ast}$ with $\imath^{\ast}(\lambda) = \nu$.
They define the affine \textbf{hyperplane arrangement} $\mcA = \{ H_{1} , \ldots , H_{N} \}$ in $( \mft^{n} )^{\ast}$ by
\begin{equation}
	\label{eqn:hyperplane}
	H_{i} := \{ x \in ( \mft^{n} )^{\ast} ~|~ \ell_{i}(x) = 0 \}, \qquad
	\ell_{i}(x) := \langle x ,\, u_{i} \rangle + \lambda_{i},
\end{equation}
where $u_{i}$ is the chosen cooriented normal and $\lambda_{i}$ is the affine offset.
An arrangement is \textbf{simple} if, whenever $i_{1} , \ldots , i_{k}$ are distinct and $H_{i_{1}} \cap \cdots \cap H_{i_{k}} \neq \varnothing$, the normals $u_{i_{1}} , \ldots , u_{i_{k}}$ are linearly independent over $\RR$; it is \textbf{smooth} if it is simple and, whenever $n$ distinct hyperplanes have a common point, their corresponding normals form a $\ZZ$-basis of $\mft_{\ZZ}^{n}$.
As mentioned in Subsection~\ref{subsection:subsection-2.1-differential-hypertoric}, the genericity condition \eqref{eqn:genericity-span-condition} on $\nu$ is equivalent to the simplicity of $\mcA$.
The stronger condition that $\mcA$ is smooth corresponds to the $K$-action on $\phi_{\HK}^{-1}(\nu , 0)$ being free.

The \textbf{regions}, or chambers, of $\mcA$ are the connected components of $(\mft^{n})^{\ast} \setminus \bigcup_{i = 1}^{N} H_{i}$.
Each $H_{i}$ determines the two closed half-spaces $H_{i}^{+} := \{ x ~|~ \ell_{i}(x) \geq 0 \}$ and $H_{i}^{-} := \{ x ~|~ \ell_{i}(x) \leq 0 \}$.
Given a subset $A \subseteq \{ 1 , \ldots , N \}$, define the closed polyhedron
\[
	\Delta_{A} := \big( \bigcap\nolimits_{i \not\in A} H_{i}^{+} \big) \cap \big( \bigcap\nolimits_{i \in A} H_{i}^{-} \big).
\]
Thus, $A$ records the hyperplanes for which the negative side has been chosen for each region.
As shown below, simplicity implies that every non-empty $\Delta_{A}$ is full-dimensional; its interior is therefore a region of $\mcA$, and $\Delta_{A}$ is the closure of that region.
We identify regions with their closures when using the notation $\Delta_{A}$, and we let $\mcR(\mcA) := \{ \Delta_{A} ~|~ A \subseteq \{ 1 , \ldots , N \} ,\ \Delta_{A} \neq \varnothing \}$ denote the set of non-empty regions of $\mcA$.

We assume that $\mcA$ has at least one bounded region, and we coorient $\mcA$ so that this region is $\Delta_{\emptyset} = \bigcap_{i = 1}^{N} H_{i}^{+}$; the non-empty bounded region $\Delta_{\emptyset}$ is said to be the \textbf{distinguished base region} of $\mcA$.
Later in Section~\ref{section:section-4-cutting-hypertoric-variety}, the boundedness of $\Delta_{\emptyset}$ will be used to guarantee that the fibre-rotation moment map is proper.
For a point $x \in ( \mft^{n} )^{\ast}$, define
\begin{equation}
	\label{eqn:active-index-set}
	\mcI_{x} := \{ i ~|~ \ell_{i}(x) = 0 \} = \{ i ~|~ x \in H_{i} \}
\end{equation}
to be the index set of the hyperplanes that pass through $x$, and set
\[
	H_{\mcI_{x}} := \bigcap\nolimits_{i \in \mcI_{x}} H_{i}, \qquad
	\rk(\mcI_{x}) := \codim_{(\mft^{n})^{\ast}} H_{\mcI_{x}},
\]
with the convention that $H_{\emptyset} = (\mft^{n})^{\ast}$ and $\rk(\varnothing) = 0$.
A point $v$ is said to be an \textbf{(arrangement) vertex} of $\mcA$ if $\rk(\mcI_{v}) = n$, which is equivalent to $|\mcI_{v}| = n$ since $\mcA$ is simple.
Let $\Verts(\mcA)$ denote the set of arrangement vertices.

\begin{lemma}
	\label{lemma:regions-pointed}
	Every non-empty region $\Delta_{A}$ is a full-dimensional pointed polyhedron.
	Furthermore, every non-empty face of $\Delta_{A}$ contains a vertex and, in particular, every vertex of $\Delta_{A}$ is a vertex of the arrangement $\mcA$.
\end{lemma}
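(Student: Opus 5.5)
The proof splits into three parts: full-dimensionality from simplicity, pointedness from the spanning of the normals, and the statement about vertices from standard polyhedral theory together with simplicity.

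\emph{Full-dimensionality.} Let $\Delta_{A} \neq \varnothing$ and pick $x \in \Delta_{A}$. The hyperplanes through $x$ are indexed by $\mcI_{x}$. By simplicity the normals $\{u_{i}\}_{i \in \mcI_{x}}$ are linearly independent, so there is a direction $y \in (\mft^{n})^{\ast}$ with $\langle y ,\, u_{i} \rangle = \sigma_{i}$ for every $i \in \mcI_{x}$, where $\sigma_{i} = +1$ if $i \notin A$ and $\sigma_{i} = -1$ if $i \in A$. For sufficiently small $\varepsilon > 0$, the point $x + \varepsilon y$ satisfies every defining inequality of $\Delta_{A}$ strictly. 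For $i \in \mcI_{x}$ this is the choice of signs; for $i \notin \mcI_{x}$ the inequality was already strict at $x$ and stays strict by continuity. Hence $\Delta_{A}$ has non-empty interior. Since $\Delta_{A}$ is convex, it is the closure of its interior, and that interior is a connected component of the complement of $\bigcup_{i} H_{i}$, i.e.\ a region.

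\emph{Pointedness.} The lineality space of $\Delta_{A}$ is $\{ y ~|~ \langle y ,\, u_{i} \rangle = 0 \text{ for all } i \}$. The $u_{i}$ generate the full-rank lattice $\mft_{\ZZ}^{n}$, so they span $\mft^{n}$, and the lineality space is therefore $0$. Thus $\Delta_{A}$ is pointed.

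\emph{Vertices.} Every non-empty face $F$ of a polyhedron is itself a polyhedron whose lineality space is contained in that of the ambient polyhedron, so $F$ is pointed. A pointed non-empty polyhedron has a vertex, by the standard Minkowski--Weyl or Carathéodory argument, and a vertex of $F$ is a vertex of $\Delta_{A}$ because faces of faces are faces.

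It remains to show that a vertex $v$ of $\Delta_{A}$ is an arrangement vertex. The tight constraints at $v$ are exactly those indexed by $\mcI_{v}$. By the standard characterisation of vertices of a polyhedron, the normals of the tight constraints span $\mft^{n}$, so $\rk(\mcI_{v}) = n$. Simplicity then gives $|\mcI_{v}| = n$, so $v \in \Verts(\mcA)$.

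\emph{Main obstacle.} The only delicate step is the full-dimensionality argument, where simplicity is used to choose the direction $y$ with prescribed signs on the active normals. Everything else is routine polyhedral theory, which I would cite rather than reprove.
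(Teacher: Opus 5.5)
Your proposal is correct and follows the paper's proof essentially step for step. It uses the same simplicity-based choice of a direction $y$ with prescribed signs on the active normals, the same lineality-space computation for pointedness, and the same rank condition on $\mcI_{v}$ at a vertex. The only differences are cosmetic: you cite the rank characterisation of vertices, whereas the paper proves it directly (a non-zero $p$ annihilating $\{u_{i}\}_{i \in \mcI_{v}}$ would make $v$ the midpoint of $v \pm \epsilon p \in \Delta_{A}$), and you spell out why faces inherit pointedness where the paper simply cites Schrijver.
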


\begin{proof}
	We first show that $\Delta_{A}$ is full-dimensional.
	Fix a point $x \in \Delta_{A}$.
	If $\mcI_{x} = \varnothing$, then every defining $\ell_{i}$-inequality is strict at $x$, and $x$ already lies in the interior of $\Delta_{A}$.
	So suppose instead that $\mcI_{x} \neq \varnothing$.
	Since $\mcA$ is simple, the normals $\{ u_{i} \}_{i \in \mcI_{x}}$ are linearly independent.
	Hence, the linear map $y \mapsto (\langle y ,\, u_{i} \rangle )_{i \in \mcI_{x}}$ from $(\mft^{n})^{\ast}$ to $\RR^{\mcI_{x}}$ is surjective, and we may therefore choose $y$ such that $\langle y ,\, u_{i} \rangle < 0$ for $i \in \mcI_{x} \cap A$ and $\langle y ,\, u_{i} \rangle > 0$ for $i \in \mcI_{x} \setminus A$.

	For each $i \in \mcI_{x}$, the identity $\ell_{i}( x + \epsilon y ) = \epsilon \langle y ,\, u_{i} \rangle$ shows that the respective defining $\ell_{i}$-inequality becomes strict when $\epsilon > 0$.
	The remaining inequalities are already strict at $x$, and, because there are only finitely many of them, they remain strict for all sufficiently small $\epsilon > 0$.
	Whence, $x + \epsilon y$ lies in the interior of $\Delta_{A}$, proving that $\Delta_{A}$ is full-dimensional.

	The lineality space of a polyhedron defined by linear inequalities is the kernel of its coefficient matrix \cite[\S8.2]{Schrijver86}.
	For $\Delta_{A}$, the coefficient vectors are $u_{i}$ or $-u_{i}$, so this kernel consists of vectors $p \in (\mft^{n})^{\ast}$ satisfying $\langle p ,\, u_{i} \rangle = 0$ for each $i$.
	Since $u_{1} , \ldots , u_{N}$ span $\mft^{n}$, this kernel is trivial and $\Delta_{A}$ is pointed, hence every non-empty face of $\Delta_{A}$ therefore contains a vertex \cite[\S\S8.3,~8.5]{Schrijver86}.

	Finally, let $v$ be a vertex of $\Delta_{A}$.
	If the normals indexed by $\mcI_{v}$ did not span $\mft^{n}$, there would be a non-zero point $p \in (\mft^{n})^{\ast}$ satisfying $\langle p ,\, u_{i} \rangle = 0$ for every $i \in \mcI_{v}$.
	The active equalities would remain unchanged at $v \pm \epsilon p$, while the remaining defining inequalities would remain strict for sufficiently small $\epsilon > 0$, so both points would therefore belong to $\Delta_{A}$.
	However, $v$ is their midpoint, contradicting the fact that it is a vertex.
	Hence, $\rk(\mcI_{v}) = n$, and thus $v \in \Verts(\mcA)$.
\end{proof}

Every non-empty region $\Delta_{A}$ is full-dimensional, so its interior lies entirely in one of the two open half-spaces bounded by each $H_{i}$.
The chosen coorientation of $H_{i}$ distinguishes these as the positive and negative sides, and the resulting half-space choices determine $A$ uniquely.
In particular, the partial order in the following remark is well-defined.

\begin{remark}
	\label{remark:poset-regions}
	Following \cite[\S1]{Edelman1984}, the \textbf{poset of regions} is denoted by $(\mcR(\mcA) , \preceq)$, where the partial order is defined as: $\Delta_{A} \preceq \Delta_{B}$ if and only if $A \subseteq B$.
	Relative to the distinguished base region $\Delta_{\emptyset}$, the set $A$ indexes the hyperplanes separating the interiors of $\Delta_{A}$ and $\Delta_{\emptyset}$, and thus $\Delta_{\emptyset}$ is the unique minimal element and $|A|$ is the number of separating hyperplanes.
	The order records the separating sets themselves~---~and not merely their cardinalities~---~since it depends only on the chosen distinguished base region, or equivalently on the coorientation of $\mcA$.

	More generally, for two non-empty regions $\Delta_{A}$ and $\Delta_{B}$, the symmetric difference $A \triangle B := (A \setminus B) \cup (B \setminus A)$ indexes the hyperplanes separating their interiors so that, for each $i \in A \triangle B$, one interior lies in $\{ \ell_{i} > 0 \}$ whereas the other lies in $\{ \ell_{i} < 0 \}$.
	Geometrically, these are the hyperplanes that one must cross to pass from the interior of $\Delta_{A}$ to that of $\Delta_{B}$: for each $i \in A \triangle B$, the strict sign condition $\ell_{i} > 0$ on one interior is replaced by $\ell_{i} < 0$ on the other, or vice versa.
	Since the two regions lie in opposite closed half-spaces bounded by each such $H_{i}$, any point of $\Delta_{A} \cap \Delta_{B}$ must satisfy $\ell_{i} = 0$ for every $i \in A \triangle B$.
	In particular, replacing $A$ by $A \triangle \{ i \}$ switches only the chosen side of $H_{i}$.
\end{remark}


\subsection{The Core and Extended Core}
\label{subsection:subsection-3.2-core-extended-core}

Following \cite[\S2]{HaradaProudfoot2004}, the \textbf{extended core} of $M_{\nu}$ is defined to be the zero set of the residual complex moment map,
\[
	\mcE_{\nu} := \mu_{\CC}^{-1}(0) = \{ [z , w] \in M_{\nu} ~|~ z_{i}w_{i} = 0 \text{ for every } i \} \subset M_{\nu}.
\]
For each $A \subseteq \{1, \ldots, N\}$ with $\Delta_{A} \neq \varnothing$, the preimage $\mcE_{A} := \mu_{\RR}^{-1}(\Delta_{A}) \cap \mcE_{\nu}$ is said to be an \textbf{extended-core component} and thus are the irreducible components of $\mcE_{\nu}$ satisfying $\mcE_{\nu} = \bigcup_{\Delta_{A} \neq \varnothing} \mcE_{A}$; see \cite[\S2]{HaradaProudfoot2004}.
Since $\Delta_{\emptyset}$ is bounded and the regions cover $( \mft^{n} )^{\ast}$, there is at least one additional unbounded region; thus, $|\mcR(\mcA)| \geq 2$ and $\mcE_{\nu}$ has at least two components.

\begin{lemma}
	\label{lemma:half-space-coordinates}
	Let $[z , w] \in \mcE_{\nu}$.
	If $w_{i} = 0$, then $\mu_{\RR}[z , w] \in H_{i}^{+}$; if $z_{i} = 0$, then $\mu_{\RR}[z , w] \in H_{i}^{-}$.
	Hence,
	\[
		\mcE_{A} = \{ [z , w] \in M_{\nu} ~|~ w_{i} = 0 ~\text{ if }~ i \not\in A ,~ z_{i} = 0 ~\text{ if }~ i \in A \}.
	\]
	Moreover, for every $A \subseteq \{1, \ldots, N\}$ with $\Delta_{A} \neq \varnothing$, the restriction $\left. \mu_{\RR} \right|_{\mcE_{A}} : \mcE_{A} \ra \Delta_{A}$ is a proper surjection.
\end{lemma}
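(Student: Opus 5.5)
The plan is to compute $\mu_{\RR}$ directly from the defining formula and then use the complex moment-map constraint to control which coordinates survive.

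\emph{Half-space claims.} For $[z,w]\in\mcE_{\nu}$, pair the formula
\[
	(\pi^{\ast}\circ\mu_{\RR})[z,w]=\tfrac12\sum_{i}(|z_{i}|^{2}-|w_{i}|^{2})\epsilon_{i}-\lambda
\]
with $e_{i}$. Since $\langle \pi^{\ast}x ,\, e_{i}\rangle=\langle x ,\, u_{i}\rangle$, this gives
\[
	\ell_{i}(\mu_{\RR}[z,w])=\tfrac12(|z_{i}|^{2}-|w_{i}|^{2}).
\]
This identity holds on all of $M_{\nu}$ and is independent of the representative. So $w_{i}=0$ forces $\ell_{i}\geq0$, and $z_{i}=0$ forces $\ell_{i}\leq0$.

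\emph{Description of $\mcE_{A}$.} Denote by $Y_{A}$ the right-hand side.
\begin{itemize}
	\item If $[z,w]\in Y_{A}$, then $z_{i}w_{i}=0$ for all $i$, so $[z,w]\in\mcE_{\nu}$. The half-space claims then put $\mu_{\RR}[z,w]$ in $\Delta_{A}$.
	\item Conversely, take $[z,w]\in\mcE_{A}$ and an index $i\notin A$. Then $\ell_{i}\geq0$ gives $|z_{i}|\geq|w_{i}|$. Together with $z_{i}w_{i}=0$, this means either $w_{i}=0$, or $z_{i}=0$ and then $w_{i}=0$ as well. The case $i\in A$ is symmetric.
\end{itemize}
Hence $\mcE_{A}=Y_{A}$. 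The condition is $K$-invariant, so it is well defined on orbits.

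\emph{Surjectivity.} Given $x\in\Delta_{A}$, imitate the proof of Lemma~\ref{lemma:effective-Tn-action}:
\begin{itemize}
	\item for $i\notin A$, set $z_{i}=\sqrt{2\ell_{i}(x)}$ and $w_{i}=0$;
	\item for $i\in A$, set $z_{i}=0$ and $w_{i}=\sqrt{-2\ell_{i}(x)}$.
\end{itemize}
Then $\phi_{\CC}=0$ and $\tfrac12\sum_{i}(|z_{i}|^{2}-|w_{i}|^{2})\epsilon_{i}=\lambda+\pi^{\ast}x$. Applying $\imath^{\ast}$ gives $\phi_{\RR}=\nu$, so the point lies in $\phi_{\HK}^{-1}(\nu,0)$. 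Its class lies in $Y_{A}$ and maps to $x$. No genericity is needed at this step.

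\emph{Properness.} This is the main point, since $\Delta_{A}$ may be unbounded. Let $C\subset\Delta_{A}$ be compact. On $\mu_{\RR}^{-1}(C)\cap\mcE_{A}$ we have $\tfrac12|z_{i}|^{2}=\ell_{i}(x)$ for $i\notin A$ and $\tfrac12|w_{i}|^{2}=-\ell_{i}(x)$ for $i\in A$, with the complementary coordinates zero. So every representative satisfies
\[
	\|z\|^{2}+\|w\|^{2}=2\sum_{i}|\ell_{i}(x)|,
\]
which is bounded on $C$. The preimage in $\phi_{\HK}^{-1}(\nu,0)$ is therefore closed and bounded in $T^{\ast}\CC^{N}$, hence compact. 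Its image under the continuous quotient by the compact group $K$ is compact.

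The closedness of $\mcE_{A}$ in $M_{\nu}$ follows from its defining equations. So $\mu_{\RR}|_{\mcE_{A}}$ is a continuous map whose preimages of compact sets are compact, that is, it is proper.
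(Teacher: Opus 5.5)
Your proposal is correct and follows essentially the same route as the paper: you use the identity $\ell_{i}(\mu_{\RR}[z , w]) = \tfrac{1}{2}(|z_{i}|^{2} - |w_{i}|^{2})$ for the half-space claims, the explicit representative for surjectivity, and the norm identity $\|z\|^{2} + \|w\|^{2} = 2\sum_{i} |\ell_{i}(x)|$ to bound the preimage of a compact set. The one difference is that you spell out the reverse inclusion $\mcE_{A} \subseteq Y_{A}$, combining $\ell_{i} \geq 0$ (or $\leq 0$) with $z_{i}w_{i} = 0$, which the paper covers only with ``which gives the claimed coordinate description''.
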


\begin{proof}
	By the residual moment-map formula in Subsection~\ref{subsection:subsection-2.1-differential-hypertoric}, $\ell_{i}(\mu_{\RR}[z , w]) = (|z_{i}|^{2} - |w_{i}|^{2}) / 2$.
	Thus, $w_{i} = 0$ implies $\mu_{\RR}[z , w] \in H_{i}^{+}$, whereas $z_{i} = 0$ implies $\mu_{\RR}[z , w] \in H_{i}^{-}$.
	On the extended core, we also have $z_{i}w_{i} = 0$ for every $i$, which gives the claimed coordinate description of $\mcE_{A}$.
	If $[z , w] \in \mcE_{A}$ satisfies $z_{i} = w_{i} = 0$, then $\mu_{\RR}[z, w] \in H_{i}$ and the point also belongs to $\mcE_{A \triangle \{ i \}}$, which describes the intersection of the two components over that hyperplane.

	To prove the surjectivity of $\mu_{\RR}|_{\mcE_{A}}$ onto $\Delta_{A}$, fix $x \in \Delta_{A}$, and choose coordinates $(z , w) \in T^{\ast}\CC^{N}$ satisfying
	\[
		w_{i} = 0, \quad |z_{i}|^{2} = 2\ell_{i}(x)
		\quad \text{ if } i \not\in A,
		\qquad \text{ and } \qquad
		z_{i} = 0, \quad |w_{i}|^{2} = -2\ell_{i}(x)
		\quad \text{ if } i \in A.
	\]
	The sign conditions defining $\Delta_{A}$ ensure that the prescribed squared norms are non-negative, and also that the coordinates satisfy $z_{i}w_{i} = 0$ for each $i$, so $\phi_{\CC}(z , w) = 0$.
	Moreover,
	\[
		\tfrac{1}{2} \sum\nolimits_{i} ( |z_{i}|^{2} - |w_{i}|^{2} ) \epsilon_{i} =
		\sum\nolimits_{i} \ell_{i}(x) \epsilon_{i} =
		\pi^{\ast}(x) + \lambda,
	\]
	and applying $\imath^{\ast}$ to this yields $\phi_{\RR}(z , w) = \nu$, since $\imath^{\ast}(\lambda) = \nu$ and $\imath^{\ast} \circ \pi^{\ast} = 0$.
	Hence, $[z , w] \in \mcE_{A}$, and the residual moment-map formula from Subsection~\ref{subsection:subsection-2.1-differential-hypertoric} gives $\mu_{\RR}[z , w] = x$, proving surjectivity.

	For properness, let $C \subseteq \Delta_{A}$ be a compact subset.
	The set of representatives $(z , w)$ in the hyperk\"ahler level set satisfying $[z , w] \in \mcE_{A}$ and $\mu_{\RR}[z , w] \in C$ is closed in $T^{\ast}\CC^{N}$.
	It is also bounded, since the coordinate description gives $\tfrac{1}{2}( \|z\|^{2} + \|w\|^{2} ) = \sum_{i} |\ell_{i}( \mu_{\RR}[z , w] )|$, and the right-hand side is bounded as the moment-map image ranges over $C$.
	This set is therefore compact, and its image under the quotient map is $(\left. \mu_{\RR} \right|_{\mcE_{A}})^{-1}(C)$, proving properness.
\end{proof}

Each extended-core component $\mcE_{A}$ is $\w_{\CC}$-Lagrangian, has real dimension $2n$, and is a \emph{symplectic toric orbifold} for the residual $T^{n}$-action; see \cite[Theorem~6.5(iii)]{BielawskiDancer2000} and \cite[\S2]{HaradaProudfoot2004}.
By Lemma~\ref{lemma:half-space-coordinates}, its moment-map image is $\Delta_{A}$, and $\mcE_{A}$ is compact if and only if $\Delta_{A}$ is bounded.
Furthermore, when the arrangement is smooth, these are ordinary symplectic toric manifolds.
For the base region, $X_{\nu} := \mcE_{\emptyset}$ is the compact toric orbifold corresponding to $\Delta_{\emptyset}$ and is the K\"ahler quotient of the zero section $\CC^{N} \subset T^{\ast}\CC^{N}$ by $K$; compare \cite{BielawskiDancer2000}.

Set $I_{\bd} := \{ A ~|~ \Delta_{A} \text{ is bounded} \}$.
The union $\mcC := \bigcup_{A \in I_{\bd}} \mcE_{A}$ is said to be the \textbf{core} of $M_{\nu}$ and its irreducible components make up the bounded regions of $\mcA$; in particular, $\mcC$ is irreducible if and only if $\Delta_{\emptyset}$ is the unique bounded region of the hyperplane arrangement.

\begin{remark}
	\label{remark:hyperkahler-analogues}
	Changing the coorientation of the arrangement $\mcA$ results in a change of the distinguished base region, the poset of regions, and the distinguished toric subvariety $X_{\nu} = \mcE_{\emptyset}$.
	The underlying hypertoric variety is unchanged up to $T^{n}$-equivariant diffeomorphism \cite[Lemma~2.2 and the discussion following Example~2.3]{HaradaProudfoot2004}, though this diffeomorphism need not intertwine the fibre-rotation actions and therefore need not be $T^{n} \times U_{1}$-equivariant.
	The hypertoric variety together with its distinguished circle action can therefore depend on the coorientation \cite[Lemma~3.5 and the discussion following Example~3.6]{Proudfoot2004}.
\end{remark}


\subsection{Flats of an Arrangement}
\label{subsection:subsection-3.3-flats-arrangement}

For a subset $\mcF \subseteq \{ 1 , \ldots , N \}$, write $H_{\mcF} := \bigcap_{i \in \mcF} H_{i}$ and $\mcFc := \{ 1 , \ldots , N \} \setminus \mcF$, with the convention that $H_{\emptyset} = ( \mft^{n} )^{\ast}$.
If $H_{\mcF} \neq \varnothing$, then we call it a \textbf{flat} of $\mcA$ and write $L(\mcA)$ for the set of flats, partially ordered by reverse inclusion.
We set $\rk(\mcF) := \codim H_{\mcF}$ and $\crk(\mcF) := \dim H_{\mcF} = n - \rk(\mcF)$ for the rank and corank of $\mcF$, so that $\rk(\mcF) = |\mcF|$ for a simple arrangement.

Observe that the index set $\mcF$ is determined uniquely by the flat $H_{\mcF}$.
Indeed, no additional hyperplane $H_{i}$, with $i \not\in \mcF$, can contain $H_{\mcF}$: otherwise, $H_{\mcF} = H_{\mcF \cup \{ i \}}$ would be a non-empty intersection of $|\mcF| + 1$ hyperplanes with codimension only $|\mcF|$, contradicting simplicity.
The \textbf{restriction arrangement} on the affine space $H_{\mcF}$ is defined by
\[
	\mcA_{\mcF} := \{ H_{i} \cap H_{\mcF} ~|~ i \in \mcFc ,\ H_{i} \cap H_{\mcF} \neq \varnothing \}.
\]
Hyperplanes that are disjoint from the flat are omitted rather than treated as hyperplanes of the restriction.

Introduce the normal space of $H_{\mcF}$ and its associated lattices by
\begin{equation}
	\label{eqn:normal-space-associated-lattices}
	\mft_{\mcF} := \Span_{\RR}\{ u_{i} ~|~ i \in \mcF \}, \quad
	\mft^{\crk \mcF} := \mft^{n} / \mft_{\mcF}, \quad \text{ and } \quad
	U_{\mcF} := \Span_{\ZZ}\{ u_{i} ~|~ i \in \mcF \}, \quad
	V_{\mcF} := \mft_{\mcF} \cap \mft_{\ZZ}^{n}.
\end{equation}
The quotient map $\mft^{n} \ra \mft^{\crk \mcF}$ dualises to give an identification of $( \mft^{\crk \mcF} )^{\ast}$ with the annihilator of $\mft_{\mcF}$, and hence with the space of directions of $H_{\mcF}$.
Both $U_{\mcF}$ and $V_{\mcF}$ have rank $\rk(\mcF)$; the lattice $V_{\mcF}$ is saturated in $\mft_{\ZZ}^{n}$ whereas $U_{\mcF}$ need not be. So the quotient $\Gamma_{\mcF} := V_{\mcF} / U_{\mcF}$ is finite, and it is furthermore trivial if and only if $U_{\mcF}$ is saturated.
We set
\[
	T_{U}^{\rk \mcF} := \mft_{\mcF} / U_{\mcF}, \quad
	T_{V}^{\rk \mcF} := \mft_{\mcF} / V_{\mcF}, \quad
	\text{ and define } \quad
	T_{V}^{\crk \mcF} := T^{n} / T_{V}^{\rk \mcF}.
\]
The integral lattice of $T_{V}^{\crk \mcF}$ is $\mft_{\ZZ}^{\crk \mcF} := \mft_{\ZZ}^{n} / V_{\mcF}$, which is torsion-free because $V_{\mcF}$ is saturated.

\begin{proposition}
	\label{prop:torus-decomposition}
	The inclusion $U_{\mcF} \subseteq V_{\mcF}$ induces a finite covering $T_{U}^{\rk \mcF} \ra T_{V}^{\rk \mcF}$ with kernel $\Gamma_{\mcF}$, yielding an exact sequence $1 \ra \Gamma_{\mcF} \ra T_{U}^{\rk \mcF} \ra T_{V}^{\rk \mcF} \ra 1$.
	Moreover, after choosing a complementary lattice to $V_{\mcF}$ in $\mft_{\ZZ}^{n}$, there exists a non-canonical splitting
	\begin{equation}
		\label{eqn:unsat-decomp}
		T^{n} \cong T_{V}^{\rk \mcF} \times T_{V}^{\crk \mcF} \cong \bigl( T_{U}^{\rk \mcF} / \Gamma_{\mcF} \bigr) \times T_{V}^{\crk \mcF}.
	\end{equation}
\end{proposition}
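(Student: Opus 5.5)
The plan is to work at the level of lattices and then pass to tori by taking quotients of real vector spaces. Both tori $T_{U}^{\rk \mcF} = \mft_{\mcF}/U_{\mcF}$ and $T_{V}^{\rk \mcF} = \mft_{\mcF}/V_{\mcF}$ are quotients of the same vector space $\mft_{\mcF}$, and $U_{\mcF} \subseteq V_{\mcF}$. The identity of $\mft_{\mcF}$ therefore descends to a surjective homomorphism $T_{U}^{\rk \mcF} \ra T_{V}^{\rk \mcF}$, $[\xi]_{U} \mapsto [\xi]_{V}$. Its kernel consists of classes $[\xi]_{U}$ with $\xi \in V_{\mcF}$, which is exactly $V_{\mcF}/U_{\mcF} = \Gamma_{\mcF}$. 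Since both lattices have full rank $\rk(\mcF)$ in $\mft_{\mcF}$, this group is finite, as already noted before the statement. The map is a local diffeomorphism because it is induced by the identity on Lie algebras, so it is a finite covering, and the short exact sequence $1 \ra \Gamma_{\mcF} \ra T_{U}^{\rk \mcF} \ra T_{V}^{\rk \mcF} \ra 1$ follows. This also gives the isomorphism $T_{V}^{\rk \mcF} \cong T_{U}^{\rk \mcF}/\Gamma_{\mcF}$ used in the second isomorphism of \eqref{eqn:unsat-decomp}.

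For the splitting, I would use the saturation of $V_{\mcF}$. The quotient $\mft_{\ZZ}^{n}/V_{\mcF} = \mft_{\ZZ}^{\crk \mcF}$ is torsion-free and finitely generated, hence free. The sequence $0 \ra V_{\mcF} \ra \mft_{\ZZ}^{n} \ra \mft_{\ZZ}^{\crk \mcF} \ra 0$ therefore splits. Choose a complementary sublattice $W \subseteq \mft_{\ZZ}^{n}$ with $\mft_{\ZZ}^{n} = V_{\mcF} \oplus W$; then $W$ maps isomorphically onto $\mft_{\ZZ}^{\crk \mcF}$. Tensoring with $\RR$ gives $\mft^{n} = \mft_{\mcF} \oplus W_{\RR}$, and the lattice splits compatibly. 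Consequently
\[
	T^{n} = \mft^{n}/\mft_{\ZZ}^{n} \cong \bigl(\mft_{\mcF}/V_{\mcF}\bigr) \times \bigl(W_{\RR}/W\bigr) = T_{V}^{\rk \mcF} \times (W_{\RR}/W).
\]

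It remains to identify $W_{\RR}/W$ with $T_{V}^{\crk \mcF} = T^{n}/T_{V}^{\rk \mcF}$. First I would check that $T_{V}^{\rk \mcF}$ embeds in $T^{n}$. The map $\mft_{\mcF}/V_{\mcF} \ra \mft^{n}/\mft_{\ZZ}^{n}$ is injective precisely because $\mft_{\mcF} \cap \mft_{\ZZ}^{n} = V_{\mcF}$, which is the defining property of $V_{\mcF}$. Under the product decomposition above, the image of $T_{V}^{\rk \mcF}$ is the first factor, so the quotient is the second factor $W_{\RR}/W$. Its lattice is $W \cong \mft_{\ZZ}^{n}/V_{\mcF}$, consistent with the stated integral lattice of $T_{V}^{\crk \mcF}$.

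There is no serious obstacle here. The one point needing care is that the splitting depends on the choice of $W$, which is why it is non-canonical. The roles of the two lattices must also be kept straight: $U_{\mcF}$ need not be saturated, so it cannot be used to split $T^{n}$, and it only enters through the quotient by $\Gamma_{\mcF}$.
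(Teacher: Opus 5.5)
Your proposal is correct and follows the paper's own argument: the kernel is $V_{\mcF}/U_{\mcF}=\Gamma_{\mcF}$, saturation of $V_{\mcF}$ makes $\mft_{\ZZ}^{n}/V_{\mcF}$ free and so gives a complementary sublattice, and tensoring with $\RR$ and quotienting by the lattices yields the splitting. You also check that $T_{V}^{\rk \mcF}\hookrightarrow T^{n}$ is injective, using $\mft_{\mcF}\cap\mft_{\ZZ}^{n}=V_{\mcF}$, which is what makes $T_{V}^{\crk \mcF}$ well defined and identifies it with $W_{\RR}/W$; the paper leaves this step implicit.
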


\begin{proof}
	The first assertion follows immediately from $\ker( \mft_{\mcF} / U_{\mcF} \ra \mft_{\mcF} / V_{\mcF} ) = V_{\mcF} / U_{\mcF} = \Gamma_{\mcF}$.
	Since $V_{\mcF}$ is saturated, $\mft_{\ZZ}^{n} / V_{\mcF}$ is free abelian, so there exists a complementary sublattice $W_{\mcF}$ with the non-canonical splitting $\mft_{\ZZ}^{n} \cong V_{\mcF} \oplus W_{\mcF}$ (compare \cite[Exercise~1.3.5]{CoxLittleSchenck2011}); tensoring with $\RR$ and quotienting by the respective lattices gives \eqref{eqn:unsat-decomp}.
\end{proof}

Note, however, that even though the restriction of a simple arrangement remains simple by Lemma~\ref{lemma:restricted-arrangement-simple}, its natural integral normals need not be primitive.
Let $\widebar{\pi}_{\mcF} : \mft^{n} \ra \mft^{\crk \mcF}$ be the quotient map and set $u_{i}^{\mcFc} := \widebar{\pi}_{\mcF}(u_{i}) \in \mft_{\ZZ}^{\crk \mcF}$ for $i \in \mcFc$.

\begin{notation}[Active and Inactive Indices]
	\label{notation:active-inactive-indices}
	For a flat $H_{\mcF}$, put
	\begin{equation}
		\label{eqn:active-index-set-flat}
		\mcF^{\dagger} := \{ i \in \mcFc ~|~ H_{i} \cap H_{\mcF} \neq \varnothing \} .
	\end{equation}
	We say an index $i \in \mcFc$ is \textbf{active} for $\mcF$ if $i \in \mcF^{\dagger}$, and \textbf{inactive} if $i \in \mcFc \setminus \mcF^{\dagger}$.
	That is to say, an index $i$ is active if and only if $u_{i}^{\mcFc} \neq 0$.
	Indeed, the projected vector $u_{i}^{\mcFc}$ represents the linear part of $\ell_{i}|_{H_{\mcF}}$.
	If it vanishes, the restriction is constant; that constant is non-zero because no additional hyperplane contains $H_{\mcF}$, and thus $H_{i} \cap H_{\mcF} = \varnothing$.

	Conversely, a non-constant affine-linear functional on $H_{\mcF}$ takes the value zero somewhere, so the intersection is non-empty.
	The active indices are the ones that contribute a hyperplane to the restriction arrangement, whereas an inactive index contributes only a non-zero constant of fixed sign.
	We shall also use the terms active and inactive for the corresponding indexed coordinates, normals, and affine functionals.
\end{notation}

\begin{lemma}
	\label{lemma:restricted-arrangement-simple}
	The restriction arrangement $\mcA_{\mcF}$ is simple: if $G \subseteq \mcF^{\dagger}$ and $H_{\mcF} \cap (\bigcap_{i \in G} H_{i}) \neq \varnothing$, then the vectors $\{ u_{i}^{\mcFc} \}_{i \in G}$ are linearly independent in $\mft^{\crk \mcF}$.
\end{lemma}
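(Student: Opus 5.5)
The plan is to reduce the claim to the simplicity of $\mcA$ itself, applied to the enlarged index set $\mcF \cup G$. Suppose $G \subseteq \mcF^{\dagger}$ and pick a point $x \in H_{\mcF} \cap \bigcap_{i \in G} H_{i}$. Since $G \subseteq \mcFc$, the sets $\mcF$ and $G$ are disjoint. The hyperplanes $H_{i}$ for $i \in \mcF \cup G$ are therefore distinct hyperplanes with the common point $x$. Simplicity of $\mcA$ then says that the full family $\{ u_{i} \}_{i \in \mcF \cup G}$ is linearly independent over $\RR$ in $\mft^{n}$. For this step I would also note that distinct indices give distinct hyperplanes, by the observation preceding the restriction arrangement that no extra hyperplane contains a flat.

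The second step passes to the quotient $\mft^{\crk \mcF} = \mft^{n} / \mft_{\mcF}$. Suppose $\sum_{i \in G} c_{i} u_{i}^{\mcFc} = 0$ with real coefficients $c_{i}$. Since $u_{i}^{\mcFc} = \widebar{\pi}_{\mcF}(u_{i})$ and $\ker \widebar{\pi}_{\mcF} = \mft_{\mcF} = \Span_{\RR}\{ u_{j} ~|~ j \in \mcF \}$, we get
\[
	\sum\nolimits_{i \in G} c_{i} u_{i} = \sum\nolimits_{j \in \mcF} b_{j} u_{j}
\]
for some real $b_{j}$. This is a linear relation among the vectors indexed by $\mcF \cup G$. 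By the first step every coefficient vanishes, and in particular $c_{i} = 0$ for all $i \in G$. Hence $\{ u_{i}^{\mcFc} \}_{i \in G}$ is linearly independent, which is the required simplicity of $\mcA_{\mcF}$.

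A short remark would complete the identification with the restriction arrangement. By Notation~\ref{notation:active-inactive-indices}, each $u_{i}^{\mcFc}$ with $i \in \mcF^{\dagger}$ is the non-zero linear part of $\ell_{i}|_{H_{\mcF}}$. The statement about the vectors $u_{i}^{\mcFc}$ is therefore precisely simplicity for the hyperplanes $H_{i} \cap H_{\mcF}$ inside the affine space $H_{\mcF}$, whose direction space is dual to $\mft^{\crk \mcF}$. The argument also shows that distinct active indices give distinct restricted hyperplanes, since a pair has independent linear parts whenever the two hyperplanes meet. I do not expect a real obstacle here. The only point needing care is checking that $\mcF \cup G$ consists of distinct indices with a common point, so that simplicity of $\mcA$ genuinely applies.
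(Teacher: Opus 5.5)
Your proposal is correct and follows the paper's proof: simplicity of $\mcA$ applied to the disjoint index set $\mcF \cup G$ gives the independence of $\{u_{j}\}_{j \in \mcF \cup G}$, and any relation among the projections $u_{i}^{\mcFc}$ lifts to a relation placing $\sum_{i \in G} c_{i} u_{i}$ in $\mft_{\mcF}$, which forces every $c_{i} = 0$. Your aside about distinct indices giving distinct hyperplanes is harmless but unnecessary, since the paper's definition of simplicity is already phrased in terms of distinct indices.
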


\begin{proof}
	The intersection in the statement is the non-empty intersection of the hyperplanes indexed by $\mcF \cup G$, so simplicity of $\mcA$ makes $\{ u_{j} \}_{j \in \mcF \cup G}$ linearly independent.
	A non-trivial real linear relation among the images $u_{i}^{\mcFc}$ would therefore lift to a linear combination of the $u_{i}$, with $i \in G$, lying in $\Span_{\RR}\{ u_{j} ~|~ j \in \mcF \}$, contradicting that independence.
\end{proof}

Even though the ambient normals $u_{i}$ are primitive, a non-zero projected vector $u_{i}^{\mcFc}$ need not be primitive in $\mft_{\ZZ}^{\crk \mcF}$: write $u_{i}^{\mcFc} = k_{i}  \widebar{u}_{i}^{\mcFc}$ with $k_{i} \in \ZZ_{> 0}$ and $\widebar{u}_{i}^{\mcFc}$ primitive.
Then $\widebar{u}_{i}^{\mcFc}$ determines the normal direction, and $k_{i}$ is the \emph{label} of that hyperplane, in the sense of labelled toric-orbifold data \cite[\S6]{LermanTolman1997}.
More generally, whenever a codimension-one face is defined by a non-zero integral normal $u = k\widebar{u}$ with $\widebar{u}$ primitive and $k \in \ZZ_{> 0}$, then $u$ is called the \emph{labelled normal} and $k$ is the label of that face.
In particular, the restriction $\mcA_{\mcF}$, with the labels $k_{i}$, is a labelled simple arrangement.
The same convention will later be applied to a cut facet whose normal is a possibly non-primitive vector $u_{A}$.

By the associated \emph{labelled hypertoric orbifold}, we mean the hyperk\"ahler quotient obtained by the construction of Subsection~\ref{subsection:subsection-2.1-differential-hypertoric}, with non-primitive projected normals allowed and their full multiplicities retained.
Since $u_{1} , \ldots , u_{N}$ generate $\mft^{n}_{\ZZ}$, the projected vectors $\{ u_{i}^{\mcFc} ~|~ i \in \mcFc \}$ generate $\mft^{n}_{\ZZ} / V_{\mcF} = \mft^{\crk \mcF}_{\ZZ}$ and, as the vectors indexed by $\mcFc \setminus \mcF^{\dagger}$ vanish, the non-zero projected vectors indexed by $\mcF^{\dagger}$ still generate the quotient lattice; compare \cite[\S1]{ProudfootWebster2007}.
Figure~\ref{fig:restriction-arrangement} presents an example that illustrates the active and inactive indices of a flat and the induced primitive normal directions.

\begin{figure}[!hbp]
	\centering
	\begingroup
	\providecommand{\mcF}{\mathcal{F}}
	\providecommand{\mcFc}{\mcF^{c}}
	\providecommand{\mcA}{\mathcal{A}}
	\def\arrangementunit{0.55cm}
	\def\arrangementtop{5.45}
	\pgfmathsetmacro{\Hfourlevel}{-3.5 * 2 / 3}
	\pgfmathsetmacro{\arrangementbottom}{\Hfourlevel - 0.7}
	\def\arrangementleft{-7.2}
	\def\arrangementright{9.5}
	\def\arrangementcentre{1.15}
	\def\Hfourleft{-7.2}
	\pgfmathsetmacro{\restrictionlevel}{\Hfourlevel - 2.6}
	\def\normalstep{1.0}
	\def\restrictednormallevel{-0.6}

	\scalebox{1.00}{
	\begin{tikzpicture}[
		x = \arrangementunit,
		y = \arrangementunit,
		line cap = round,
		line join = round,
		font = \small,
		draw = black,
		text = black,
		fill = black,
		hyperplane/.style = {line width = 0.45pt, densely dotted},
		chosenflat/.style = {line width = 0.9pt, dashed},
		restrictedaxis/.style = {line width = 0.45pt},
		restrictionarrow/.style = {
			-{Stealth[length = 4pt, width = 3pt]},
			line width = 0.6pt
		},
		normalarrow/.style = {
			-{Stealth[length = 4pt, width = 3pt]},
			line width = 0.65pt
		},
		normallabel/.style = {font = \scriptsize, inner sep = 1.5pt}
		]

		\begin{scope}
			\clip (\arrangementleft, \arrangementbottom)
			rectangle (\arrangementright, \arrangementtop);
			\fill[gray!10] (0 , 0) -- (0 , 3) -- (1 , 4) -- (5 , 0) -- cycle;

			\draw[hyperplane] (0, \arrangementbottom) -- (0, \arrangementtop);
			\draw[hyperplane] ({5 - \arrangementtop}, \arrangementtop)
			-- ({5-\arrangementbottom}, \arrangementbottom);
			\draw[hyperplane] ({\arrangementbottom-3}, \arrangementbottom)
			-- ({\arrangementtop-3}, \arrangementtop);
			\draw[hyperplane] (\arrangementleft, \Hfourlevel) -- (\arrangementright, \Hfourlevel);
			\draw[chosenflat] (\arrangementleft, 0) -- (\arrangementright, 0);

			\foreach \x/\y in {-3/0 , 0/0 , 5/0 , 0/3 , 1/4 , 0/5} {
				\fill (\x, \y) circle (1.35pt);
			}

			\foreach \x in {\Hfourlevel - 3, 0, 5 - \Hfourlevel} {
				\fill ({\x}, \Hfourlevel) circle (1.35pt);
			}
		\end{scope}

		\draw[hyperplane] (\Hfourleft, \Hfourlevel) -- (\arrangementleft, \Hfourlevel);
		\node[below = 4pt] at (0, \arrangementbottom) {$H_{1}$};
		\node[below = 3pt] at ({5-\arrangementbottom}, \arrangementbottom) {$H_{3}$};
		\node[left = 5pt] at (\Hfourleft, \Hfourlevel) {$H_{4}$};
		\node[below = 3pt] at ({\arrangementbottom-3}, \arrangementbottom) {$H_{5}$};

		\node[left = 5pt, yshift = -1pt] at (\arrangementleft, 0)
		{$H_{\mcF} = H_{2}$};

		\node[anchor = north west, align = left] at (\Hfourleft, 5.1)
		{$\mcF = \{ 2 \}$ \\[6pt]
			$\mcF^{\dagger} = \{ 1 , 3 , 5 \}$ \\[6pt]
			$\mcFc \setminus \mcF^{\dagger} = \{ 4 \}$};

		\draw[normalarrow] (0, 0) -- ++(\normalstep, 0)
		node[midway, below right = 3pt, normallabel] {$u_{1}$};
		\draw[normalarrow] (1.9, 0) -- ++(0, \normalstep)
		node[pos = 0.85, right = 3pt, normallabel] {$u_{2}$};
		\draw[normalarrow] (5, 0) -- ++(-\normalstep, -\normalstep)
		node[midway, below = 8pt, normallabel] {$u_{3}$};
		\draw[normalarrow] (1.9, \Hfourlevel) -- ++(0, \normalstep)
		node[pos = 0.85, right = 3pt, normallabel] {$u_{4}$};
		\draw[normalarrow] (-3, 0) -- ++(\normalstep, -\normalstep)
		node[midway, below = 8pt, normallabel] {$u_{5}$};

		\draw[restrictionarrow] (\arrangementcentre + 1.75, {\Hfourlevel - 0.6}) -- (\arrangementcentre + 1.75, {\Hfourlevel - 2.2});
		\node[right = 2pt, font = \scriptsize, fill = none, inner sep = 1pt] at (\arrangementcentre + 1.75, {\Hfourlevel - 1.2})
		{$u_{i}^{\mcFc} = \widebar{\pi}_{\mcF}(u_{i})$};

		\begin{scope}[shift = {(0, \restrictionlevel)}]

			\draw[restrictedaxis] (\arrangementleft, 0) -- (\arrangementright, 0);
			\node[left = 5pt] at (\arrangementleft, 0) {$\mcA_{\mcF}$};
			\foreach \x/\i/\sign in {-3/5/1, 0/1/1, 5/3/-1} {
				\fill (\x, 0) circle (1.35pt);

				\draw[normalarrow] (\x, \restrictednormallevel) -- ++({\sign*\normalstep}, 0)
				node[midway, below = 6pt, normallabel]
				{$\widebar{u}_{\i}^{\mcFc} = \sign$};

				\fill (\x, \restrictednormallevel) circle (1.35pt);
			}

			\node[
			anchor = north east,
			yshift = -6pt,
			normallabel
			] at (-4.10, \restrictednormallevel)
			{
				$u_{i}^{\mcFc} = \widebar{u}_{i}^{\mcFc}
				\ (i \in \mcF^{\dagger})\!: \qquad
				u_{4}^{\mcFc} = 0 \qquad$
				};
		\end{scope}
	\end{tikzpicture}
}
	\caption{
		Restriction of the cooriented arrangement to $H_{\mcF} = H_{2}$, where $\mcF = \{ 2 \}$.
		The indices in $\mcF^{\dagger} = \{ 1 , 3 , 5 \}$ are active, whereas $4$ is inactive because $H_{4} \cap H_{2} = \varnothing$.
	}
	\label{fig:restriction-arrangement}
	\endgroup
\end{figure}


\subsection{Hypertoric Subvarieties}
\label{subsection:subsection-3.4-hypertoric-subvarieties}

We first fix the terminology used for embeddings.
A \textbf{closed K\"ahler embedding} is a closed holomorphic embedding $j : Y \hookrightarrow M$ satisfying $j^{\ast}\w_{M} = \w_{Y}$; for hyperk\"ahler orbifolds, a \textbf{closed hyperk\"ahler embedding} is a closed embedding which is holomorphic for each complex structure and satisfies $j^{\ast}\w_{M}^{j} = \w_{Y}^{j}$ for $j = 1 , 2 , 3$.

\begin{definition}
	\label{def:hypertoric-subvariety}
	Let $( Y , \w_{\RR}^{Y} , \w_{\CC}^{Y} , T^{n'} )$ and $( M , \w_{\RR}^{M} , \w_{\CC}^{M} , T^{n} )$ be hypertoric orbifolds.
	When explicitly stated, we allow ineffective orbifold presentations: the local orbifold groups may contain non-identity elements that act trivially on their entire charts.
	We call $Y$ a \textbf{hypertoric subvariety} of $M$ if there exists a closed hyperk\"ahler embedding $j : Y \hookrightarrow M$ and an injective homomorphism $\rho : T^{n'} \hookrightarrow T^{n}$ such that $j$ is $\rho$-equivariant.
\end{definition}

We now construct the hypertoric subvariety associated with a flat, adapting Konno's construction in \cite[\S7, in particular Claim~7.1]{Konno2000}, in which he studied the behaviour when restricting to a single hyperplane, to arbitrary flats in the orbifold setting.
The coordinate projection, the associated commutative exact diagrams, and the comparison of moment maps follow his construction.

For a flat $H_{\mcF}$, we use a hat to distinguish the coordinate subspace of $\mft^{N}$ from the residual normal space $\mft_{\mcF} \subseteq \mft^{n}$.
Set
\[
	\hat{\mft}_{\mcF} := \Span_{\RR}\{ e_{i} ~|~ i \in \mcF \} \subseteq \mft^{N}, \quad
	\mft^{\mcFc} := \mft^{N} / \hat{\mft}_{\mcF}, \quad
	q_{\mcFc} : \mft^{N} \longrightarrow \mft^{\mcFc}, \quad
	T^{\mcFc} := \mft^{\mcFc} / q_{\mcFc}( \mft_{\ZZ}^{N} ),
\]
where $q_{\mcFc}$ is the coordinate projection.
Since the normals $\{ u_{i} \}_{i \in \mcF}$ are linearly independent, the restriction $q_{\mcFc}|_{\mfk} : \mfk \ra q_{\mcFc}(\mfk)$ is injective, and so it is an isomorphism onto its image; we set $\mfk^{\mcFc} := q_{\mcFc}(\mfk)$ and $K^{\mcFc} := q_{\mcFc}(K) \subseteq T^{\mcFc}$.
The quotient Lie algebra $\mft^{\mcFc} / \mfk^{\mcFc}$ is canonically isomorphic to $\mft^{n} / \mft_{\mcF} = \mft^{\crk \mcF}$, giving the commutative exact diagram
\begin{equation}
	\label{eqn:sub-lie-algebra-seq}
	\begin{tikzcd}[ ampersand replacement = \& , column sep = large ]
		0 \ar[r] \& \mfk \ar[ r , hook , "\imath" ] \ar[ d , "q_{\mcFc}|_{\mfk}" , "\sim"'{sloped} ]
		\& \mft^{N} \ar[ r , "\pi" ] \ar[ d , "q_{\mcFc}" ]
		\& \mft^{n} \ar[r] \ar[ d , "\widebar{\pi}_{\mcF}" ] \& 0 \\
		0 \ar[r] \& \mfk^{\mcFc} \ar[ r , hook , "\imath_{\mcFc}" ]
		\& \mft^{\mcFc} \ar[ r , "\pi_{\mcFc}" ]
		\& \mft^{\crk\mcF} \ar[r] \& 0.
	\end{tikzcd}
\end{equation}
Dualising gives
\begin{equation}
	\label{eqn:sub-dual-lie-algebra-seq}
	\begin{tikzcd}[ ampersand replacement = \& , column sep = large , row sep = large ]
		0 \& \mfk^{\ast} \ar[l]
		\& ( \mft^{N} )^{\ast} \ar[ l , "\imath^{\ast}"' ]
		\& ( \mft^{n} )^{\ast} \ar[ l , "\pi^{\ast}"' ]
		\& 0 \ar[l] \\
		0 \& ( \mfk^{\mcFc} )^{\ast} \ar[l] \ar[ u , "( q_{\mcFc}|_{\mfk} )^{\ast}" , "\sim"'{sloped} ]
		\& ( \mft^{\mcFc} )^{\ast} \ar[ l , "\imath_{\mcFc}^{\ast}"' ] \ar[ u , "q_{\mcFc}^{\ast}" ]
		\& ( \mft^{\crk\mcF} )^{\ast} \ar[ l , "\pi_{\mcFc}^{\ast}"' ] \ar[ u , "\widebar{\pi}_{\mcF}^{\ast}"' ]
		\& 0 \ar[l].
	\end{tikzcd}
\end{equation}

At the group level, $q_{\mcFc}|_{K}$ may have a finite kernel.
Let $\widehat{T}_{\mcF} \subseteq T^{N}$ be the coordinate subtorus supported on $\mcF$, so that $\ker( q_{\mcFc}|_{K} ) = K \cap \widehat{T}_{\mcF} = \ker( \widehat{T}_{\mcF} \ra T^{n})$.
Under $\widehat{T}_{\mcF} \cong \mft_{\mcF} / U_{\mcF} = T_{U}^{\rk \mcF}$, this last map is the finite covering of Proposition~\ref{prop:torus-decomposition}; hence, $\ker( q_{\mcFc}|_{K} ) \cong \Gamma_{\mcF}$ and there is an exact sequence $1 \ra \Gamma_{\mcF} \ra K \ra K^{\mcFc} \ra 1$.
Since the kernel acts trivially on the coordinate subspace $T^{\ast}\CC^{\mcFc}$, the induced $K$-action there factors through the effective torus $K^{\mcFc}$.

Let $\nu^{\mcFc} \in ( \mfk^{\mcFc} )^{\ast}$ be uniquely fixed by $( q_{\mcFc}|_{\mfk} )^{\ast}(\nu^{\mcFc}) = \nu$, and let $\phi_{\HK}^{\mcFc}$ be the hyperk\"ahler moment map for the effective $K^{\mcFc}$-action on $T^{\ast}\CC^{\mcFc}$.
By the naturality of the moment maps with respect to $q_{\mcFc}|_{\mfk}$, together with $( q_{\mcFc}|_{\mfk} )^{\ast}(\nu^{\mcFc}) = \nu$, the extension by zeros of any point of $( \phi_{\HK}^{\mcFc} )^{-1}(\nu^{\mcFc} , 0)$ lies in $\phi_{\HK}^{-1}(\nu , 0)$.
A positive-dimensional stabiliser for the effective $K^{\mcFc}$-action would therefore lift to a positive-dimensional stabiliser for the ambient $K$-action on the corresponding point of $T^{\ast}\CC^{N}$, which is not possible by the regularity of $(\nu , 0)$.
Hence, $( \nu^{\mcFc} , 0 )$ is a regular value, and we define the effective quotient $M_{\mcF}^{\eff} := ( \phi_{\HK}^{\mcFc} )^{-1}(\nu^{\mcFc} , 0) / K^{\mcFc}$.

The removal of factors whose projected normals vanish follows \cite[\S7, p.~1018]{Konno2000}; we also check that it introduces no additional finite isotropy.
If $i \in \mcFc \setminus \mcF^{\dagger}$, then $u_{i}^{\mcFc} = 0$, which means by \eqref{eqn:sub-lie-algebra-seq} that the $i$-th coordinate vector lies in $\mfk^{\mcFc}$.
It follows that $K^{\mcFc}$ contains each coordinate circle indexed by $\mcFc \setminus \mcF^{\dagger}$, and hence their product, which acts independently on the inactive coordinate pairs and leaves the active coordinates unchanged.
Fix, once and for all, a base point $y_{0} \in H_{\mcF}$, and set $c_{i} := \ell_{i}(y_{0}) \neq 0$ for $i \in \mcFc \setminus \mcF^{\dagger}$.
The $i$-th coordinate factor $T^{\ast}\CC$ is then reduced by its coordinate circle at moment-map level $(c_{i} , 0)$, giving the level-set equations
\begin{equation}
	\label{eqn:inactive-point-factor}
	c_{i} = \tfrac{1}{2} (|z_{i}|^{2} - |w_{i}|^{2}), \qquad z_{i}w_{i} = 0,
\end{equation}
and its hyperk\"ahler quotient is a single point: if $c_{i} > 0$, then $w_{i} = 0$ and $|z_{i}|^{2} = 2c_{i}$; if $c_{i} < 0$, then $z_{i} = 0$ and $|w_{i}|^{2} = -2c_{i}$.
In either case, a unique coordinate is non-zero, so the corresponding coordinate circle acts freely on this level set, and deleting the inactive factors introduces no additional finite isotropy.
Reduction in stages therefore identifies $M_{\mcF}^{\eff}$ with the labelled hypertoric quotient defined only by the non-zero projected vectors $u_{i}^{\mcFc}$ with $i \in \mcF^{\dagger}$, and, by Lemma~\ref{lemma:restricted-arrangement-simple}, this active labelled arrangement is simple.

The same coarse space carries two orbifold structures: the effective orbifold $M_{\mcF}^{\eff}$, that we have just discussed, which is obtained by quotienting out the global ineffective isotropy $\Gamma_{\mcF}$, and the orbifold presentation induced from the ambient $K$-quotient, which retains this ineffective isotropy.
We write $M_{\mcF}$ for this coarse space equipped with the induced orbifold presentation whenever it is viewed as a suborbifold of $M_{\nu}$, and we use the effective orbifold $M_{\mcF}^{\eff}$ when discussing its intrinsic hypertoric geometry.
Of course, when $\Gamma_{\mcF}$ is trivial, the two orbifold structures coincide.

\begin{theorem}
	\label{thm:hypertoric-subvariety}
	Let $H_{\mcF}$ be a flat of a simple arrangement $\mcA$.
	Then the coordinate subspace $T^{\ast}\CC^{\mcFc} \subseteq T^{\ast}\CC^{N}$ induces a closed $T^{n}$-invariant subspace
	\begin{equation}
		\label{eqn:subvariety-subset}
		M_{\mcF} \cong \bigl( \phi_{\HK}^{-1}(\nu , 0) \cap T^{\ast}\CC^{\mcFc} \bigr) / K \subseteq M_{\nu},
	\end{equation}
	where $M_{\mcF}$ in \eqref{eqn:subvariety-subset} is understood with its induced, possibly ineffective, $K$-presentation.
	Its effective orbifold presentation is the labelled hypertoric orbifold $M_{\mcF}^{\eff}$ above.

	It has real dimension $4\crk(\mcF)$, and its residual torus is $T_{V}^{\crk \mcF}$.
	The inclusion preserves $\w_{\RR}$ and $\w_{\CC}$, and, after choosing the non-canonical splitting \eqref{eqn:unsat-decomp}, it is $T^{n}$-equivariant with respect to the corresponding injective homomorphism $T_{V}^{\crk \mcF} \hookrightarrow T^{n}$.
	With the induced orbifold presentation, $M_{\mcF}$ is therefore a hypertoric subvariety of $M_{\nu}$ in the sense of Definition~\ref{def:hypertoric-subvariety}.
\end{theorem}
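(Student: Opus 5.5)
The plan is to work directly with the level set $P := \phi_{\HK}^{-1}(\nu , 0)$ and its intersection $P_{\mcF} := P \cap T^{\ast}\CC^{\mcFc}$, where $T^{\ast}\CC^{\mcFc}$ is embedded by extension by zeros in the coordinates indexed by $\mcF$.

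\textbf{Step 1: the subspace and its image.} The subset $P_{\mcF}$ is closed and $T^{N}$-invariant, because $T^{N}$ acts diagonally on coordinates. Hence its image $M_{\mcF} := P_{\mcF}/K$ is closed in $M_{\nu}$ and $T^{n}$-invariant. By the residual formula $\ell_{i}(\mu_{\RR}[z , w]) = \tfrac12(|z_{i}|^{2} - |w_{i}|^{2})$, every point of $M_{\mcF}$ has $\mu_{\RR} \in H_{\mcF}$.

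\textbf{Step 2: identification with $M_{\mcF}^{\eff}$.} By the naturality argument before the theorem, extension by zeros identifies $(\phi_{\HK}^{\mcFc})^{-1}(\nu^{\mcFc} , 0)$ with $P_{\mcF}$. For the inclusion $P_{\mcF} \subseteq$ this set, pair the moment map with $\mfk$ using $(q_{\mcFc}|_{\mfk})^{\ast}$. The $K$-action on $T^{\ast}\CC^{\mcFc}$ factors through $K \ra K^{\mcFc}$, whose kernel is $\Gamma_{\mcF}$. Therefore the coarse spaces $P_{\mcF}/K$ and $M_{\mcF}^{\eff}$ agree, and the induced presentation differs from the effective one exactly by the globally ineffective $\Gamma_{\mcF}$.

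The dimension count is then immediate. We have $\dim_{\RR} T^{\ast}\CC^{\mcFc} = 4(N - |\mcF|)$ and $\dim K^{\mcFc} = N - n$, and the value $(\nu^{\mcFc} , 0)$ is regular, which was shown before the theorem. This gives real dimension $4(n - |\mcF|) = 4\crk(\mcF)$, since $\rk \mcF = |\mcF|$ by simplicity. Non-emptiness follows by picking $x$ in $H_{\mcF}$ off the other hyperplanes and using the construction of Lemma~\ref{lemma:effective-Tn-action}, with $z_{i} = w_{i} = 0$ for $i \in \mcF$. By the diagram \eqref{eqn:sub-lie-algebra-seq}, the residual torus is $T^{\mcFc}/K^{\mcFc}$, with Lie algebra $\mft^{\crk \mcF}$ and lattice $\mft_{\ZZ}^{n}/V_{\mcF}$. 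Hence it equals $T_{V}^{\crk \mcF} = T^{n}/T_{V}^{\rk \mcF}$.

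\textbf{Step 3: embedding, forms, and equivariance.} Since $P_{\mcF}$ is cut out of $P$ by complex-linear equations in each complex structure, it is a hyperk\"ahler submanifold of $T^{\ast}\CC^{N}$. The forms \eqref{eqn:standard-hyperkahler-forms} restrict to the standard forms on $T^{\ast}\CC^{\mcFc}$, so the descended forms agree and the inclusion is holomorphic for $J_{1}, J_{2}, J_{3}$.

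For equivariance, the coordinate torus $\widehat{T}_{\mcF}$ maps onto $T_{V}^{\rk \mcF} \subseteq T^{n}$ and acts trivially on $P_{\mcF}$. Therefore $T^{n}$ acts on $M_{\mcF}$ through $T^{n}/T_{V}^{\rk \mcF} = T_{V}^{\crk \mcF}$. Choosing the splitting \eqref{eqn:unsat-decomp} gives a section $T_{V}^{\crk \mcF} \hookrightarrow T^{n}$, and the inclusion is equivariant for it.

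\textbf{Main obstacle: closed embedding in the orbifold sense.} The hardest step is showing that $P_{\mcF}/K \ra P/K$ is a closed embedding of orbifolds with the induced presentation, rather than merely an injective map of coarse spaces. I would first use a slice argument. At $p \in P_{\mcF}$, the stabiliser $K_{p}$ is finite, and a $K_{p}$-invariant slice $S$ for $P$ at $p$ can be chosen so that $S \cap T^{\ast}\CC^{\mcFc}$ is a slice for $P_{\mcF}$; this is possible because $T^{\ast}\CC^{\mcFc}$ is $K$-invariant and linear. The local chart of $M_{\mcF}$ is then $(S \cap T^{\ast}\CC^{\mcFc})/K_{p}$ inside $S/K_{p}$. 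This is a closed suborbifold chart with the same group $K_{p}$, which retains the $\Gamma_{\mcF}$ acting trivially on it. Together, Steps 1–3 and this slice argument verify Definition~\ref{def:hypertoric-subvariety}.
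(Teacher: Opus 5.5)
Your proposal is correct and follows essentially the same route as the paper. You restrict the level set to the coordinate subspace, identify the quotient with the $K^{\mcFc}$-reduction via naturality of the moment maps, descend the form-preserving coordinate inclusion, and obtain equivariance from the splitting \eqref{eqn:unsat-decomp}; along the way you also supply details the paper leaves implicit, namely non-emptiness, the lattice computation $\mft_{\ZZ}^{n}/V_{\mcF}$ of the residual torus, and the slice argument for the orbifold embedding. One small slip: it is the linear subspace $T^{\ast}\CC^{\mcFc}$, not $P_{\mcF}$, that is quaternionic (hyperk\"ahler), and holomorphicity for each $J_{a}$ follows because the horizontal spaces of $P_{\mcF}$ are $J_{a}$-invariant subspaces of those of $P$.
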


\begin{proof}
	Since the ambient $\phi_{\HK}$-moment-map equations restrict on $T^{\ast}\CC^{\mcFc}$ to those of the $K^{\mcFc}$-action, we obtain the orbit-space identification \eqref{eqn:subvariety-subset}.
	The coordinate inclusion is
	\begin{equation}
		\label{eqn:subvariety-closed-embedding}
		\hat{\jmath} : T^{\ast}\CC^{\mcFc} \hookrightarrow T^{\ast}\CC^{N}, \qquad
		( z_{i} , w_{i} )_{i \in \mcFc} \longmapsto ( \hat{z}_{i} , \hat{w}_{i} )_{i = 1}^{N},
	\end{equation}
	where $( \hat{z}_{i} , \hat{w}_{i} ) = ( 0 , 0 )$ for $i \in \mcF$ and $( \hat{z}_{i} , \hat{w}_{i} ) = ( z_{i} , w_{i} )$ for $i \in \mcFc$.
	The map $\hat{\jmath}$ is closed and preserves the ambient hyperk\"ahler forms, and so it descends to a closed embedding $j : M_{\mcF} \hookrightarrow M_{\nu}$ of the induced quotient presentation; write $\w_{\RR}^{\mcF}$ and $\w_{\CC}^{\mcF}$ for the reduced forms on $M_{\mcF}$.
	The image of $j$ is cut out by the equations $z_{i} = w_{i} = 0$ for $i \in \mcF$, which are invariant under the residual $T^{n}$-action and thus the image is $T^{n}$-invariant.

	Choose a splitting $T^{n} \cong T_{V}^{\rk \mcF} \times T_{V}^{\crk \mcF}$ as in Proposition~\ref{prop:torus-decomposition}.
	The second factor acts effectively on $M_{\mcF}^{\eff}$, and the descended embedding $j$ is $T^{\mcFc}$-equivariant via the action induced by the coordinate inclusion $T^{\mcFc} \hookrightarrow T^{n}$.
	Since \eqref{eqn:subvariety-closed-embedding} preserves the standard forms before reduction, the reduced forms satisfy $j^{\ast}(\w_{\RR}) = \w_{\RR}^{\mcF}$ and $j^{\ast}(\w_{\CC}) = \w_{\CC}^{\mcF}$.
\end{proof}

The construction above is compatible with the restriction to a flat described in \cite[Proposition~2.1]{ProudfootWebster2007}.
Here, we retain the projected-normal labels and the effective/ineffective distinction, and track the effects of inactive factors on the finite isotropy and the inherited fibre-rotation moment-map normalisation, since these details are needed in the cut-space arguments.

For $x \in ( \mft^{\crk \mcF} )^{\ast}$ and each $i \in \mcFc$, define
\begin{equation}
	\label{eqn:subarrangement-hyperplanes}
	\lambda_{i}^{\mcFc} := \ell_{i}(y_{0}), \quad
	\ell_{i}^{\mcFc} \! (x) := \langle x ,\, u_{i}^{\mcFc} \rangle + \lambda_{i}^{\mcFc}, \quad
	G_{i} := \{ \ell_{i}^{\mcFc} = 0 \}, \quad
	\mcA^{\crk \mcF} := \{ G_{i} ~|~ i \in \mcF^{\dagger} \}.
\end{equation}
For an inactive index $i$, the set $G_{i}$ is empty and is not included in $\mcA^{\crk \mcF}$.
We nevertheless retain the non-zero constant $\ell_{i}^{\mcFc} = \lambda_{i}^{\mcFc}$, because these constants contribute to the normalisation that the $\Phi$-moment map inherits below.
The active hyperplanes $G_{i}$, with $i \in \mcF^{\dagger}$, are pairwise distinct by Lemma~\ref{lemma:restricted-arrangement-simple}.
Replacing $y_{0}$ by $y_{0} + \widebar{\pi}_{\mcF}^{\ast}(a)$, where $a \in (\mft^{\crk \mcF})^{\ast}$, adds $\langle a ,\, u_{i}^{\mcFc} \rangle$ to each $\lambda_{i}^{\mcFc}$.
Thus, the new arrangement is the translate of the old one by $-a$, and the following statements are invariant under this translation.

To follow the affine identification used in the proof of \cite[Claim~7.1(3)]{Konno2000}, define the affine map
\[
	\eta_{y_{0}} : ( \mft^{\crk \mcF} )^{\ast} \longrightarrow H_{\mcF}, \qquad
	\eta_{y_{0}}(x) := \widebar{\pi}_{\mcF}^{\ast}(x) + y_{0}.
\]
Since $\widebar{\pi}_{\mcF}^{\ast}$ identifies $( \mft^{\crk \mcF} )^{\ast}$ with the annihilator of $\mft_{\mcF}$ by \eqref{eqn:sub-dual-lie-algebra-seq}, this is an affine isomorphism onto $H_{\mcF}$, and for $x \in ( \mft^{\crk \mcF} )^{\ast}$, we have
\begin{equation}
	\label{eqn:corank-dual-subalgebra-identification-result}
	\ell_{i}(\eta_{y_{0}}(x)) =
	\langle \widebar{\pi}_{\mcF}^{\ast}(x) + y_{0} ,\, u_{i} \rangle + \lambda_{i} =
	\begin{cases}
		0,                                       & i \in \mcF,  \\[2pt]
		\ell_{i}^{\mcFc} \! (x),  & i \in \mcFc.
	\end{cases}
\end{equation}

\begin{proposition}
	\label{prop:restriction-identification}
	The affine map $\eta_{y_{0}}$ identifies the labelled arrangement $\mcA^{\crk \mcF}$ in $(\mft^{\crk \mcF})^{\ast}$ with the restriction arrangement $\mcA_{\mcF}$ on $H_{\mcF}$.
	That is, $\eta_{y_{0}}(G_{i}) = H_{i} \cap H_{\mcF}$ for every active index $i \in \mcF^{\dagger}$, and the projected vector $u_{i}^{\mcFc}$ provides the orbifold label on the corresponding restricted hyperplane.
\end{proposition}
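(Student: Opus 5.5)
The argument is essentially a direct consequence of the identity \eqref{eqn:corank-dual-subalgebra-identification-result}, together with the fact that $\eta_{y_{0}}$ is an affine isomorphism onto $H_{\mcF}$. I will carry it out in three steps: equality of hyperplanes, agreement of the indexing sets, and identification of the labels.

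First I establish that $\eta_{y_{0}}$ is an affine bijection $(\mft^{\crk\mcF})^{\ast}\to H_{\mcF}$. Its linear part $\widebar{\pi}_{\mcF}^{\ast}$ is injective by the dual diagram \eqref{eqn:sub-dual-lie-algebra-seq}. Its image is the annihilator of $\mft_{\mcF}$, which is precisely the direction space of $H_{\mcF}$. Since $y_{0}\in H_{\mcF}$, the map is onto $H_{\mcF}$.

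Next, fix $i\in\mcFc$. By \eqref{eqn:corank-dual-subalgebra-identification-result} we have $\ell_{i}\circ\eta_{y_{0}}=\ell_{i}^{\mcFc}$. Since $\eta_{y_{0}}$ is a bijection onto $H_{\mcF}$, this gives
\[
\eta_{y_{0}}(G_{i})=\eta_{y_{0}}\bigl(\{\ell_{i}^{\mcFc}=0\}\bigr)=\{y\in H_{\mcF}~|~\ell_{i}(y)=0\}=H_{i}\cap H_{\mcF}.
\]
This holds for every $i\in\mcFc$. For inactive $i$ both sides are empty, consistent with Notation~\ref{notation:active-inactive-indices}. For active $i\in\mcF^{\dagger}$ both sides are non-empty hyperplanes.

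It remains to check that the indexing sets match. The indices contributing to $\mcA^{\crk\mcF}$ are exactly those in $\mcF^{\dagger}$. These are also exactly the indices contributing to $\mcA_{\mcF}$. By Lemma~\ref{lemma:restricted-arrangement-simple}, distinct active indices give distinct hyperplanes, so the correspondence $G_{i}\mapsto H_{i}\cap H_{\mcF}$ is a bijection of arrangements. The same identity $\ell_{i}\circ\eta_{y_{0}}=\ell_{i}^{\mcFc}$ also shows that the coorientations correspond: the sides $\{\ell^{\mcFc}_{i}\gtrless0\}$ map to $\{\ell_{i}\gtrless0\}\cap H_{\mcF}$.

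For the labels, I argue as follows. The linear part of $\ell_{i}|_{H_{\mcF}}$ is the restriction of $u_{i}$ to the direction space $\mathrm{Ann}(\mft_{\mcF})\cong(\mft^{\crk\mcF})^{\ast}$. As an element of $\mft^{\crk\mcF}=\mft^{n}/\mft_{\mcF}$, this restriction is exactly $\widebar{\pi}_{\mcF}(u_{i})=u_{i}^{\mcFc}$. The integral structure on the direction space is the one given by $\mft_{\ZZ}^{\crk\mcF}=\mft_{\ZZ}^{n}/V_{\mcF}$. So the integral normal of the restricted hyperplane, measured in the lattice of the flat, is $u_{i}^{\mcFc}=k_{i}\widebar{u}_{i}^{\mcFc}$, and its label is $k_{i}$.

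The only point needing care, and the step I expect to be the main obstacle, is this lattice identification. I must confirm that the lattice of $H_{\mcF}$ is indeed $\mft_{\ZZ}^{n}/V_{\mcF}$, rather than $\mft_{\ZZ}^{n}/U_{\mcF}$, so that the labels are computed in a torsion-free lattice. This follows from the saturation of $V_{\mcF}$, noted before Proposition~\ref{prop:torus-decomposition}. It also matches the residual torus $T_{V}^{\crk\mcF}$ in Theorem~\ref{thm:hypertoric-subvariety}.
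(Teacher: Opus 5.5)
Your proposal is correct and follows the paper's argument: the identity $\ell_i\circ\eta_{y_0}=\ell_i^{\mcFc}$ together with bijectivity of $\eta_{y_0}$ onto $H_\mcF$ gives $\eta_{y_0}(G_i)=H_i\cap H_\mcF$, and the label is read off from the full projected normal $u_i^{\mcFc}$ in $\mft_\ZZ^{\crk\mcF}$. The lattice point you flag as the main obstacle needs no separate argument: the image of $\mft^n_\ZZ$ in $\mft^n/\mft_\mcF$ is $\mft_\ZZ^n/(\mft_\ZZ^n\cap\mft_\mcF)=\mft_\ZZ^n/V_\mcF$ directly from the definition of $V_\mcF$.
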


\begin{proof}
	Equation~\eqref{eqn:corank-dual-subalgebra-identification-result} shows that, for each active index $i$, a point $x$ belongs to $G_{i}$ if and only if $\eta_{y_{0}}(x)$ belongs to $H_{i} \cap H_{\mcF}$.
	Since $\eta_{y_{0}}$ is an affine isomorphism, it follows that $\eta_{y_{0}}(G_{i}) = H_{i} \cap H_{\mcF}$.
	Moreover, the labels agree because the quotient construction retains the full projected normals $u_{i}^{\mcFc}$, including their multiplicities, rather than only their primitive parts.
\end{proof}

Recall from \eqref{eqn:normal-space-associated-lattices} that $U_{G} = \Span_{\ZZ}\{ u_{i} ~|~ i \in G \}$ is the lattice generated by the ambient normals indexed by $G$, and that $V_{\mcF} = \mft_{\mcF} \cap \mft_{\ZZ}^{n}$ is the saturation of $U_{\mcF}$ in $\mft_{\ZZ}^{n}$.

\begin{lemma}[Comparison of Local Orbifold Lattices]
	\label{lemma:flat-local-group-comparison}
	Let $W \subseteq \mft_{\ZZ}^{n}$ be a sublattice such that $U_{\mcF} + W$ and $V_{\mcF} + W$ have full rank in $\mft_{\ZZ}^{n}$.
	Then the inclusion $U_{\mcF} + W \subseteq V_{\mcF} + W$ induces an exact sequence of finite abelian groups
	\[
		0 \longrightarrow \frac{V_{\mcF} + W}{U_{\mcF} + W} \longrightarrow \frac{\mft_{\ZZ}^{n}}{U_{\mcF} + W} \longrightarrow \frac{\mft_{\ZZ}^{n}}{V_{\mcF} + W} \longrightarrow 0,
	\]
	whose left-hand group is naturally a quotient of $\Gamma_{\mcF} = V_{\mcF} / U_{\mcF}$.
	Moreover, writing $L^{\vee} := \{ \alpha \in ( \mft^{n} )^{\ast} ~|~ \langle \alpha ,\, L \rangle \subseteq \ZZ \}$ for a full-rank lattice $L \subseteq \mft_{\ZZ}^{n}$, we have $( V_{\mcF} + W )^{\vee} \subseteq ( U_{\mcF} + W )^{\vee}$.
	In particular, if $W = U_{G}$ for $G \subseteq \mcF^{\dagger}$ and $H_{\mcF \cup G}$ is a vertex of the ambient simple arrangement, then $U_{G} \cap V_{\mcF} = 0$ and the left-hand group is $\Gamma_{\mcF}$.
\end{lemma}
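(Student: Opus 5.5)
The plan is to treat the lemma as pure lattice algebra, via the third isomorphism theorem and duality of full-rank lattices. We have $U_{\mcF}+W \subseteq V_{\mcF}+W \subseteq \mft_{\ZZ}^{n}$, and both sums have full rank by hypothesis. Hence both quotients $\mft_{\ZZ}^{n}/(U_{\mcF}+W)$ and $\mft_{\ZZ}^{n}/(V_{\mcF}+W)$ are finite. The canonical surjection between them has kernel $(V_{\mcF}+W)/(U_{\mcF}+W)$, which gives the displayed exact sequence of finite abelian groups.

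For the quotient claim, I would consider the composite $V_{\mcF} \hookrightarrow V_{\mcF}+W \to (V_{\mcF}+W)/(U_{\mcF}+W)$. It is surjective, since every element $v+w$ with $v \in V_{\mcF}$ and $w \in W$ is congruent to $v$ modulo $W$. Its kernel contains $U_{\mcF}$, so the map factors through $\Gamma_{\mcF}=V_{\mcF}/U_{\mcF}$ and is surjective from there. The precise kernel is $V_{\mcF}\cap(U_{\mcF}+W)$ modulo $U_{\mcF}$.

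The duality claim is immediate: if $L_{1}\subseteq L_{2}$ are full-rank lattices, then any $\alpha$ pairing integrally with $L_{2}$ also pairs integrally with $L_{1}$, so $L_{2}^{\vee}\subseteq L_{1}^{\vee}$. Apply this with $L_{1}=U_{\mcF}+W$ and $L_{2}=V_{\mcF}+W$.

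For the final assertion, take $W=U_{G}$ with $H_{\mcF\cup G}$ a vertex. Simplicity of $\mcA$ makes $\{u_{j}\}_{j\in\mcF\cup G}$ linearly independent, and since $|\mcF\cup G|=n$ it is an $\RR$-basis of $\mft^{n}$; in particular both sums have full rank. Then $\mft_{\mcF}\cap\Span_{\RR}\{u_{i}\}_{i\in G}=0$, so $U_{G}\cap V_{\mcF}=0$.

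It remains to show $V_{\mcF}\cap(U_{\mcF}+U_{G})=U_{\mcF}$, which makes the left-hand group exactly $\Gamma_{\mcF}$. Take $v=a+b$ with $v \in V_{\mcF}$, $a\in U_{\mcF}$ and $b\in U_{G}$. Then $b=v-a\in \mft_{\mcF}\cap\Span_{\RR}\{u_{i}\}_{i\in G}=0$, so $v=a\in U_{\mcF}$. This uses that the real spans are complementary. That complementarity is the only step needing the vertex hypothesis, and it is the one point requiring care rather than a genuine obstacle.
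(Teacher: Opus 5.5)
Your proposal is correct and follows essentially the same route as the paper. Both arguments take the exact sequence from the chain $U_{\mcF}+W \subseteq V_{\mcF}+W \subseteq \mft_{\ZZ}^{n}$, use surjectivity of $\Gamma_{\mcF} \to (V_{\mcF}+W)/(U_{\mcF}+W)$, reverse inclusions under duality, and in the vertex case derive $U_{G}\cap V_{\mcF}=0$ from the linear independence of the normals indexed by $\mcF\cup G$, which gives $V_{\mcF}\cap(U_{\mcF}+U_{G})=U_{\mcF}$. Your additional check that the vertex hypothesis also supplies the full-rank assumption for $W=U_{G}$ is a small point the paper leaves implicit.
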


\begin{proof}
	The exact sequence is the quotient sequence associated with the inclusions $U_{\mcF} + W \subseteq V_{\mcF} + W \subseteq \mft_{\ZZ}^{n}$.
	The natural map $V_{\mcF} / U_{\mcF} \ra ( V_{\mcF} + W ) / ( U_{\mcF} + W )$ is surjective, which shows that the kernel is a quotient of $\Gamma_{\mcF}$.
	Furthermore, a functional integral on the larger lattice $V_{\mcF} + W$ is also integral on $U_{\mcF} + W$, which gives the stated inclusion of dual lattices.

	For the final claim, take $W = U_{G}$, where $H_{\mcF \cup G}$ is a vertex.
	Simplicity means that the normals indexed by $\mcF \cup G$ are linearly independent.
	A non-zero element of $U_{G} \cap V_{\mcF}$ would give a non-trivial real linear relation between the normals indexed by $G$ and those indexed by $\mcF$, contradicting their linear independence, thus, $U_{G} \cap V_{\mcF} = \{ 0 \}$.
	If $v = u + g \in V_{\mcF}$, with $u \in U_{\mcF}$ and $g \in U_{G}$, then $g = v - u \in V_{\mcF} \cap U_{G} = \{ 0 \}$.
	Hence, $V_{\mcF} \cap (U_{\mcF} + U_{G}) = U_{\mcF}$, and therefore $( V_{\mcF} + U_{G} ) / ( U_{\mcF} + U_{G} ) \cong V_{\mcF} / U_{\mcF} = \Gamma_{\mcF}$.
\end{proof}

Applied to $M_{\mcF}$, the inclusion $( V_{\mcF} + W )^{\vee} \subseteq ( U_{\mcF} + W )^{\vee}$ shows that every local weight for the effective presentation $M_{\mcF}^{\eff}$ is also a local weight for the presentation inherited from $M_{\nu}$.
The corresponding character of the inherited local group is obtained by composing the effective character with the natural surjection $\mft_{\ZZ}^{n} / ( U_{\mcF} + W ) \ra \mft_{\ZZ}^{n} / ( V_{\mcF} + W )$.

Now take an ambient region $\Delta_{A}$ meeting $H_{\mcF}$, and retain only its face $\Delta_{A} \cap H_{\mcF}$ on the flat. Expressing this face in intrinsic coordinates gives the \textbf{augmented sign region} $\Delta_{B}^{\crk \mcF} = \eta_{y_{0}}^{-1}(\Delta_{A} \cap H_{\mcF})$, where $B := A \cap \mcFc$.
On $H_{\mcF}$, the inequalities indexed by $\mcF$ are automatically satisfied, regardless of their chosen sign, since the corresponding affine functionals vanish there.
The intersection therefore depends only on the remaining sign choices which are recorded by $B = A \cap \mcFc$, and hence $\Delta_{A} \cap H_{\mcF} = \Delta_{B} \cap H_{\mcF}$.
In other words, for every $B \subseteq \mcFc$, \eqref{eqn:corank-dual-subalgebra-identification-result} gives
\begin{equation}
	\label{eqn:augmented-sign-region}
	\Delta_{B}^{\crk \mcF} :=
	\{ x \in ( \mft^{\crk \mcF} )^{\ast} ~|~
	\ell_{i}^{\mcFc}\!(x) \geq 0 \text{ for } i \in \mcFc \setminus B,
	\quad
	\ell_{i}^{\mcFc}\!(x) \leq 0 \text{ for } i \in B \}.
\end{equation}
Here, \emph{augmented} means that we also retain the inequalities for the inactive indices. For each such index $i$, the functional $\ell_{i}^{\mcFc} = \lambda_{i}^{\mcFc}$ is a non-zero constant, so a non-empty augmented region requires $i \in B$ if and only if $\lambda_{i}^{\mcFc} < 0$.
These forced inequalities are satisfied everywhere and do not change the region, and therefore each region of $\mcA^{\crk \mcF}$ is described by its active inequalities together with the uniquely determined inequalities for the inactive indices.

As an example, in Figure~\ref{fig:restriction-arrangement}, take the shaded ambient region $\Delta_{\emptyset}$ and retain its lower edge on $H_{2}$, between $H_{1} \cap H_{2}$ and $H_{3} \cap H_{2}$.
This edge, expressed in intrinsic coordinates and redrawn as the corresponding segment in the lower panel, is $\Delta_{\emptyset}^{1}$.
Its sign description retains the positive sign at the inactive index $4$, recording that $H_{2}$ lies entirely on the positive side of $H_{4}$.
For every $B \subseteq \mcFc$ with $\Delta_{B}^{\crk \mcF} \neq \varnothing$, we write $\mcE_{B}^{\mcF}$ for the corresponding extended-core component of $M_{\mcF}^{\eff}$ and, since the two presentations share the same underlying component, we use the same notation for that component with the presentation induced from $M_{\nu}$.

\begin{remark}
	\label{remark:subarrangement-regions}
	In particular, for every $B \subseteq \mcFc$, the previous identification gives $\eta_{y_{0}}(\Delta_{B}^{\crk \mcF}) = \Delta_{B} \cap H_{\mcF} = \Delta_{B \cup \mcF} \cap H_{\mcF}$.
	For later use, if $B \subseteq \mcFc$, set $u_{B}^{\mcFc} := -\sum_{i \in B} u_{i}^{\mcFc}$ and $\lambda_{B}^{\mcFc} := \sum_{i \in B} \lambda_{i}^{\mcFc}$.
	These formulae include inactive indices: their projected vectors vanish, but their constant terms record the normalisation inherited from the fibre-rotation moment map on the ambient hypertoric variety $M_{\nu}$.
\end{remark}

It remains to compare the ambient fibre-rotation action and moment map on $M_{\nu}$ with their intrinsic counterparts on $M_{\mcF}^{\eff}$.
In anticipation of Section~\ref{section:section-4-cutting-hypertoric-variety}, write $\Phi[z , w] := \tfrac{1}{2} \|w\|^{2}$ for the ambient fibre-rotation moment map, and let $\Phi_{\mcF}^{\std}$ denote the standard fibre-rotation moment map on the effective quotient $M_{\mcF}^{\eff}$.

For an inactive index $i$, the constant $\lambda_{i}^{\mcFc} = \ell_{i}(y_{0})$ is the value $c_{i}$ appearing in \eqref{eqn:inactive-point-factor}.
If this constant is positive, then $w_{i} = 0$, so the factor is fixed by the fibre-rotation action and contributes nothing to $\Phi$.
If it is negative, then $z_{i} = 0$ and $\tfrac{1}{2} |w_{i}|^{2} = -\lambda_{i}^{\mcFc}$, so in the latter case, the corresponding circle in $K^{\mcFc}$ cancels the rotation of $w_{i}$ in the quotient.
This circle acts only on the inactive coordinate and therefore leaves the active coordinates unchanged, thus deleting the inactive point factors leaves the standard fibre-rotation action on $M_{\mcF}^{\eff}$, while their total contribution to the ambient moment map is $C_{\mcF} := \sum_{i \in \mcFc \setminus \mcF^{\dagger}} \max\{ 0 ,\, -\lambda_{i}^{\mcFc} \}$, and so $\Phi|_{M_{\mcF}} = \Phi_{\mcF}^{\std} + C_{\mcF}$.
Furthermore, the constant $C_{\mcF}$ is independent of the choice of $y_{0}$, because every inactive restriction $\ell_{i}|_{H_{\mcF}}$ is constant and, for a non-empty augmented region $\Delta_{B}^{\crk \mcF}$, the forced inactive signs give $C_{\mcF} = -\sum_{i \in B \setminus \mcF^{\dagger}} \lambda_{i}^{\mcFc}$.

Now let $\mu_{\RR}^{\mcF}$ denote the intrinsic residual real moment map on $M_{\mcF}^{\eff}$, which takes its values in $(\mft^{\crk \mcF})^{\ast}$ and whose normalisation is determined by the constants $\lambda_{i}^{\mcFc}$ in \eqref{eqn:subarrangement-hyperplanes}.
Since $u_{i}^{\mcFc} = 0$ for each inactive index, the inactive indices contribute nothing to $u_{B}^{\mcFc}$, whereas their contribution to $-\lambda_{B}^{\mcFc}$ is $C_{\mcF}$.
Hence, on the extended-core component $\mcE_{B}^{\mcF}$, the restriction of the ambient fibre-rotation moment map is $\left. \Phi \right|_{\mcE_{B}^{\mcF}} = \langle \mu_{\RR}^{\mcF} ,\, u_{B}^{\mcFc} \rangle - \lambda_{B}^{\mcFc}$.

Finally, let $\mcS$ denote the ambient slack function (introduced in Section~\ref{section:section-4-cutting-hypertoric-variety}) and define the intrinsic slack function by $\mcS_{\mcF}^{\std}(x) := \sum_{i \in \mcF^{\dagger}} \max\{ 0 ,\, -\ell_{i}^{\mcFc} \! (x) \}$.
Equation \eqref{eqn:corank-dual-subalgebra-identification-result} then gives
\begin{equation}
	\label{eqn:slack-restriction}
	\mcS(\eta_{y_{0}}(x)) = \mcS_{\mcF}^{\std}(x) + C_{\mcF},
\end{equation}
which shows that $C_{\mcF}$ is the contribution of the inactive indices to the ambient slack along the flat.
We use \eqref{eqn:slack-restriction} in Section~\ref{section:section-4-cutting-hypertoric-variety} when comparing cuts with restrictions to flats.

%% file: sections/4-cut-spaces.tex
\section{Symplectic Cut of a Hypertoric Variety}
\label{section:section-4-cutting-hypertoric-variety}

Symplectic cutting provides the geometric basis for our approach to equivariant quantisation: it allows us to pass from the non-compact hypertoric variety $M_{\nu}$ to a family of compact K\"ahler orbifolds while retaining the residual torus action.
The fibre-rotation circle supplies the cutting action, producing the spaces $M_{\nu}^{\leq \delta}$ at positive regular levels.
The construction also retains a direct connection with the hyperplane arrangement: cutting each extended-core component truncates its moment region, and the resulting polytopes fit together along common faces to form the moment polyptych.
These cuts and their moment polyptychs supply the geometric setting for the localisation calculations in \S\S5--6; their relation to the section spaces of the original hypertoric variety is established in \S7.

\subsection{Residual Fibre-Rotation \texorpdfstring{$U_{1}$}{U1}-Action}
\label{subsection:subsection-4.1-residual-circle-action}

As mentioned previously, hypertoric varieties carry a natural $U_{1}$-action induced by rotating the cotangent fibres of $T^{\ast}\CC^{N}$ \cite[\S1]{HaradaProudfoot2004}: for $\tau \in U_{1}$, the action is $\tau \cdot ( z , w ) = ( z , \tau w )$.
It scales the holomorphic-symplectic form and preserves the real K\"ahler form: $\tau^{\ast}\w_{\CC} = \tau \w_{\CC}$ and $\tau^{\ast}\w_{\RR} = \w_{\RR}$.
With the moment-map convention fixed in Section~\ref{section:section-2-hypertoric-varieties}, this action is Hamiltonian for $\w_{\RR}$, with a moment map $\Phi : T^{\ast}\CC^{N} \ra \RR_{\geq 0}$ given by $\Phi(z , w) = \tfrac{1}{2} \|w\|^{2}$.

The fibre-rotation action preserves $\phi_{\RR}$ and satisfies $\phi_{\CC}(z, \tau w) = \tau \phi_{\CC}(z , w)$, so it preserves $\phi_{\HK}^{-1}(\nu , 0)$.
Since it also commutes with $K$, it descends to a Hamiltonian $U_{1}$-action on $M_{\nu}$, and we continue to denote the induced moment map by $\Phi$.
By the coordinate description in Lemma~\ref{lemma:half-space-coordinates}, the zero set of the descended moment map is the locus $w = 0$, namely $\Phi^{-1}(0) = \mcE_{\emptyset} = X_{\nu}$.
Harada and Proudfoot \cite[Proposition~1.3]{HaradaProudfoot2004} show that the compactness of $X_{\nu}$ implies the properness of$\Phi$ when the reducing group is abelian (which is the case here since $K$ is a torus).
The boundedness of $\Delta_{\emptyset}$ makes $X_{\nu}$ compact by Lemma~\ref{lemma:half-space-coordinates}, and hence $\Phi$ is proper.

Although the fibre-rotation $U_{1}$-action is not generally induced by a single circle subgroup of the residual torus $T^{n}$, its restriction to each extended-core component $\mcE_{A}$ agrees with the action of a one-parameter subgroup of $T^{n}$ \cite[\S3]{HaradaProudfoot2004}.
To describe it, for $A \subseteq \{ 1 , \ldots , N \}$, define $\iota_{A} : U_{1} \ra T^{N}$ by $\iota_{A}(\tau) = ( \tau_{1} , \ldots , \tau_{N} )$, where $\tau_{i} = \tau^{-1}$ for $i \in A$ and $\tau_{i} = 1$ otherwise.
The induced $U_{1}$-action on $\mcE_{A}$ is then the restriction of the residual $T^{n}$-action along $\pi \circ \iota_{A}$.
To express its generator and moment map explicitly, set
\begin{equation}
	\label{eqn:U1-action-generator}
	e_{A} := -\sum\nolimits_{i \in A} e_{i} \in \mft_{\ZZ}^{N}, \quad
	u_{A} := \pi(e_{A}) = -\sum\nolimits_{i \in A} u_{i} \in \mft_{\ZZ}^{n}, \quad
	\lambda_{A} := \sum\nolimits_{i \in A} \lambda_{i} \in \RR.
\end{equation}
The one-parameter subgroup $\pi \circ \iota_{A}$ has infinitesimal generator $u_{A} = ( \pi \circ \iota_{A} )_{\ast}(1) = \pi(e_{A})$, which we call the \textbf{restricted $U_{1}$-action generator} on $\mcE_{A}$.
By the coordinate description of $\mcE_{A}$ in Subsection~\ref{subsection:subsection-3.2-core-extended-core}, one has $w_{i} = 0$ for $i \notin A$ and $z_{i} = 0$ for $i \in A$, so that the restriction of $\Phi$ is
\begin{equation}
	\label{eqn:circle-moment-map}
	\Phi_{A} = \left. \Phi \right|_{\mcE_{A}} : \mcE_{A} \ra \RR_{\geq 0}, \qquad
	\Phi_{A}[ z , w ] = \tfrac{1}{2} \sum\nolimits_{i \in A} |w_{i}|^{2} = \big\langle \mu_{\RR}[ z , w ] ,\, u_{A} \big\rangle - \lambda_{A}.
\end{equation}
The sign in $u_{A}$ and its absence in $\lambda_{A}$ are both required for \eqref{eqn:circle-moment-map}.
Observe also that when $A = \{ i \}$, we have $u_{\{ i \}} = -u_{i}$, so the restricted generator is the negative of the corresponding arrangement normal.


\subsection{Cut Spaces}
\label{subsection:subsection-4.2-cut-spaces}

Following Lerman \cite{Lerman1995}, we form the symplectic cut of $M_{\nu}$; for the orbifold formulation and the descent of commuting group actions, compare \cite[\S5.8]{GuilleminGinzburgKarshon2002}.
The action that we use here is the one in which the cutting circle rotates the auxiliary coordinate $\zeta$ with the same weight as $w$, so that the cut retains the sub-level region $\{ \Phi \leq \delta \}$ \cite[\S1.1, p.~249]{Lerman1995}.

Equip the auxiliary copy of $\CC$ with the K\"ahler form $\w_{\aux} = ( \sqrt{-1} / 4 \pi ) d\zeta \wedge d\bar{\zeta}$; then the scalar $U_{1}$-rotation action $\tau \cdot \zeta = \tau \zeta$ has the moment map $\tfrac{1}{2} |\zeta|^{2}$ (c.f. Section~\ref{section:section-2-hypertoric-varieties}).
Let $U_{1}$ act on $M_{\nu} \times \CC$ by $\tau \cdot ( [ z , w ] , \zeta ) = ( [ z , \tau w ] , \tau \zeta )$, which has the moment map
\[
	\rho : M_{\nu} \times \CC \ra \RR_{\geq 0}, \qquad
	\rho\big( [ z , w ] , \zeta \big) = \Phi[ z , w ] + \tfrac{1}{2} |\zeta|^{2} = \tfrac{1}{2} \|w\|^{2} + \tfrac{1}{2} |\zeta|^{2}.
\]
A point $( p , \zeta ) \in M_{\nu} \times \CC$ is critical for $\rho$ if and only if $\zeta = 0$ and $p$ is critical for $\Phi$, so it follows that $\CritVal(\rho) = \CritVal(\Phi)$ and a positive level is regular for the cutting moment map if and only if it is regular for $\Phi$.

For any regular value $\delta > 0$ of $\rho$, the $U_{1}$-action is locally free on the level set $\rho^{-1}(\delta)$, and we define the \textbf{cut space} of the hypertoric variety $M_{\nu}$ at level $\delta$ to be the symplectic quotient
\[
	M_{\nu}^{\leq \delta} := \left( M_{\nu} \times \CC \right) \sslash_{\delta} U_{1} = \rho^{-1}(\delta) / U_{1}.
\]
Since the cutting action is holomorphic and Hamiltonian for the product K\"ahler structure and is locally free at a regular level, the cut space $M_{\nu}^{\leq \delta}$ is a K\"ahler orbifold of real dimension $4n$ (or complex dimension $2n$).
We continue to denote its reduced complex structure by $J_{1}$.

The cut contains an open subset, that we call the \textbf{cut interior} $M_{\nu}^{< \delta} := \{ [ [ z , w ] , \zeta ] \in M_{\nu}^{\leq \delta} ~|~ \zeta \neq 0 \}$, which is $T^{n}$-equivariantly symplectomorphic to $\{ p \in M_{\nu} ~|~ \Phi(p) < \delta \}$.
The biholomorphism type of $M_{\nu}^{< \delta}$ is constant as $\delta$ varies within a connected component of the regular values of $\Phi$ \cite[Theorems~3.2--3.4]{Hausel1998}, and if $\delta$ lies above the highest critical value of $\Phi$, then $M_{\nu}^{< \delta}$ is biholomorphic to all of $M_{\nu}$; compare \cite[Theorems~3.2--3.4 and the following remark]{Hausel1998}.
At lower regular levels, this global identification need not hold.

The complement of the cut interior is the \textbf{cut divisor} $Z_{\nu}^{\delta} := \{ [ [z , w ] , 0 ] \in M_{\nu}^{\leq \delta} \} \cong \Phi^{-1}(\delta) / U_{1}$, which has real codimension two and is not a topological boundary: $M_{\nu}^{\leq \delta}$ is an orbifold without boundary and is compact by Lemma~\ref{lemma:proper-moment-map}.
Nevertheless, we shall often use the terms \emph{cut boundary} and \emph{boundary point} for $Z_{\nu}^{\delta}$ and its points when discussing the moment polyptych below.

The residual $T^{n}$-action on $M_{\nu}$ commutes with the cutting circle, hence it descends to $M_{\nu}^{\leq \delta}$; we continue to denote its moment map by $\mu_{\RR}$, and likewise continue using $\Phi$ to denote the function on $M_{\nu}^{\leq \delta}$ induced by $\Phi$ on $M_{\nu}$.

\begin{lemma}
	\label{lemma:proper-moment-map}
	Since $\Delta_{\emptyset}$ is bounded, the cutting moment map $\rho$ is proper.
	In particular, every regular level $\delta > 0$ gives a compact cut space $M_{\nu}^{\leq \delta}$.
\end{lemma}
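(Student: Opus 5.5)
We will deduce properness of $\rho$ from properness of $\Phi$ on $M_{\nu}$, which is already available. As recalled in Subsection~\ref{subsection:subsection-4.1-residual-circle-action}, boundedness of $\Delta_{\emptyset}$ makes $X_{\nu} = \Phi^{-1}(0)$ compact by Lemma~\ref{lemma:half-space-coordinates}, and then \cite[Proposition~1.3]{HaradaProudfoot2004} shows that $\Phi : M_{\nu} \ra \RR_{\geq 0}$ is proper.

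The first step is to take a compact set $C \subseteq \RR_{\geq 0}$ and choose $R$ with $C \subseteq [0 , R]$. Both summands of $\rho = \Phi + \tfrac{1}{2}|\zeta|^{2}$ are non-negative. Hence every point of $\rho^{-1}(C)$ satisfies $\Phi(p) \leq R$ and $|\zeta|^{2} \leq 2R$. This gives
\[
	\rho^{-1}(C) \subseteq \Phi^{-1}([0 , R]) \times \{ |\zeta| \leq \sqrt{2R} \}.
\]
The right-hand side is a product of two compact sets, so it is compact. The preimage $\rho^{-1}(C)$ is closed because $\rho$ is continuous, so it is a closed subset of a compact set and therefore compact. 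This proves that $\rho$ is proper.

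For the second assertion, fix a regular level $\delta > 0$. Properness shows that $\rho^{-1}(\delta)$ is compact. The cut space $M_{\nu}^{\leq \delta} = \rho^{-1}(\delta)/U_{1}$ is the image of this compact set under the continuous quotient map, so it is compact. The $U_{1}$-action is locally free on $\rho^{-1}(\delta)$ because $\delta$ is regular, and this makes the quotient a compact orbifold.

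The only non-trivial input is the properness of $\Phi$. That is the main obstacle, and it is handled entirely by the cited Harada--Proudfoot result. If a self-contained argument were wanted, one could try to bound $\|z\|^{2}$ on $\{\Phi \leq R\} \cap \phi_{\HK}^{-1}(\nu , 0)$ using the real moment-map equation $\tfrac12\sum(|z_i|^2-|w_i|^2)\epsilon_i = \lambda + \pi^{\ast}(\mu_{\RR})$ together with boundedness of $\Delta_{\emptyset}$. We will not attempt this here.
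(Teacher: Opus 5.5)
Your proposal is correct and is essentially the paper's argument. Properness of $\Phi$ (from Harada--Proudfoot, via compactness of $X_{\nu}$) gives $\rho^{-1}([0,R]) \subseteq \Phi^{-1}([0,R]) \times \{\tfrac{1}{2}|\zeta|^{2} \leq R\}$, so each preimage is a closed subset of a compact set, and $M_{\nu}^{\leq \delta}$ is then compact as the continuous image of $\rho^{-1}(\delta)$; your handling of an arbitrary compact $C \subseteq [0,R]$ is just a slightly more explicit version of that same step.
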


\begin{proof}
	By Subsection~\ref{subsection:subsection-4.1-residual-circle-action}, $\Phi : M_{\nu} \to \RR_{\geq 0}$ is proper.
	For every $R \geq 0$, continuity of $\rho$ implies that $\rho^{-1}([ 0 , R ])$ is closed in $M_{\nu} \times \CC$ and, moreover, $\rho^{-1}( [ 0 , R ] ) \subset \Phi^{-1}( [ 0 , R ] ) \times \{ \zeta \in \CC \mid \tfrac{1}{2} |\zeta|^{2} \leq R \}$.
	The right-hand side is compact because $\Phi^{-1}( [ 0 , R ] )$ is compact, since $\Phi$ is proper and the $\zeta$-factor is a closed ball in $\CC$; hence, $\rho^{-1}([ 0 , R ])$ is a closed subset of a compact set, and therefore compact.
	It follows that $\rho^{-1}(\delta)$ is compact, as is the quotient $M_{\nu}^{\leq \delta} = \rho^{-1}(\delta) / U_{1}$, being the continuous image of a compact space.
\end{proof}

The same construction applies to each extended-core component $\mcE_{A}$ with $\Delta_{A} \neq \varnothing$.
Using the restricted $U_{1}$-action and its moment map $\Phi_{A}$, we define the \textbf{cut component} to be
\[
	\mcE_{A}^{\leq \delta} := ( \mcE_{A} \times \CC ) \sslash_{\delta} U_{1} = \rho_{A}^{-1}(\delta) / U_{1},
	\quad \text{ where } \quad
	\rho_{A}( [ z , w ] , \zeta ) := \Phi_{A}[ z , w ] + \tfrac{1}{2} |\zeta|^{2}.
\]
Since the subspace $\mcE_{A} \times \CC$ is closed, $U_{1}$-invariant, and $\rho_{A} = \rho|_{\mcE_{A} \times \CC}$, the cutting action is locally free on $\rho_{A}^{-1}(\delta) = \rho^{-1}(\delta) \cap ( \mcE_{A} \times \CC )$ and $\delta$ is also a regular value of $\rho_{A}$.
The induced map $\mcE_{A}^{\leq \delta} \hookrightarrow M_{\nu}^{\leq \delta}$ is a closed embedding into a compact orbifold, and hence $\mcE_{A}^{\leq \delta}$ is compact.
The \textbf{cut-component interior} is $\mcE_{A}^{< \delta} := \{ [ [ z , w ] , \zeta ] \in \mcE_{A}^{\leq \delta} ~|~ \zeta \neq 0 \}$ and its complement is the \textbf{cut-component divisor} $Z_{A}^{\delta} := \{ [ [ z , w ] , 0 ] \in \mcE_{A}^{\leq \delta} \} \cong \Phi_{A}^{-1}(\delta) / U_{1}$.
When the context is clear, we shall refer to these simply as the \emph{component interior} and \emph{component divisor}, respectively.
Note that each component interior $\mcE_{A}^{< \delta}$ is symplectomorphic to $\{ p \in \mcE_{A} ~|~ \Phi_{A}(p) < \delta \}$.


\subsection{Moment Polyptychs}
\label{subsection:subsection-4.3-moment-polyptychs}

In general, the full cut space $M_{\nu}^{\leq \delta}$ is not toric for the residual $T^{n}$-action, but each cut extended-core component $\mcE_{A}^{\leq \delta}$ is toric.
The moment image of each component is a truncation of the corresponding arrangement region, and these truncations agree on common faces.
To record the contributions from the half-spaces where the affine-linear functionals $\ell_{i}$ are negative, we introduce the piecewise-linear \textbf{slack function}:
\begin{equation}
	\label{eqn:slack-function}
	\mcS : ( \mft^{n} )^{\ast} \longrightarrow \RR_{\geq 0}, \qquad
	\mcS(x) := \sum_{i = 1}^{N} \max\{ 0 , -\ell_{i}(x) \}.
\end{equation}

\begin{lemma}
	\label{lemma:slack-on-region}
	If $x \in \Delta_{A}$, then
	\begin{equation}
		\label{eqn:slack-on-region}
		\mcS(x) = \langle x ,\, u_{A} \rangle - \lambda_{A}.
	\end{equation}
\end{lemma}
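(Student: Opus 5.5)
The argument is a term-by-term evaluation of the summands in the slack function \eqref{eqn:slack-function}, using only the sign conditions defining $\Delta_{A}$. Fix $x \in \Delta_{A}$.

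For $i \notin A$ we have $\ell_{i}(x) \geq 0$, so $\max\{0, -\ell_{i}(x)\} = 0$. For $i \in A$ we have $\ell_{i}(x) \leq 0$, so $\max\{0, -\ell_{i}(x)\} = -\ell_{i}(x)$.

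Summing these contributions gives
\[
	\mcS(x) = -\sum\nolimits_{i \in A} \ell_{i}(x) = -\sum\nolimits_{i \in A} \bigl( \langle x ,\, u_{i} \rangle + \lambda_{i} \bigr).
\]
Linearity of the pairing turns the right-hand side into $\langle x ,\, -\sum_{i \in A} u_{i} \rangle - \sum_{i \in A} \lambda_{i}$. By the definitions in \eqref{eqn:U1-action-generator}, this is exactly $\langle x ,\, u_{A} \rangle - \lambda_{A}$, which proves \eqref{eqn:slack-on-region}.

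No step presents a genuine obstacle. The only point needing care is the boundary case: points of $\Delta_{A}$ lying on some $H_{i}$ satisfy both inequalities, but they give the same value because $\ell_{i}(x) = 0$ there. This is also why the formula is consistent on intersections $\Delta_{A} \cap \Delta_{B}$, as in Remark~\ref{remark:poset-regions}.

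As a consistency check, one can compare the result with \eqref{eqn:circle-moment-map} via Lemma~\ref{lemma:half-space-coordinates}, which gives $\Phi = \mcS \circ \mu_{\RR}$ on $\mcE_{A}$.
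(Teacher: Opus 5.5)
Your proof is correct and is essentially the paper's own argument: the sign conditions defining $\Delta_{A}$ make the summands of $\mcS$ with $i \notin A$ vanish and turn those with $i \in A$ into $-\ell_{i}(x)$. Linearity of the pairing together with the definitions in \eqref{eqn:U1-action-generator} then gives $\langle x ,\, u_{A} \rangle - \lambda_{A}$.
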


\begin{proof}
	If $x \in \Delta_{A}$, then we have $\ell_{i}(x) \leq 0$ for $i \in A$ and $\ell_{i}(x) \geq 0$ for $i \notin A$.
	On $\Delta_{A}$, we therefore have $\mcS(x) = -\sum_{i \in A} \ell_{i}(x) = \langle x , -\sum_{i \in A} u_{i} \rangle - \sum_{i \in A} \lambda_{i} = \langle x , u_{A} \rangle - \lambda_{A}$ by \eqref{eqn:U1-action-generator}.
\end{proof}

For the next lemma, recall that $\mcE_{\nu} = \bigcup_{A} \mcE_{A}$ is the extended core.

\begin{lemma}
	\label{lemma:slack-function}
	Let $p = [ z , w ] \in M_{\nu}$ and write $x = \mu_{\RR}(p)$.
	Then $\mcS(x) \leq \Phi(p)$, and equality holds if and only if $p \in \mcE_{\nu}$.
\end{lemma}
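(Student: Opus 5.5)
The plan is a coordinatewise comparison on a representative $(z , w) \in \phi_{\HK}^{-1}(\nu , 0)$ of $p$. By the residual moment-map formula from Subsection~\ref{subsection:subsection-2.1-differential-hypertoric}, as already used in the proof of Lemma~\ref{lemma:half-space-coordinates}, we have $\ell_{i}(x) = \tfrac{1}{2}( |z_{i}|^{2} - |w_{i}|^{2} )$ for every $i$. Hence $-\ell_{i}(x) \leq \tfrac{1}{2} |w_{i}|^{2}$. Since also $0 \leq \tfrac{1}{2} |w_{i}|^{2}$, this gives the termwise bound
\[
	\max\{ 0 ,\, -\ell_{i}(x) \} \leq \tfrac{1}{2} |w_{i}|^{2}.
\]
Summing over $i$ yields $\mcS(x) \leq \tfrac{1}{2}\|w\|^{2} = \Phi(p)$. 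The right-hand side is independent of the representative, because $K$ acts unitarily on $w$.

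For the equality case, note that equality of the sums forces equality in every term, since each term is bounded as above. For a single index $i$, I would check that $\max\{ 0 ,\, \tfrac{1}{2}( |w_{i}|^{2} - |z_{i}|^{2} ) \} = \tfrac{1}{2}|w_{i}|^{2}$ holds if and only if $z_{i} = 0$ or $w_{i} = 0$.
\begin{itemize}
	\item If $w_{i} = 0$, both sides vanish.
	\item If $z_{i} = 0$, both sides equal $\tfrac{1}{2}|w_{i}|^{2}$.
	\item Conversely, suppose $w_{i} \neq 0$. Then the right-hand side is positive, so the maximum must be attained by its second entry. This gives $\tfrac{1}{2}(|w_{i}|^{2} - |z_{i}|^{2}) = \tfrac{1}{2}|w_{i}|^{2}$, hence $z_{i} = 0$.
\end{itemize}
Thus equality holds for all $i$ if and only if $z_{i}w_{i} = 0$ for every $i$. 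By the definition of the extended core in Subsection~\ref{subsection:subsection-3.2-core-extended-core}, this says exactly that $p \in \mcE_{\nu}$. Once more, this condition is independent of the representative, since $K$ acts diagonally on $z_{i}$ and $w_{i}$.

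Consistency check: on $\mcE_{A}$ this recovers Lemma~\ref{lemma:slack-on-region} combined with \eqref{eqn:circle-moment-map}. I expect no real obstacle here. The only point of care is the equality case, and specifically that the termwise inequalities must all be equalities, which follows from the finite sum of non-negative differences.
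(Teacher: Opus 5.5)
Your proposal is correct and follows essentially the same route as the paper: use $\ell_{i}(x) = \tfrac{1}{2}(|z_{i}|^{2} - |w_{i}|^{2})$ to get the termwise bound $\max\{0, -\ell_{i}(x)\} \leq \tfrac{1}{2}|w_{i}|^{2}$, sum over $i$, and note that termwise equality holds exactly when $z_{i}w_{i} = 0$. Your remarks on independence of the representative, and on why equality of the sums forces equality in every term, are small clarifications that the paper leaves implicit.
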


\begin{proof}
	On $M_{\nu}$, one has $\ell_{i}(x) = \tfrac{1}{2}( |z_{i}|^{2} - |w_{i}|^{2} )$, so $\max\{ 0 , -\ell_{i}(x) \} = \tfrac{1}{2} \max\{ 0 , |w_{i}|^{2} - |z_{i}|^{2} \} \leq \tfrac{1}{2} |w_{i}|^{2}$.
	For each $i$, equality holds if $w_{i} = 0$ when both sides vanish, or if $z_{i} = 0$ when both sides equal $\tfrac{1}{2} |w_{i}|^{2}$.
	Clearly, if both coordinates are non-zero, then the inequality is strict.
	Summing over $i$ gives $\mcS(x) \leq \tfrac{1}{2} \sum_{i} |w_{i}|^{2} = \Phi(p)$, with equality holding if and only if either $w_{i} = 0$ or $z_{i} = 0$ for each $i$; that is, when $z_{i}w_{i} = 0$ for every $i$, which is the condition for $p$ to lie in the extended core $\mcE_{\nu}$.
\end{proof}

\begin{proposition}
	\label{prop:cut-component-image}
	Let $\Delta_{A} \neq \varnothing$, and let $\delta>0$ be a regular value of $\rho$.
	Then
	\[
		\mu_{\RR}(\mcE_{A}^{\leq \delta}) = \Delta_{A} \cap \{ x \in ( \mft^{n} )^{\ast} \mid \langle x ,\, u_{A} \rangle \leq \delta + \lambda_{A} \}.
	\]
	This set is a bounded intersection of finitely many closed half-spaces, and hence a polytope.
	Since all defining normals are integral, it has rational vertices whenever the constants $\lambda_{i}$ and $\delta$ are rational.
\end{proposition}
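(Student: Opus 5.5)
The plan is to compute the image of the cut component directly from its two pieces. Since $\mcE_{A}^{\leq\delta} = \mcE_{A}^{<\delta} \sqcup Z_{A}^{\delta}$, and the residual $T^{n}$-action commutes with the cutting circle, the moment map $\mu_{\RR}$ on the cut satisfies $\mu_{\RR}[[z,w],\zeta] = \mu_{\RR}[z,w]$. This uses the fact that the $T^{n}$-action on $\mcE_{A} \times \CC$ is trivial on the $\zeta$-factor. Hence
\[
	\mu_{\RR}(\mcE_{A}^{\leq\delta}) = \mu_{\RR}\bigl( \{ p \in \mcE_{A} ~|~ \Phi_{A}(p) \leq \delta \} \bigr),
\]
because a point $p \in \mcE_{A}$ lifts to $\rho_{A}^{-1}(\delta)$ exactly when $\Phi_{A}(p) \leq \delta$: choose $\zeta$ with $\tfrac{1}{2}|\zeta|^{2} = \delta - \Phi_{A}(p)$.

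Next I will use \eqref{eqn:circle-moment-map}, namely $\Phi_{A} = \langle \mu_{\RR} ,\, u_{A} \rangle - \lambda_{A}$ on $\mcE_{A}$. Under this formula the condition $\Phi_{A}(p) \leq \delta$ becomes $\langle \mu_{\RR}(p) ,\, u_{A} \rangle \leq \delta + \lambda_{A}$. The set in question is therefore the preimage under $\mu_{\RR}|_{\mcE_{A}}$ of the half-space $\{\langle x , u_{A}\rangle \leq \delta+\lambda_{A}\}$. By Lemma~\ref{lemma:half-space-coordinates}, $\mu_{\RR}|_{\mcE_{A}}$ is surjective onto $\Delta_{A}$, so the image is exactly $\Delta_{A} \cap \{\langle x , u_{A}\rangle \leq \delta + \lambda_{A}\}$. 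Equivalently, by Lemma~\ref{lemma:slack-on-region}, this is $\Delta_{A} \cap \{ \mcS \leq \delta \}$.

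It remains to prove boundedness, and this is the only nontrivial step. I would argue from the compactness of $\mcE_{A}^{\leq\delta}$, which is a closed suborbifold of the compact cut established in Lemma~\ref{lemma:proper-moment-map}. Its continuous image under $\mu_{\RR}$ is therefore compact, and hence bounded.

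An alternative, purely combinatorial route is also available. On $\Delta_{A}$ one has $\mcS(x) = \sum_{i} |\ell_{i}(x)| - \sum_{i \notin A} \ell_{i}(x)$, which is not obviously helpful. A better identity comes from the proof of Lemma~\ref{lemma:half-space-coordinates}, which gives $\tfrac{1}{2}(\|z\|^{2} + \|w\|^{2}) = \sum_{i} |\ell_{i}|$ on $\mcE_{A}$, so it suffices to bound $\sum_{i}|\ell_{i}|$. I would prefer the topological argument, since it uses only results already established.

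Finally, the set is an intersection of the finitely many half-spaces defining $\Delta_{A}$ together with one more half-space, so it is a polyhedron; being bounded, it is a polytope. Its normals $\pm u_{i}$ and $u_{A}$ are integral. When the constants $\lambda_{i}$ and $\delta$ are rational, each vertex is the unique solution of $n$ linearly independent equations with rational coefficients, so Cramer's rule shows that the vertices are rational.
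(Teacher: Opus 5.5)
Your proposal is correct and follows essentially the same route as the paper: both inclusions come from $\Phi_{A} = \langle \mu_{\RR} ,\, u_{A} \rangle - \lambda_{A}$ on $\mcE_{A}$, the surjectivity of $\mu_{\RR}|_{\mcE_{A}}$ onto $\Delta_{A}$, and the choice of $\zeta$ with $\tfrac{1}{2}|\zeta|^{2} = \delta - \Phi_{A}$, while boundedness comes from the closed embedding $\mcE_{A}^{\leq \delta} \hookrightarrow M_{\nu}^{\leq \delta}$ together with Lemma~\ref{lemma:proper-moment-map}. Your Cramer's-rule argument for rational vertices spells out what the paper leaves implicit, and you are right to rely on the topological argument for boundedness, since your combinatorial aside (bounding $\sum_{i}|\ell_{i}|$ on the truncation) is only a restatement of boundedness rather than a proof of it.
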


\begin{proof}
	Let $[ [ z , w ] , \zeta ] \in \mcE_{A}^{\leq \delta}$ and choose a representative $( [ z , w] , \zeta ) \in \mcE_{A} \times \CC$ such that $\rho( [ z , w ] , \zeta ) = \delta$, so that we have $\Phi[ z , w ] \leq \delta$.
	Since $[ z , w ] \in \mcE_{A}$, one has $\mu_{\RR}[ z , w ] \in \Delta_{A}$, and equation \eqref{eqn:circle-moment-map} gives $\Phi[ z , w ] = \Phi_{A}[ z , w ] = \langle \mu_{\RR}[ z , w ] , u_{A} \rangle - \lambda_{A}$, hence $\langle \mu_{\RR}[ z , w ] , u_{A} \rangle \leq \delta + \lambda_{A}$, showing one direction.

	Conversely, suppose $x \in \Delta_{A}$ satisfies $\langle x , u_{A} \rangle \leq \delta + \lambda_{A}$.
	By the toric moment-image description $\mu_{\RR}(\mcE_{A}) = \Delta_{A}$ from Subsection~\ref{subsection:subsection-3.2-core-extended-core} (see also \cite[\S2]{HaradaProudfoot2004}), choose $[ z , w ] \in \mcE_{A}$ with $\mu_{\RR}[ z , w ] = x$; then $\Phi[ z , w ] = \langle x , u_{A} \rangle - \lambda_{A} \leq \delta$.
	Picking $\zeta \in \CC$ with $\tfrac{1}{2}|\zeta|^{2} = \delta - \Phi[ z , w ]$ gives $( [ z , w] , \zeta ) \in \rho_{A}^{-1}(\delta)$, hence its $U_{1}$-equivalence class $[ [ z , w ] , \zeta ] \in \mcE_{A}^{\leq \delta}$ satisfies $\mu_{\RR}[ [ z , w ] , \zeta ] = \mu_{\RR}[ z , w ] = x$, proving the reverse inclusion.
	Compactness follows from the closed embedding $\mcE_{A}^{\leq \delta} \hookrightarrow M_{\nu}^{\leq \delta}$ and Lemma~\ref{lemma:proper-moment-map}; convexity is immediate from the half-space description in the statement of the proposition.
\end{proof}

On a region $\Delta_{A}$, Lemma~\ref{lemma:slack-on-region} identifies the truncating affine function with the slack: $\langle x , u_{A} \rangle - \lambda_{A} = \mcS(x)$.
The cut retains the part of each region on which $\mcS \leq \delta$.
For $A = \varnothing$, one has $u_{\emptyset} = 0$ and $\lambda_{\emptyset} = 0$, so the truncation is vacuous and the distinguished bounded region $\Delta_{\emptyset}$ is retained in full.

For each subset $A \subseteq \{ 1 , \ldots , N \}$ with $\Delta_{A} \neq \varnothing$, define the \textbf{truncation} $\Delta_{A}^{\leq \delta}$ and its \textbf{cut face} $\Pi_{A}^{\delta}$ by
\[
	\Delta_{A}^{\leq \delta} := \left\{ x \in \Delta_{A} ~|~ \langle x ,\, u_{A} \rangle \leq \delta + \lambda_{A}	\right\},
	\qquad
	\Pi_{A}^{\delta} := \left\{ x \in \Delta_{A} ~|~ \langle x ,\, u_{A} \rangle = \delta + \lambda_{A} \right\}.
\]
By Proposition~\ref{prop:cut-component-image}, every non-empty truncation $\Delta_{A}^{\leq \delta}$ is a compact polytope and, at a regular cut level, it is also $n$-dimensional.
Indeed, suppose that a non-empty truncation $\Delta_{A}^{\leq \delta}$ has an empty interior in the full-dimensional region $\Delta_{A}$.
Since $\Delta_{A}^{\leq \delta} = \Delta_{A} \cap \{ \mcS \leq \delta \}$ and $\mcS|_{\Delta_{A}}$ is affine, this can occur only when $\delta = \min_{\Delta_{A}} \mcS$.
The truncation is then the non-empty minimum face $\Delta_{A} \cap \{ \mcS = \delta \}$, which contains an arrangement vertex $v$ by Lemma~\ref{lemma:regions-pointed}.
Thus, $\delta = \mcS(v)$ is a critical value by Proposition~\ref{prop:critical-values-slack}, which contradicts the regularity of $\delta$.

From Remark~\ref{remark:poset-regions}, recall that, for non-empty regions $\Delta_{A}$ and $\Delta_{B}$, the indices in $A \triangle B$ label the hyperplanes that one must cross to pass from one interior into the other.
The following lemma shows that, whenever $\Delta_{A}$ and $\Delta_{B}$ intersect, their intersection is a common face obtained by imposing the equations $\ell_{i} = 0$ for $i \in A \triangle B$.
The two truncating inequalities agree on this face, so cutting either region at level $\delta$ retains the same portion of their intersection.

\begin{lemma}
	\label{lemma:region-common-face}
	For any $A , B \subseteq \{ 1 , \ldots , N \}$, we have
	\begin{equation}
		\label{eqn:region-common-face}
		\Delta_{A} \cap \Delta_{B} =
		\Delta_{A} \cap \Big( \bigcap\nolimits_{i \in A \triangle B} H_{i} \Big) =
		\Delta_{B} \cap \Big( \bigcap\nolimits_{i \in A \triangle B} H_{i} \Big).
	\end{equation}
	In particular, it follows that $\Delta_{A} \cap \Delta_{B}$ is either empty or a common face of both $\Delta_{A}$ and $\Delta_{B}$ and, on this common face, the two truncating affine-linear functionals agree: $\langle x ,\, u_{A} \rangle - \lambda_{A} = \langle x ,\, u_{B} \rangle - \lambda_{B}$.
	Moreover, the intersection $\Delta_{A}^{\leq \delta} \cap \Delta_{B}^{\leq \delta}$ is either empty or a common face of the two truncated polytopes.
\end{lemma}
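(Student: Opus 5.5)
The plan is to prove the set identity \eqref{eqn:region-common-face} directly from the half-space descriptions, then read off the face and truncation statements. The argument follows Remark~\ref{remark:poset-regions}.

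For $i \in A \triangle B$, the two regions impose opposite closed sign conditions on $\ell_{i}$: one requires $\ell_{i} \geq 0$ and the other $\ell_{i} \leq 0$. Hence every $x \in \Delta_{A} \cap \Delta_{B}$ satisfies $\ell_{i}(x) = 0$, which gives the inclusion $\Delta_{A} \cap \Delta_{B} \subseteq \Delta_{A} \cap \bigcap_{i \in A \triangle B} H_{i}$.

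For the reverse inclusion, take $x \in \Delta_{A}$ with $\ell_{i}(x) = 0$ for all $i \in A \triangle B$, and check that $x \in \Delta_{B}$ index by index. For $i \notin A \triangle B$, membership of $i$ in $B$ agrees with membership in $A$, so the sign condition for $\Delta_{B}$ is the one already satisfied in $\Delta_{A}$. For $i \in A \triangle B$, we have $\ell_{i}(x) = 0$, which satisfies either sign. Exchanging the roles of $A$ and $B$ gives the third expression.

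Next I would show the intersection is a face. Each hyperplane $H_{i}$ with $i \in A \triangle B$ is a supporting hyperplane of $\Delta_{A}$, because $\ell_{i}$ or $-\ell_{i}$ is one of its defining inequalities. The intersection of $\Delta_{A}$ with such a family of supporting hyperplanes is a face, since it is the face on which the valid inequality $\sum_{i \in A\triangle B} \pm \ell_{i} \geq 0$ attains equality. The same holds for $\Delta_{B}$.

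For the agreement of the truncating functionals, I would invoke Lemma~\ref{lemma:slack-on-region}: on $\Delta_{A} \cap \Delta_{B}$, both affine functions $\langle x , u_{A}\rangle - \lambda_{A}$ and $\langle x , u_{B}\rangle - \lambda_{B}$ equal $\mcS(x)$. Alternatively, one can compute their difference directly, which is $\pm\sum_{i\in A\triangle B}\ell_{i}$ and vanishes on the face.

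For the truncations, write $\Delta_{A}^{\leq\delta} = \Delta_{A} \cap \{\mcS \leq \delta\}$, so that $\Delta_{A}^{\leq\delta} \cap \Delta_{B}^{\leq\delta} = (\Delta_{A}\cap\Delta_{B}) \cap \{\mcS\leq\delta\}$. Set $F = \Delta_{A}\cap\Delta_{B}$ and let $h = \sum \pm\ell_{i}$ be the linear form exposing $F$ in $\Delta_{A}$. Then $h \geq 0$ on $\Delta_{A}^{\leq\delta} \subseteq \Delta_{A}$, and its zero set there is exactly $F \cap \{\mcS\leq\delta\}$. So the intersection is an exposed face of $\Delta_{A}^{\leq\delta}$, and symmetrically of $\Delta_{B}^{\leq\delta}$.

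The only subtle point is this last step, namely that intersecting a face with the truncating half-space still yields a face of the truncated polytope. Using the single exposing functional $h$ handles this cleanly.
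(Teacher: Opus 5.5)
Your proposal is correct and follows essentially the same route as the paper. It has the same two inclusions for the set identity, the same agreement of the truncating functionals on the common face (you via Lemma~\ref{lemma:slack-on-region}, the paper by computing $-\sum_{i \in A}\ell_{i} = -\sum_{i \in B}\ell_{i}$ directly), and the same reduction of the truncated intersection to $(\Delta_{A} \cap \Delta_{B}) \cap \{\mcS \leq \delta\}$. The one difference is at the last step: where the paper cites the general fact that truncation preserves faces, you write down the exposing functional $h$ explicitly, which proves that fact directly in this setting.
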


\begin{proof}
	If $x \in \Delta_{A} \cap \Delta_{B}$ and $i \in A \triangle B$, then $\Delta_{A}$ and $\Delta_{B}$ lie respectively in the two opposite closed half-spaces bounded by $H_{i}$.
	Hence $x \in H_{i}^{+} \cap H_{i}^{-} = H_{i}$ and so $\ell_{i}(x) = 0$, proving one inclusion in the first equality of \eqref{eqn:region-common-face}.
	Conversely, suppose that $x \in \Delta_{A}$ and $x \in H_{i}$ for every $i \in A \triangle B$.
	Since $x \in \Delta_{A}$, we have $\ell_{i}(x) \leq 0$ for $i \in A \cap B$ and $\ell_{i}(x) \geq 0$ for $i \not\in A \cup B$.
	For $i \in A \triangle B$, the assumption $x \in H_{i}$ gives $\ell_{i}(x) = 0$, so both $\ell_{i}(x) \leq 0$ and $\ell_{i}(x) \geq 0$ hold, which means $x$ satisfies every defining inequality of $\Delta_{B}$ too, whence $x \in \Delta_{B}$ and moreover $x \in \Delta_{A} \cap \Delta_{B}$.

	On the common face, we have $\ell_{i}(x) = 0$ for every $i \in A \triangle B$, and hence
	\[
		\langle x , u_{A} \rangle - \lambda_{A} =
		-\sum_{i \in A} \ell_{i}(x) =
		-\sum_{i \in B} \ell_{i}(x) =
		\langle x , u_{B} \rangle - \lambda_{B}.
	\]
	Writing $C := \Delta_{A} \cap \Delta_{B}$, this equality gives
	\[
	\Delta_{A}^{\leq \delta} \cap \Delta_{B}^{\leq \delta} = C \cap \left\{ x ~|~ \langle x , u_{A} \rangle - \lambda_{A} \leq \delta \right\} = C \cap \left\{ x ~|~ \langle x , u_{B} \rangle - \lambda_{B} \leq \delta \right\}.
	\]
	Truncation preserves faces: if $P$ is a polyhedron and $F$ is a face of $P$, then $F \cap H^{-}$, whenever non-empty, is a face of $P \cap H^{-}$.
	Since $C$ is a face of both $\Delta_{A}$ and $\Delta_{B}$, the intersection $\Delta_{A}^{\leq \delta} \cap \Delta_{B}^{\leq \delta}$ is therefore either empty or a common face of both $\Delta_{A}^{\leq \delta}$ and $\Delta_{B}^{\leq \delta}$.
\end{proof}

The collection of non-empty truncations $\Delta_{A}^{\leq \delta}$ and their faces together form a finite polytopal complex; when the $\lambda_{i}$ and $\delta$ are rational, each of its vertices are rational.
We define the \textbf{moment polyptych} of the cut space $M_{\nu}^{\leq \delta}$ to be the polytopal complex
\[
	\Delta_{\nu}^{\leq \delta} := \bigcup_{\substack{A \subseteq \{1,\ldots , N\}\\ \Delta_{A}^{\leq \delta}\neq\varnothing}} \Faces(\Delta_{A}^{\leq \delta}),
\]
where, $\Faces(P)$ denotes the set of all faces of a polytope $P$, including $P$ itself.
The maximal cells of $\Delta_{\nu}^{\leq \delta}$ are the \textbf{components of the moment polyptych}.
At a regular level, these are the non-empty $n$-dimensional truncations $\Delta_{A}^{\leq \delta}$, so each moment-polyptych component is a truncated arrangement region.
The \textbf{boundary of the moment polyptych} is the subcomplex generated by the non-empty cut faces,
\[
	\Pi_{\nu}^{\delta} := \bigcup_{\substack{A \subseteq \{1,\ldots , N\}\\ \Pi_{A}^{\delta}\neq\varnothing}} 	\Faces(\Pi_{A}^{\delta}).
\]
We also write $\mcE_{\nu}^{\leq \delta}:=\bigcup_{A}\mcE_{A}^{\leq\delta}$ for the cut extended core of the cut space $M_{\nu}^{\leq \delta}$.

A vertex or edge of the moment polyptych means a zero- or one-dimensional cell of this polytopal complex, respectively.
When a vertex belongs to the boundary subcomplex $\Pi_{\nu}^{\delta}$, we call it a \textbf{boundary vertex}; otherwise, we call it an \textbf{interior vertex}.

We call
\[
	|\Delta_{\nu}^{\leq \delta}| = \bigcup_{ \Delta_{A}^{\leq \delta} \neq \varnothing } \Delta_{A}^{\leq \delta} \subseteq ( \mft^{n} )^{\ast}
	\qquad \text{ and} \qquad
	|\Pi_{\nu}^{\delta}| = \bigcup_{ \Pi_{A}^{\delta} \neq \varnothing } \Pi_{A}^{\delta}
\]
the \textbf{moment-polyptych support} and the \textbf{moment-polyptych boundary support}, respectively.
Thus, statements about subsets of $( \mft^{n} )^{\ast}$, moment images, convexity, and lattice points concern these supports, whereas cells, components, faces, edges, and subcomplexes refer to the corresponding polytopal complexes.

\begin{proposition}
	\label{prop:slack-support}
	Let $\delta > 0$ be a regular value of $\rho$.
	Then the moment-polyptych support is
	\begin{equation}
		\label{eqn:slack-support}
		|\Delta_{\nu}^{\leq \delta}| = \mu_{\RR}(M_{\nu}^{\leq \delta}) =
		\bigcup\nolimits_{\Delta_{A}^{\leq \delta} \neq \varnothing} \Delta_{A}^{\leq \delta} =
		\{ x \in ( \mft^{n} )^{\ast} \mid \mcS(x) \leq \delta \},
	\end{equation}
	and the moment-polyptych boundary support is
	\begin{equation}
		\label{eqn:slack-boundary}
		|\Pi_{\nu}^{\delta}| = \mu_{\RR}( Z_{\nu}^{\delta} \cap \mcE_{\nu}^{\leq \delta} ) =
		\bigcup\nolimits_{\Pi_{A}^{\delta} \neq \varnothing} \Pi_{A}^{\delta} = \{ x \in ( \mft^{n} )^{\ast} ~|~ \mcS(x) = \delta \}.
	\end{equation}
	Furthermore, since $|\Delta_{\nu}^{\leq \delta}| = \{ x \in (\mft^{n})^{\ast} ~|~ \mcS \leq \delta \}$ and $\mcS$ is a convex piecewise-linear function, the moment-polyptych support is convex.
\end{proposition}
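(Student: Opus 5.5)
The plan is to prove the chain of equalities in \eqref{eqn:slack-support} from right to left, using Proposition~\ref{prop:cut-component-image} and Lemma~\ref{lemma:slack-on-region} for the polyptych side and Lemma~\ref{lemma:slack-function} for the full cut. First, the middle equality is the definition of the support. For the rightmost equality, the regions $\Delta_{A}$ cover $(\mft^{n})^{\ast}$, since every point lies in the closure of some region and every non-empty $\Delta_{A}$ is the closure of its region by Lemma~\ref{lemma:regions-pointed}. On each $\Delta_{A}$, Lemma~\ref{lemma:slack-on-region} gives $\mcS = \langle \cdot ,\, u_{A} \rangle - \lambda_{A}$, so $\Delta_{A}^{\leq \delta} = \Delta_{A} \cap \{ \mcS \leq \delta \}$. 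Taking the union over $A$ gives exactly $\{ \mcS \leq \delta \}$.

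Next I show $\mu_{\RR}(M_{\nu}^{\leq \delta}) = \{ \mcS \leq \delta \}$. For the inclusion $\supseteq$, Proposition~\ref{prop:cut-component-image} gives $\Delta_{A}^{\leq \delta} = \mu_{\RR}(\mcE_{A}^{\leq \delta})$, and each $\mcE_{A}^{\leq \delta}$ embeds in $M_{\nu}^{\leq \delta}$ compatibly with $\mu_{\RR}$. For $\subseteq$, take a point $[p , \zeta]$ with $\rho(p , \zeta) = \delta$. The residual moment map on the cut is $\mu_{\RR}(p)$, because $T^{n}$ does not act on $\zeta$. Lemma~\ref{lemma:slack-function} then gives $\mcS(\mu_{\RR}(p)) \leq \Phi(p) \leq \delta$.

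For the boundary statement \eqref{eqn:slack-boundary}, I use the same argument with equalities. Points of $Z_{\nu}^{\delta} \cap \mcE_{\nu}^{\leq \delta}$ have $\zeta = 0$ and $p \in \mcE_{A}$ for some $A$. Hence $\Phi(p) = \delta$, and the equality case of Lemma~\ref{lemma:slack-function} gives $\mcS(\mu_{\RR}(p)) = \delta$. Conversely, if $x \in \Delta_{A}$ has $\mcS(x) = \delta$, then $x \in \Pi_{A}^{\delta}$ by Lemma~\ref{lemma:slack-on-region}. Lemma~\ref{lemma:half-space-coordinates} gives a preimage $p \in \mcE_{A}$, and then $\Phi(p) = \delta$, so $[p , 0]$ is a point of $Z_{A}^{\delta}$ lying over $x$. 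This also identifies the union of the cut faces $\Pi_{A}^{\delta}$ with $\{ \mcS = \delta \}$.

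Finally, convexity: each summand $\max\{ 0 ,\, -\ell_{i} \}$ is a maximum of affine functions, hence convex, so $\mcS$ is convex and its sublevel set $\{ \mcS \leq \delta \}$ is convex. The only step needing care is that the cut's moment map is computed from a representative in $\rho^{-1}(\delta)$ and is well defined on $U_{1}$-orbits. This holds because the cutting circle commutes with $T^{n}$ and preserves $\mu_{\RR}$, the fibre rotation fixing $|z_{i}|$ and $|w_{i}|$. I do not expect any serious obstacle beyond this bookkeeping.
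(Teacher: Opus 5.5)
Your proposal is correct and follows essentially the same route as the paper. Lemma~\ref{lemma:slack-function} gives $\mu_{\RR}(M_{\nu}^{\leq \delta}) \subseteq \{\mcS \leq \delta\}$, while Lemma~\ref{lemma:slack-on-region} and Proposition~\ref{prop:cut-component-image} give the reverse inclusion and identify the union of the truncations; the boundary statement then comes from the equality case $\mcS \circ \mu_{\RR} = \Phi$ on the extended core, and your explicit preimage $[p,0] \in Z_{A}^{\delta}$ via Lemma~\ref{lemma:half-space-coordinates} and your justification of the convexity of $\mcS$ simply spell out steps the paper states more briefly.
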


\begin{proof}
	To show \eqref{eqn:slack-support}, let $q \in M_{\nu}^{\leq \delta}$ be represented by $( p , \zeta ) \in \rho^{-1}(\delta)$, so that $\Phi(p) \leq \delta$. By Lemma~\ref{lemma:slack-function}, we have $\mcS( \mu_{\RR}(q) ) = \mcS( \mu_{\RR}(p) ) \leq \Phi(p) \leq \delta$, whence $\mu_{\RR}( M_{\nu}^{\leq \delta} ) \subseteq \{ \mcS \leq \delta \}$.
	Conversely, suppose $x \in (\mft^{n})^{\ast}$ satisfies $\mcS(x) \leq \delta$ and let $\Delta_{A}$ be the region containing $x$.
	By Lemma~\ref{lemma:slack-on-region}, $\langle x ,\, u_{A} \rangle - \lambda_{A} = \mcS(x) \leq \delta$, hence $x \in \Delta_{A}^{\leq \delta} = \mu_{\RR}( \mcE_{A}^{\leq \delta} )$ by Proposition~\ref{prop:cut-component-image}, proving \eqref{eqn:slack-support}.

	For \eqref{eqn:slack-boundary}, if $x \in \Pi_{A}^{\delta}$, then $x \in \Delta_{A}$ and $\mcS(x) = \langle x , u_{A} \rangle - \lambda_{A} = \delta$, and so $|\Pi_{\nu}^{\delta}| \subseteq \{ \mcS = \delta \}$.
	Conversely, if $\mcS(x) = \delta$ and $x \in \Delta_{A}$, then the same identity implies $x \in \Pi_{A}^{\delta}$, whence $|\Pi_{\nu}^{\delta}| = \{ \mcS = \delta \}$.
	Finally, on each cut component $\mcE_{A}^{\leq \delta}$, the equality $\mcS \circ \mu_{\RR} = \Phi$ holds by Lemma~\ref{lemma:slack-function} and therefore the image of its boundary $Z_{A}^{\delta}$ is $\Pi_{A}^{\delta}$.
	Taking the union over $A$ yields the final equality in \eqref{eqn:slack-boundary}.
\end{proof}

Note that the equality in \eqref{eqn:slack-boundary} cannot be simplified to $|\Pi_{\nu}^{\delta}| = \mu_{\RR}(Z_{\nu}^{\delta})$ in general.
Indeed, if $q \in Z_{\nu}^{\delta}$ is represented by $(p , 0)$ with $p \not\in \mcE_{\nu}$, then $\Phi(p) = \delta$, whereas Lemma~\ref{lemma:slack-function} implies $\mcS(\mu_{\RR}(q)) = \mcS(\mu_{\RR}(p)) < \delta$.
Such points on the cut divisor therefore map to $\{ \mcS < \delta \}$, rather than to the support of the polyptych boundary $|\Pi_{\nu}^{\delta}| = \{ \mcS = \delta \}$.

\begin{proposition}
	\label{prop:critical-values-slack}
	The critical values of the fibre-rotation moment map are
	\begin{equation}
		\label{eqn:critical-values-slack}
		\CritVal(\Phi) = \{ \mcS(v) \mid v \in \Verts( \mcA ) \}.
	\end{equation}
	Furthermore, for every $\delta>0$, this implies
	\begin{equation}
		\label{eqn:regular-value-polyptych-boundary}
		\delta \text{ is a regular value of } \rho
		\qquad \text{ if and only if } \qquad
		|\Pi_{\nu}^{\delta}| \text{ contains no vertex of } \mcA.
	\end{equation}
\end{proposition}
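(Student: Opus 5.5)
We identify the critical points of $\Phi$ on $M_{\nu}$ with the fixed points of the fibre-rotation circle, and then show that these fixed points all lie over arrangement vertices. For a Hamiltonian circle action, $d\Phi_{p} = \w_{\RR}(-, X)$ vanishes exactly when the generating vector field $X$ vanishes at $p$. Here $X$ is the image in $T_{p}M_{\nu}$ of the fibre-rotation generator, taken modulo the $K$-directions. So $p$ is critical if and only if $p$ is fixed by $U_{1}$ in the orbifold sense, that is, $\tau\cdot p = p$ for all $\tau$.

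\emph{Fixed points lie in the extended core.} The first step shows that every $U_{1}$-fixed point lies in $\mcE_{\nu}$. If $[z,\tau w]=[z,w]$ for all $\tau$, then by connectedness there is a one-parameter subgroup $k(\tau)$ of $K$ with $k(\tau)\cdot(z,w) = (z,\tau w)$. Hence the action of $\pi(k(\tau))^{-1}\cdot$ (coordinate-wise) lies in $T^{N}$ and fixes $z_{i}$ while rescaling $w_{i}$ by $\tau$. Whenever $z_{i}w_{i}\neq 0$, this is impossible, since the $i$-th coordinate $t_{i}$ would have to satisfy both $t_{i}=1$ and $t_{i}^{-1}=\tau$. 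Therefore $z_{i}w_{i}=0$ for all $i$, and $p\in\mcE_{A}$ for some $A$.

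\emph{Fixed points lie over vertices.} On $\mcE_{A}$ the fibre rotation coincides with the one-parameter subgroup of $T^{n}$ generated by $u_{A}$, by \S4.1. However, $p$ being critical in $M_{\nu}$ is stronger than being critical for $\Phi_{A}$. I would use instead that $X$ at $p$ must also vanish in the normal directions to $\mcE_{A}$. The cleanest route is the following. The fixed-point condition forces $k(\tau)$ to equal $\iota_{A}(\tau)^{-1}$ on the non-zero coordinates, modulo stabilisers. For every $i$ with $z_{i}=w_{i}=0$, both coordinates vanish and impose no condition. The coordinates that are non-zero are indexed by the complement of $\mcI_{x}$, where $x=\mu_{\RR}(p)$. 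The requirement that $\iota_{A}(\tau)$ lie in $K\cdot T_{\mcI_{x}}$ then becomes: $e_{A}\in \mfk + \hat{\mft}_{\mcI_{x}}$, i.e.\ $u_{A}\in \mft_{\mcI_{x}}$. The remaining task is to upgrade this to $\rk(\mcI_{x})=n$.

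\emph{Upgrading to a vertex (main obstacle).} For this I would use the normal directions, as the paper's restriction to flats permits. Pass to the hypertoric subvariety $M_{\mcI_{x}}$ of Theorem~\ref{thm:hypertoric-subvariety}. There $p$ lies over an interior point of the intrinsic arrangement, with no active hyperplanes through it. The fibre rotation is the standard one plus the constant $C_{\mcF}$. In that quotient $p$ lies in an open toric stratum with free $T_{V}^{\crk}$-orbit, on which $\Phi^{\std}_{\mcF}$ restricts, via \eqref{eqn:slack-restriction}, to a linear function with gradient $u_{B}^{\mcFc}$. Genericity should force $u_{B}^{\mcFc}\neq 0$ whenever $\crk>0$, and then $p$ is not critical. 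Here I also need that a point of $M_{\nu}$ critical for $\Phi$ remains critical for $\Phi|_{M_{\mcI_{x}}}$, which holds because the restriction of a function to a submanifold preserves vanishing of the differential. I expect the main obstacle to lie in exactly this step: a region might be unbounded in a direction along which $\mcS$ is constant, so that $u_{B}^{\mcFc}=0$ on a positive-dimensional flat. I would rule this out using that $\mcS$ is strictly positive and proper. The paper's discussion of non-empty truncations supports this, since $\mcS$ cannot have a positive-dimensional minimum face because regions are pointed.

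\emph{Every vertex value is critical.} Conversely, each vertex $v$ gives $T^{n}$-fixed points by Bielawski--Dancer. Any such fixed point lies in the extended core, so the $U_{1}$-action there is by a subgroup of $T^{n}$ and fixes it. Hence $\mcS(v)=\Phi$ of that point is a critical value, by Lemma~\ref{lemma:slack-function}. This proves \eqref{eqn:critical-values-slack}.

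\emph{The regularity criterion.} The equivalence \eqref{eqn:regular-value-polyptych-boundary} then follows from $\CritVal(\rho)=\CritVal(\Phi)$ together with $|\Pi_{\nu}^{\delta}|=\{\mcS=\delta\}$. For the latter I would cite the proof of Proposition~\ref{prop:slack-support}, which depends only on Lemma~\ref{lemma:slack-on-region}, Proposition~\ref{prop:cut-component-image} and Lemma~\ref{lemma:slack-function}, and so avoids circularity.
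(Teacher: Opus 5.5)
\emph{Gap.} Your first step (critical points of $\Phi$ are fibre-rotation fixed points, and these lie in $\mcE_{\nu}$), your converse inclusion, and your deduction of \eqref{eqn:regular-value-polyptych-boundary} are all sound. The middle of the argument, however, tries to prove something false: critical points of $\Phi$ need not lie over arrangement vertices.

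Every point of $X_{\nu} = \Phi^{-1}(0)$ has $w = 0$, so it is fixed by fibre rotation and is critical. Yet $\mu_{\RR}(X_{\nu}) = \Delta_{\emptyset}$, so a point over the interior of $\Delta_{\emptyset}$ has $\mcI_{x} = \varnothing$ and $\rk(\mcI_{x}) = 0 \neq n$. For $T^{\ast}\CC\PP^{1}$, for instance, the entire zero section is critical.

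This is exactly where your upgrade step breaks. With $\mcF = B = \varnothing$ one has $u_{B}^{\mcFc} = u_{\emptyset} = 0$, so no genericity argument can make that gradient non-zero. Your proposed rescue also fails for three reasons:
\begin{enumerate}[(i)]
\item $\mcS$ is not strictly positive.
\item Its minimum set $\{\mcS = 0\} = \Delta_{\emptyset}$ is full-dimensional.
\item Pointedness of the regions only says that every face \emph{contains} a vertex, not that minimum faces are vertices.
\end{enumerate}
In general, the fixed locus inside $\mcE_{A}$ is the preimage of every face $G$ of $\Delta_{A}$ for which $u_{A}$ lies in the span of the normals of the hyperplanes containing $G$. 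Such faces can be positive-dimensional.

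\emph{Fix.} The proposition concerns critical \emph{values}, so no upgrade is needed; the condition $u_{A} \in \mft_{\mcI_{x}}$ that you already derived suffices.
\begin{enumerate}[(i)]
\item Each $\langle \cdot ,\, u_{i} \rangle$ with $i \in \mcI_{x}$ equals $-\lambda_{i}$ on $H_{\mcI_{x}}$. Hence $\langle \cdot ,\, u_{A} \rangle$ is also constant there.
\item By Lemma~\ref{lemma:slack-on-region}, $\mcS$ is therefore constant on $G := \Delta_{A} \cap H_{\mcI_{x}}$. This $G$ is a non-empty face of $\Delta_{A}$, since each $\ell_{i}$ with $i \in \mcI_{x}$ has constant sign on $\Delta_{A}$.
\item By Lemma~\ref{lemma:regions-pointed}, $G$ contains an arrangement vertex $v$.
\item Lemma~\ref{lemma:slack-function} then gives $\Phi(p) = \mcS(x) = \mcS(v)$.
\end{enumerate}
This is the paper's argument in pointwise form. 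The paper takes a connected component $F$ of $M_{\nu}^{U_{1}}$, which lies in some $\mcE_{A}$ by Harada--Proudfoot. It notes that $\Phi$ is constant on $F$ and that $\mu_{\RR}(F)$ is a face of $\Delta_{A}$, then evaluates $\Phi$ at a point of $F$ lying over a vertex of that face. Your converse, using $T^{n}$-fixed points over vertices and the fact that they lie in the extended core, is a valid substitute for the paper's citation of Harada--Proudfoot's Corollary~3.3.
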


\begin{proof}
	The critical set of $\Phi$ is $M_{\nu}^{U_{1}}$, so let $F$ be a connected component of this fixed-point set, and write $\Phi|_{F} \equiv c$.
	By \cite[Proposition~3.2]{HaradaProudfoot2004}, $F$ lies in an extended-core component $\mcE_{A}$.
	Its image $\mu_{\RR}(F)$ is a non-empty face of $\Delta_{A}$ and therefore contains an arrangement vertex $v$ by Lemma~\ref{lemma:regions-pointed}.
	Choose $q \in F$ with $\mu_{\RR}(q) = v$.
	Since $F \subseteq \mcE_{\nu}$, Lemma~\ref{lemma:slack-function} gives $c = \Phi(q) = \mcS(\mu_{\RR}(q)) = \mcS(v)$, hence every critical value is therefore of the form $\mcS(v)$	for some arrangement vertex $v$.

	Conversely, \cite[Corollary~3.3]{HaradaProudfoot2004} states that every arrangement vertex is the image of a point fixed by the fibre-rotation action, so every value $\mcS(v)$ is critical, proving \eqref{eqn:critical-values-slack}.
	Finally, by the discussion in Subsection~\ref{subsection:subsection-4.2-cut-spaces}, a positive level is regular for $\rho$ if and only if it is regular for $\Phi$.
	Combining \eqref{eqn:critical-values-slack} with $|\Pi_{\nu}^{\delta}| = \{ \mcS = \delta \}$ from Proposition~\ref{prop:slack-support} then gives \eqref{eqn:regular-value-polyptych-boundary}.
\end{proof}

Let us call the connected components of $(0,\infty)\setminus\CritVal(\Phi)$ the \textbf{cut chambers of $\Phi$} and the positive critical values of $\Phi$ the \textbf{cut walls}.
We will mainly use the unbounded \textbf{outer cut chamber}
\begin{equation}
	\label{eqn:outer-cut-chamber}
	\Cout:=(c_{\max},\infty),
	\qquad \text{ where } \qquad
	c_{\max}:=\max\CritVal(\Phi).
\end{equation}
By Proposition~\ref{prop:critical-values-slack}, the set $\CritVal(\Phi)$ is finite, so $c_{\max}$ is well-defined and every $\delta \in \Cout$ is a regular cut level.
On each core component $\mcE_{A} \subseteq \mcC$, we have $\Phi = \mcS \circ \mu_{\RR}$, and the slack function $\mcS|_{\Delta_{A}}$ attains its maximum at an arrangement vertex.
Whence, $\Phi \leq c_{\max}$ on the compact core, and $\delta > c_{\max}$ implies that $\Phi^{-1}(\delta) \cap \mcC = \varnothing$.

\begin{remark}
	Our polyptych construction is closely related to the bounded polytopal complex associated by Bielawski and Dancer to a simple hypertoric arrangement \cite[\S6, Theorems~6.5, 6.7, Proposition~6.8]{BielawskiDancer2000}.
	Their complex records the incidence geometry of the \emph{bounded regions} of the arrangement, and hence of the core.
	The moment polyptych instead retains every non-empty truncated arrangement region, together with the boundary subcomplex $\Pi_{\nu}^{\delta}$ and the possibly non-primitive cutting generators $u_{A}$; this extra cut and lattice data will determine the local orbifold groups and isotropy weights later, which are required for equivariant localisation.
	The term \emph{moment polyptych} emphasises this presentation by alluding to several polytopal panels glued along common faces, rather than just the single convex support $|\Delta_{\nu}^{\leq\delta}|$.
\end{remark}

\begin{remark}
	\label{remark:polyptych-coorientation}
	The construction also depends on the chosen coorientation (and therefore on the distinguished base region $\Delta_{\emptyset}$), as reversing the coorientation of $H_{i}$ replaces its affine functional $\ell_{i}$ by $-\ell_{i}$.
	Since $\ell_{i} = \langle x ,\, u_{i} \rangle + \lambda_{i}$, this amounts to replacing $(u_{i} , \lambda_{i})$ by $(-u_{i} , -\lambda_{i})$; the underlying hyperplane $H_{i} = \{ \ell_{i} = 0 \}$ is unchanged, whilst its two cooriented half-spaces are interchanged.
	In the hypertoric presentation, this interchanges the roles of $z_{i}$ and $w_{i}$ via $( z_{i} , w_{i} ) \mapsto (w_{i} , -z_{i} )$, which preserves the holomorphic-symplectic form.
	The labels of the arrangement are changed of course, in addition to the fibre-rotation action.
	Combinatorially, this is reflected in the change of the restricted generators $u_{A}$ on each extended-core component $\mcE_{A}$, and hence of the truncating half-spaces.
	Compare Remark~\ref{remark:hyperkahler-analogues} and the references therein.
\end{remark}


\subsection{Worked Cut Space and Moment Polyptych Examples}
\label{subsection:subsection-4.4-worked-cut-spaces-moment-polyptychs}

The following four examples make the cut spaces and their moment polyptychs explicit.
In each case, we describe the truncated arrangement regions and distinguish the polyptych from its convex support.
The $A_{r}$ family also illustrates how a non-primitive cutting generator gives rise to finite orbifold isotropy at a boundary point.
We will return to these examples in Section~\ref{section:section-6-equivariant-quantisation-hypertoric-algebraic-cutting} to compute their equivariant quantisations, using the local tangent and fibre representations determined in Section~\ref{section:section-5-localisation-formulae-isotropy-data}.
For brevity, in this subsection, we shall write $\Delta_{ij} := \Delta_{\{i, j\}}$ and $\Pi_{ij} := \Pi_{\{i, j\}}$, and similarly for larger subsets.

\begin{example}[$T^{\ast}\CC\PP^{1}$]
	\label{example:TCP1-example-1}
	We consider the one-dimensional hypertoric data $u_{1} = 1$, $u_{2} = -1$, with parameter $\nu = m \in \RR_{>0}$, and choose the lift $\lambda = ( 0 , m ) \in ( \mft^{2} )^{\ast}$.
	The map $\pi : \mft^{2} \ra \mft^{1}$ has kernel generated by $( 1 , 1 )$, so the corresponding kernel torus is $K = \ker( \pi ) \cong U_{1}$.
	Its action on $T^{\ast}\CC^{2}$ has the corresponding real and complex $K$-moment maps:
	\[
		\phi_{\RR}(z , w) = \tfrac{1}{2} ( |z_{1}|^{2} + |z_{2}|^{2} - |w_{1}|^{2} - |w_{2}|^{2} ), \qquad \phi_{\CC}(z , w) = \langle z ,\, w \rangle = z_{1}w_{1} + z_{2}w_{2}.
	\]
	The hyperk\"ahler quotient is the cotangent bundle of the complex projective line, $M_{\nu} = \phi_{\HK}^{-1}(m , 0) / K \cong T^{\ast}\CC\PP^{1}$.

	Since $\W_{\CC\PP^{1}}^{1} \cong \mcO_{\CC\PP^{1}}(-2)$, we may write $M_{\nu} \cong \Tot(\mcB)$, where $\mcB := \mcO_{\CC\PP^{1}}(-2)$.
	The fibre-rotation circle acts by scalar multiplication on the fibres of $\mcB$.
	For every $\delta > 0$, the cut compactifies each affine fibre $\mcB_{p} \cong \CC$, with $p \in \CC\PP^{1}$, to $\PP(\mcB_{p} \oplus \CC) \cong \CC\PP^{1}$, and these fibrewise compactifications assemble to form the Hirzebruch surface:
	\[
		M_{\nu}^{\leq \delta} \cong \PP(\mcB\oplus\mcO_{\CC\PP^{1}}) =
		\PP( \mcO_{\CC\PP^{1} }(-2) \oplus \mcO_{ \CC\PP^{1} } ) \cong \FF_{2}.
	\]
	The chart where the $\mcO_{\CC\PP^{1}}$-coordinate is non-zero is biholomorphic to $\Tot(\mcB)$, while the locus where that coordinate vanishes is the cut divisor at infinity.
	Thus, for every $\delta > 0$, the cut is biholomorphic to the same Hirzebruch surface $\FF_{2}$; varying $\delta$ changes the K\"ahler class induced by symplectic reduction, rather than the underlying complex projective-bundle structure.

	Now let us describe the moment polyptych of the cut space.
	Identifying $( \mft^{1} )^{\ast} \cong \RR$, write $x$ for the residual moment coordinate.
	The affine-linear functionals defining the arrangement are
	\[
		\ell_{1}(x) = \langle x ,\, u_{1} \rangle + \lambda_{1} = x, \qquad
		\ell_{2}(x) = \langle x ,\, u_{2} \rangle + \lambda_{2} = m - x,
	\]
	which determine the two hyperplanes $H_{1} = \{ 0 \}$, $H_{2} = \{ m \}$, as well as the base region $\Delta_{\emptyset} = H_{1}^{+} \cap H_{2}^{+} = [ 0 , m ]$.
	Under the residual real moment map, this is the image of the zero section $X_{\nu} = \CC\PP^{1} \subset T^{\ast}\CC\PP^{1} = M_{\nu}$.
	The two unbounded cotangent directions extend this interval to the left and right, giving the regions $\Delta_{1} = (-\infty , 0]$ and $\Delta_{2} = [m , +\infty)$, and cutting at level $\delta$ truncates these two unbounded regions.
	Since $u_{1} = +1$ and $u_{2} = -1$, the corresponding generators are $u_{ \{ 1 \} } = -u_{1} = -1$ and $u_{ \{ 2 \} } = -u_{2} = +1$, hence the moment polyptych $\Delta_{\nu}^{\leq \delta}$ has three components $\Delta_{1}^{\leq \delta} = [ -\delta , 0 ]$, $\Delta_{\emptyset}^{\leq \delta} = [ 0 , m ]$, and $\Delta_{2}^{\leq \delta} = [ m , m + \delta ]$, with support $|\Delta_{\nu}^{\leq \delta}| = \mu_{\RR}(M_{\nu}^{\leq \delta}) = [ -\delta , m + \delta ]$.
	The resulting moment polyptych is shown in
	Figure~\ref{fig:polyptych-TCP1}.

	\begin{figure}[ht]
		\centering
		\begin{tikzpicture}[x=1cm,y=1cm]

			\coordinate (bL) at (-4,0);
			\coordinate (v1) at (-1.5,0);
			\coordinate (v2) at (1.5,0);
			\coordinate (bR) at (4,0);

			\draw[polyboundary] (bL) -- (bR);

			\foreach \p in {bL,v1, v2,bR}
			\node[vertex] at (\p) {};

			\node[above=4pt] at (bL) {$-\delta$};
			\node[above=4pt] at (v1) {$0$};
			\node[above=4pt] at (v2) {$m$};
			\node[above=4pt] at (bR) {$m+\delta$};

			\node[region , below=6pt] at (-2.75 , 0) {$\Delta_{1}^{\leq \delta}$};
			\node[region , below=6pt] at (0 , 0) {$\Delta_{\emptyset}^{\leq \delta} = \Delta_{\emptyset}$};
			\node[region , below=6pt] at (2.75 , 0) {$\Delta_{2}^{\leq \delta}$};
		\end{tikzpicture}

		\caption{
			The moment polyptych $\Delta_{\nu}^{\leq \delta}$ for $M_{\nu} \cong T^{\ast}\CC\PP^{1}$.
			}
		\label{fig:polyptych-TCP1}
	\end{figure}
\end{example}


\begin{example}[$A_{r}$-Resolution Family]
	\label{example:A2-resolution-example-1}
	Fix $r \in \ZZ_{> 0}$, and consider the one-dimensional hypertoric data $u_{1} = 1$, $u_{2} = \cdots = u_{r + 1} = -1$, with parameter $\nu = ( m_{1} , \ldots , m_{r} ) \in \RR_{> 0}^{r}$, where $0 < m_{1} < \cdots < m_{r}$, and choose the lift $\lambda = ( 0 , m_{1} , \ldots , m_{r} ) \in ( \mft^{r + 1} )^{\ast}$.
	The map $\pi : \mft^{r + 1} \ra \mft^{1}$ has kernel generated by $e_{1} + e_{j + 1}$ for $1 \leq j \leq r$, so the kernel torus is $K = \ker( \pi ) \cong T^{r}$.
	If an element of $K$ is written as $( k_{1} , \ldots , k_{r} )$ in the corresponding coordinates, then its action on $T^{\ast}\CC^{r + 1}$ is
	\[
		\begin{split}
			( k_{1} , \ldots , k_{r} ) \cdot & ( z_{1} , \ldots , z_{r + 1} , w_{1} , \ldots , w_{r + 1} ) \\
			                                 & =
			(
			k_{1} \cdots k_{r} z_{1} , k_{1} z_{2} , \ldots , k_{r} z_{r + 1} ,
			k_{1}^{-1} \cdots k_{r}^{-1} w_{1} , k_{1}^{-1}w_{2} , \ldots , k_{r}^{-1} w_{r + 1}
			).
		\end{split}
	\]
	The corresponding real and complex $K$-moment maps are
	\[
		\phi_{\RR}(z , w) = \tfrac{1}{2} ( | z_{1} |^{2} + | z_{j + 1} |^{2} - | w_{1} |^{2} - | w_{j + 1} |^{2} )_{ 1 \leq j \leq r },
		\quad
		\phi_{\CC}(z , w) = ( z_{1}w_{1} + z_{j + 1}w_{j + 1} )_{ 1 \leq j \leq r }.
	\]
	The hyperk\"ahler quotient $M_{\nu} = \phi_{\HK}^{-1}( \nu , 0 ) / K$ is the smooth hypertoric surface that is the minimal resolution of the $A_{r}$ Kleinian singularity $\CC^{2} / \ZZ_{r + 1}$.
	The fibre-rotation circle acts on $M_{\nu}$ by $\tau \cdot [ z , w ] = [ z , \tau w ]$, with the moment map $\Phi[ z , w ] = \tfrac{1}{2}\|w\|^{2}$.

	To describe the moment polyptych of the cut space, identify $( \mft^{1} )^{\ast} \cong \RR$ and write $x$ for the residual moment coordinate.
	The affine-linear functionals defining the arrangement are
	\[
		\ell_{1}(x) = \langle x ,\, u_{1} \rangle + \lambda_{1} = x, \qquad
		\ell_{i + 1}(x) = \langle x ,\, u_{i + 1} \rangle + \lambda_{i + 1} = m_{i} - x, \qquad 1 \leq i \leq r,
	\]
	so the arrangement consists of the $r + 1$ points $0 < m_{1} < \cdots < m_{r}$.
	The case $r = 1$ is the $T^{\ast}\CC\PP^{1}$ example above, and $r = 2$ is the $A_{2}$-resolution.

	The slack function is $\mcS(x) = \max\{ 0 ,\, -x \} + \sum_{i = 1}^{r} \max\{ 0 ,\, x - m_{i} \}$, so $\mcS(0) = \mcS(m_{1}) = 0$ and, for each $2 \leq j \leq r$, we have $\mcS(m_{j}) = ( j - 1 )m_{j} - \sum_{i < j}m_{i}$.
	These values increase with $j$, since $\mcS(m_{j + 1}) - \mcS(m_{j}) = j( m_{j + 1} - m_{j} ) > 0$, and hence $c_{\max} = \mcS(m_{r}) = ( r - 1 )m_{r} - \sum_{i < r}m_{i}$.

	For the remainder of this example, assume that $\delta > c_{\max}$, so that $\delta$ is a regular level in the outer cut chamber, and the resulting cut $M_{\nu}^{\leq \delta}$ is a compact K\"ahler orbifold surface.
	For $r \geq 2$, the compact core has $r$ components, corresponding to $[0 , m_{1}]$ and the intervals $[m_{i - 1} , m_{i}]$ for $2 \leq i \leq r$.
	Unlike in the first example, the cut therefore cannot be described by the same fibrewise compactification over a single compact base component.
	At this outer-chamber level, the bounded regions remain unchanged, and the only two unbounded regions are truncated.

	On the left-hand region $\Delta_{1} = (-\infty , 0]$, the generator and constant are $u_{\{1\}} = -1$ and $\lambda_{\{1\}} = 0$, so $\Delta_{1}^{\leq \delta} = [-\delta , 0]$.
	On the rightmost region, we have $A = \{ 2 , \ldots, r + 1 \}$, so $u_{A} = r$ and $\lambda_{A} = \sum_{i = 1}^{r} m_{i}$, which gives the cutting inequality $rx \leq \lambda_{A} + \delta$.
	The moment polyptych therefore has the $r + 2$ components
	\[
		\Delta_{1}^{\leq \delta} = [ -\delta , 0 ], \quad
		\Delta_{\emptyset}^{\leq \delta} = [ 0 , m_{1} ], \quad
		\Delta_{2 \ldots i}^{\leq \delta} = [ m_{i-1} , m_{i} ] \quad
		(2 \leq i \leq r),
		\quad
		\Delta_{A}^{\leq \delta} = [ m_{r} , (\lambda_{A} + \delta) / r ],
	\]
	with support $|\Delta_{\nu}^{\leq \delta}| = [ -\delta , ( \lambda_{A} + \delta ) / r ]$.

	For $r \geq 2$, the right endpoint is located at $(\lambda_{A} + \delta) / r$, as opposed to $m_{r} + \delta$.
	This reflects the fact that the restricted fibre-rotation generator $u_{A} = r$ is non-primitive on $\mcE_{A}$, which produces the local isotropy $\ZZ / r\ZZ$ at the rightmost boundary vertex.
	On the other hand, the left-hand cut vertex has the primitive cutting generator $u_{\{1\}} = -1$, while each arrangement vertex has a single primitive active normal, and hence they only have trivial local isotropy.

	The moment polyptych for $r = 2$ is shown in
	Figure~\ref{fig:polyptych-A2}; for $r \geq 3$, the polyptych differs only in having $r - 2$ further interior vertices at $m_{3} , \ldots , m_{r}$ and the right-hand endpoint $( \lambda_{A} + \delta ) / r$ in place of $( m_{1} + m_{2} + \delta ) / 2$.

	\begin{figure}[ht]
		\centering
		\begin{tikzpicture}[x=1cm , y=1cm]

			\coordinate (bL) at (-4.4 , 0);
			\coordinate (v1) at (-1.8 , 0);
			\coordinate (v2) at ( 0.4 , 0);
			\coordinate (v3) at ( 2.2 , 0);
			\coordinate (bR) at ( 4.6 , 0);

			\draw[polyboundary] (bL) -- (bR);

			\foreach \p in {bL,v1,v2,v3,bR}
			\node[vertex] at (\p) {};

			\node[above=4pt] at (bL) {$-\delta$};
			\node[above=4pt] at (v1) {$0$};
			\node[above=4pt] at (v2) {$m_{1}$};
			\node[above=4pt] at (v3) {$m_{2}$};
			\node[above=4pt] at (bR)
			{$( m_{1} + m_{2} + \delta ) / 2$};

			\node[region, below=8pt] at (-3.1 , 0) {$\Delta_{1}^{\leq \delta}$};
			\node[region, below=8pt] at (-0.7 , 0) {$\Delta_{\emptyset}^{\leq \delta} = \Delta_{\emptyset}$};
			\node[region, below=8pt] at ( 1.3 , 0) {$\Delta_{2}^{\leq \delta}$};
			\node[region, below=8pt] at ( 3.4 , 0) {$\Delta_{23}^{\leq \delta}$};

		\end{tikzpicture}
		\caption{
				The moment polyptych $\Delta_{\nu}^{\leq \delta}$ for the cut of the $A_{2}$-resolution, with $\delta>m_{2} - m_{1}$.
				The rightmost endpoint reflects the non-primitive restricted fibre-rotation generator $u_{\{ 2 , 3 \}} = 2$.
				The filled dots mark the vertices.
			}
		\label{fig:polyptych-A2}
	\end{figure}
\end{example}


\begin{example}[$T^{\ast}\CC\PP^{2}$]
	\label{example:TCP2-example-1}
	Next, we consider the two-dimensional hypertoric data $u_{1} = ( 1 , 0 )$, $u_{2} = ( 0 , 1 )$, and $u_{3} = ( -1 , -1 )$, with parameter $\nu = m \in \RR_{>0}$, and choose the lift $\lambda = ( 0 , 0 , m ) \in ( \mft^{3} )^{\ast}$.
	The map $\pi : \mft^{3} \ra \mft^{2}$ has kernel generated by $( 1 , 1 , 1 )$, so the kernel torus is $K = \ker( \pi ) \cong U_{1}$.
	It acts on $T^{\ast}\CC^{3}$ with corresponding real and complex moment maps
	\[
		\phi_{\RR}(z , w) = \tfrac{1}{2} ( \|z\|^{2} - \|w\|^{2} ),
		\quad
		\phi_{\CC}(z , w) = \langle z ,\, w \rangle.
	\]
	The hyperk\"ahler quotient is $M_{\nu} = \phi_{\HK}^{-1}(m , 0) / K$, which is the cotangent bundle of the complex projective plane, $M_{\nu} \cong T^{\ast}\CC\PP^{2} \cong \Tot(\Omega_{\CC\PP^{2}}^{1})$.

	Let $\mcB := \Omega_{\CC\PP^{2}}^{1}$, so that $M_{\nu} \cong \Tot(\mcB)$.
	As in Example~\ref{example:TCP1-example-1}, fibre rotation acts by scalar multiplication on the cotangent fibres.
	For every $\delta > 0$, the cut compactifies each fibre $\mcB_{p} \cong \CC^{2}$, with $p \in \CC\PP^{2}$, to $\PP( \mcB_{p} \oplus \CC) \cong \CC\PP^{2}$, hence, as a complex manifold,
	\[
		M_{\nu}^{\leq\delta}
		\cong
		\PP(\mcB\oplus\mcO_{\CC\PP^{2}})
		\cong
		\PP(\Omega_{\CC\PP^{2}}^{1}\oplus\mcO_{\CC\PP^{2}}).
	\]
	Again, varying $\delta$ changes the reduced K\"ahler class on this fixed complex projective bundle.

	Let us describe its moment polyptych.
	Identifying $( \mft^{2} )^{\ast} \cong \RR^{2}$, write $x = ( x_{1} , x_{2} )$.
	The affine-linear functionals defining the arrangement are $\ell_{1}(x) = x_{1}$, $\ell_{2}(x) = x_{2}$, and
	$\ell_{3}(x) = m - x_{1} - x_{2}$, so the bounded distinguished base region is the $m$-fold dilate of the standard simplex $\Delta_{\emptyset} = \{ \ell_{i} \geq 0 ~|~ 1 \leq i \leq 3 \} = \Conv\{(0 , 0) , (m , 0) , (0 , m)\}$, i.e., the moment image of the zero section $X_{\nu} = \CC\PP^{2}$.
	The six unbounded regions consist of three regions adjacent across the edges of the simplex and three corner regions.
	Cutting truncates all six, and Proposition~\ref{prop:slack-support} gives the hexagonal support
	\[
		|\Delta_{\nu}^{\leq \delta}| = \mu_{\RR}(M_{\nu}^{\leq \delta}) = \Conv\{(-\delta , 0) , (0 , -\delta) , (m+\delta , -\delta) , (m+\delta , 0) , (0 , m+\delta) , (-\delta , m+\delta)\}.
	\]
	The original simplex is the central bounded cell, and the subdivision of the hexagon into this cell and the six truncated outer regions is shown in Figure~\ref{fig:polyptych-TCP2}.

	\begin{figure}[ht]
		\centering
		\begin{tikzpicture}[scale=1.1]

			\tikzset{
				polyboundary/.style={line width=0.9pt},
				polyedge/.style={line width=0.45pt},
				region/.style={font=\scriptsize , inner sep=1pt},
				hyperplane/.style={dashed, line width=0.45pt},
			}

			\def\mval{3.2}
			\def\dval{1.0}

			\coordinate (v12) at (0, 0);
			\coordinate (v23) at (\mval, 0);
			\coordinate (v13) at (0, \mval);

			\coordinate (bL)  at (-\dval, 0);
			\coordinate (bB)  at (0, -\dval);
			\coordinate (bBR) at ({\mval + \dval}, -\dval);
			\coordinate (bR)  at ({\mval + \dval}, 0);
			\coordinate (bT)  at (0, {\mval + \dval});
			\coordinate (bTL) at (-\dval, {\mval + \dval});

			\fill[gray!10] (v12) -- (v23) -- (v13) -- cycle;
			\draw[polyboundary]
			(bL) -- (bB) -- (bBR) -- (bR)
			-- (bT) -- (bTL) -- cycle;
			\draw[polyedge] (bB)  -- (bT);
			\draw[polyedge] (bL)  -- (bR);
			\draw[polyedge] (bTL) -- (bBR);

			\foreach \p in {v12,v23,v13,bL,bB,bBR,bR,bT,bTL}
			\fill (\p) circle (1.35pt);

			\node[region] at (0.95 , 0.92) {$\Delta_{\emptyset}$};
			\node[font=\scriptsize]
			at ($(bL)!0.5!(bTL)+(-0.38 , 0)$)
			{$\Delta_{1}^{\leq \delta}$};

			\node[font=\scriptsize]
			at ($(bB)!0.5!(bBR)+(0 , -0.38)$)
			{$\Delta_{2}^{\leq \delta}$};

			\node[font=\scriptsize]
			at ($(bR)!0.5!(bT)+(0.28 , 0.28)$)
			{$\Delta_{3}^{\leq \delta}$};

			\node[font=\scriptsize]
			at ($(bL)!0.5!(bB)+(-0.24 , -0.24)$)
			{$\Delta_{12}^{\leq \delta}$};

			\node[font=\scriptsize]
			at ($(bT)!0.5!(bTL)+(0 , 0.32)$)
			{$\Delta_{13}^{\leq \delta}$};

			\node[font=\scriptsize]
			at ($(bBR)!0.5!(bR)+(0.38 , 0)$)
			{$\Delta_{23}^{\leq \delta}$};

		\end{tikzpicture}
		\caption{
			The moment polyptych $\Delta_{\nu}^{\leq \delta}$ for $M_{\nu} \cong T^{\ast}\CC\PP^{2}$.
			The outer hexagon is its support, the internal subdivision records the seven maximal cells, and the filled dots mark the vertices.
		}
		\label{fig:polyptych-TCP2}
	\end{figure}
\end{example}


\begin{example}[$T^{\ast}(\CC\PP^{1} \times \CC\PP^{1})$]
	\label{example:TCP1xCP1-example-1}
	Lastly, we consider the two-dimensional hypertoric data $u_{1} = ( 1 , 0 )$, $u_{2} = ( 0 , 1 )$, $u_{3} = ( -1 , 0 )$, and $u_{4} = ( 0 , -1 )$, with parameter $\nu = ( m_{1} , m_{2} ) \in \RR_{>0}^{2}$, and choose the lift $\lambda = ( 0 , 0 , m_{1} , m_{2} ) \in ( \mft^{4} )^{\ast}$.
	The map $\pi : \mft^{4} \ra \mft^{2}$ has kernel generated by $( 1 , 0 , 1 , 0 )$ and $( 0 , 1 , 0 , 1 )$, so the kernel torus is $K = \ker( \pi ) \cong U_{1}^{2}$ and acts on $T^{\ast}\CC^{4}$ with corresponding real and complex moment maps
	\[
		\begin{split}
			\phi_{\RR}(z , w) &= \tfrac{1}{2} ( |z_{1}|^{2} + |z_{3}|^{2} - |w_{1}|^{2} - |w_{3}|^{2} , |z_{2}|^{2} + |z_{4}|^{2} - |w_{2}|^{2} - |w_{4}|^{2} ), \\
			\phi_{\CC}(z , w) &= ( z_{1}w_{1} + z_{3}w_{3} , z_{2}w_{2} + z_{4}w_{4} ).
		\end{split}
	\]
	The hyperk\"ahler quotient is $M_{\nu} = \phi_{\HK}^{-1}(( m_{1} , m_{2} ) , 0) / K \cong T^{\ast}(\CC\PP^{1} \times \CC\PP^{1})$ and, after setting $X_{\nu} = \CC\PP^{1} \times \CC\PP^{1}$, we have $M_{\nu} \cong T^{\ast}X_{\nu} \cong \Tot(\Omega_{X_{\nu}}^{1})$, and $\Omega_{X_{\nu}}^{1} \cong \mcO(-2 , 0) \oplus \mcO(0 , -2)$.

	Set $\mcB := \W_{X_{\nu}}^{1}$.
	As in the preceding cotangent-bundle examples, the fibre-rotation action is the scalar multiplication on the cotangent fibres.
	For every $\delta > 0$, the cut compactifies each $\CC^{2}$-fibre to $\CC\PP^{2}$ and gives, as a complex manifold,
	\[
		M_{\nu}^{\leq\delta}
		\cong
		\PP(\mcB\oplus\mcO_{X_{\nu}})
		\cong
		\PP(\mcO(-2 , 0)\oplus\mcO(0 , -2)\oplus\mcO).
	\]
	As before, varying $\delta$ changes the reduced K\"ahler class rather than the entire complex projective-bundle model.

	We now describe the moment polyptych.
	Identifying $( \mft^{2} )^{\ast} \cong \RR^{2}$, write $x = ( x_{1} , x_{2} )$.
	The affine-linear functionals defining the arrangement are $\ell_{1}(x) = x_{1}$, $\ell_{2}(x) = x_{2}$, $\ell_{3}(x) = m_{1} - x_{1}$, and $\ell_{4}(x) = m_{2} - x_{2}$.
	Thus, the compact base region is the rectangle $\Delta_{\emptyset} = [ 0 , m_{1} ] \times [ 0 , m_{2} ]$, which is the real moment-map image of the zero section $X_{\nu} = \CC\PP^{1} \times \CC\PP^{1}$.
	There are four unbounded side regions adjacent across its edges and four corner regions.
	Cutting these eight regions gives the octagonal support
	\[
		\scalebox{0.88}{
			$|\Delta_{\nu}^{\leq \delta}| = \mu_{\RR}(M_{\nu}^{\leq \delta}) = \Conv\{ (-\delta , 0) , (0 , -\delta) , (m_{1} , -\delta) , (m_{1} + \delta , 0), (m_{1} + \delta , m_{2}) , (m_{1},m_{2} + \delta) , (0 , m_{2} + \delta) , (-\delta , m_{2}) \}$.
			}
	\]
	The original rectangle is the central bounded cell, and Figure~\ref{fig:polyptych-TCP1xCP1} shows the subdivision of the octagonal support into this rectangle and the eight truncated outer cells.

	\begin{figure}[ht]
		\centering
		\begin{tikzpicture}[scale=1.0]

			\tikzset{
				polyboundary/.style={line width=0.9pt},
				polyedge/.style={line width=0.45pt},
				region/.style={font=\scriptsize , inner sep=1pt},
			}

			\def\moneval{4.2}
			\def\mtwoval{2.8}
			\def\dval{1.1}

			\coordinate (v12) at (0, 0);
			\coordinate (v23) at (\moneval, 0);
			\coordinate (v34) at (\moneval, \mtwoval);
			\coordinate (v14) at (0, \mtwoval);

			\coordinate (bL1) at (-\dval, 0);
			\coordinate (bB1) at (0, -\dval);
			\coordinate (bB2) at (\moneval, -\dval);
			\coordinate (bR1) at ({\moneval + \dval}, 0);
			\coordinate (bR2) at ({\moneval + \dval}, \mtwoval);
			\coordinate (bT2) at (\moneval, {\mtwoval + \dval});
			\coordinate (bT1) at (0, {\mtwoval + \dval});
			\coordinate (bL2) at (-\dval, \mtwoval);

			\fill[gray!10] (v12) -- (v23) -- (v34) -- (v14) -- cycle;

			\draw[polyboundary]
			(bL1) -- (bB1) -- (bB2) -- (bR1) -- (bR2) -- (bT2) -- (bT1) -- (bL2) -- cycle;

			\draw[polyedge] (bB1) -- (bT1);
			\draw[polyedge] (bB2) -- (bT2);
			\draw[polyedge] (bL1) -- (bR1);
			\draw[polyedge] (bL2) -- (bR2);

			\foreach \p in {v12, v23, v34, v14, bL1, bB1, bB2, bR1, bR2, bT2, bT1, bL2}
			\fill (\p) circle (1.35pt);

			\node[region] at ({0.5*\moneval}, {0.5*\mtwoval}) {$\Delta_{\emptyset}^{\leq \delta} = \Delta_{\emptyset}$};

			\node[font=\scriptsize, anchor=east]  at (-1.20, {0.5*\mtwoval}) {$\Delta_{1}^{\leq \delta}$};
			\node[font=\scriptsize, anchor=north] at ({0.5*\moneval}, -1.10) {$\Delta_{2}^{\leq \delta}$};
			\node[font=\scriptsize, anchor=west]  at ({\moneval+1.20}, {0.5*\mtwoval}) {$\Delta_{3}^{\leq \delta}$};
			\node[font=\scriptsize, anchor=south] at ({0.5*\moneval}, {\mtwoval + 1.10}) {$\Delta_{4}^{\leq \delta}$};

			\node[font=\scriptsize, anchor=north east] at (-0.65, -0.55) {$\Delta_{12}^{\leq \delta}$};
			\node[font=\scriptsize, anchor=north west] at ({\moneval + 0.65}, -0.55) {$\Delta_{23}^{\leq \delta}$};
			\node[font=\scriptsize, anchor=south west] at ({\moneval + 0.65}, {\mtwoval + 0.55}) {$\Delta_{34}^{\leq \delta}$};
			\node[font=\scriptsize, anchor=south east] at (-0.65, {\mtwoval + 0.55}) {$\Delta_{14}^{\leq \delta}$};
		\end{tikzpicture}
		\caption{
			The moment polyptych $\Delta_{\nu}^{\leq \delta}$ for $M_{\nu} \cong T^{\ast}(\CC\PP^{1} \times \CC\PP^{1})$.
			The outer octagon is its support, the internal subdivision records the nine maximal cells, and the filled dots mark the vertices.
		}
		\label{fig:polyptych-TCP1xCP1}
	\end{figure}
\end{example}


\subsection{Properties of Cut Spaces}
\label{subsection:subsection-4.5-cut-space-properties}

These examples illustrate the local geometry of the truncations defining the moment polyptych.
Returning to the general setting, let us establish two structural properties that are required in our localisation argument.
First, the restriction of the moment polyptych to a flat is compatible with cutting the corresponding hypertoric subvariety.
Second, this dimensional reduction can then be used to determine when finite stabilisers necessarily occur on the cut boundary.

Given a flat $H_{\mcF} \in L(\mcA)$, define the \textbf{restriction of the moment polyptych to the flat} $H_{\mcF}$ by
\[
	\Delta_{\nu}^{\leq \delta}|_{H_{\mcF}} :=
	\bigcup_{\substack{ A \subseteq \{ 1 , \ldots , N \} \\ \Delta_{A}^{\leq \delta} \cap H_{\mcF} \neq \varnothing } }
	\Faces\left( \Delta_{A}^{\leq \delta} \cap H_{\mcF} \right).
\]
Since each $\ell_{i}$ with $i \in \mcF$ is either everywhere non-negative or everywhere non-positive on $\Delta_{A}$, the equalities $\ell_{i} = 0$ defining $H_{\mcF} = \bigcap_{i \in \mcF} H_{i}$ cut out a face of $\Delta_{A}^{\leq \delta}$, and thus every non-empty intersection $\Delta_{A}^{\leq \delta} \cap H_{\mcF}$ is a face of $\Delta_{A}^{\leq \delta}$.
Hence, $\Delta_{\nu}^{\leq \delta}|_{H_{\mcF}}$ is a polytopal subcomplex of $\Delta_{\nu}^{\leq \delta}$ with support $|\Delta_{\nu}^{\leq \delta}| \cap H_{\mcF}$ in $(\mft^{n})^{\ast}$.

For every $B \subseteq \mcFc$, let $\Delta_{B}^{\crk\mcF}$ be the augmented sign region \eqref{eqn:augmented-sign-region}.
We denote its \emph{ambient truncation} by
\begin{equation}
	\label{eqn:ambient-truncation}
	\Delta_{B}^{\crk \mcF, \leq \delta} := \Delta_{B}^{\crk \mcF} \cap
	\left\{ x \in ( \mft^{\crk \mcF} )^{\ast} ~|~ \langle x ,\, u_{B}^{\mcFc} \rangle \leq \delta + \lambda_{B}^{\mcFc} \right\},
\end{equation}
where $u_{B}^{\mcFc}$ and $\lambda_{B}^{\mcFc}$ are as fixed in Remark~\ref{remark:subarrangement-regions}.

\begin{remark}[Ambient and Intrinsic Normalisation]
	The terms \emph{ambient} and \emph{intrinsic} refer to the normalisation of the fibre-rotation moment map, rather than to the vector space in which a truncation is written.

	On the \emph{ambient} hypertoric variety, the fibre-rotation and residual real moment maps are $\Phi : M_{\nu} \ra \RR$ and $\mu_{\RR} : M_{\nu} \ra (\mft^{n})^{\ast}$, respectively.
	Their restrictions to the hypertoric subvariety $M_{\mcF} \subseteq M_{\nu}$ retain this \emph{ambient} normalisation; in particular, the residual moment image lies in the flat $H_{\mcF} \subseteq (\mft^{n})^{\ast}$.

	\emph{Intrinsically}, on the other hand, the effective hypertoric orbifold $M_{\mcF}^{\eff}$ has the residual real moment map $\mu_{\RR}^{\mcF} : M_{\mcF}^{\eff} \ra (\mft^{\crk\mcF})^{\ast}$ for the $T^{\crk \mcF}$-action and the standard fibre-rotation moment map $\Phi_{\mcF}^{\std} : M_{\mcF}^{\eff} \ra \RR$.
	The affine isomorphism $\eta_{y_{0}} : (\mft^{\crk\mcF})^{\ast} \overset{\sim}{\longrightarrow} H_{\mcF}$ identifies the \emph{intrinsic} moment-coordinate space $(\mft^{\crk\mcF})^{\ast}$ with the \emph{ambient} flat $H_{\mcF} \subseteq (\mft^{n})^{\ast}$.

	Under the identification of the underlying coarse spaces, we have $\Phi|_{M_{\mcF}} = \Phi_{\mcF}^{\std} + C_{\mcF}$.
	Thus, although the ambient truncations $\Delta_{B}^{\crk\mcF,\leq\delta}$ live in $(\mft^{\crk\mcF})^{\ast}$, they use the level $\delta$ inherited from the ambient Hamiltonian $\Phi$.
	That is to say, they are the intrinsic truncations at level 	$\delta - C_{\mcF}$.
\end{remark}

\begin{proposition}
	\label{prop:cut-subspace-subpolyptych}
	For a subset $\mcF \subseteq \{ 1 , \ldots , N \}$, let $H_{\mcF} \in L(\mcA)$ be the corresponding flat, and let $j : M_{\mcF} \hookrightarrow M_{\nu}$ be the associated hypertoric subvariety with the orbifold presentation induced from $M_{\nu}$.
	Let $\delta > 0$ be a regular value of the ambient cutting moment map $\rho : M_{\nu} \times \CC \ra \RR$.
	Then cutting via the restricted ambient fibre-rotation moment map $\Phi|_{M_{\mcF}} : M_{\mcF} \ra \RR$ at the level $\delta$ gives
	\[
		M_{\mcF}^{\leq \delta} := ( M_{\mcF} \times \CC )\sslash_{\delta} U_{1} = \bigl( \rho^{-1}(\delta) \cap ( M_{\mcF} \times \CC ) \bigr) / U_{1}.
	\]

	On the effective hypertoric orbifold $M_{\mcF}^{\eff}$, the same underlying cut is obtained intrinsically from $\Phi_{\mcF}^{\std} : M_{\mcF}^{\eff} \ra \RR$ at the shifted level $\delta - C_{\mcF}$.
	Let $\Delta_{\mcF}^{\leq \delta}$ be the polytopal complex in $(\mft^{\crk \mcF})^{\ast}$ generated by the non-empty ambient truncations $\Delta_{B}^{\crk \mcF, \leq \delta}$ of \eqref{eqn:ambient-truncation}, with support
	\[
		|\Delta_{\mcF}^{\leq \delta}| = \bigcup\nolimits_{ \substack{ B \subseteq \mcFc \\ \Delta_{B}^{\crk \mcF, \leq \delta} \neq \varnothing } }
		\Delta_{B}^{\crk \mcF, \leq \delta} \subseteq (\mft^{\crk \mcF})^{\ast}.
	\]
	Then:
	\begin{enumerate}[(i)]
		\item the inclusion $j : M_{\mcF}\hookrightarrow M_{\nu}$
		descends to a closed K\"ahler embedding $j^{\leq \delta} : M_{\mcF}^{\leq \delta} \hookrightarrow M_{\nu}^{\leq \delta}$ after cutting;
		\label{prop:cut-subspace-subpolyptych-1}
		\item for the chosen base point $y_{0} \in H_{\mcF}$, the affine isomorphism $\eta_{y_{0}} : (\mft^{\crk \mcF})^{\ast} \overset{\sim}{\longrightarrow} H_{\mcF} \subseteq (\mft^{n})^{\ast}$ identifies the moment polyptych $\Delta_{\mcF}^{\leq \delta}$, written in intrinsic coordinates but using the ambient cut level $\delta$, with the subcomplex obtained by restricting the ambient moment polyptych to $H_{\mcF}$:
		\[
			\eta_{y_{0}} : \Delta_{\mcF}^{\leq\delta} \overset{\sim}{\longrightarrow} \left. \Delta_{\nu}^{\leq \delta} \right|_{H_{\mcF}} \subseteq \Delta_{\nu}^{\leq \delta}.
		\]
		In particular, on supports, we have $\eta_{y_{0}} \bigl( |\Delta_{\mcF}^{\leq \delta}| \bigr) = |\Delta_{\nu}^{\leq \delta}| \cap H_{\mcF} \subseteq(\mft^{n})^{\ast}$.
		\label{prop:cut-subspace-subpolyptych-2}
	\end{enumerate}
\end{proposition}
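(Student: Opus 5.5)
Part (i) is a statement about reduction of a closed invariant subspace, and part (ii) is a statement about polytopes. I will treat them separately.

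\emph{Well-definedness and part (i).} First I check that $M_{\mcF} \times \CC$ is a closed subspace of $M_{\nu} \times \CC$ that is invariant under the cutting circle. Theorem~\ref{thm:hypertoric-subvariety} shows $M_{\mcF}$ is cut out by $z_{i} = w_{i} = 0$ for $i \in \mcF$, and fibre rotation preserves these equations. The restricted cutting moment map is therefore $\rho|_{M_{\mcF} \times \CC}$.

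Next I show that $\delta$ is regular for the restriction. On $\rho^{-1}(\delta)$ the ambient circle acts locally freely, and this property is inherited by any invariant subset. So $\delta$ is a regular value of the restricted map, exactly as in the argument given for the cut components $\mcE_{A}^{\leq \delta}$. This gives the displayed description of $M_{\mcF}^{\leq \delta}$.

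The closed embedding $j$ is $U_{1}$-equivariant. It therefore induces an injective continuous map $\bigl(\rho^{-1}(\delta) \cap (M_{\mcF} \times \CC)\bigr)/U_{1} \to \rho^{-1}(\delta)/U_{1}$. This map is closed because its source is compact. It is holomorphic and pulls back the reduced K\"ahler form to the reduced form, because before reduction the inclusion $M_{\mcF} \times \CC \hookrightarrow M_{\nu} \times \CC$ preserves $\w_{\RR} \oplus \w_{\aux}$. I will check the orbifold-chart compatibility in local slices: the slice for the restricted action is the intersection of the ambient slice with the closed invariant suborbifold, and the local groups are those of the induced presentation.

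For the intrinsic statement I use the identity $\Phi|_{M_{\mcF}} = \Phi_{\mcF}^{\std} + C_{\mcF}$. It shows that the level set $\{\Phi|_{M_{\mcF}} + \tfrac12|\zeta|^{2} = \delta\}$ coincides with $\{\Phi_{\mcF}^{\std} + \tfrac12|\zeta|^{2} = \delta - C_{\mcF}\}$. The fibre-rotation actions also agree, because deleting the inactive point factors leaves the standard rotation. Hence the two quotients have the same underlying space.

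\emph{Part (ii).} This reduces to a cell-by-cell comparison in the arrangement. Take any $A$ with $\Delta_{A}^{\leq \delta} \cap H_{\mcF} \neq \varnothing$ and set $B = A \cap \mcFc$. Remark~\ref{remark:subarrangement-regions} gives $\eta_{y_{0}}(\Delta_{B}^{\crk\mcF}) = \Delta_{A} \cap H_{\mcF}$.

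I then compare truncating functionals. By \eqref{eqn:corank-dual-subalgebra-identification-result}, $\ell_{i} \circ \eta_{y_{0}} = \ell_{i}^{\mcFc}$ for $i \in \mcFc$, and $\ell_{i} \circ \eta_{y_{0}} = 0$ for $i \in \mcF$. It follows that
\[
\langle \eta_{y_{0}}(x), u_{A}\rangle - \lambda_{A} = -\sum_{i\in A}\ell_{i}(\eta_{y_{0}}(x)) = -\sum_{i \in B}\ell_{i}^{\mcFc}(x) = \langle x, u_{B}^{\mcFc}\rangle - \lambda_{B}^{\mcFc}.
\]
Consequently $\eta_{y_{0}}(\Delta_{B}^{\crk\mcF,\leq\delta}) = \Delta_{A}^{\leq\delta} \cap H_{\mcF}$. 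Equivalently, one can use \eqref{eqn:slack-restriction} together with Lemma~\ref{lemma:slack-on-region} on both sides.

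Conversely, suppose $\Delta_{B}^{\crk\mcF,\leq\delta}$ is non-empty for some $B \subseteq \mcFc$. For every $A'$ with $A' \cap \mcFc = B$ (for instance $A' = B$ or $A' = B \cup \mcF$), the set $\Delta_{A'}^{\leq \delta}$ meets $H_{\mcF}$ in this image. So the two families of maximal cells correspond bijectively, up to the harmless redundancy that distinct $A$ with the same $B$ give the same face.

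Since $\eta_{y_{0}}$ is an affine isomorphism, it carries faces to faces. Therefore it identifies the generated complexes, $\Delta_{\mcF}^{\leq\delta} \cong \Delta_{\nu}^{\leq\delta}|_{H_{\mcF}}$. Taking unions gives the statement on supports, and this also matches $\{\mcS \le \delta\} \cap H_{\mcF}$ via \eqref{eqn:slack-restriction} and Proposition~\ref{prop:slack-support}.

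\emph{Main obstacle.} I expect the main difficulty to be the orbifold-level part of (i): confirming that the induced map is an embedding of orbifolds with the induced, possibly ineffective, presentation, and not merely of coarse spaces. The plan is to work upstairs with $\phi_{\HK}^{-1}(\nu,0) \times \CC$ and reduce by $K \times U_{1}$ in one step. Cutting is reduction by $K \times U_{1}$ of $T^{\ast}\CC^{N} \times \CC$, and the subspace is the coordinate subspace $T^{\ast}\CC^{\mcFc} \times \CC$. Both local groups are then stabilisers in $K \times U_{1}$ of the same points, so they agree by construction.
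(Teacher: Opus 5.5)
Your proposal is correct and follows the paper's proof essentially step for step. Part (i) comes from the $U_{1}$-equivariance of the closed coordinate inclusion, the inherited local freeness on $\rho^{-1}(\delta) \cap ( M_{\mcF} \times \CC )$, and preservation of the K\"ahler forms before reduction. Part (ii) comes from setting $B = A \cap \mcFc$, using \eqref{eqn:corank-dual-subalgebra-identification-result} to match $\langle \eta_{y_{0}}(x) ,\, u_{A} \rangle - \lambda_{A}$ with $\langle x ,\, u_{B}^{\mcFc} \rangle - \lambda_{B}^{\mcFc}$, and noting that every ambient truncation meeting $H_{\mcF}$ restricts to one of these images.

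Two of your steps are refinements of that same route rather than a different argument. Your one-step reduction by $K \times U_{1}$ pins down the induced orbifold charts more explicitly than the paper does. Your level-set argument for the shift to $\delta - C_{\mcF}$ replaces the paper's argument, which goes through $\mcS_{\mcF}^{\std}$ and the moment images of the extended-core components.
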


\begin{proof}
	The coordinate inclusion $T^{\ast}\CC^{\mcFc} \hookrightarrow T^{\ast}\CC^{N}$ from \eqref{eqn:subvariety-closed-embedding} is closed, preserves the hyperk\"ahler forms, and is equivariant for the fibre-rotation action.
	The induced embedding $j : M_{\mcF} \hookrightarrow M_{\nu}$ is therefore $U_{1}$-equivariant, and the product embedding $j \times \Identity_{\CC} : M_{\mcF} \times \CC \hookrightarrow M_{\nu} \times \CC$ restricts to a closed $U_{1}$-invariant inclusion of cut levels.
	Since the ambient action is locally free at the regular level $\delta$, so is its restriction, and quotienting gives a closed embedding $j^{\leq \delta} : M_{\mcF}^{\leq \delta} \hookrightarrow M_{\nu}^{\leq \delta}$.
	The product embedding $j \times \Identity_{\CC}$ preserves the K\"ahler forms, so after reduction the K\"ahler form on $M_{\mcF}^{\leq \delta}$ is $(j^{\leq \delta})^{\ast} \w_{\RR}^{\leq \delta}$, proving \ref{prop:cut-subspace-subpolyptych-1}.

	For \ref{prop:cut-subspace-subpolyptych-2}, recall that the affine isomorphism $\eta_{y_{0}} : (\mft^{\crk \mcF})^{\ast} \overset{\sim}{\rightarrow} H_{\mcF} \subseteq (\mft^{n})^{\ast}$ identifies the intrinsic coordinate space with the ambient flat.
	Fix $B \subseteq \mcFc$ and set $A := B \cup \mcF$, so that Proposition~\ref{prop:restriction-identification} and Remark~\ref{remark:subarrangement-regions} give $\eta_{y_{0}}(\Delta_{B}^{\crk \mcF}) = \Delta_{A} \cap H_{\mcF}$.
	When non-empty, the region $\Delta_{B}^{\crk \mcF}$ in $(\mft^{\crk \mcF})^{\ast}$ corresponds to a face of the ambient region $\Delta_{A}$ in $(\mft^{n})^{\ast}$.
	For $x \in (\mft^{\crk \mcF})^{\ast}$, put $y := \eta_{y_{0}}(x) \in H_{\mcF}$ so that, by \eqref{eqn:corank-dual-subalgebra-identification-result}, we have $\ell_{i}(y) = 0$ for $i \in \mcF$ and $\ell_{i}(y) = \ell_{i}^{\mcFc}(x)$ for $i \in \mcFc$. Therefore,
	\[
		\langle y ,\, u_{A} \rangle - \lambda_{A} = -\sum\nolimits_{i \in A} \ell_{i}(y) = -\sum\nolimits_{i \in B} \ell_{i}^{\mcFc}\!(x) = \langle x ,\, u_{B}^{\mcFc} \rangle - \lambda_{B}^{\mcFc},
	\]
	hence the truncating inequality in the intrinsic coordinate $x$ agrees with the ambient truncating inequality at $y = \eta_{y_{0}}(x)$.
	Thus, from \eqref{eqn:ambient-truncation}, we get
	\begin{equation}
		\label{eqn:subregion-identification}
		\eta_{y_{0}}(\Delta_{B}^{\crk \mcF,\leq \delta}) = \Delta_{B \cup \mcF}^{\leq \delta} \cap H_{\mcF}.
	\end{equation}

	To express the same truncation using the intrinsic normalisation, recall that the ambient slack function $\mcS : (\mft^{n})^{\ast} \ra \RR_{\geq 0}$ and the intrinsic slack 	function $\mcS_{\mcF}^{\std} : (\mft^{\crk \mcF})^{\ast} \ra \RR_{\geq 0}$ satisfy $\mcS(\eta_{y_{0}}(x)) = \mcS_{\mcF}^{\std}(x) + C_{\mcF}$ by \eqref{eqn:slack-restriction}.
	Hence,
	\[
		\Delta_{B}^{\crk \mcF,\leq \delta} = \left\{ x \in \Delta_{B}^{\crk \mcF} ~|~ \mcS_{\mcF}^{\std}(x) \leq \delta - C_{\mcF} \right\}.
	\]
	On each extended-core component $\mcE_{B}^{\mcF} \subseteq M_{\mcF}^{\eff}$, the moment-map formulae of Subsection~\ref{subsection:subsection-3.4-hypertoric-subvarieties} give $\Phi_{\mcF}^{\std} = \mcS_{\mcF}^{\std} \circ \mu_{\RR}^{\mcF}$, where $\mu_{\RR}^{\mcF} : M_{\mcF}^{\eff} \ra (\mft^{\crk \mcF})^{\ast}$ is the intrinsic residual real moment map.
	Then we see that $\Delta_{\mcF}^{\leq \delta}$ is the moment polyptych obtained by cutting $M_{\mcF}^{\eff}$ at level $\delta - C_{\mcF}$ using $\Phi_{\mcF}^{\std} : M_{\mcF}^{\eff} \ra \RR$, and this intrinsic cut has the same underlying coarse space as $M_{\mcF}^{\leq \delta}$.
	By \ref{prop:cut-subspace-subpolyptych-1}, $M_{\mcF}^{\leq \delta}$ is a closed subspace of the compact ambient cut $M_{\nu}^{\leq \delta}$, and is therefore compact.
	Moreover, this argument applies even when
	$H_{\mcF} \cap \Delta_{\emptyset} = \varnothing$.

	It remains to check that \eqref{eqn:subregion-identification} accounts for every polytope generating $\Delta_{\nu}^{\leq \delta}|_{H_{\mcF}}$.
	Take any ambient truncation $\Delta_{A^{\prime}}^{\leq \delta} \subseteq (\mft^{n})^{\ast}$ meeting $H_{\mcF}$, and set $B := A^{\prime} \cap \mcFc$.
	For each $i \in \mcF$, every point of $H_{\mcF}$ lies on $H_{i}$ and therefore belongs to both closed half-spaces bounded by $H_{i}$.
	Then since $B = A^{\prime} \cap \mcFc$, the inequalities indexed by $\mcFc$ are identical in the definitions of $\Delta_{A^{\prime}}$ and $\Delta_{B \cup \mcF}$: both require $\ell_{i} \leq 0$ for $i \in B$ and $\ell_{i} \geq 0$ for $i \in \mcFc \setminus B$.
	Hence, $\Delta_{A^{\prime}} \cap H_{\mcF} = \Delta_{B \cup \mcF} \cap H_{\mcF}$, and as both of these regions are truncated by the same ambient inequality $\mcS \leq \delta$, we obtain
	\[
		\Delta_{A^{\prime}}^{\leq \delta} \cap H_{\mcF}
		= \Delta_{B \cup \mcF}^{\leq \delta} \cap H_{\mcF}
		= \eta_{y_{0}}(\Delta_{B}^{\crk \mcF,\leq \delta}).
	\]
	It follows that the images of the non-empty ambient truncations that generate $\Delta_{\mcF}^{\leq \delta}$ also generate the restricted ambient complex $\Delta_{\nu}^{\leq \delta}|_{H_{\mcF}}$.
	As $\eta_{y_{0}}$ is an affine isomorphism, it preserves faces and their incidences, which yields the claimed isomorphism of polytopal complexes.
	Finally, taking the union of their cells gives the support identity $\eta_{y_{0}} ( |\Delta_{\mcF}^{\leq \delta}| ) = |\Delta_{\nu}^{\leq \delta}| \cap H_{\mcF} \subseteq (\mft^{n})^{\ast}$.
\end{proof}

A regular symplectic cut by a locally free circle action is an orbifold.
However, a non-trivial finite stabiliser does not by itself make the coarse quotient singular: a finite group generated by pseudo-reflections has a smooth quotient.
For the cuts considered here, reducibility of the core forces a cyclic stabiliser which acts non-trivially in two complex directions, so the local group contains no non-trivial pseudo-reflections; this is the content of the following theorem.

\begin{theorem}
	\label{thm:reducible-core-orbifold}
	Let $M_{\nu}$ be a smooth hypertoric manifold of real dimension $4n$, and suppose that its core $\mcC$ is reducible.
	For every cut level $\delta \in \Cout = ( c_{\max} , \infty )$, there exists a boundary point of $M_{\nu}^{\leq \delta}$ whose local orbifold structure group is cyclic of order at least two and whose coarse local quotient is singular.
	In particular, the cut space $M_{\nu}^{\leq \delta}$ is not a smooth manifold.
\end{theorem}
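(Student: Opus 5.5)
The plan is to reduce to a one-dimensional hypertoric subvariety along a line of the arrangement. There the local group at a cut endpoint is the index of a non-primitive cutting generator, as in the $A_{r}$ family of Example~\ref{example:A2-resolution-example-1}.

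First I would find a rank-$(n-1)$ flat $H_{\mcF}$, a line $L$, on which the restricted arrangement $\mcA_{\mcF}$ has at least two bounded segments lying in bounded regions. Since $\mcC$ is reducible and connected, there is a bounded region $\Delta_{B}$ with $B \neq \emptyset$ sharing a face with $\Delta_{\emptyset}$. I would take an edge of $\Delta_{\emptyset} \cup \Delta_{B}$ that crosses the common face, so that its supporting line $L = H_{\mcF}$ passes through a vertex of $\Delta_{\emptyset}$, then through a hyperplane $H_{i}$ with $i \in B$, then through a further vertex of $\Delta_{B}$. By Lemma~\ref{lemma:regions-pointed} such edges exist.

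Next I would pass to the restriction. By Theorem~\ref{thm:hypertoric-subvariety} and Proposition~\ref{prop:cut-subspace-subpolyptych}, the cut $M_{\mcF}^{\leq \delta}$ embeds as a closed Kähler suborbifold of $M_{\nu}^{\leq \delta}$. Its polyptych is $\Delta_{\nu}^{\leq\delta}|_{H_{\mcF}}$, a subdivided segment whose two endpoints are boundary vertices of $\Delta_{\nu}^{\leq \delta}$; by Proposition~\ref{prop:slack-support} these are the two points of $L \cap \{\mcS = \delta\}$. Because $M_{\nu}$ is smooth, $U_{\mcF}$ is saturated, so $\Gamma_{\mcF}$ is trivial. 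The restricted normals $u_{i}^{\mcFc}$ for active $i$ are then $\pm 1$, since each is part of a $\ZZ$-basis at a smooth vertex.

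At an endpoint $v$ lying in a truncated region $\Delta_{A}^{\leq\delta}$, the local group is $\mft_{\ZZ}^{n}/(U_{\mcF} + \ZZ u_{A})$, of order $|\langle e, u_{A}\rangle|$ with $e$ the primitive direction of $L$ pointing outward. This order equals the number of active $i \in A$, i.e. the number of restricted points of $L$ whose negative side contains that end of $L$; inactive indices contribute nothing. The configuration chosen in the first step has two consecutive bounded segments on $L$, which forces two restricted points with the same coorientation on one side, exactly as in the $A_{r}$ computation. Hence one of the two ends of $L$ has local group $\ZZ/k\ZZ$ with $k \geq 2$.

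\textbf{Main obstacle.} The delicate step is the combinatorial choice of $L$: the two hyperplanes meeting $L$ must have negative sides facing the same unbounded end. I would first try to argue it from $\Delta_{\emptyset}$ being the unique minimal region of Remark~\ref{remark:poset-regions}. Travelling along $L$ from $\Delta_{\emptyset}$ into $\Delta_{B}$ crosses $H_{i}$ from positive to negative. Continuing to the far end of $L$, I would need to show that at least one further crossing is also positive-to-negative; if it were not, every later crossing would flip a sign back, and I expect this to contradict the boundedness of $\Delta_{B}$. If that argument fails, I would fall back on a two-dimensional flat containing the two regions and carry out the analysis there instead.

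Finally, the generator of $\ZZ/k\ZZ$ acts on the tangent space at $v$ with non-trivial weight both on the direction normal to the cut divisor inside $\mcE_{A}^{\leq\delta}$ and on the fibre-rotation (auxiliary $\zeta$) direction. Since the cutting weight occurs twice, no non-trivial element of the group is a pseudo-reflection, and Chevalley--Shephard--Todd shows that the coarse local quotient is singular.
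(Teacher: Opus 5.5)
Your route is essentially the paper's. Made precise, your line $L$ is $H_{\mcJ_{v,j}}$ for a vertex $v$ of $\Delta_{\emptyset}\cap\Delta_{B}$ and some $j\in B\subseteq\mcI_{v}$. Smoothness makes $\Gamma_{\mcJ_{v,j}}$ trivial and the restricted normals $\pm1$, and one then reads off a cyclic group at one end of the cut segment. Your direct ambient count $k=|\langle e,u_{A}\rangle|$ (the number of active indices in $A$), via $\Gamma_{p}\cong\mft_{\ZZ}^{n}/(U_{\mcF}+\ZZ u_{A})$, is a legitimate shortcut. It uses only the ambient outer-chamber condition, and so spares you the paper's checks that $C_{\mcJ_{v,j}}=0$ and that $\delta$ lies in the intrinsic outer chamber before Lemma~\ref{lemma:one-dimensional-reducible-core-orbifold} is invoked.

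The gap is the step you call the main obstacle. You leave it open, and the mechanism you sketch (a later crossing ``flipping back'' and contradicting boundedness of $\Delta_{B}$) is not what makes it work. The missing observation is that $\Delta_{\emptyset}\cap L$ is a non-degenerate interval on which every $\ell_{i}\geq 0$. For each active $i$, $\ell_{i}|_{L}$ is affine and non-constant, so it is positive on the side of its zero containing $\Delta_{\emptyset}\cap L$ and negative on the other. Hence every crossing met when leaving $\Delta_{\emptyset}\cap L$ is positive-to-negative, and none can flip back. Boundedness of $\Delta_{B}$ is needed only to produce a second crossing. The far endpoint of $\Delta_{B}\cap L$ lies on an active $H_{i'}$ with $i'\neq j$, because $\ell_{j}|_{L}$ vanishes only at $v$. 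Both $H_{j}$ and $H_{i'}$ then have their negative sides facing the end of $L$ beyond $\Delta_{B}\cap L$, so your count gives $k\geq 2$ at that end. This is exactly the paper's remark in the rank-one lemma that all normals crossed on one side of $\Delta_{\emptyset}$ point towards it, so that $|u_{A}|=|A|\geq 2$.

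Separately, your last paragraph misplaces the two lines that carry the cutting character. The normal to $Z_{A}^{\delta}$ inside $\mcE_{A}^{\leq\delta}$ \emph{is} the $\zeta$-direction, and the fibre-rotation orbit direction $V_{q}$ is quotiented out. The second copy lies in $T_{q}M_{\mcF}/V_{q}$, transverse to $\mcE_{A}$. The paper identifies its character from $\tau^{\ast}\w_{\CC}=\tau\w_{\CC}$: this forces $\det_{\CC}(dg)_{q}=g$ on the four-dimensional $M_{\mcF}$, while $g$ acts trivially on $V_{q}$. Appealing instead to the doubled summand $V_{\vartheta_{p}}^{\oplus 2}$ of Proposition~\ref{prop:isotropy-weights} also works, but you must note that $\theta_{p}$ is a faithful character of $\Gamma_{p}\cong\ZZ/k\ZZ$; it sends a generator to a primitive $k$-th root of unity. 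Then \emph{every} non-identity element, not only a generator, acts non-trivially on a two-dimensional subspace, so none is a pseudo-reflection and Chevalley--Shephard--Todd applies.
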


First, however, let us consider the simpler case when $M_{\nu}$ has real dimension four (i.e., when $n = 1$), since then the general case in Theorem~\ref{thm:reducible-core-orbifold} will follow from it by induction via restrictions to a four-dimensional hypertoric submanifold.

\begin{lemma}
	\label{lemma:one-dimensional-reducible-core-orbifold}
	Let $M_{\nu}$ be a smooth hypertoric manifold of real dimension four, and suppose that its core $\mcC$ is reducible.
	Then, for every cut level $\delta \in \Cout = (c_{\max} , \infty)$, the cut space $M_{\nu}^{\leq \delta}$ contains a boundary point with cyclic local orbifold group $\mu_{k}$ of some order $k \geq 2$.
	On a local uniformising chart, this group acts non-trivially on two complex tangent directions, and hence the coarse local quotient is singular.
\end{lemma}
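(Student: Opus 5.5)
The plan is to locate the bad boundary point explicitly at one of the two ends of the one-dimensional polyptych, compute its stabiliser for the cutting circle, and then read off the slice representation.

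\emph{Combinatorics of the ends.} Since $n = 1$ and $\mft_{\ZZ}^{1} \cong \ZZ$, every primitive normal is $u_{i} = \pm 1$. By simplicity the arrangement consists of $N$ distinct points of $\RR$. Reducibility of $\mcC$ means there are at least two bounded regions, so $N \geq 3$. Consider the two unbounded regions. On the right-hand one, the negative-side index set is $A_{+} = \{ i ~|~ u_{i} = -1 \}$, because there $\ell_{i} = \lambda_{i} - x$ is eventually negative exactly when $u_{i} = -1$. On the left-hand one it is $A_{-} = \{ i ~|~ u_{i} = +1 \}$. By \eqref{eqn:U1-action-generator} this gives $u_{A_{+}} = |A_{+}|$ and $u_{A_{-}} = -|A_{-}|$. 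Since $|A_{+}| + |A_{-}| = N \geq 3$, at least one of them, say $A$, has $k := |u_{A}| \geq 2$; this generalises the computation in Example~\ref{example:A2-resolution-example-1}. For $\delta \in \Cout$, the cut face $\Pi_{A}^{\delta}$ is a single point $x_{\ast}$ in the interior of $\Delta_{A}$, because $\mcS(x_{\ast}) = \delta > c_{\max}$ rules out arrangement vertices by Proposition~\ref{prop:critical-values-slack}. By Proposition~\ref{prop:slack-support} there is $p \in \mcE_{A}$ with $\mu_{\RR}(p) = x_{\ast}$ and $\Phi(p) = \delta$, and $q = [p , 0] \in Z_{\nu}^{\delta}$ is the candidate boundary point.

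\emph{Stabiliser.} The local group at $q$ is the stabiliser of $(p , 0)$ in the cutting circle. Since $\zeta = 0$, this is just $\{ \tau ~|~ \tau \cdot p = p \}$. Because $p \in \mcE_{A}$, the fibre rotation of $p$ agrees with the action of the one-parameter subgroup $\pi \circ \iota_{A}$, whose generator is $u_{A} = \pm k$; thus $\tau \cdot p = \tau^{\mp k} \cdot_{T^{1}} p$. Since $x_{\ast}$ lies on no hyperplane, every pair $(z_{i} , w_{i})$ at $p$ has exactly one non-zero entry. The argument of Lemma~\ref{lemma:effective-Tn-action} then shows that $T^{1}$ acts freely at $p$ (smoothness of $M_{\nu}$ is also used here). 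Hence the stabiliser is exactly $\mu_{k}$, which is cyclic of order $k \geq 2$.

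\emph{Slice representation.} A uniformising chart at $q$ is a complex slice $S$ to the orbit of the complexified cutting action at $(p , 0)$, with $S \cong T_{(p,0)}(M_{\nu} \times \CC) / \CC \cdot X$, where $X$ is the generator of the orbit. The orbit direction lies in $T_{p}\mcE_{A}$, which is complex one-dimensional. On $T_{p}\mcE_{A}$ the group $\mu_{k}$ acts trivially, since it acts through the free $T^{1}$-action, which is trivial on the tangent space to its own orbit. The remaining directions are as follows.
\begin{enumerate}[(a)]
	\item The auxiliary factor $\CC$, on which $\tau$ acts by $\tau$.
	\item A complement $N_{p}$ to the $\w_{\CC}$-Lagrangian $T_{p}\mcE_{A}$ in $T_{p}M_{\nu}$. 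Here $\tau^{\ast}\w_{\CC} = \tau \w_{\CC}$, the pairing $\w_{\CC} : T_{p}\mcE_{A} \times N_{p} \ra \CC$ is perfect, and $\mu_{k}$ is trivial on $T_{p}\mcE_{A}$. Together these force $\mu_{k}$ to act on $N_{p}$ by $\tau$; concretely, $N_{p}$ is spanned by the $w$-direction that is zero along $\mcE_{A}$.
\end{enumerate}
So $S \cong \CC^{2}$, and a generator of $\mu_{k}$ acts by $\diag(\zeta_{k} , \zeta_{k})$. This action is non-trivial in both complex directions and contains no pseudo-reflections. By Chevalley--Shephard--Todd the quotient $\CC^{2}/\mu_{k}$ (the cone over a rational normal curve) is singular for $k \geq 2$.

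The main obstacle is this last step: justifying rigorously that the normal weight is exactly $\tau$, with the correct sign relative to the $\CC$-factor. I would first try the $\w_{\CC}$-duality argument above. As a fallback I would compute directly in the coordinates $(z , w)$ on $T^{\ast}\CC^{N}$, using the level-set equations to identify the slice.
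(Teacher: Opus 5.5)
Your proposal is correct and follows essentially the same route as the paper: an unbounded end region with non-primitive $u_{A}$, a cut point in its relative interior where $T^{1}$ acts freely, stabiliser $\mu_{k} = \ker(\pi \circ \iota_{A})$ acting trivially on the orbit line, and the scaling $\tau^{\ast}\w_{\CC} = \tau\w_{\CC}$ forcing weight $\tau$ on the transverse line alongside the auxiliary $\CC_{\zeta}$, followed by Chevalley--Shephard--Todd. Your pigeonhole count $|A_{+}| + |A_{-}| = N \geq 3$ is a slightly more direct substitute for the paper's chain argument through the bounded neighbour $\Delta_{j}$, and the sign you flag as the main obstacle is harmless, since weights $(1 , \pm 1)$ modulo $k$ give no pseudo-reflections either way (your perfect-pairing argument in any case fixes it as $+1$).
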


\begin{proof}[Proof of Lemma~\ref{lemma:one-dimensional-reducible-core-orbifold}]
	As $n = 1$, fix an identification $( \mft^{n} )^{\ast} \cong \RR$.
	Since $\mcA$ is simple, the hyperplanes $H_{1} , \ldots , H_{N}$ are $N$ distinct points (i.e., arrangement vertices) whose bounded regions are the $N - 1$ closed intervals between each consecutive pair of points in $\RR$.
	Moreover, there are at least two bounded regions as the core $\mcC$ is reducible, so $N \geq 3$.
	It follows that the distinguished bounded interval $\Delta_{\emptyset}$ shares an endpoint with a further bounded interval.
	If $H_{j}$ is that common endpoint, then the neighbouring region differs from $\Delta_{\emptyset}$ only in the sign of $\ell_{j}$, which we write as $\Delta_{j} := \Delta_{\{ j \}}$.
	Let $X_{\nu} = \mcE_{\emptyset}$ and $\mcE_{j} = \mcE_{\{ j \}}$ be the corresponding compact core components, so $X_{\nu} \cup \mcE_{j} \subseteq \mcC$.
	The one-dimensional geometrical picture is shown in Figure~\ref{fig:adjacent-region}: the adjacent bounded intervals $\Delta_{\emptyset}$ and $\Delta_{j}$ are separated by $H_{j}$, whose cooriented normal vector $u_{j}$ points towards $\Delta_{\emptyset}$.

	\begin{figure}[!htbp]
		\centering
		\captionsetup[subfigure]{labelfont=normalfont}

	    \newcommand{\drawCommonChain}[1]{%
	        \coordinate (hL) at (-2.00 , 0);
	        \coordinate (hJ) at (1.00 , 0);
	        \coordinate (hR) at (2.70 , 0);
	        \coordinate (hA) at (4.25 , 0);

	        \foreach \h in {hL,hJ,hR,hA}{%
	            \draw[hyperplane] ($ (\h)+(0 , 0.70) $) -- ($ (\h)-(0 , 0.70) $);
	        }
	        \node[below=2pt] at ($(hJ)+(0 , -0.62)$) {$H_{j}$};

	        \draw[region] (hL) -- (hJ);
	        \draw[region] (hJ) -- (hR);
	        \node[below=4pt] at ($(hL)!0.5!(hJ)$) {$\Delta_{\emptyset}$};
	        \node[below=4pt] at ($(hJ)!0.5!(hR)$) {$\Delta_{j}$};

	        \draw[continuation] (hL) -- ++(-2.00 , 0);
	        \draw[continuation] (hR) -- (hA);
	        \node[below=4pt] at ($(hR)!0.5!(hA)$) {$\cdots$};
	        \draw[continuation] (hA) -- ++(3.20 , 0);
	        \node[below=4pt] at ($(hA)+(0.95 , 0)$) {$#1$};

	        \draw[normal] ($(hJ)+(0 , 0.52)$) -- ++(-0.80 , 0);
	        \node[above left=2pt] at ($(hJ)+(-0.38 , 0.52)$) {$u_{j}$};
	    }

	    \begin{subfigure}[t]{0.49\linewidth}
	        \centering
	        \begin{tikzpicture}[
	            x=0.60cm, y=0.86cm,
	            hyperplane/.style={line width=0.5pt},
	            region/.style={line width=0.9pt},
	            continuation/.style={densely dashed, line width=0.5pt},
	            normal/.style={->, line width=0.6pt},
	            every node/.style={font=\small}
	        ]

	            \path[use as bounding box] (-4.1 , -1.28) rectangle (8.15 , 1.15);
	            \drawCommonChain{\Delta_{A}}
	        \end{tikzpicture}
	        \caption{Before cutting.}
	        \label{fig:adjacent-region}
	    \end{subfigure}%
	    \hfill
	    \begin{subfigure}[t]{0.49\linewidth}
	        \centering
	        \begin{tikzpicture}[
	            x=0.60cm, y=0.86cm,
	            hyperplane/.style={line width=0.5pt},
	            region/.style={line width=0.9pt},
	            continuation/.style={densely dashed, line width=0.5pt},
	            normal/.style={->, line width=0.6pt},
	            cutline/.style={line width=0.7pt},
	            every node/.style={font=\small}
	        ]
	            \path[use as bounding box] (-4.1 , -1.28) rectangle (8.15 , 1.15);
	            \drawCommonChain{\Delta_{A}^{\leq \delta}}

	            \coordinate (cutA) at (6.00 , 0);
	            \draw[cutline] ($(cutA)+(0 , 0.70)$) -- ($(cutA)-(0 , 0.70)$);
	            \fill (cutA) circle (1.4pt);
	            \node[below=2pt] at ($(cutA)+(0 , -0.67)$) {$\Pi_{A}^{\delta}$};
	            \draw[normal] ($(cutA)+(0 , 0.52)$) -- ++(1.60 , 0);
	            \node[above right=2pt] at ($(cutA)+(0.95 , 0.52)$) {$u_{A}$};
	        \end{tikzpicture}
	        \caption{After cutting at $\delta$.}
	        \label{fig:non-primitive}
	    \end{subfigure}

	    \caption{
	        The one-dimensional configuration in the proof of Lemma~\ref{lemma:one-dimensional-reducible-core-orbifold}.
	        The bounded regions $\Delta_{\emptyset}$ and $\Delta_{j}$ are separated by $H_{j}$, whose normal $u_{j}$ points towards $\Delta_{\emptyset}$.
	        The right panel adds the cut point $\Pi_{A}^{\delta}$ in the relative interior of the terminal unbounded region $\Delta_{A}$, with cutting normal $u_{A}$ pointing outwards from the retained side.
	    }
	    \label{fig:arrangement-chain-and-cut}
	\end{figure}

	In dimension one, the regions encountered on either side of $\Delta_{\emptyset}$ form a chain in the poset of regions $( \mcR(\mcA) , \preceq )$ from Remark~\ref{remark:poset-regions}, and the two chains meet only at the minimum $\Delta_{\emptyset}$.
	The neighbouring bounded region $\Delta_{j}$ lies in exactly one of these chains, and we let $\Delta_{A}$ be the maximal, and hence unbounded, region in that chain corresponding to a non-compact extended-core component $\mcE_{A}$.
	The cooriented primitive normals of the hyperplanes crossed along this chain all point towards $\Delta_{\emptyset}$ so, after identifying $\mft^{1}_{\ZZ} \cong \ZZ$ in a way that is compatible with this orientation, they all have the same sign and \eqref{eqn:U1-action-generator} gives $|u_{A}| = |-\sum_{i \in A} u_{i}| = |A|$.
	Because $\Delta_{j}$ is bounded, the chain from $\Delta_{\emptyset}$ to its unbounded terminal region $\Delta_{A}$ crosses at least two hyperplanes, so $|u_{A}| = |A| \geq 2$ and $u_{A}$ is non-primitive.
	The resulting truncation of $\Delta_{A}$ to $\Delta_{A}^{\leq \delta}$ is illustrated in Figure~\ref{fig:non-primitive}.

	Write $u_{A} = \pm k \widebar{u}$, where $k \geq 2$ and $\widebar{u}$ is the positive primitive generator of $\mft_{\ZZ}^{1} \cong \ZZ$.
	Since $\delta \in \Cout$, every arrangement vertex $v$ satisfies $\mcS(v) \leq c_{\max} < \delta$, whereas the defining cut equation gives $\mcS = \delta$ at $\Pi_{A}^{\delta}$.
	The cut point therefore cannot be an arrangement vertex and, since the boundary points of the one-dimensional region $\Delta_{A}$ are arrangement vertices, it lies in the relative interior of $\Delta_{A}$.

	Choose $[ z , w ] \in \Phi_{A}^{-1}(\delta) \subseteq \mcE_{A}$ with $\mu_{\RR}[ z , w ] \in \Pi_{A}^{\delta}$; the toric orbit--face correspondence then implies that the residual $T^{1}$-action is free at $[ z , w ]$.
	With respect to the generator $\widebar{u}$, the homomorphism $\pi \circ \iota_{A} : U_{1} \ra T^{1}$ inducing the restricted cutting action from Subsection~\ref{subsection:subsection-4.1-residual-circle-action} is the $k$-fold covering $\tau \mapsto \tau^{\pm k}$.
	The stabiliser of $[ z , w ]$ under this cutting action is therefore its kernel, $\ker(\pi \circ \iota_{A}) = \mu_{k} := \{ \tau \in U_{1} ~\mid~ \tau^{k} = 1 \} \cong \ZZ / k \ZZ$.

	The representative $( [ z , w ] , 0 ) \in \rho_{A}^{-1}(\delta) \subseteq \rho^{-1}(\delta)$ has the same stabiliser for the diagonal cutting action as its auxiliary coordinate $\zeta$ is zero.
	Its quotient class $p := [ [ z , w ] , 0 ] \in Z_{A}^{\delta} \subseteq Z_{\nu}^{\delta} \subseteq M_{\nu}^{\leq \delta}$ therefore has the local orbifold structure group $\Gamma_{p} := \mu_{k}$, since $M_{\nu} \times \CC$ is smooth and the cutting action is locally free at the regular level $\delta$.
	Unlike $[ z , w ]$ before reduction, $p$ is fixed by the residual $T^{1}$-action: surjectivity of $\pi \circ \iota_{A} : U_{1} \ra T^{1}$ ensures that, for every $t \in T^{1}$, the point $t \cdot ( [ z , w ] , 0 )$ lies in the same cutting orbit as $( [ z , w ] , 0 )$.

	It remains to determine the action of $\Gamma_{p} = \mu_{k}$ on a local uniformising chart of the cut.
	Let $q = [ z , w ] \in \mcE_{A}$ be the point chosen above, and let $V_{q} \subseteq T_{q}M_{\nu}$ be the complex tangent line bundle to the complexified cutting-circle orbit through $q$.
	Since the cutting circle is abelian, every $g \in \mu_{k}$ acts trivially on the orbit line $V_{q}$.
	To determine its action transverse to the orbit, i.e., on the quotient line $T_{q}M_{\nu} / V_{q}$, consider the holomorphic-symplectic form $\w_{\CC}$ on $M_{\nu}$.
	Since $M_{\nu}$ has complex dimension two and $\w_{\CC}$ is non-degenerate, the fibre-rotation identity $\tau^{\ast}\w_{\CC} = \tau\w_{\CC}$, applied to $\tau = g \in \mu_{k}$, gives $\det_{\CC}( (d g)_{q} ) = g$.
	The action on $V_{q}$ is trivial, so the induced action on $T_{q}M_{\nu} / V_{q}$ must therefore be multiplication by $g$.
	The auxiliary tangent line $\CC_{\zeta}$ at $\zeta = 0$ also carries the character $g \mapsto g$.

	Since K\"ahler reduction removes the complex orbit direction $V_{q}$, on a local uniformising chart of the cut at $p$, the $\mu_{k}$-representation is $( T_{q}M_{\nu} / V_{q} ) \oplus \CC_{\zeta}$, where $g \cdot ( a , b ) = ( g a , g b )$.
	Hence, every non-identity element of $\mu_{k}$ acts non-trivially on both complex tangent directions, so no non-identity element is a pseudo-reflection.
	The Chevalley--Shephard--Todd theorem \cite{ShephardTodd1954, Chevalley1955} therefore implies that the coarse local quotient is singular.
\end{proof}

The higher-dimensional case reduces to Lemma~\ref{lemma:one-dimensional-reducible-core-orbifold} by restricting to a flat of rank $n - 1$, as follows.

\begin{proof}[Proof of Theorem~\ref{thm:reducible-core-orbifold}]
	Consider the distinguished compact core component $X_{\nu} = \mcE_{\emptyset}$.
	By \cite[Corollary~3.6]{HaradaProudfoot2004}, the core $\mcC$ is a deformation retract of $M_{\nu}$ and is therefore connected.
	Furthermore, since $\mcC$ is reducible and is the finite union of its compact irreducible components, the intersection graph of these components is connected.
	It follows that there exists a compact core component $\mcE_{A} \neq \mcE_{\emptyset}$ such that $\mcE_{A} \cap \mcE_{\emptyset} \neq \varnothing$, so their moment polytopes have a non-empty intersection.

	Choose a vertex $v$ of the common face $\Delta_{A} \cap \Delta_{\emptyset}$, which is also a vertex of $\Delta_{\emptyset}$.
	Then $v$ is an arrangement vertex by Lemma~\ref{lemma:regions-pointed}, and simplicity implies that exactly $n$ hyperplanes pass through $v$, so $|\mcI_{v}| = n$.
	Furthermore, because $v \in \Delta_{A} \cap \Delta_{\emptyset}$, every $i \in A$ satisfies both $\ell_{i}(v) \leq 0$ and $\ell_{i}(v) \geq 0$, so $\ell_{i}(v) = 0$ and $A \subseteq \mcI_{v}$.
	Since $A \neq \varnothing$, choose $j \in A$ and set $\mcJ_{v , j} := \mcI_{v} \setminus \{ j \}$.
	Then the regions $\Delta_{A}$ and $\Delta_{\emptyset}$ lie in opposite closed half-spaces determined by $H_{j}$, since their respective sign conditions are $\ell_{j} \leq 0$ and $\ell_{j} \geq 0$.

	By simplicity, $H_{\mcJ_{v , j}}$ is an affine line in $(\mft^{n})^{\ast}$ through $v$ and, along this line, each functional $\ell_{i}$ with $i \in \mcJ_{v , j}$ vanishes identically whereas $\ell_{j}$ is non-constant since $u_{j} \not\in \Span_{\RR}\{ u_{i} \mid i \in \mcJ_{v , j} \}$ by simplicity.
	The inequalities corresponding to the indices outside $\mcI_{v}$ remain strict in a neighbourhood of $v$.
	Since $\Delta_{A}$ and $\Delta_{\emptyset}$ are bounded and impose opposite inequalities at $H_{j}$, it follows that $\Delta_{A} \cap H_{\mcJ_{v , j}}$ and $\Delta_{\emptyset} \cap H_{\mcJ_{v , j}}$ are non-degenerate bounded intervals on opposite sides of $v$ that meet at $v$.

	Let us first check that the effective and ambient-induced orbifold presentations of $M_{\mcJ_{v , j}}$ coincide.
	Since $M_{\nu}$ is smooth, the arrangement $\mcA$ is smooth by \cite[Theorem~3.2]{BielawskiDancer2000} and the normals $\{ u_{i} \mid i \in \mcI_{v} \}$ form a $\ZZ$-basis of $\mft_{\ZZ}^{n}$.
	As $\mcJ_{v , j} = \mcI_{v} \setminus \{j\}$, this gives $\mft_{\ZZ}^{n} = U_{\mcJ_{v , j}} \oplus \ZZ u_{j}$.
	Thus, $U_{\mcJ_{v , j}}$ is saturated: it contains every ambient lattice point in its real span $\mft_{\mcJ_{v , j}}$.
	Therefore, $V_{\mcJ_{v , j}} = \mft_{\mcJ_{v , j}} \cap \mft_{\ZZ}^{n} = U_{\mcJ_{v , j}}$, so $\Gamma_{\mcJ_{v , j}} = V_{\mcJ_{v , j}} / U_{\mcJ_{v , j}}$ is trivial.
	By the construction in Subsection~\ref{subsection:subsection-3.4-hypertoric-subvarieties}, it follows that the effective quotient $M_{\mcJ_{v , j}}^{\eff}$ has the same orbifold structure as the ambient-induced subvariety $M_{\mcJ_{v , j}} \subseteq M_{\nu}$; compare Lemma~\ref{lemma:flat-local-group-comparison}.

	We next check that the restricted arrangement on $H_{\mcJ_{v , j}}$ is smooth.
	It is simple by Lemma~\ref{lemma:restricted-arrangement-simple}, so it suffices to show that every active projected normal is primitive.
	For any $i \not\in \mcJ_{v , j}$ with $H_{i} \cap H_{\mcJ_{v , j}} \neq \varnothing$, this intersection is an ambient arrangement vertex.
	Smoothness of $\mcA$ therefore makes the corresponding normals a lattice basis, giving $\mft_{\ZZ}^{n} = U_{\mcJ_{v , j}} \oplus \ZZ u_{i}$.
	Thus, the image of $u_{i}$ generates the rank-one quotient lattice $\mft_{\ZZ}^{n} / U_{\mcJ_{v , j}}$ and is therefore primitive.

	By Theorem~\ref{thm:hypertoric-subvariety}, the associated hypertoric subvariety $M_{\mcJ_{v , j}} \subseteq M_{\nu}$ has real dimension $4\crk(\mcJ_{v , j}) = 4$.
	Proposition~\ref{prop:restriction-identification} identifies its intrinsic labelled arrangement $\mcA^{\crk \mcJ_{v , j}}$ in $(\mft^{\crk \mcJ_{v , j}})^{\ast}$ with this smooth restricted arrangement $\mcA_{\mcJ_{v , j}}$ on $H_{\mcJ_{v , j}}$.
	The effective quotient is therefore smooth; since the effective and ambient-induced presentations coincide, $M_{\mcJ_{v , j}}$ is a smooth four-dimensional hypertoric submanifold of $M_{\nu}$.

	For the flat $H_{\mcJ_{v , j}}$ in particular, the ambient and intrinsic fibre-rotation normalisations coincide since $v \in \Delta_{\emptyset} \cap H_{\mcJ_{v , j}}$, and so $\mcS(v) = 0$.
	Writing $x_{v}:=\eta_{y_{0}}^{-1}(v) \in (\mft^{\crk \mcJ_{v , j}})^{\ast}$, equation~\eqref{eqn:slack-restriction} gives $0 = \mcS(v) = \mcS_{\mcJ_{v , j}}^{\std}(x_{v}) + C_{\mcJ_{v , j}}$.
	Both terms on the right are non-negative, and hence $C_{\mcJ_{v , j}} = 0$, so it follows that $\Phi|_{M_{\mcJ_{v , j}}} = \Phi_{\mcJ_{v , j}}^{\std} : M_{\mcJ_{v , j}} \ra \RR$.

	Moreover, every vertex of the intrinsic one-dimensional arrangement maps under $\eta_{y_{0}} : ( \mft^{\crk\mcJ_{v , j}} )^{\ast} \overset{\sim}{\ra} H_{\mcJ_{v , j}} \subseteq ( \mft^{n} )^{\ast}$ to an ambient arrangement vertex, since it is obtained by intersecting $H_{\mcJ_{v , j}}$ with one further active hyperplane.
	Moreover, because $C_{\mcJ_{v , j}} = 0$, the intrinsic and ambient slack values agree at these vertices.
	The interval $\Delta_{\emptyset} \cap H_{\mcJ_{v , j}}$ corresponds to the distinguished bounded region of the intrinsic arrangement, so Proposition~\ref{prop:critical-values-slack} applies and gives $\max \CritVal( \Phi_{\mcJ_{v , j}}^{\std} ) \leq c_{\max} < \delta$.
	Thus, $\delta$ lies in the outer cut chamber of the four-dimensional hypertoric manifold $M_{\mcJ_{v , j}}$.
	Since $\Gamma_{\mcJ_{v , j}}$ is trivial, its effective and ambient-induced presentations agree, and Proposition~\ref{prop:cut-subspace-subpolyptych} identifies its intrinsic cut at level $\delta$ with the ambient-induced cut $M_{\mcJ_{v , j}}^{\leq \delta}$.

	Under $\eta_{y_{0}}$, the intervals $\Delta_{\emptyset} \cap H_{\mcJ_{v , j}}$ and $\Delta_{A} \cap H_{\mcJ_{v , j}}$ correspond to two adjacent bounded regions of the intrinsic one-dimensional arrangement.
	The core of $M_{\mcJ_{v , j}}$ is therefore reducible, so Lemma~\ref{lemma:one-dimensional-reducible-core-orbifold} 	applies at the outer-chamber level $\delta$.
	Let $p \in M_{\mcJ_{v , j}}^{\leq \delta}$ be the boundary point supplied by the lemma, and choose a representative $(q , 0) \in \rho^{-1}(\delta) \cap ( M_{\mcJ_{v , j}} \times \CC )$ of $p$ with $U_{1}$-stabiliser $\mu_{k} \subseteq U_{1}$, where $k \geq 2$.
	The embedding $M_{\mcJ_{v , j}}^{\leq \delta} \hookrightarrow M_{\nu}^{\leq \delta}$ of Proposition~\ref{prop:cut-subspace-subpolyptych} is induced by the $U_{1}$-equivariant inclusion of the corresponding cut level sets, so $(q , 0)$ has the same stabiliser when regarded as a point of the ambient level $\rho^{-1}(\delta) \subseteq M_{\nu} \times \CC$.
	Since $M_{\nu} \times \CC$ is smooth and the cutting action is locally free on this level, the ambient cut has the same local group $\Gamma_{p} = \mu_{k}$ at the image of $p$, which we continue to denote by $p$.

	To determine the ambient tangent representation, choose a local uniformising chart $(\widetilde{U} , \Gamma_{p})$ for $M_{\nu}^{\leq \delta}$ at $p$, compatible with the embedded cut.
	Let $\widetilde{V} \subseteq \widetilde{U}$ be the $\Gamma_{p}$-invariant complex submanifold representing $M_{\mcJ_{v , j}}^{\leq \delta}$, and fix a lift $\widetilde{p} \in \widetilde{V}$ of $p$.
	The calculation in the proof of Lemma~\ref{lemma:one-dimensional-reducible-core-orbifold} shows that the action on the two-dimensional complex tangent space $T_{\widetilde{p}} \widetilde{V}$ is $g \cdot (a , b) = (g a , g b)$ for $g \in \mu_{k}$.
	A $\Gamma_{p}$-invariant Hermitian metric gives an invariant splitting $T_{\widetilde{p}} \widetilde{U} = T_{\widetilde{p}} \widetilde{V} \oplus N_{\widetilde{p}}$, where $N_{\widetilde{p}}$ is the complex normal space to $\widetilde{V}$ in $\widetilde{U}$ at $\widetilde{p}$.
	For every $g \in \mu_{k} \setminus \{1\}$, the fixed subspace of $T_{\widetilde{p}} \widetilde{V}$ is zero, so the fixed subspace in $T_{\widetilde{p}} \widetilde{U}$ lies completely in $N_{\widetilde{p}}$ and has complex codimension at least two.
	The ambient tangent representation is faithful, since its restriction to $T_{\widetilde{p}} \widetilde{V}$ is faithful, and no non-identity element of $\Gamma_{p}$ is a pseudo-reflection.
	As $\Gamma_{p}$ is non-trivial, the Chevalley--Shephard--Todd theorem \cite{ShephardTodd1954, Chevalley1955} implies that the ambient coarse local quotient is singular at $p$, proving the theorem.
\end{proof}

Theorem~\ref{thm:reducible-core-orbifold} rules out smooth cut spaces when the core is reducible.
Combining this obstruction with the cotangent-bundle criterion of Bielawski and Dancer \cite[Theorem~7.2]{BielawskiDancer2000} gives the following characterisation of smooth cut spaces for hypertoric varieties.

\begin{corollary}
	\label{corollary:smooth-cut-space-characterisation}
	Let $M_{\nu}$ be a smooth hypertoric manifold of real dimension $4n$, and let $X_{\nu} = \mcE_{\emptyset} \subseteq M_{\nu}$ be its distinguished compact toric subvariety with moment polytope $\Delta_{\emptyset} \subseteq (\mft^{n})^{\ast}$.
	Let $U_{\nu} \subseteq M_{\nu}$ be the $T^{n}$-invariant open subset identified with $T^{\ast}X_{\nu}$ by \cite[Theorem~7.1]{BielawskiDancer2000}, so that $T^{\ast}X_{\nu} \cong U_{\nu} \hookrightarrow M_{\nu}$.
	Recall that $\rho : M_{\nu} \times \CC \ra \RR_{\geq 0}$ is the cutting moment map and that $\Cout = (c_{\max} , \infty)$ is the outer cut chamber.
	Then the following are equivalent:
	\begin{enumerate}[(i)]
		\item for some $\delta \in \Cout$, the cut $M_{\nu}^{\leq \delta}$ is a smooth manifold;
		\item for every $\delta > 0$, the level $\delta$ is regular for $\rho$ and the cut $M_{\nu}^{\leq \delta}$ is a smooth manifold;
		\item $U_{\nu} = M_{\nu}$.
	\end{enumerate}
	Whenever these conditions hold, $X_{\nu}$ is a product of projective spaces.
\end{corollary}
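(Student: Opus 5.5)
I would prove the cycle (ii) $\Rightarrow$ (i) $\Rightarrow$ (iii) $\Rightarrow$ (ii), and then read off the final assertion from the Bielawski--Dancer criterion. The implication (ii) $\Rightarrow$ (i) is immediate, because $\Cout$ is non-empty and every $\delta \in \Cout$ is positive.

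For (i) $\Rightarrow$ (iii), argue by contraposition. By \cite[Theorem~7.2]{BielawskiDancer2000}, for a smooth hypertoric manifold the inclusion $U_{\nu} \subseteq M_{\nu}$ is an equality if and only if $\Delta_{\emptyset}$ is the unique bounded region, equivalently the core is irreducible; in that case $X_{\nu}$ is a product of projective spaces. I would first check that this is exactly the form of their theorem, translating it through the description of $\mcC$ in Subsection~\ref{subsection:subsection-3.2-core-extended-core}. Now suppose $U_{\nu} \neq M_{\nu}$. Then $\mcC$ is reducible, so Theorem~\ref{thm:reducible-core-orbifold} gives, for every $\delta \in \Cout$, a boundary point with singular coarse local quotient. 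Hence no outer-chamber cut is a smooth manifold, which contradicts (i).

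For (iii) $\Rightarrow$ (ii), assume $M_{\nu} = U_{\nu} \cong T^{\ast}X_{\nu}$ with $X_{\nu} \cong \prod_{a} \CC\PP^{n_{a}}$. Up to lattice automorphism and translation, the arrangement is then the product arrangement with normals $e^{(a)}_{1}, \ldots, e^{(a)}_{n_{a}}, -\sum_{i} e^{(a)}_{i}$ in each factor, exactly as in Examples~\ref{example:TCP2-example-1} and \ref{example:TCP1xCP1-example-1}. I would then establish two facts.

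\textbf{Regularity.} Each vertex $v$ of $\mcA$ has $\mcS(v) = 0$, because every region of the product arrangement has all its vertices on $\Delta_{\emptyset}$. I would check this factorwise: in a single $\CC\PP^{n_a}$ arrangement, any $n_a$ of the $n_a+1$ hyperplanes meet at a vertex of the simplex. Proposition~\ref{prop:critical-values-slack} then gives $\CritVal(\Phi) = \{0\}$, so every $\delta > 0$ is regular.

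\textbf{Freeness of the cutting action.} The cutting circle acts freely on $\rho^{-1}(\delta)$. Stabilisers can only arise at points with $\zeta = 0$, and by the $T^{n}$-equivariance of the local model it suffices to examine points over polyptych vertices on $\Pi_{\nu}^{\delta}$. For this I would use the fibrewise description: fibre rotation is scalar multiplication on the fibres of $\mcB = \Omega^{1}_{X_{\nu}}$. On $\Phi^{-1}(\delta)$, every point has a non-zero fibre coordinate, so the circle $\tau \mapsto \tau w$ acts freely there. Consequently $M_{\nu}^{\leq \delta} \cong \PP(\mcB \oplus \mcO)$ is a smooth projective bundle, as in the examples.

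To make the freeness rigorous, I would note that $M_{\nu} \to X_{\nu}$ is a vector bundle $T^{\ast}X_{\nu}$, and that the fibre-rotation circle on the hyperkähler quotient coincides with the scalar action. Under the Kempf--Ness identification \eqref{eqn:hypertoric-kempf-ness} it is $[z,w] \mapsto [z,\tau w]$, which is scalar in the cotangent fibres. An alternative, lattice-theoretic route is to show that each $u_{A}$ is primitive and extends, together with the normals at the relevant vertex, to a $\ZZ$-basis. This follows because $\sum_{i} e_i^{(a)}$ is primitive in the product case.

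The main obstacle is this last step: confirming that the identification $U_{\nu} \cong T^{\ast}X_{\nu}$ of \cite[Theorem~7.1]{BielawskiDancer2000} intertwines fibre rotation with scalar multiplication, so that freeness holds globally rather than only over $\Delta_{\emptyset}$. If this proves delicate, I would fall back on the explicit vertex check at each boundary vertex of $\Delta_{\nu}^{\leq\delta}$, using the isotropy description of Section~\ref{section:section-5-localisation-formulae-isotropy-data}.

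Finally, the product-of-projective-spaces assertion under (iii) follows directly from \cite[Theorem~7.2]{BielawskiDancer2000}.
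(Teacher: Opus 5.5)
Your cycle (ii)$\Rightarrow$(i)$\Rightarrow$(iii)$\Rightarrow$(ii) is the paper's. Your main argument for (iii)$\Rightarrow$(ii) is also exactly the paper's: under $M_{\nu} = U_{\nu} \cong T^{\ast}X_{\nu}$, fibre rotation is scalar multiplication on covectors, and a point of $\rho^{-1}(\delta)$ with $\delta > 0$ cannot have both covector and $\zeta$ zero, so the cutting circle acts freely. The paper takes the intertwining you flag as the main obstacle directly from \cite[Theorem~7.1]{BielawskiDancer2000}, so that is not where the difficulty lies. Freeness already implies that $\delta$ is regular. You can therefore drop your separate regularity check, which also presupposes without proof that $\mcA$ is exactly the facet arrangement of a product of simplices. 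You can likewise drop the reduction to points over polyptych vertices.

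The gap is in (i)$\Rightarrow$(iii). Your contrapositive needs ``$\mcC$ irreducible $\Rightarrow$ $U_{\nu} = M_{\nu}$''. You obtain it by reading \cite[Theorem~7.2]{BielawskiDancer2000} as ``$U_{\nu} = M_{\nu}$ if and only if $\Delta_{\emptyset}$ is the unique bounded region''. That is not the form of the cotangent-bundle criterion the paper invokes. What has to be verified is that every vertex of $\mcA$ lies in $\Delta_{\emptyset}$ (the equivalent form of condition~(ii) in the proof of their Theorem~7.2), and that $\mcA$ is the facet arrangement of the Delzant polytope $\Delta_{\emptyset}$. The description of $\mcC$ in Subsection~\ref{subsection:subsection-3.2-core-extended-core}, through which you propose to translate, only turns irreducibility of the core into uniqueness of the bounded region. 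It says nothing about where the vertices of $\mcA$ lie: a priori a vertex could lie only on unbounded regions, or a hyperplane of $\mcA$ could be disjoint from $\Delta_{\emptyset}$. Supplying this bridge is the actual content of (i)$\Rightarrow$(iii) beyond Theorem~\ref{thm:reducible-core-orbifold}.

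The paper does it in two steps.
\begin{enumerate}
\item Each vertex $v$ equals $\mu_{\RR}(q)$ for a fibre-rotation fixed point $q$ \cite[Corollary~3.3]{HaradaProudfoot2004}. Fixed points lie in the core \cite[Corollary~3.6]{HaradaProudfoot2004}, so $v$ lies in a bounded region, hence in $\Delta_{\emptyset}$.
\item Fix $i$. By boundedness of $\Delta_{\emptyset}$, the classes $u_{j} + \RR u_{i}$ with $j \neq i$ positively span $\mft^{n}/\RR u_{i}$. So one can choose $n - 1$ further hyperplanes meeting $H_{i}$ in a single arrangement vertex $v'$. By the first step $v' \in \Delta_{\emptyset}$, and simplicity at $v'$ shows that $H_{i} \cap \Delta_{\emptyset}$ is a facet.
\end{enumerate}
Only with these two facts do Theorems~7.1 and~7.2 of \cite{BielawskiDancer2000} yield $U_{\nu} = M_{\nu}$ and that $X_{\nu}$ is a product of projective spaces.
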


\begin{proof}
	We first prove (i) $\implies$ (iii).
	Suppose that $M_{\nu}^{\leq \delta}$ is smooth for some $\delta \in \Cout$.
	By Theorem~\ref{thm:reducible-core-orbifold}, the core $\mcC \subseteq M_{\nu}$ is irreducible, so $\Delta_{\emptyset}$ is the unique bounded region of $\mcA$.
	We shall show that every arrangement vertex lies in $\Delta_{\emptyset}$, so let $v \in (\mft^{n})^{\ast}$ be such an arrangement vertex.
	By \cite[Corollary~3.3]{HaradaProudfoot2004}, there is a fibre-rotation fixed point $q \in M_{\nu}$ with $\mu_{\RR}(q) = v$, where $\mu_{\RR} : M_{\nu} \ra (\mft^{n})^{\ast}$ is the residual real moment map.
	Every fibre-rotation fixed point lies in the core, which is the union of the unstable orbifolds of the fixed components \cite[Corollary~3.6]{HaradaProudfoot2004}, and so $q$ belongs to a compact core component whose bounded moment polytope contains $v$.
	Since $\Delta_{\emptyset}$ is the only bounded region, we have $v \in \Delta_{\emptyset}$.

	To apply the cotangent-bundle criterion, we also need to show that every hyperplane of $\mcA$ supports a facet of $\Delta_{\emptyset}$.
	Fix $i \in \{1,\ldots , N\}$, and write $u_{j}^{(i)} := u_{j} + \RR u_{i} \in \mft^{n} / \RR u_{i}$ for each $j \neq i$, so that $u_{j}^{(i)}$ represents the linear parts of the restricted affine functionals $\ell_{j}|_{H_{i}}$ on $H_{i} \subseteq (\mft^{n})^{\ast}$.
	Boundedness of $\Delta_{\emptyset}$ implies that the ambient normals $\{u_{j}\}$ positively span $\mft^{n}$, so their images $\{u_{j}^{(i)}\}_{j \neq i}$ positively span the $(n-1)$-dimensional quotient $\mft^{n} / \RR u_{i}$.
	Choose distinct indices $j_{1},\ldots , j_{n-1}$ for which the projected normals are linearly independent.
	The corresponding restricted affine functionals on $H_{i}$ have independent linear parts, so their common zero set is a single point: $\{v'\} = H_{i} \cap H_{j_{1}} \cap \cdots \cap H_{j_{n-1}} \subseteq (\mft^{n})^{\ast}$, and thus $v'$ is an ambient arrangement vertex and lies in $\Delta_{\emptyset}$ by the previous paragraph.

	Near $v'$, the only active inequalities defining $\Delta_{\emptyset}$ are $\ell_{j} \geq 0$ for $j \in \mcI_{v'}$; all remaining inequalities are strict.
	Simplicity makes the active normals linearly independent and so, since $i \in \mcI_{v'}$, imposing $\ell_{i} = 0$ therefore gives an $(n-1)$-dimensional face of $\Delta_{\emptyset}$.
	Hence, $H_{i}$ supports a facet, and $\mcA$ is the facet arrangement of $\Delta_{\emptyset}$.

	Smoothness of $M_{\nu}$ gives the smoothness condition on $\mcA$ in \cite[Theorem~3.2]{BielawskiDancer2000}, so $\Delta_{\emptyset}$ is the Delzant polytope of the smooth compact toric variety $X_{\nu}$.
	All arrangement vertices lie in $\Delta_{\emptyset}$, which is the equivalent form of condition~(ii) given in the proof of \cite[Theorem~7.2]{BielawskiDancer2000}.
	Theorems~7.1 and~7.2 of \cite{BielawskiDancer2000} therefore give $U_{\nu} = M_{\nu} \cong T^{\ast}X_{\nu}$ and show that $X_{\nu}$ is a product of projective spaces, which shows that (i) $\implies$ (iii) and the additional conclusion.

	Now let us show that (iii) $\implies$ (ii).
	The identification $M_{\nu} = U_{\nu} \cong T^{\ast}X_{\nu}$ from \cite[Theorem~7.1]{BielawskiDancer2000} identifies the coordinate $w$ with a cotangent covector and intertwines the fibre-rotation action with the scalar multiplication on the cotangent fibres.
	Writing $b \in X_{\nu}$ for a base point and $\alpha \in T_{b}^{\ast}X_{\nu}$ for a cotangent covector, the cutting-circle action on $T^{\ast}X_{\nu} \times \CC$ is $\tau \cdot (b,\alpha,\zeta) = (b,\tau\alpha,\tau\zeta)$ for $\tau \in U_{1}$.
	Fix $\delta > 0$, and use this identification to regard $\rho^{-1}(\delta)$ as a subset of $T^{\ast}X_{\nu} \times \CC$.
	The zero section corresponds to $X_{\nu} = \Phi^{-1}(0)$, so a point $(b,\alpha,\zeta) \in \rho^{-1}(\delta)$ cannot have both $\alpha = 0$ and $\zeta = 0$.
	If $\tau \in U_{1}$ stabilises this point, then $\tau\alpha = \alpha$ and $\tau\zeta = \zeta$; since at least one of $\alpha$ and $\zeta$ is non-zero, it follows that $\tau = 1$, hence the cutting-circle action is free on every positive level set of $\rho$.
	Whence, every $\delta > 0$ is a regular value and $M_{\nu}^{\leq \delta} = \rho^{-1}(\delta)/U_{1}$ is a smooth manifold, proving (iii) $\implies$ (ii).
	Finally, (ii) $\implies$ (i) follows by choosing any $\delta > c_{\max}$.
\end{proof}

%% file: sections/5-localisation-formulae-isotropy-data.tex
\section{Equivariant Localisation Formulae and Isotropy Data}
\label{section:section-5-localisation-formulae-isotropy-data}

Let us move on from the geometry of the compact cuts to the calculation of their equivariant indices.
The equivariant orbifold localisation formula reduces this global calculation to one involving local data at the fixed points: their tangent representations, finite orbifold structure groups, and the actions on the fibres of the pre-quantum line orbibundle.
We first fix the equivariant conventions and recall the localisation formula, before then showing that every local input for the hypertoric cut can be recovered from the moment polyptych and its associated lattice data.

\subsection{Equivariant Localisation Formulae}
\label{subsection:subsection-5.1-equivariant-localisation-formulae}

Let $M$ be a compact K\"ahler manifold with K\"ahler form $\w$, and let $\mcL \ra M$ be a holomorphic pre-quantum line bundle, normalised so that $F_{\nabla} = (2 \pi / \sqrt{-1})\,\w$ and $c_{1}(\mcL) = [\w]$.
Its Riemann--Roch number, or holomorphic Euler characteristic, is $\chi(M ; \mcL) = \sum_{q \geq 0}(-1)^{q} \dim_{\CC} H^{q}(M ; \mcL)$.
The Hirzebruch--Riemann--Roch formula expresses this number as $\chi(M ; \mcL) = \int_{M} \Td(TM) \wedge \Ch(\mcL)$ and, when the higher cohomology vanishes, the alternating sum reduces to $\chi(M ; \mcL) = \dim_{\CC}H^{0}(M ; \mcL)$.

Suppose that $\dim_{\CC}M = D$, and that a compact torus $T$ acts holomorphically on $M$ with a moment map $\mu : M \ra \mft^{\ast}$ and a lift to $\mcL$ preserving the pre-quantum connection.
Since $T$ is compact and connected, this action also preserves the Hermitian structure of $\mcL$.
We use the geometric linearisation fixed in Section~\ref{section:section-2-hypertoric-varieties} and the pullback action on sections given by \eqref{eqn:pullback-action-on-sections}, i.e., a geometric fibre character is inverted when regarded as a character in the induced representation on sections.
In the equivariant formulas below, we use a logarithmic variable $\xi \in \mft_{\CC}$: the element $e^{\xi} \in T_{\CC}$ is characterised by $\chi_{\alpha}(e^{\xi}) = e^{\langle \alpha ,\, \xi \rangle}$ for every $\alpha \in \mft_{\ZZ}^{\ast}$.
Thus, for $\eta \in \mft$, the element $[\eta] \in T$ of Section~\ref{section:section-2-hypertoric-varieties} is written as $e^{2\pi\sqrt{-1}\eta}$ in this notation.

For a local tangent line bundle $V_{\alpha}$ of geometric weight $\alpha$, the equivariant first Chern class is
\begin{equation}
	\label{eqn:equivariant-first-Chern-class-convention}
	c_{1}^{T}(V_{\alpha})(\xi)
	= -\langle \alpha ,\, \xi \rangle,
\end{equation}
so that the respective fixed-point denominator is $1 - e^{\langle \alpha , \xi \rangle}$.
For a pre-quantum lift normalised by the moment map $\mu$, the geometric action on the fibre over a fixed point $p$ has weight $-\mu(p)$.
We write $\beta_{p} = \mu(p)$ for the corresponding pullback fibre weight, which enters the equivariant character.

Each cohomology group $H^{q}(M ; \mcL)$ is then a finite-dimensional $T$-representation, so we can define the equivariant holomorphic Euler characteristic by
\begin{equation}
	\label{eqn:holomorphic-Euler-characteristic}
	\chi_{T}( M ; \mcL ) := \sum\nolimits_{q \geq 0}(-1)^{q} [ H^{q}( M ; \mcL) ] \in R(T).
\end{equation}
Suppose that $M^{T}$ is finite and write, at each fixed point $p$, $T_{p}M \cong \bigoplus_{j = 1}^{D}V_{\alpha_{p , j}}$ with $\alpha_{p , j} \neq 0$.
The equivariant index theorem expresses the character of $\chi_{T}( M ; \mcL)$, defined in \eqref{eqn:holomorphic-Euler-characteristic}, as the integral of the equivariant Riemann--Roch class $\Td_{T}(TM) \wedge \Ch_{T}(\mcL)$.
Atiyah--Bott--Berline--Vergne localisation evaluates this integral as a sum over the fixed points, dividing each contribution by the equivariant Euler class of $T_{p}M$.
With \eqref{eqn:equivariant-first-Chern-class-convention}, this division cancels the linear numerators in the Todd class, leaving one factor $(1 - e^{\langle \alpha_{p, j} , \xi \rangle})^{-1}$ for each tangent line bundle, and the equivariant Chern character of the pre-quantum fibre contributes $e^{\langle \mu(p) , \xi \rangle}$.
We therefore obtain the holomorphic fixed-point formula; compare \cite[Corollary~I.10; see also Theorem~I.7]{GuilleminGinzburgKarshon2002}:
\begin{equation}
	\label{eqn:equivariant-HRR}
	\chi_{T}(M ; \mcL)( e^{\xi} ) = \sum_{p \in M^{T}}
	\frac{
		e^{\langle \mu(p) ,\, \xi \rangle}
	}{
		\prod_{j = 1}^{D}
		\left( 1 - e^{\langle \alpha_{p , j} ,\, \xi \rangle} \right)
	},
\end{equation}
for $\xi \in \mft_{\CC}$ away from the denominator hyperplanes where $e^{\langle \alpha_{p , j} , \xi \rangle} = 1$.
Although the individual summands may have poles, their sum is the character of a finite-dimensional virtual representation and is regular at the identity.
Furthermore, if the higher cohomology vanishes, then this character is the character of $H^{0}(M ; \mcL)$.

Of course, the cut spaces considered below need not be smooth manifolds.
Indeed, Theorem~\ref{thm:reducible-core-orbifold} shows that, even when $M_{\nu}$ is smooth, a reducible core $\mcC$ forces every cut $M_{\nu}^{\leq \delta}$ with $\delta \in \Cout$ to have non-trivial cyclic isotropy at the cut boundary and a singular coarse local quotient there, so we will generally require the orbifold version of the previous formula.
At an orbifold fixed point $p$, the $T$-action on a local uniformising chart may lift only to a finite covering torus $\widetilde{T}_{p} \ra T$.
This makes the local tangent and fibre weights characters of $\widetilde{T}_{p}$; that is, they may be regarded as rational weights with respect to the global lattice $\mft_{\ZZ}^{\ast}$.

\begin{theorem}[Equivariant Orbifold Kawasaki-Riemann--Roch Formula]
	\label{thm:equivariant-KRR-index-theorem}
	Let $M$ be a compact K\"ahler orbifold of complex dimension $D$, equipped with a holomorphic action of a torus $T$, and let $\mcL \ra M$ be a $T$-equivariant holomorphic line orbibundle.
	Suppose that the fixed-point set $M^{T}$ is finite.

	For each $p \in M^{T}$, let $\Gamma_{p}$ be the local orbifold structure group.
	Assume that, on a local uniformising chart, the $T$-action lifts to a finite covering torus $\widetilde{T}_{p} \ra T$ whose action commutes with that of $\Gamma_{p}$, and that the tangent representation splits into common $\widetilde{T}_{p}$- and $\Gamma_{p}$-weight spaces $T_{p}M \cong \bigoplus_{j = 1}^{D} V_{\alpha_{p , j}}$ with $\alpha_{p , j} \neq 0$.
	For $g \in \Gamma_{p}$, let $\chi_{p , j}^{\tang}(g) \in U_{1}$ be the \emph{geometric} eigenvalue of $g$ on the tangent line bundle $V_{\alpha_{p , j}}$, let $\beta_{p}$ be the pullback torus weight of the local fibre, and let $\chi_{p}^{\fib}(g) \in U_{1}$ be its local \emph{pullback} character; i.e., $\chi_{p}^{\fib}(g)$ is the inverse of the geometric eigenvalue of $g$ on the local fibre $\mcL_{p}$.
	We say that $\xi \in \mft_{\CC}$ is \textbf{generic} if
	\begin{equation}
		\label{eqn:orbifold-generic-xi}
		\chi_{p, j}^{\tang}(g) e^{\langle \alpha_{p , j} ,\, \xi \rangle} \neq 1
	\end{equation}
	for every $p \in M^{T}$, $g \in \Gamma_{p}$, and $j = 1 , \ldots , D$.

	Then, for all generic $\xi \in \mft_{\CC}$, the \emph{Kawasaki localisation formula} gives
	\begin{equation}
		\label{eqn:equivariant-KRR-character-formula}
		\chi_{T}(M ; \mcL)( e^{\xi} ) = \sum_{p \in M^{T}}
		\frac{1}{|\Gamma_{p}|}
		\sum_{g \in \Gamma_{p}}
		\frac{
			\chi_{p}^{\fib}(g) e^{\langle \beta_{p} ,\, \xi \rangle}
		}{
			\prod\nolimits_{j = 1}^{D}
			[ 1 - \chi_{p, j}^{\tang}(g) e^{\langle \alpha_{p , j} ,\, \xi \rangle} ]
		}.
	\end{equation}
\end{theorem}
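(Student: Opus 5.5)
The plan is to derive \eqref{eqn:equivariant-KRR-character-formula} from Kawasaki's orbifold Riemann--Roch theorem in its equivariant form, localised by the Atiyah--Bott--Berline--Vergne method, and to reduce the local computation to the manifold case \eqref{eqn:equivariant-HRR} on each uniformising chart. First we treat $M$ as a global-quotient-free orbifold: on a neighbourhood of each fixed point the structure is $(\widetilde{U}_{p},\Gamma_{p})$, with $\widetilde{U}_{p}\subseteq\CC^{D}$ linearised. The equivariant index $\chi_{T}(M;\mcL)$ is the character of a finite-dimensional virtual representation; by Kawasaki \cite{GuilleminGinzburgKarshon2002} it equals a sum over the twisted sectors $(g)$ of the inertia orbifold of integrals of $\Td_{T}\wedge\Ch_{T}$-type classes, weighted by $1/m_{g}$ and twisted by the eigenvalues of $g$ on the normal bundle and on $\mcL$.

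Second, we localise. For generic $\xi$ the element $e^{\xi}$ acts on each sector with fixed locus contained in the $T$-fixed points, since $M^{T}$ is finite. Each sector integral then localises to the points $p\in M^{T}$ in the support of that sector. Equivalently, and more concretely, we use the Lefschetz form of the theorem: the trace of $e^{\xi}$ on $\chi_{T}(M;\mcL)$ equals a sum over connected components of the fixed locus of $e^{\xi}$. Near each isolated fixed point $p$, this locus lifts to those pairs $(\tilde p,g)$ with $g\in\Gamma_{p}$, where $g$ composed with the lift $\tilde{e}^{\xi}\in\widetilde{T}_{p}$ fixes $\tilde p$.

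The local contribution is the Atiyah--Bott term for the composite automorphism $g\,\tilde e^{\xi}$ of the chart, averaged by $1/|\Gamma_{p}|$. This average accounts for the fact that a point of $M$ has $|\Gamma_{p}|$ lifts modulo the $\Gamma_{p}$-action, with the effective contribution divided by the order of the group. Because the actions of $\widetilde{T}_{p}$ and $\Gamma_{p}$ commute and are simultaneously diagonal on the lines $V_{\alpha_{p,j}}$, the differential of $g\,\tilde e^{\xi}$ has eigenvalues $\chi^{\tang}_{p,j}(g)e^{\langle\alpha_{p,j},\xi\rangle}$. Its action on the pullback fibre is $\chi^{\fib}_{p}(g)e^{\langle\beta_{p},\xi\rangle}$, where the sign conventions follow \eqref{eqn:pullback-action-on-sections} and Notation~\ref{notation:weight-conventions}, exactly as in \eqref{eqn:equivariant-HRR}. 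The holomorphic Lefschetz number of a linear automorphism with isolated fixed point is the fibre trace divided by $\prod_{j}\det(1-\text{eigenvalue})$. The genericity condition \eqref{eqn:orbifold-generic-xi} ensures these denominators are non-zero and that $\tilde p$ is an isolated fixed point for every $g$. Summing over $g$ and $p$ yields the formula.

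The main obstacle is justifying the passage from the $T$-equivariant statement to an honest Lefschetz formula for the single element $e^{\xi}$ on an orbifold. Three issues arise: the lift to $\widetilde{T}_{p}$ is defined only locally; the genericity must be taken for complex $\xi$; and the identity must then be extended from a dense set of $e^{\xi}$ to all generic $\xi$. I would handle this first for $\xi\in\mft$ with $e^{\xi}$ topologically generating a dense subgroup, so that $M^{e^{\xi}}=M^{T}$, and apply Kawasaki's Lefschetz theorem for finite-order or compact-group isometries of orbifolds. Both sides are then meromorphic, in fact rational, in $e^{\xi}$: the left-hand side is a Laurent polynomial. Analytic continuation then gives the identity on the complement of the denominator hyperplanes.
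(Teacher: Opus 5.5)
Your proposal is correct and follows essentially the same route as the paper. The paper gives no independent proof: it takes the formula as the isolated-fixed-point specialisation of the equivariant orbifold index theorem of Vergne and Kawasaki \cite{Vergne1996,Kawasaki1979}, with the explicit average over $\Gamma_{p}$ as in \cite{CannasDaSilva1996}, and then extends it by meromorphic continuation to every $\xi$ satisfying \eqref{eqn:orbifold-generic-xi}. Your Lefschetz argument at topological generators of $T$, followed by analytic continuation off the real slice, makes that last step explicit; note only that, with the paper's convention $\chi_{\alpha}(e^{\xi}) = e^{\langle \alpha ,\, \xi \rangle}$, such $\xi$ lie in $2\pi\sqrt{-1}\,\mft$ rather than in $\mft$.
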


On the compact cut spaces, algebraic and holomorphic orbifold sheaf cohomology agree, so we use $H^{q}(M ; \mcL)$ for both of these groups.
Theorem~\ref{thm:equivariant-KRR-index-theorem} is the isolated-fixed-point specialisation of the equivariant orbifold index theorem of Vergne \cite{Vergne1996}; see also \cite[Theorems~3.4--3.5]{Meinrenken1998}, together with Kawasaki's orbifold Riemann--Roch theorem \cite{Kawasaki1979}.
The finite-group average over the local structure group is written explicitly in the isolated-fixed-point case in \cite[\S10.1 and Remark~10.4]{CannasDaSilva1996}; compare also \cite[Corollary~I.10 and (I.13)]{GuilleminGinzburgKarshon2002}.
The cited formulae are initially stated either near the identity or under a stronger genericity hypothesis; both sides are meromorphic functions of $\xi$, so the equality extends by meromorphic continuation to every $\xi$ satisfying \eqref{eqn:orbifold-generic-xi}.
We call the equivariant index character $\chi_{T}(M ; \mcL)(e^{\xi})$ computed by \eqref{eqn:equivariant-KRR-character-formula} the \emph{Kawasaki character}.

For the cut space of a hypertoric variety, $\Gamma_{p}$ is the kernel of the local covering homomorphism $\widetilde{T}_{p} \ra T^{n}$, and its action on the chart is the restriction of the covering-torus action.
The decomposition into one-dimensional $\widetilde{T}_{p}$-weight spaces is therefore also a decomposition into common $\widetilde{T}_{p}$- and $\Gamma_{p}$-eigenspaces, as required in Theorem~\ref{thm:equivariant-KRR-index-theorem}.
In the local calculations below, we retain the local orbifold groups inherited from $M_{\nu}$ when restricting to a hypertoric subvariety $M_{\mcF} \subseteq M_{\nu}$, as described in Subsection~\ref{subsection:subsection-3.4-hypertoric-subvarieties}.
Some elements of these groups may act trivially on the restricted charts but non-trivially in directions normal to the subvariety.
Passing to the effective quotient $M_{\mcF}^{\eff}$ removes the elements acting trivially on the restricted charts.
Characters calculated on this effective quotient pull back to the inherited local groups along the homomorphisms in Lemma~\ref{lemma:flat-local-group-comparison}.
When applying the Kawasaki localisation formula to $M_{\nu}^{\leq \delta}$, however, we use the full local group $\Gamma_{p}$ at each fixed point and its representations on the full tangent space and the pre-quantum fibre of the cut.


\subsection{Isotropy Data of the Cut Spaces}
\label{subsection:subsection-5.2-isotropy-data-cut-spaces}

We specialise Theorem~\ref{thm:equivariant-KRR-index-theorem} to the cut spaces $M_{\nu}^{\leq \delta}$.
Although the full cut space need not be toric for the residual $T^{n}$-action, it is a compact K\"ahler orbifold.
The fixed points, the local tangent weights, and the finite local groups are nevertheless determined by the moment polyptych, using only the toric geometry of its individual cut components.

\begin{lemma}
	\label{lemma:cut-space-fixed-points}
	Let $M_{\nu}^{\leq \delta}$ be the cut space at a regular value $\delta > 0$.
	Then, every residual $T^{n}$-fixed point of $M_{\nu}^{\leq \delta}$ lies in the cut extended core $\mcE_{\nu}^{\leq \delta} = \bigcup_{A} \mcE_{A}^{\leq \delta}$.
	Moreover, the residual moment map restricts to a bijection $\mu_{\RR} : ( M_{\nu}^{\leq \delta} )^{T^{n}} \overset{\sim}{\ra} \Verts ( \Delta_{\nu}^{\leq \delta} )$ and, under this bijection:
	\begin{enumerate}[(i)]
		\item fixed points in the cut interior $M_{\nu}^{<\delta}$ correspond to the interior vertices of $\Delta_{\nu}^{\leq \delta}$;
		      \label{lemma:cut-space-fixed-points-1}
		\item fixed points in the cut boundary $Z_{\nu}^{\delta}$ correspond to the boundary vertices of $\Delta_{\nu}^{\leq \delta}$, that is, to the vertices of $\Pi_{\nu}^{\delta}$.
		      \label{lemma:cut-space-fixed-points-2}
	\end{enumerate}
\end{lemma}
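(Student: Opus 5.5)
The plan is to split the fixed-point set of $M_{\nu}^{\leq \delta}$ into its intersections with the cut interior $M_{\nu}^{<\delta}$ and the cut divisor $Z_{\nu}^{\delta}$. We show that in each piece the fixed points lie in the cut extended core. Then we use the toric structure of each cut component $\mcE_{A}^{\leq \delta}$ to match fixed points with vertices of its truncation $\Delta_{A}^{\leq \delta}$, and finally glue these matchings using Lemma~\ref{lemma:region-common-face}.

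For the interior, $M_{\nu}^{<\delta}$ is $T^{n}$-equivariantly identified with $\{\Phi<\delta\}\subseteq M_{\nu}$. By \cite[Theorem~3.1(ii)]{BielawskiDancer2000}, the $T^{n}$-fixed points of $M_{\nu}$ are exactly the points over arrangement vertices. They lie in $\mcE_{\nu}$, since at a vertex every pair $(z_{i},w_{i})$ with $i\in\mcI_{v}$ vanishes and every other pair has one zero entry. There is exactly one such point over each vertex, because the active coordinates are zero and the inactive moduli are fixed, leaving a single $K$-orbit. By Lemma~\ref{lemma:slack-function}, this point satisfies $\Phi=\mcS(v)$. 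So the interior fixed points correspond bijectively to arrangement vertices $v$ with $\mcS(v)<\delta$. One then checks that these are precisely the interior vertices of the polyptych. A vertex of $\Delta_{A}^{\leq \delta}$ with $\mcS<\delta$ is a vertex of $\Delta_{A}$, hence an arrangement vertex by Lemma~\ref{lemma:regions-pointed}. Conversely, an arrangement vertex with $\mcS(v)<\delta$ lies in some region and is a vertex of it, hence a vertex of the truncation.

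For the boundary, $Z_{\nu}^{\delta}\cong\Phi^{-1}(\delta)/U_{1}$, and a point $[p]$ is $T^{n}$-fixed exactly when the orbit $T^{n}\cdot p$ lies in $U_{1}\cdot p$. This is the main obstacle: we must show that such a $p$ lies in $\mcE_{\nu}$. The approach is to first pass to the complexified action. The $T^{n}$-orbit of $p$ is then contained in a circle, so the stabiliser $\mathrm{Stab}_{T^{n}}(p)$ has codimension at most one. By the stabiliser description in \cite[Theorem~3.1]{BielawskiDancer2000}, $\mu_{\RR}(p)$ then lies on a flat of dimension at most one, and along the residual circle the fibre-rotation action coincides with a one-parameter subgroup. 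Next we use the holomorphic moment map. Fibre rotation scales $\mu_{\CC}$ by $\tau$, whereas $T^{n}$ preserves $\mu_{\CC}$. Hence $\mu_{\CC}(p)=\tau\mu_{\CC}(p)$ for every $\tau$ in the nontrivial image of $U_{1}$, which forces $\mu_{\CC}(p)=0$ and so $p\in\mcE_{\nu}$. The point needing care is that when $T^{n}$ fixes $p$ outright, the $U_{1}$-orbit meets the whole circle. In that case the argument is instead that $p$ is $U_{1}$-critical, which is impossible at the regular level $\delta$ unless the circle orbit is nontrivial.

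Once $p\in\mcE_{A}$, the cut component $\mcE_{A}^{\leq\delta}$ is a compact symplectic toric orbifold with moment polytope $\Delta_{A}^{\leq\delta}$ by Proposition~\ref{prop:cut-component-image}. Its $T^{n}$-fixed points therefore biject with the vertices of $\Delta_{A}^{\leq\delta}$, and those in $Z_{A}^{\delta}$ correspond to the vertices on the cut face $\Pi_{A}^{\delta}$. To glue, observe that a vertex shared by two truncations lies on their common face by Lemma~\ref{lemma:region-common-face}. The corresponding fixed points in the two components must coincide, since in each component the coordinates vanishing over that face are the same by Lemma~\ref{lemma:half-space-coordinates}, and the point over a vertex of a toric orbifold is unique. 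Injectivity of $\mu_{\RR}$ on the fixed set follows, which completes the bijection and parts (i) and (ii).
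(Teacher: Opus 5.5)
Your argument is correct, and it departs from the paper mainly at the boundary step. The paper shows that a representative $q \in \Phi^{-1}(\delta)$ of a boundary fixed point lies in $\mcE_{\nu}$ infinitesimally. It uses regularity to force $\dim \mft_{q} = n-1$, normalises a generator of $\mft^{n}/\mft_{q}$ to agree with the fibre-rotation generator at $q$, lifts it and corrects it by some $\kappa \in \mfk$ to a coordinate vector $\sum_{i} c_{i} e_{i}$, and compares velocities to get $c_{i} z_{i} = 0$ and $-c_{i} w_{i} = w_{i}$, which rules out $z_{i} w_{i} \neq 0$.

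You use only that $\mu_{\CC}$ is $T^{n}$-invariant and has weight one under fibre rotation. Any $t \in T^{n}$ with $t \cdot q = \tau \cdot q \neq q$ forces $\tau \neq 1$ and $\mu_{\CC}(q) = \tau \mu_{\CC}(q)$, hence $\mu_{\CC}(q) = 0$. This is coordinate-free and shorter. A single such $t$ suffices, so the complexification, the codimension count and the flat description you invoke are not needed. The paper's computation yields slightly more: at $q$ the fibre rotation is realised by a coordinate-torus element with $c_{i} \in \{0,-1\}$ on the non-zero coordinates, matching $\iota_{A}$. The lemma does not use this.

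Elsewhere the two routes are parallel. For interior points you cite \cite[Theorem~3.1(ii)]{BielawskiDancer2000}, where the paper proves membership in $\mcE_{\nu}$ directly: $\{\alpha_{j}\}_{j \notin I(q)}$ is a basis of $\mfk^{\ast}$, so the complex moment-map equation forces each $z_{j} w_{j} = 0$. For injectivity you combine toric uniqueness on each cut component with the coordinate description of Lemma~\ref{lemma:half-space-coordinates}, where the paper counts the remaining phases over each vertex in one step.

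Two sentences should be tightened.
\begin{enumerate}[(i)]
	\item \emph{The points over arrangement vertices} must mean $\mu_{\HK}^{-1}(v,0)$. Read with $\mu_{\RR}^{-1}(v)$ alone the claim fails: for $T^{\ast}\CC\PP^{1}$ there are non-fixed points over $v = 0$ with $z_{1} w_{1} \neq 0$. Your assertion that every inactive pair has a zero entry is exactly what the basis argument above supplies.
	\item A representative fixed outright by $T^{n}$ needs no separate treatment. Your first paragraph already places it in $\mcE_{\nu}$ over an arrangement vertex $v$ with $\mcS(v) = \Phi = \delta$, which Proposition~\ref{prop:critical-values-slack} excludes. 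If you prefer the \emph{$U_{1}$-critical} phrasing, justify it by noting that the finite set $M_{\nu}^{T^{n}}$ is preserved by the commuting connected circle, which must therefore fix each of its points.
\end{enumerate}
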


\begin{proof}
	For a point $q = [ z , w ] \in M_{\nu}$, set $I(q) := \{ i ~|~ z_{i} = w_{i} = 0 \}$.
	The Lie algebra of the residual stabiliser of $q$ is $\mft_{q} = \Span_{\RR}\{ u_{i} ~|~ i \in I(q) \}$ and, moreover, $\mu_{\RR}(q) \in H_{i}$ for every $i \in I(q)$.

	First, for \ref{lemma:cut-space-fixed-points-1}, suppose that $p \in M_{\nu}^{< \delta}$ is a residual fixed point, and let $q \in \{ \Phi < \delta \} \subset M_{\nu}$ be the corresponding point under the $T^{n}$-equivariant identification of the cut interior.
	Then $q$ is also fixed, so $\mft_{q} = \mft^{n}$.
	Furthermore, simplicity gives $|I(q)| \leq n$, while the stabiliser formula shows that the normals indexed by $I(q)$ span $\mft^{n}$.
	Hence, $|I(q)| = \rk( I(q) ) = n$.

	The dual exact sequence \eqref{eqn:dual-lie-algebra-seq} shows that the weights $\{ \alpha_{j} \}_{j \not\in I(q)}$ form a basis of $\mfk^{\ast}$, and it follows that the complex moment-map equation $\sum_{j \not\in I(q)} (z_{j}w_{j}) \alpha_{j} = 0$ gives $z_{j}w_{j} = 0$ for every $j \not\in I(q)$.
	The same equality also holds for $j \in I(q)$ by definition, so $q \in \mu_{\CC}^{-1}(0) = \mcE_{\nu}$.

	On an extended-core component containing $q$, the toric orbit--face correspondence identifies $v = \mu_{\RR}(q)$ as an arrangement vertex.
	Moreover, since $\mcS(v) = \Phi(q) < \delta$, it is an interior vertex of the moment polyptych.
	Conversely, every interior vertex belongs to a toric cut component, where the same orbit--face correspondence supplies a residual fixed point above it.
	This establishes the correspondence in \ref{lemma:cut-space-fixed-points-1}; injectivity will be verified below.

	For \ref{lemma:cut-space-fixed-points-2}, let $p \in Z_{\nu}^{\delta}$ be a residual $T^{n}$-fixed point, represented by $q = [ z , w ] \in \Phi^{-1}(\delta) \subseteq M_{\nu}$.
	Since $p$ is fixed after quotienting by fibre rotation, $T^{n} \cdot q \subseteq U_{1} \cdot q$, and hence $\dim \mft_{q} \geq n - 1$.
	If $\dim \mft_{q} = n$, the argument for residual fixed points above gives $q \in \mcE_{\nu}$ and an arrangement vertex $v = \mu_{\RR}(q)$ with $\mcS(v) = \Phi(q) = \delta$.
	This contradicts the regularity of $\delta$ by Proposition~\ref{prop:critical-values-slack}, hence $\dim \mft_{q} = n - 1$.

	Choose an infinitesimal generator $\xi$ of $\mft^{n} / \mft_{q}$, normalised so that its action at $q$ agrees with the fibre-rotation generator, and lift it to $\hat{\xi} \in \mft^{N}$.
	Their actions on the representative $(z , w) \in \phi_{\HK}^{-1}(\nu , 0)$ differ by an infinitesimal $K$-action, so there exists a $\kappa \in \mfk$ such that $\hat{\xi} + \kappa$ acts at $(z , w)$ as the fibre-rotation generator.
	Write $\hat{\xi} + \kappa = \sum_{i = 1}^{N} c_{i} e_{i}$, where $c_{i} \in \RR$.
	Then, on the $i$-th coordinate pair, the infinitesimal coordinate-torus action has velocity $2\pi \sqrt{-1}( c_{i}z_{i} , -c_{i}w_{i} )$, whereas fibre-rotation action has velocity $2\pi\sqrt{-1} ( 0 , w_{i} )$.
	Their equality at $( z , w )$ therefore implies $c_{i}z_{i} = 0$ and $-c_{i}w_{i} = w_{i}$ for each $i = 1, \ldots, N$.
	If both $z_{i}$ and $w_{i}$ were non-zero, these equations would force $c_{i} = 0$ and $c_{i} = -1$, a contradiction.
	Hence, $z_{i}w_{i} = 0$ for every $i$, and so $q \in \mcE_{\nu}$.

	The point $p$ therefore lies on a toric cut component $\mcE_{A}^{\leq \delta}$, where the toric orbit--face correspondence identifies its moment image with a vertex of $\Delta_{A}^{\leq \delta}$.
	Since $p$ lies on the cut divisor $Z_{\nu}^{\delta}$, this vertex belongs to the cut face $\Pi_{A}^{\delta}$ and is therefore a boundary vertex.
	Conversely, every boundary vertex is attained on a toric cut component by Proposition~\ref{prop:cut-component-image}, and the same correspondence supplies a residual fixed point on its component divisor, establishing the correspondence in \ref{lemma:cut-space-fixed-points-2}.

	To verify injectivity independently of the choice of cut component, fix a vertex $v$ and observe that the equations $|z_{i}|^{2} - |w_{i}|^{2} = 2\ell_{i}(v)$ and $z_{i}w_{i} = 0$ determine $|z_{i}|$ and $|w_{i}|$ for every $i$.
	The remaining freedom consists of phases forming a single $T^{N}$-orbit, which descends to a single residual $T^{n}$-orbit in the cut after quotienting by $K$ and, at a boundary vertex, by the cutting circle.
	By the vertex argument from before, this residual $T^{n}$-orbit is zero-dimensional and hence consists of a single point, proving injectivity and completing the proof.
\end{proof}

Recall from Subsection~\ref{subsection:subsection-4.3-moment-polyptychs} that an edge of the moment polyptych $\Delta_{\nu}^{\leq \delta}$ is a one-dimensional cell of it as a polytopal complex.
We say that such an edge \emph{emanates from} a vertex $v$ if it contains $v$.

\begin{theorem}
	\label{thm:vertex-edges}
	Let $p \in M_{\nu}^{\leq \delta}$ be a residual $T^{n}$-fixed point, and let $v = \mu_{\RR}(p)$.
	Then:
	\begin{enumerate}[(i)]
		\item if $v$ is an interior vertex, then $|\mcI_{v}| = \rk(\mcI_{v}) = n$.
		      Furthermore, for every $j \in \mcI_{v}$, there are exactly two edges emanating from $v$ which are not contained in $H_{j}$, one on either side, and the resulting $2n$ edges are pairwise distinct and exhaust all edges emanating from $v$;
		      \label{theorem:vertex-edges-1}
		\item if $v$ is a boundary vertex, then $|\mcI_{v}| = \rk(\mcI_{v}) = n - 1$.
		      Furthermore, for every $j \in \mcI_{v}$, there are exactly two edges emanating from $v$ which are not contained in $H_{j}$, one on either side, and the resulting $2n - 2$ edges are pairwise distinct.
		      Moreover, there is exactly one additional distinct edge that is tangent to the affine line $H_{\mcI_{v}} = \cap_{j \in \mcI_{v}} H_{j}$, and the resulting $2n - 1$ edges exhaust all edges emanating from $v$.
		      \label{theorem:vertex-edges-2}
	\end{enumerate}
\end{theorem}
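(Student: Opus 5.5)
The plan is to reduce both parts to a local description of the moment polyptych $\Delta_{\nu}^{\leq \delta}$ near $v$. Faces of a polytope containing a given vertex correspond bijectively to faces of its tangent cone there, and that cone is cut out by the inequalities active at the vertex. So the edges of $\Delta_{\nu}^{\leq \delta}$ emanating from $v$ are exactly the rays of the tangent cones at $v$ of the truncations $\Delta_{A}^{\leq \delta}$ containing $v$, with rays identified when they coincide. Fix a neighbourhood $W$ of $v$ that meets only the hyperplanes $H_{i}$ with $i \in \mcI_{v}$. By simplicity (Subsection~\ref{subsection:subsection-3.1-hyperplane-arrangements}) the normals $\{u_{i}\}_{i \in \mcI_{v}}$ are linearly independent, so on $W$ the functionals $\ell_{i}$, $i \in \mcI_{v}$, extend to affine coordinates. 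In these coordinates the regions meeting $v$ are the $2^{|\mcI_{v}|}$ sign orthants, each times a factor of complementary dimension. Throughout I would use Lemma~\ref{lemma:slack-on-region}: $\mcS$ is affine on each $\Delta_{A}$, and its restriction there is the truncating functional.

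For (i), Lemma~\ref{lemma:cut-space-fixed-points} shows that $v$ is an arrangement vertex with $\mcS(v) < \delta$, and the definition of vertex gives $|\mcI_{v}| = \rk(\mcI_{v}) = n$. Shrinking $W$ so that $\mcS < \delta$ on it, the truncation inequality is inactive near $v$. Each region containing $v$ then has a simplicial tangent cone at $v$, namely a coordinate orthant in the $\ell$-coordinates. Its rays lie along the lines $H_{\mcI_{v} \setminus \{j\}}$, one per $j$, pointing to the side of $H_{j}$ prescribed by that region's sign. The edges not contained in $H_{j}$ are therefore exactly the two rays of $H_{\mcI_{v}\setminus\{j\}}$ through $v$, with $\ell_{j}>0$ and $\ell_{j}<0$ respectively, because every other line $H_{\mcI_{v}\setminus\{k\}}$ with $k \neq j$ lies inside $H_{j}$. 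Both sign choices are realised: every sign pattern on $\mcI_{v}$ defines a non-empty region near $v$, and by Lemma~\ref{lemma:regions-pointed} each such region is full-dimensional. Distinctness and exhaustion follow because the $2n$ rays exhaust the rays of all the orthant cones.

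For (ii), $\mcS(v) = \delta$, and $v$ is not an arrangement vertex by \eqref{eqn:regular-value-polyptych-boundary}, so $\rk(\mcI_{v}) \leq n-1$. On the other hand $v$ is a vertex of some $\Delta_{A}^{\leq \delta}$. Its active constraints are the $\ell_{i}$ with $i \in \mcI_{v}$ together with the single cut constraint $\langle x, u_{A}\rangle \leq \delta + \lambda_{A}$, and these must have rank $n$. This forces $|\mcI_{v}| = n-1$ and shows that $u_{A}$ is independent of $\{u_{i}\}_{i \in \mcI_{v}}$. Let $L := H_{\mcI_{v}}$, which is a line. The key step, and the one I expect to be the main obstacle, is to show that $\mcS|_{L}$ has non-zero slope at $v$. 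Near $v$ the line $L$ lies in the relative interior of a common face of all $2^{n-1}$ regions, and $\mcS$ is affine along it. Suppose the slope vanished. Since the $u_{i}$ span $\mft^{n}$, $L$ meets some further hyperplane, and $\mcS$ would stay equal to $\delta$ along $L$ until that first intersection, which is an arrangement vertex. This contradicts regularity by Proposition~\ref{prop:critical-values-slack}.

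With the slope known to be non-zero, each local region near $v$ is an orthant in the $n-1$ coordinates $\ell_{i}$ times the $L$-direction, truncated by one hyperplane transverse to $L$. Its tangent cone at $v$ is therefore simplicial with $n$ facets. The ray along $L$ on the side where $\mcS < \delta$ is shared by all regions, which gives the single edge tangent to $H_{\mcI_{v}}$. For each $j \in \mcI_{v}$, the remaining rays lie in the $2$-plane $H_{\mcI_{v}\setminus\{j\}}$, on the level set $\{\mcS = \delta\}$ and off $L$. There are two of them, one on each side of $H_{j}$, since the sign of $\ell_{j}$ can be chosen either way and transversality gives exactly one boundary ray per side. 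Counting rays over all regions yields the $2n-2$ pairwise distinct edges plus the one extra edge, and these exhaust all edges emanating from $v$. The delicate points to check are transversality, which comes from the non-zero slope, and the identification of polyptych edges with tangent-cone rays.
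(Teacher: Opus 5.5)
Your proposal is correct and follows essentially the paper's argument: simplicial tangent cones at $v$, with the $n-1$ arrangement constraints indexed by $\mcI_{v}$ together with the cut constraint active at a boundary vertex, and edges obtained by relaxing one constraint at a time. The paper treats $n=2$ first and reduces $n \geq 3$ to it by restricting the polyptych to the flat $H_{\mcI_{v}\setminus\{j\}}$ via Proposition~\ref{prop:cut-subspace-subpolyptych}; you instead read the two non-cutting edges for each $j$ directly off the tangent cone in that plane, which is the same computation without the subvariety detour.

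The step you flag as the main obstacle is in fact immediate. The non-zero slope of $\mcS|_{L}$ is exactly the independence of $u_{A}$ from $\{u_{i}\}_{i\in\mcI_{v}}$ that you had just derived. It carries over to every region through $v$ because $u_{A}-u_{B}\in U_{\mcI_{v}}$. So your regularity argument there is valid but unnecessary.
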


\begin{proof}
	Since $\delta > 0$ is regular, Proposition~\ref{prop:critical-values-slack} and \eqref{eqn:slack-boundary} imply that no cut face $\Pi_{A}^{\delta} = \Delta_{A} \cap \{\mcS = \delta\}$ contains an arrangement vertex.
	Every edge emanating from $v$ belongs to a polyptych component $\Delta_{A}^{\leq \delta} \subseteq (\mft^{n})^{\ast}$, which near $v$ has a simplicial tangent cone defined by $n$ active inequalities with linearly independent normals.
	The active arrangement inequalities are $\pm\ell_{i}(x) \geq 0$ for $i \in \mcI_{v}$, with the signs determined by $A$; at a boundary vertex, the cutting inequality $\langle x ,\, u_{A} \rangle \leq \delta + \lambda_{A}$ is active as well.
	Each edge emanating from $v$ within this component is obtained by keeping $n-1$ of these inequalities as equalities and allowing the remaining inequality to become strict.
	We use this description in the two cases below.

	For \ref{theorem:vertex-edges-1}, suppose that $v$ is an interior vertex, so $|\mcI_{v}| = \rk(\mcI_{v}) = n$ and the normals $\{u_{i} \mid i \in \mcI_{v}\}$ are linearly independent.
	For $j \in \mcI_{v}$, keeping $\ell_{k} = 0$ for every $k \in \mcJ_{v , j} := \mcI_{v} \setminus \{j\}$ within a component $\Delta_{A}^{\leq \delta}$ containing $v$ determines an edge along the affine line $H_{\mcJ_{v , j}}$, on the side of $H_{j}$ chosen for the region $\Delta_{A}$.
	The independence of these normals ensures that regions containing $v$ occur on both sides of $H_{j}$, giving one edge in each direction along this line.
	These two edges are contained in every $H_{k}$ with $k \in \mcJ_{v , j}$ but not in $H_{j}$, so an edge associated with one index cannot also be associated with another.
	As every edge arises in this way, the resulting $2n$ edges are all distinct and exhaust the edges emanating from $v$, proving \ref{theorem:vertex-edges-1}.

	For \ref{theorem:vertex-edges-2}, let $v$ be a boundary vertex and choose a component $\Delta_{A}^{\leq \delta}$ with $v \in \Pi_{A}^{\delta}$.
	Since $v$ is not an arrangement vertex, the $n$ active equalities at $v$ consist of $n-1$ arrangement equalities together with the cutting equality: $\ell_{k}(x) = 0$ for each $k \in \mcI_{v}$ and $\langle x ,\, u_{A} \rangle = \delta + \lambda_{A}$.
	Thus, $|\mcI_{v}| = \rk(\mcI_{v}) = n - 1$, and the normals $\{u_{k} ~|~ k \in \mcI_{v}\} \cup \{u_{A}\}$ form an $\RR$-basis of $\mft^{n}$.
	In particular, $H_{\mcI_{v}} \subseteq (\mft^{n})^{\ast}$ is an affine line, and
	\[
		H_{\mcI_{v}} \cap \left\{ x \in (\mft^{n})^{\ast} ~|~ \langle x ,\, u_{A} \rangle = \delta + \lambda_{A} \right\} = \{v\},
	\]
	intersecting transversely.
	We first identify the edge along this line, and then the remaining edges, which lie in the cut faces.

	Keeping $\ell_{k}(x) = 0$ for each $k \in \mcI_{v}$ gives the edge $\Delta_{A}^{\leq \delta} \cap H_{\mcI_{v}}$, and its points other than $v$ satisfy $\langle x ,\, u_{A} \rangle < \delta + \lambda_{A}$ (or equivalently, $\mcS(x) < \delta$), so the edge points from $v$ into the retained side.
	To show that this edge is independent of the component, suppose that $\Delta_{B}^{\leq \delta}$ also contains $v$.
	Then $A \triangle B \subseteq \mcI_{v}$, and \eqref{eqn:U1-action-generator} gives
	\[
		\begin{aligned}
			( \langle x ,\, u_{A} \rangle - \lambda_{A} ) - ( \langle x ,\, u_{B} \rangle - \lambda_{B} )
			 & = -\sum\nolimits_{k \in A} \ell_{k}(x) + \sum\nolimits_{k \in B} \ell_{k}(x)                          \\
			 & = -\sum\nolimits_{k \in A \setminus B} \ell_{k}(x) + \sum\nolimits_{k \in B \setminus A} \ell_{k}(x),
		\end{aligned}
	\]
	using the disjoint decompositions $A=(A\cap B)\sqcup(A\setminus B)$, $B=(A\cap B)\sqcup(B\setminus A)$, and since the terms indexed by $k \in A \cap B$ cancel out.
	For $x \in H_{\mcI_{v}}$, every term on the right-hand side vanishes because $( A \setminus B ) \sqcup ( B \setminus A ) = A \triangle B \subseteq \mcI_{v}$, so the two cutting inequalities agree along $H_{\mcI_{v}}$.
	Moreover, the defining inequalities of $\Delta_{A}$ and $\Delta_{B}$ differ only for indices in $A \triangle B$, since $\ell_{k} = 0$ on $H_{\mcI_{v}}$; hence $\Delta_{A} \cap H_{\mcI_{v}} = \Delta_{B} \cap H_{\mcI_{v}}$.
	It follows that $\Delta_{A}^{\leq \delta} \cap H_{\mcI_{v}} = \Delta_{B}^{\leq \delta} \cap H_{\mcI_{v}}$, so every component containing $v$ yields the same edge along $H_{\mcI_{v}}$, and hence, there is exactly one such edge emanating from $v$; when $n = 1$, this is the only edge, so \ref{theorem:vertex-edges-2} holds in the one-dimensional case.

	To identify the remaining edges, first suppose that $n = 2$.
	Then $\mcI_{v} = \{j\}$, and the only arrangement regions containing $v$ are $\Delta_{A}$ and the adjacent region $\Delta_{B}$ on the opposite side of $H_{j}$, where $B = A \triangle \{j\}$.
	Since $v \in \Delta_{A} \cap \Delta_{B}$, Lemma~\ref{lemma:region-common-face} gives $\langle v ,\, u_{B} \rangle - \lambda_{B} = \langle v ,\, u_{A} \rangle - \lambda_{A} = \delta$, so $v \in \Pi_{A}^{\delta} \cap \Pi_{B}^{\delta}$.
	In each of the two components, retaining the cutting equality and allowing $\ell_{j}$ to become non-zero presents its cut face as an edge emanating from $v$, i.e., the two edges are $\Pi_{A}^{\delta}$ and $\Pi_{B}^{\delta}$, with $\ell_{j} > 0$ on one and $\ell_{j} < 0$ on the other away from $v$.

	For $n \geq 3$, we reduce to this planar case (i.e., when $n = 2$) by fixing $j \in \mcI_{v}$ and restricting to $H_{\mcJ_{v , j}}$, where $\mcJ_{v , j} = \mcI_{v} \setminus \{j\}$.
	The equalities $\ell_{k}(x) = 0$ for $k \in \mcJ_{v , j}$ define $H_{\mcJ_{v , j}}$ as an affine plane in $(\mft^{n})^{\ast}$, and the only line of the restricted arrangement $\mcA_{\mcJ_{v, j}}$ through $v$ is $H_{j} \cap H_{\mcJ_{v , j}} = H_{\mcI_{v}}$.
	The corresponding cut subvariety $M_{\mcJ_{v , j}}^{\leq \delta} \subseteq M_{\nu}^{\leq \delta}$ has real dimension eight, and Proposition~\ref{prop:cut-subspace-subpolyptych} identifies its moment polyptych with the restricted subcomplex $\Delta_{\nu}^{\leq \delta}|_{H_{\mcJ_{v , j}}}$.
	Figure~\ref{fig:boundary-vertex-restrictions} illustrates this planar restriction and the further restriction to $H_{\mcI_{v}}$ for $n = 3$.

	\begin{figure}[htbp]
		\centering
		\begingroup
		\def\mval{2}
		\def\dval{1}
		\pgfmathsetmacro{\outerval}{\mval + \dval}

		\def\figscale{0.85}
		\pgfmathsetmacro{\labelscale}{1/\figscale}
		\def\projx{0.75}
		\def\projy{0.433}
		\def\projz{1.00}
		\def\panelgap{1.35}
		\def\labelpos{0.5}
		\def\labelgap{0.30}
		\def\vertexradius{1.10pt}
		\def\edgewidth{0.45pt}
		\def\highlightwidth{1.20pt}

		\pgfmathsetmacro{\planeshift}{(\mval + 3*\dval)*\projx + \panelgap}
		\pgfmathsetmacro{\lineshift}{\planeshift + \outerval*\projx + 2*\panelgap}
		\pgfmathsetmacro{\labelheight}{(\outerval - \labelpos*\dval)*\projz}
		\pgfmathsetmacro{\ambientlabelx}{\dval*\projx +
			(\outerval*\projz + \dval*\projy - \labelheight)*\projx/(\projz + \projy)
			+ \labelgap}
		\pgfmathsetmacro{\planelabelx}{
			(\outerval*\projz - \labelheight)*\projx/(\projz + \projy)
			+ \labelgap}

		\tikzset{
			polyprojection/.style={
					x={(-\projx cm , -\projy cm)},
					y={(\projx cm , -\projy cm)},
					z={(0cm,\projz cm)}
				},
			polyedge/.style={draw=black,line width=\edgewidth},
			polyhidden/.style={polyedge,dashed},
			polyplane/.style={polyedge,fill=gray,opacity=0.25},
			polycore/.style={polyedge,fill=gray,opacity=0.50},
			polycutedge/.style={draw=black,line width=\highlightwidth},
			polyinward/.style={draw=black,line width=\highlightwidth},
			polyinteriorvertex/.style={polyedge,fill=gray},
			polyboundaryvertex/.style={polyedge,fill=black},
			polylabel/.style={inner sep=2pt,transform shape,scale=\labelscale},
		}

		\newcommand{\polyptychcoordinates}{%
			\coordinate[overlay] (o)    at (0 , 0,0);
			\coordinate[overlay] (aX)   at (\mval,0,0);
			\coordinate[overlay] (aY)   at (0,\mval,0);
			\coordinate[overlay] (y)    at (0 , 0,\mval);
			\coordinate[overlay] (bX)   at (-\dval,0,0);
			\coordinate[overlay] (bY)   at (0 , -\dval,0);
			\coordinate[overlay] (bZ)   at (0 , 0 , -\dval);
			\coordinate[overlay] (pX)   at (\outerval,0,0);
			\coordinate[overlay] (pY)   at (0,\outerval,0);
			\coordinate[overlay] (v)    at (0 , 0,\outerval);
			\coordinate[overlay] (pXnY) at (\outerval , -\dval,0);
			\coordinate[overlay] (pXnZ) at (\outerval,0 , -\dval);
			\coordinate[overlay] (pYnX) at (-\dval,\outerval,0);
			\coordinate[overlay] (pYnZ) at (0,\outerval , -\dval);
			\coordinate[overlay] (pZnX) at (-\dval,0,\outerval);
			\coordinate[overlay] (pZnY) at (0 , -\dval,\outerval);
		}

		\newcommand{\polyptychplanesection}{%
			\draw[polyplane]
			(bY) -- (bZ) -- (pYnZ) -- (pY) -- (v) -- (pZnY) -- cycle;
			\draw[polycore] (o) -- (aY) -- (y) -- cycle;
		}

		\scalebox{\figscale}{%
			\begin{tikzpicture}[polyprojection]
				\begin{scope}[name prefix=ambient-]
					\polyptychcoordinates
					\polyptychplanesection

					\draw[polyhidden] (bX) -- (bY) -- (pZnY) -- (pZnX) -- cycle;
					\draw[polyhidden] (bZ) -- (bY) -- (pXnY) -- (aX);
					\draw[polyhidden] (bZ) -- (bX) -- (pYnX) -- (aY);
					\draw[polyhidden] (pX) -- (pXnZ) -- (bZ) -- (pYnZ) -- (pY);
					\draw[polyhidden] (bX) -- (pX);
					\draw[polyhidden] (bY) -- (pY);
					\draw[polyhidden] (bZ) -- (v);
					\draw[polyhidden] (pXnZ) -- (pZnX);
					\draw[polyhidden] (pYnZ) -- (pZnY);
					\draw[polyhidden] (pXnY) -- (pYnX);

					\draw[polyedge] (v) -- (pZnX) -- (pZnY) -- cycle;
					\draw[polyedge] (v) -- (pX) -- (pXnY) -- (pZnY) -- cycle;
					\draw[polyedge] (v) -- (pY) -- (pYnX) -- (pZnX) -- cycle;
					\draw[polyedge]
					(pXnY) -- (pXnZ) -- (pYnZ) -- (pYnX) -- (pY) -- (pX) -- cycle;
					\draw[polyedge] (o) -- (aY) -- (y) -- cycle;

					\draw[polyinward] (v) -- (y);
					\foreach \p in {o,aX,aY,y}
					\draw[polyinteriorvertex] (\p) circle (\vertexradius);
					\foreach \p in {bX,bY,bZ,pX,pY,v,pXnY,pXnZ,pYnX,pYnZ,pZnX,pZnY}
					\draw[polyboundaryvertex] (\p) circle (\vertexradius);
					\node[polylabel,above=2.pt] at (v) {$v$};
					\node[polylabel,anchor=west] at (\ambientlabelx cm,\labelheight cm)
					{$\Delta_{\nu}^{\leq \delta}$};
				\end{scope}

				\begin{scope}[xshift=\planeshift cm,name prefix=plane-]
					\polyptychcoordinates
					\polyptychplanesection

					\draw[polyedge] (bZ) -- (v);
					\draw[polyedge] (pZnY) -- (pYnZ);
					\draw[polyedge] (bY) -- (pY);

					\draw[polycutedge] (pZnY) -- (v) -- (pY);
					\draw[polyinward] (v) -- (y);

					\foreach \p in {o,aY,y}
					\draw[polyinteriorvertex] (\p) circle (\vertexradius);

					\foreach \p in {bY,bZ,pY,pYnZ,v,pZnY}
					\draw[polyboundaryvertex] (\p) circle (\vertexradius);
					\node[polylabel,above=2.5pt] at (v) {$v$};
					\node[polylabel,anchor=west] at (\planelabelx cm,\labelheight cm)
					{$\Delta_{\nu}^{\leq \delta}|_{H_{\mcJ_{v , j}}}$};
				\end{scope}

				\begin{scope}[xshift=\lineshift cm,name prefix=line-]
					\polyptychcoordinates
					\draw[polyedge] (bZ) -- (v);
					\draw[polyinward] (v) -- (y);

					\foreach \p in {o,y}
					\draw[polyinteriorvertex] (\p) circle (\vertexradius);
					\foreach \p in {bZ,v}
					\draw[polyboundaryvertex] (\p) circle (\vertexradius);
					\node[polylabel,above=2.5pt] at (v) {$v$};
					\node[polylabel,anchor=west] at (\labelgap cm,\labelheight cm)
					{$\Delta_{\nu}^{\leq \delta}|_{H_{\mcI_{v}}}$};
				\end{scope}
			\end{tikzpicture}%
		}
		\endgroup
		\caption{
			The boundary-vertex restriction argument for $n = 3$.
			From left to right, the ambient moment polyptych $\Delta_{\nu}^{\leq \delta}$ is restricted first to the plane $H_{\mcJ_{v , j}}$ and then to the line $H_{\mcI_{v}} = H_{j} \cap H_{\mcJ_{v , j}}$.
		}
		\label{fig:boundary-vertex-restrictions}
	\end{figure}

	Within a component $\Delta_{A}^{\leq \delta}$ containing $v$, retaining the equalities $\ell_{k}(x) = 0$ for each $k \in \mcJ_{v , j}$ and $\langle x ,\, u_{A} \rangle = \delta + \lambda_{A}$ gives the cut face $\Pi_{A}^{\delta} \cap H_{\mcJ_{v , j}}$ of the restricted component $\Delta_{A}^{\leq \delta} \cap H_{\mcJ_{v , j}}$.
	The planar argument therefore gives two such edges, one on each side of $H_{j} \cap H_{\mcJ_{v , j}}$; since the restricted complex $\Delta_{\nu}^{\leq \delta}|_{H_{\mcJ_{v , j}}}$ is a subcomplex of $\Delta_{\nu}^{\leq \delta}$, these are also edges of the ambient moment polyptych $\Delta_{\nu}^{\leq \delta}$.

	For each $j \in \mcI_{v}$, the two edges just determined are contained in $H_{k}$ for every $k \in \mcJ_{v , j}$, but neither is contained in $H_{j}$.
	If either edge also belonged to the pair associated with another index $j^{\prime} \neq j$, it would also have to be contained in $H_{j}$, a contradiction.
	Hence, no edge occurs in two different pairs of hyperplanes.
	These $2(n - 1)$ edges lie in $\{ \mcS = \delta \}$, whereas the previously identified edge along $H_{\mcI_{v}}$ lies in $\{ \mcS < \delta \}$ away from $v$, and therefore we get $2(n - 1) + 1 = 2n - 1$ distinct edges.
	Within each component containing $v$, an edge is obtained by imposing $n - 1$ of the $n$ active equalities.
	Omitting the cutting equality gives the edge along $H_{\mcI_{v}}$; omitting one arrangement equality gives an edge in one of the pairs already constructed.
	These $2n - 1$ edges therefore exhaust every edge emanating from $v$, which proves \ref{theorem:vertex-edges-2}.
\end{proof}

\begin{notation}[Edges Emanating from a Vertex]
	\label{notation:vertex-edges}
	Let $v$ be a vertex of $\Delta_{\nu}^{\leq \delta}$, and let $p$ be the residual fixed point with $\mu_{\RR}(p) = v$.
	For $j \in \mcI_{v}$, the two edges of Theorem~\ref{thm:vertex-edges} not contained in $H_{j}$ are called the \textbf{non-cutting edges} at $v$; we write $\varepsilon_{v , j}$ for the one on the side on which $\ell_{j}$ increases, and $\varphi_{v , j}$ for the one on which it decreases.
	At a boundary vertex, the edge tangent to $H_{\mcI_{v}}$ is called the \textbf{cutting edge} $\varkappa_{v}$; it is the edge dual to the cut facet, and it leaves the cut boundary and runs into the interior.
	Edges are indexed by the vertex $v$ from which they emanate, whereas the local tangent weights defined below are vectors in $( \mft_{\QQ}^{n} )^{\ast}$ indexed by the fixed point $p$; the edge--weight pairs are therefore $( \varepsilon_{v , j} , \varrho_{p , j} )$, $( \varphi_{v , j} , \varsigma_{p , j} )$, and $( \varkappa_{v} , \vartheta_{p} )$.
\end{notation}

We now normalise the local tangent weights.
Since each edge tangent space is one-dimensional, there exist unique vectors $\varrho_{p , j} \in T_{v}\varepsilon_{v , j}$ and $\varsigma_{p , j} \in T_{v}\varphi_{v , j}$ such that
\begin{equation}
	\label{eqn:noncutting-weight-normalisation}
	\langle \varrho_{p , j} ,\, u_{j} \rangle = 1, \qquad
	\langle \varsigma_{p , j} ,\, u_{j} \rangle = -1.
\end{equation}
These are local weights in $( \mft_{\QQ}^{n} )^{\ast}$; they need not be global $T^{n}$-characters. The interior edge--weight configuration is illustrated in Figure~\ref{fig:weights-interior-vertex}.

\begin{figure}[H]
	\centering
	\begin{tikzpicture}[
			hyper/.style={line width=0.5pt},
			tangent/.style={->, line width=1pt},
			normal/.style={->, line width=0.6pt},
			every node/.style={font=\small}
		]

		\coordinate (v) at (0 , 0);

		\draw[hyper] (-3.1 , 0) -- (3.1 , 0);
		\node[right=4pt] at (3.1 , 0) {$H_j$};

		\draw[hyper] (-2.6 , 2.05) -- (2.6 , -2.05);

		\node[above=2pt]  at (-1.95,1.55) {$\varepsilon_{v , j}$};
		\node[below=2pt] at (1.95 , -1.55) {$\varphi_{v , j}$};

		\draw[normal] (2.35,0) -- ++(0 , 1.0);
		\node[right=2pt] at (2.35,0.72) {$u_j$};

		\draw[tangent] (v) -- ++(-1.00,0.80);
		\draw[tangent] (v) -- ++(1.00 , -0.80);

		\node[above right=1pt] at (-1.00,0.80) {$\varrho_{p , j}$};
		\node[below left=1pt]  at (1.00 , -0.80) {$\varsigma_{p , j}$};

		\fill (v) circle (1.35pt);
		\node[above right=1pt] at (v) {$v$};

	\end{tikzpicture}
	\caption{
		An edge pair $\{ \varepsilon_{v , j} , \varphi_{v , j} \}$ meeting at an interior vertex $v$.
		The coorientation of the hyperplane $H_{j}$, determined by the normal $u_{j}$, distinguishes the corresponding local tangent directions $\varrho_{p , j}$ and $\varsigma_{p , j}$.
		}
	\label{fig:weights-interior-vertex}
\end{figure}

At a boundary vertex, choose a cut face $\Pi_{A}^{\delta}$ containing $v$, then there exists a unique vector $\vartheta_{p} \in T_{v} \varkappa_{v}$ such that
\begin{equation}
	\label{eqn:cutting-weight-normalisation}
	\langle \vartheta_{p} ,\, u_{j} \rangle = 0
	\quad \text{ for each } j \in \mcI_{v}, \qquad
	\langle \vartheta_{p} ,\, u_{A} \rangle = -1.
\end{equation}
As the vectors $\{ u_{j} \}_{j \in \mcI_{v}} \cup \{ u_{A} \}$ form an $\RR$-basis of $\mft^{n}$, \eqref{eqn:cutting-weight-normalisation} determines $\vartheta_{p}$ uniquely.
If $v$ also lies in $\Pi_{B}^{\delta}$, Lemma~\ref{lemma:region-common-face} gives $A \triangle B \subseteq \mcI_{v}$, hence $u_{A} - u_{B} \in U_{\mcI_{v}}$.
Since $\vartheta_{p}$ annihilates every active arrangement normal vector, it has the same pairing with $u_{A}$ and $u_{B}$ and so $\vartheta_{p}$ is independent of the chosen cut face.
The boundary edge--weight configuration is illustrated in Figure~\ref{fig:weights-boundary-vertex}.

\begin{figure}[H]
	\centering
	\begin{tikzpicture}[
			boundary/.style={line width=0.5pt},
			tangent/.style={->, line width=1pt},
			normal/.style={->, line width=0.6pt},
			every node/.style={font=\small}
		]

		\coordinate (v) at (0 , 0);

		\draw[boundary] (v) -- ++(-2.45,2.45);
		\node[left=3pt] at (-2.5 , 2.5)
		{$\Pi_{B}^\delta$};

		\draw[boundary] (v) -- ++(3.80,0);
		\node[below right=3pt] at (3.80,0)
		{$\Pi_{A}^\delta$};

		\draw[boundary] (v) -- ++(0 , 3.10);
		\node[above=4pt] at (0 , 3.10)
		{$H_j$};

		\draw[normal] (0 , 2.35) -- ++(1.25,0);
		\node[above=2pt] at (1.05,2.35)
		{$u_{j}$};

		\draw[normal] (2.75,0) -- ++(0 , -1.20);
		\node[right=2pt] at (2.75 , -0.95)
		{$u_{A}$};

		\draw[normal] (-1.65,1.65) -- ++(-0.85 , -0.85);
		\node[left=2pt] at (-2.35,0.95)
		{$u_{B}$};

		\begin{scope}[shift={(5.35,1.45)}, scale=0.90]

			\coordinate (sumstart) at (0 , 0);
			\coordinate (summid)   at (0 , -1.20);
			\coordinate (sumend)   at (1.20,0);

			\node at (0.60,0.95) {$u_{A} - u_{B} = u_{j}$};

			\draw[normal, shorten <=4pt, shorten >=5.5pt]
			(sumstart) -- (summid)
			node[midway,left=3pt] {$u_{A}$};

			\draw[normal, shorten <=4pt, shorten >=5.5pt]
			(summid) -- (sumend)
			node[midway,below right=0.25pt] {$-u_{B}$};

			\draw[normal, shorten <=4pt, shorten >=5.5pt]
			(sumstart) -- (sumend)
			node[midway,above=3pt] {$u_{j}$};

		\end{scope}

		\draw[tangent] (v) -- ++(1.35,0);
		\node[below=3pt] at (1.35,0)
		{$\varrho_{p , j}$};

		\draw[tangent] (v) -- ++(-0.95,0.95);
		\node[left=3pt] at (-0.95,0.95)
		{$\varsigma_{p , j}$};

		\draw[tangent] (v) -- ++(0 , 1.35);
		\node[right=3pt] at (0 , 1.35)
		{$\vartheta_{p}$};

		\fill (v) circle (1.35pt);
		\node[below left=2pt] at (v)
		{$v$};
	\end{tikzpicture}
	\caption{
		An edge triple $\{ \varepsilon_{v , j} , \varphi_{v , j} , \varkappa_{v} \}$ at a boundary vertex.
		Here, $B = A \cup \{j\}$, so $u_{A} - u_{B} = u_{j}$, as shown by the vector sum.
		The inward cutting weight $\vartheta_{p}$ annihilates $u_{j}$, so it has pairing $-1$ with both cutting normals, as in \eqref{eqn:cutting-weight-normalisation}.
	}
	\label{fig:weights-boundary-vertex}
\end{figure}

The following elementary cut calculation on $T^{\ast}\CC$ illustrates how two complex tangent directions at a boundary fixed point can carry the same residual-circle weight.

\begin{lemma}[Elementary Cut Model on $T^{\ast}\CC$]
	\label{lemma:local-boundary-cut-model}
	Consider $T^{\ast}\CC = \CC^{2}_{( z , w )}$ with the residual $T^{1}$-action $t \cdot ( z , w ) = ( tz , t^{-1}w )$ and the fibre-rotation action $\tau \cdot ( z , w ) = ( z , \tau w )$.
	Form the diagonal cut with an auxiliary coordinate $\zeta$ at a cut level $\delta > 0$.
	Then the cut is modelled locally by $\{ \tfrac{1}{2} |w|^{2} + \tfrac{1}{2} |\zeta|^{2} = \delta \} / U_{1} \cong \CC_{z} \times \CC\PP^{1}_{[ w : \zeta ]}$, and the cut divisor is $\{ \zeta = 0 \}$.
	Moreover, at the residual fixed point $( z , [ w : \zeta ] ) = ( 0 , [ 1 : 0 ] )$, the two holomorphic tangent coordinates $z$ and $q := \zeta / w$ both have residual $T^{1}$-weight $+1$.
\end{lemma}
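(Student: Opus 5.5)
The plan is to compute the cut directly from the definition in Subsection~\ref{subsection:subsection-4.2-cut-spaces}, specialised to $M = T^{\ast}\CC$. Here there is no reducing group $K$, so $M_{\nu}$ is replaced by $\CC^{2}_{(z,w)}$ itself. The cutting circle acts on $\CC^{3}_{(z,w,\zeta)}$ by $\tau\cdot(z,w,\zeta) = (z,\tau w,\tau\zeta)$, with moment map $\rho = \tfrac12|w|^{2} + \tfrac12|\zeta|^{2}$.

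The $z$-coordinate carries no condition. Hence
\[
\rho^{-1}(\delta) = \CC_{z}\times S^{3}_{\delta},
\qquad
S^{3}_{\delta} = \{\tfrac12|w|^{2} + \tfrac12|\zeta|^{2} = \delta\}.
\]
The circle acts on $S^{3}_{\delta}$ by the diagonal Hopf action, which is free because $(w,\zeta)\neq(0,0)$ on this sphere. So every $\delta>0$ is regular, and the quotient is $\CC_{z}\times S^{3}_{\delta}/U_{1} \cong \CC_{z}\times\CC\PP^{1}_{[w:\zeta]}$. The Kähler--GIT identification sends the orbit of $(w,\zeta)$ to $[w:\zeta]$, and this is holomorphic for the reduced structure $J_{1}$.

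The cut divisor is $\{\zeta = 0\}$ by definition, and it is identified with $\CC_{z}\times\{[1:0]\}$. The cut interior $\{\zeta\neq0\}$ is the affine chart $[w:1]\cong\CC_{w}$, which recovers $\CC^{2}_{(z,w)}$, as expected.

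Next I descend the residual action $t\cdot(z,w,\zeta) = (tz,t^{-1}w,\zeta)$. It commutes with the cutting circle, and on the quotient it sends $(z,[w:\zeta])$ to $(tz,[t^{-1}w:\zeta])$. For $z$ this fixes only $z=0$. For the $\CC\PP^{1}$ factor it fixes exactly $[1:0]$ and $[0:1]$, since $[t^{-1}w:\zeta] = [w:t\zeta]$. Thus $(0,[1:0])$ is a residual fixed point lying on the divisor.

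Near $[1:0]$ I use the affine chart with coordinate $q = \zeta/w$. In this chart the action reads $q\mapsto t\zeta/w = tq$. Together with $z\mapsto tz$, this shows that both holomorphic coordinates $z$ and $q$ carry the geometric character $t\mapsto t$, that is, residual weight $+1$.

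The only point requiring care is the sign convention. The weight stated is that of the coordinate functions under the geometric action, and I would check it against \eqref{eqn:equivariant-first-Chern-class-convention} and Notation~\ref{notation:weight-conventions}. A related subtlety is that $[t^{-1}w:\zeta]$ must be rewritten as $[w:t\zeta]$ before reading off the action on $q$. With the conventions fixed, both tangent directions at the fixed point are the same line character, which is the source of the squared denominator factor at boundary vertices. No other step is expected to be an obstacle.
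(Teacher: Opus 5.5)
Your proposal is correct and follows essentially the same route as the paper. Both identify the level set as $\CC_{z}$ times a sphere in $(w,\zeta)$, quotient by the diagonal circle to get $\CC_{z}\times\CC\PP^{1}_{[w:\zeta]}$, and read off $z\mapsto tz$ and $q=\zeta/w\mapsto tq$ in the chart $w\neq 0$. Your extra checks are details the paper leaves implicit: the Hopf action is free (so every $\delta>0$ is regular), and $[t^{-1}w:\zeta]$ must be rewritten as $[w:t\zeta]$ before reading off the action on $q$.
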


\begin{proof}
	The cutting circle acts diagonally on $( w , \zeta )$, and the quotient of the level set $\{ ( w , \zeta ) \in \CC^{2} ~|~ \tfrac{1}{2} |w|^{2} + \tfrac{1}{2} |\zeta|^{2} = \delta \}$ by $U_{1}$ is the projective line $\CC\PP_{[ w : \zeta ]}^{1}$.
	Since the cutting circle leaves $z$ unchanged, locally the full cut is $\CC_{z} \times \CC\PP_{ [ w : \zeta ] }^{1}$.
	The residual $T^{1}$-action acts by $t \cdot ( z , w , \zeta ) = ( tz , t^{-1}w , \zeta )$ and commutes with the cutting-circle action.
	On the chart with $w \neq 0$ near $[ 1 : 0 ]$, the local quotient coordinate is $q = \zeta / w$, and hence $z \mapsto tz$ and $q \mapsto tq$; thus, the two tangent weights coincide and equal $+1$.
\end{proof}

For a compact symplectic toric orbifold, the labelled facet normals and their dual edge directions determine the local tangent weights at a fixed point \cite{LermanTolman1997}, and this description applies to each toric cut component $\mcE_{A}^{\leq \delta} \subseteq M_{\nu}^{\leq \delta}$; in the full tangent representation of the cut, however, the cutting weight $\vartheta_{p}$ occurs twice at a boundary fixed point $p \in Z_{\nu}^{\delta}$.
Throughout the following proposition and its proof, we describe each restricted cut subvariety using the local charts of $M_{\nu}^{\leq \delta}$ restricted to that subvariety.
We keep the full local groups, even when some of their elements act trivially on the restricted charts, and the tangent and normal spaces considered are taken at lifts of the relevant points in these charts.

\begin{proposition}
	\label{prop:isotropy-weights}
	Let $p \in M_{\nu}^{\leq \delta}$ be a residual $T^{n}$-fixed point, and let $v = \mu_{\RR}(p)$.
	Then, as a representation of the local covering torus, and hence of the local orbifold group $\Gamma_{p}$, we have the following splitting
	\begin{equation}
		\label{eqn:isotropy-decomposition}
		T_{p}M_{\nu}^{\leq \delta} \cong \begin{cases}
			\displaystyle
			\bigoplus\nolimits_{j \in \mcI_{v}}
			\left( V_{\varrho_{p , j}} \oplus V_{\varsigma_{p , j}} \right),                                     & \text{ if } p \in M_{\nu}^{<\delta}, \\[3mm]
			\displaystyle
			\bigoplus\nolimits_{j \in \mcI_{v}}
			\left( V_{\varrho_{p , j}} \oplus V_{\varsigma_{p , j}} \right) \oplus V_{\vartheta_{p}}^{\oplus 2}, & \text{ if } p \in Z_{\nu}^{\delta}.
		\end{cases}
	\end{equation}
\end{proposition}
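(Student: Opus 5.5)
We reduce everything to the toric geometry of the cut components, together with the flat-restriction machinery of Theorem~\ref{thm:hypertoric-subvariety} and Proposition~\ref{prop:cut-subspace-subpolyptych}. By Lemma~\ref{lemma:cut-space-fixed-points}, $p$ lies on some cut component $\mcE_{A}^{\leq\delta}$ with $v$ a vertex of $\Delta_{A}^{\leq\delta}$. The tangent space then splits into two pieces, $T_{p}M_{\nu}^{\leq\delta} = T_{p}\mcE_{A}^{\leq\delta}\oplus N_{p}$, using a $\Gamma_{p}$- and torus-invariant Hermitian metric. Here $\mcE_{A}^{\leq\delta}$ is a compact toric orbifold of complex dimension $n$, and $N_{p}$ is its normal space, also of complex dimension $n$. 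We identify the weights on each piece separately.

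\emph{The component piece.} By Lerman--Tolman \cite{LermanTolman1997}, the weights of $T_{p}\mcE_{A}^{\leq\delta}$ are the edge vectors of $\Delta_{A}^{\leq\delta}$ at $v$, normalised against the labelled facet normals. The facet normals are $\pm u_{j}$ for $j\in\mcI_{v}$ and, at a boundary vertex, also $-u_{A}$, possibly non-primitive. Each edge is paired with the facet it does not lie in, with dual pairing $1$. By Theorem~\ref{thm:vertex-edges}, the edges of $\Delta_{A}^{\leq\delta}$ at $v$ are as follows: for each $j\in\mcI_{v}$, the one edge of the pair $\{\varepsilon_{v,j},\varphi_{v,j}\}$ lying on the side of $H_{j}$ selected by $A$; and, at a boundary vertex, the cutting edge $\varkappa_{v}$. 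Matching the dual-basis normalisation with \eqref{eqn:noncutting-weight-normalisation} and \eqref{eqn:cutting-weight-normalisation} gives the weights on this piece:
\begin{itemize}
\item $\varrho_{p,j}$ if $j\notin A$, and $\varsigma_{p,j}$ if $j\in A$;
\item $\vartheta_{p}$, at a boundary vertex.
\end{itemize}
The sign bookkeeping uses the sign of $\pm u_{j}$ as the inward normal, and $\langle\vartheta_{p},u_{A}\rangle=-1$ for the inward-pointing cutting edge. This is routine, but it requires the local covering torus: the Lerman--Tolman weights lie in the dual of the local lattice generated by the labelled normals, which is consistent with Lemma~\ref{lemma:flat-local-group-comparison}.

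\emph{The normal piece.} We use $\w_{\CC}$. At an interior vertex, $\mcE_{A}$ is $\w_{\CC}$-Lagrangian, and $T^{n}$ preserves $\w_{\CC}$. Hence $N_{p}\cong (T_{p}\mcE_{A})^{\ast}$, with weights the negatives of those just found. Since $\varsigma_{p,j}=-\varrho_{p,j}$ along the line $H_{\mcJ_{v,j}}$ (both are normalised against $u_{j}$ with opposite signs), negation exchanges $\varrho$ and $\varsigma$. So $N_{p}$ supplies exactly the complementary member of each pair, and this gives the first case of \eqref{eqn:isotropy-decomposition}.

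At a boundary vertex, $\w_{\CC}$ does not descend to the cut, so we proceed differently, in two steps.

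First, for each $j\in\mcI_{v}$ we restrict to the flat $H_{\mcJ_{v,j}}$, where $\mcJ_{v,j}=\mcI_{v}\setminus\{j\}$. By Proposition~\ref{prop:cut-subspace-subpolyptych}, this gives a cut subvariety of complex dimension $4$ whose polyptych is the planar restriction. Its neighbourhood of $p$ is a product of the model of Lemma~\ref{lemma:local-boundary-cut-model} with the transverse $T^{\ast}\CC$ factor indexed by $j$. The non-cutting edges in that plane, namely the cut faces $\Pi_{A}^{\delta}\cap H_{\mcJ_{v,j}}$ and $\Pi_{B}^{\delta}\cap H_{\mcJ_{v,j}}$ with $B=A\triangle\{j\}$, are realised by the two cut components $\mcE_{A}^{\leq\delta}$ and $\mcE_{B}^{\leq\delta}$. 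Both contain $p$, and their tangent spaces share the $\vartheta_{p}$ line. So the tangent space of this four-dimensional cut at $p$ contains $V_{\varrho_{p,j}}\oplus V_{\varsigma_{p,j}}\oplus V_{\vartheta_{p}}$, plus one further line.

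Second, we identify that remaining line by restricting further to $H_{\mcI_{v}}$. There we obtain a two-dimensional cut, namely the $n=1$ situation of a cut point in the relative interior of a one-dimensional region. Its local model is $\CC_{z}\times\CC\PP^{1}$ as in Lemma~\ref{lemma:local-boundary-cut-model}, after passing to the covering torus of the non-primitive generator $u_{A}$. That lemma shows that the two tangent lines $z$ and $q=\zeta/w$ both carry the same weight, which by the normalisation equals $\vartheta_{p}$. This yields the doubled summand $V_{\vartheta_{p}}^{\oplus2}$.

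Finally, summing over $j$, the spaces $V_{\varrho_{p,j}}\oplus V_{\varsigma_{p,j}}$ for distinct $j$ are linearly independent. They are distinct edge directions by Theorem~\ref{thm:vertex-edges}, and they are tangent to distinct subvarieties through $p$ that meet transversally. A dimension count, $2(n-1)+2=2n$, then shows that they exhaust the tangent space.

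The main obstacle is this last boundary step, where the weight $\vartheta_{p}$ must be justified in the presence of non-primitive $u_{A}$ and non-trivial $\Gamma_{p}$, and the compatibility of the local covering tori must be checked across the successive restrictions. We would handle it by computing directly in the chart $\rho^{-1}(\delta)$ near $(q,0)$ with a slice for the cutting circle, exactly as in the proof of Lemma~\ref{lemma:one-dimensional-reducible-core-orbifold}. In that chart $\Gamma_{p}=\mu_{k}$ acts by the same character on $T_{q}M_{\nu}/V_{q}$ and on $\CC_{\zeta}$. This shows that the two lines carry the same covering-torus weight, and the normalisation $\langle\vartheta_{p},u_{A}\rangle=-1$ fixes that weight.
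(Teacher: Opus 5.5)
Your proposal is correct and follows essentially the paper's route. The interior case uses the $\w_{\CC}$-Lagrangian identification $N_{q}\mcE_{A}\cong T_{q}^{\ast}\mcE_{A}$ with Lerman--Tolman and $\varsigma_{p,j}=-\varrho_{p,j}$. The boundary case restricts to $H_{\mcJ_{v,j}}$, where the two toric components through $p$ supply $\varrho_{p,j}$, $\varsigma_{p,j}$, $\vartheta_{p}$, and to $H_{\mcI_{v}}$, where Lemma~\ref{lemma:local-boundary-cut-model} supplies $V_{\vartheta_{p}}^{\oplus 2}$, and then counts dimensions; the paper only organises the boundary case from the outset as $T_{p}M_{\mcI_{v}}^{\leq\delta}\oplus N_{p}$.

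One caution: your fallback identification of $\Gamma_{p}$ with the cutting-circle stabiliser $\mu_{k}$ is only valid where $M_{\nu}$ is smooth, as in Lemma~\ref{lemma:one-dimensional-reducible-core-orbifold}. In general $\Gamma_{p}\cong\mft_{\ZZ}^{n}/(U_{\mcI_{v}}+\ZZ u_{A})$ also contains isotropy of $M_{\nu}$ itself, and the paper handles this by pulling characters back along $\mft_{\ZZ}^{n}/(U_{\mcI_{v}}+\ZZ u_{A})\to\mft_{\ZZ}^{n}/(V_{\mcI_{v}}+\ZZ u_{A})$ via Lemma~\ref{lemma:flat-local-group-comparison}.
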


\begin{proof}
	The calculation for interior fixed points is inspired by Harada and Holm's identification of tangent weights from the incident arrangement edges in the smooth hypertoric setting \cite[proof of Proposition~3.2]{HaradaHolm2005}.
	Suppose first that $p \in M_{\nu}^{<\delta}$, and let $q \in \{\Phi < \delta\} \subseteq M_{\nu}$ be the corresponding fixed point under the canonical $T^{n}$-equivariant symplectomorphism.
	This identifies the symplectic tangent representations at $p$ and $q$, whose weights agree when computed using invariant compatible complex structures on local uniformising charts \cite[Lemma~A.1]{LermanTolman1997}.
	We may therefore calculate the weights at $q$ on $M_{\nu}$.

	Choose an extended-core component $\mcE_{A}$ containing $q$.
	On a local uniformising chart, a Hermitian metric invariant under the local covering torus splits $T_{q}M_{\nu}$ into the tangent and normal representations of $\mcE_{A}$ as $T_{q}M_{\nu} \cong T_{q}\mcE_{A} \oplus N_{q}\mcE_{A}$.
	Moreover, since $\mcE_{A}$ is $\w_{\CC}$-Lagrangian and the residual torus preserves $\w_{\CC}$, contraction with $\w_{\CC}$ identifies $N_{q}\mcE_{A} \cong T_{q}^{\ast}\mcE_{A}$, and so $T_{q}M_{\nu} \cong T_{q}\mcE_{A} \oplus T_{q}^{\ast}\mcE_{A}$.
	The toric tangent representation $T_{q}\mcE_{A}$ contains exactly one weight from each pair $\{\varrho_{p , j} , \varsigma_{p , j}\}$, for $j \in \mcI_{v}$, by \cite[\S6]{LermanTolman1997}.
	At an interior vertex, the edge directions and \eqref{eqn:noncutting-weight-normalisation} give $\varsigma_{p , j} = -\varrho_{p , j}$, and the dual representation $V_{\varsigma_{p , j}} \cong V_{\varrho_{p , j}}^{\ast}$ supplies the other weight in each pair, so transferring these weights back to the cut interior yields
	\[
		T_{p}M_{\nu}^{\leq \delta} \cong \bigoplus\nolimits_{j \in \mcI_{v}} (V_{\varrho_{p , j}} \oplus V_{\varsigma_{p , j}}),
	\]
	which proves the interior case of \eqref{eqn:isotropy-decomposition}.

	Now let $p \in Z_{\nu}^{\delta}$, choose a cut face $\Pi_{A}^{\delta}$ containing $v$.
	We separate the tangent directions along $M_{\mcI_{v}}^{\leq \delta} \subseteq M_{\nu}^{\leq \delta}$ from its normal directions.
	Before cutting, $M_{\mcI_{v}} \subseteq M_{\nu}$ is defined upstairs by $z_{i} = w_{i} = 0$ for $i \in \mcI_{v}$, and is therefore $T^{n}$-invariant.
	On a compatible local uniformising chart at $p$, an invariant Hermitian metric gives the following splitting:
	\begin{equation}
		\label{eqn:tangent-space-decomposition}
		T_{p}M_{\nu}^{\leq \delta} \cong N_{p}
		\big( M_{\mcI_{v}}^{\leq \delta} \subset M_{\nu}^{\leq \delta} \big) \oplus T_{p}M_{\mcI_{v}}^{\leq \delta},
	\end{equation}
	which is equivariant for the local covering torus.

	To calculate the tangent summand $T_{p}M_{\mcI_{v}}^{\leq \delta}$, consider its effective presentation, which has real dimension four.
	By Proposition~\ref{prop:cut-subspace-subpolyptych}, this is obtained by cutting $M_{\mcI_{v}}^{\eff}$ at level $\delta - C_{\mcI_{v}}$ using $\Phi_{\mcI_{v}}^{\std}$, since $\Phi|_{M_{\mcI_{v}}} = \Phi_{\mcI_{v}}^{\std} + C_{\mcI_{v}}$.
	More explicitly, the local group $\Gamma_{p}$ of the inherited chart and the local group $\Gamma_{p, \mcI_{v}}^{\eff}$ of the effective restricted cut have the respective lattice descriptions
	\[
		\Gamma_{p} \cong \left. \mft_{\ZZ}^{n} \middle/ ( U_{\mcI_{v}} + \ZZ u_{A} ), \right.
		\qquad
		\Gamma_{p, \mcI_{v}}^{\eff} \cong \left. \mft_{\ZZ}^{n} \middle/ ( V_{\mcI_{v}} + \ZZ u_{A} ), \right.
	\]
	so we see that replacing $U_{\mcI_{v}}$ by its saturation $V_{\mcI_{v}}$ removes the ineffective isotropy is present on the restricted chart.
	Lemma~\ref{lemma:flat-local-group-comparison}, applied with $\mcF = \mcI_{v}$ and $W = \ZZ u_{A}$, gives $(V_{\mcI_{v}} + \ZZ u_{A})^{\vee} \subseteq (U_{\mcI_{v}} + \ZZ u_{A})^{\vee}$.
	The underlying lattice inclusion $U_{\mcI_{v}} + \ZZ u_{A} \subseteq V_{\mcI_{v}} + \ZZ u_{A}$ induces the natural surjective homomorphism $\Gamma_{p} \ra \Gamma_{p, \mcI_{v}}^{\eff}$, along which each character of $\Gamma_{p, \mcI_{v}}^{\eff}$ pulls back to a character of $\Gamma_{p}$ by composition.
	The dual-lattice inclusion above expresses the corresponding pullback to the torus acting on the inherited chart, so the local weights calculated on the effective cut define characters of the covering torus.

	On the local covering circle of the effective cut, Lemma~\ref{lemma:local-boundary-cut-model} gives two one-dimensional weight spaces carrying the same primitive cutting character.
	Under the pullback $\Gamma_{p} \ra \Gamma_{p, \mcI_{v}}^{\eff}$, both carry the character with weight $\vartheta_{p}$, characterised by $\langle \vartheta_{p} ,\, u_{j} \rangle = 0$ for $j \in \mcI_{v}$ and $\langle \vartheta_{p} ,\, u_{A} \rangle = -1$ in \eqref{eqn:cutting-weight-normalisation}.
	Therefore, we have
	\[
		T_{p}M_{\mcI_{v}}^{\leq \delta} \cong V_{\vartheta_{p}}^{\oplus 2}
	\]
	as a representation of the covering torus acting on the	inherited chart, and hence of $\Gamma_{p}$ by restriction.

	For the normal summand in \eqref{eqn:tangent-space-decomposition}, fix $j \in \mcI_{v}$ and set $\mcJ_{v , j} := \mcI_{v} \setminus\{j\}$.
	The cut subvariety $M_{\mcJ_{v , j}}^{\leq \delta}$ has real dimension eight and contains $M_{\mcI_{v}}^{\leq \delta}$.
	The two toric cut components through $p$ in $M_{\mcJ_{v , j}}^{\leq \delta}$ have moment polytopes on opposite sides of $H_{j} \cap H_{\mcJ_{v , j}}$ in the flat $H_{\mcJ_{v , j}} \subseteq (\mft^{n})^{\ast}$ and, by Theorem~\ref{thm:vertex-edges}, their incident edge pairs at $v$ are respectively $\{\varepsilon_{v , j} , \varkappa_{v}\}$ and $\{\varphi_{v , j} , \varkappa_{v}\}$.
	The local toric-orbifold model \cite[\S6]{LermanTolman1997} identifies their tangent weights with the corresponding edge directions.
	For the restriction to $M_{\mcJ_{v, j}}^{\leq \delta}$, we apply Lemma~\ref{lemma:flat-local-group-comparison} with $\mcF = \mcJ_{v , j}$ and $W = \ZZ u_{j} + \ZZ u_{A}$.
	Since $U_{\mcJ_{v , j}} + W = U_{\mcI_{v}} + \ZZ u_{A}$, the resulting characters pull back to the same covering torus as above, giving the respective tangent representations $V_{\varrho_{p , j}} \oplus V_{\vartheta_{p}}$ and $V_{\varsigma_{p , j}} \oplus V_{\vartheta_{p}}$ on the restricted charts.

	The normalisations \eqref{eqn:noncutting-weight-normalisation} and \eqref{eqn:cutting-weight-normalisation} give $\langle \varrho_{p , j} ,\, u_{j} \rangle = 1$, $\langle \varsigma_{p , j} ,\, u_{j} \rangle = -1$, and $\langle \vartheta_{p} ,\, u_{j} \rangle = 0$.
	Since $T_{p}M_{\mcI_{v}}^{\leq \delta}$ consists of two copies of $V_{\vartheta_{p}}$, the lines $V_{\varrho_{p , j}}$ and $V_{\varsigma_{p , j}}$ give two distinct weight lines in the complex two-dimensional normal space, and hence
	\[
		N_{p} ( M_{\mcI_{v}}^{\leq \delta} \subset M_{\mcJ_{v , j}}^{\leq \delta} ) \cong V_{\varrho_{p , j}} \oplus V_{\varsigma_{p , j}}.
	\]
	These covering-torus weights are distinct as $j$ varies: the pair indexed by $j$ has pairings $+1$ and $-1$ with $u_{j}$, while both weight pairings vanish on every $u_{k}$ with $k \in \mcJ_{v , j} = \mcI_{v} \setminus \{j\}$.
	Since the normal space to $M_{\mcI_{v}}^{\leq \delta}$ in the ambient cut has complex dimension $2(n - 1)$, the resulting $2(n - 1)$ weight lines exhaust it:
	\[
		N_{p} ( M_{\mcI_{v}}^{\leq \delta} \subset M_{\nu}^{\leq \delta} ) \cong \bigoplus\nolimits_{j \in \mcI_{v}} ( V_{\varrho_{p , j}} \oplus V_{\varsigma_{p , j}} ).
	\]
	Combining the two summands in \eqref{eqn:tangent-space-decomposition} proves the boundary case of \eqref{eqn:isotropy-decomposition}.
\end{proof}

Thus, a boundary vertex has only one cutting edge $\varkappa_{v}$, even though its associated weight $\vartheta_{p}$ occurs twice in $T_{p}M_{\nu}^{\leq \delta}$.
The remaining tangent data required for the Kawasaki localisation formula \eqref{eqn:equivariant-KRR-character-formula} is the local group $\Gamma_{p}$ and its action on these isotropy weight spaces, which we now identify.

Let $p \in M_{\nu}^{\leq \delta}$ be a residual fixed point with $v = \mu_{\RR}(p)$, and choose a toric cut component $\mcE_{A}^{\leq \delta}$ containing $p$.
The facets of its moment polytope $\Delta_{A}^{\leq \delta}$ coming from the arrangement have labelled normals $\pm u_{j}$ and label one, since the $u_{j}$ are primitive.
At a boundary vertex, the cut face $\Pi_{A}^{\delta}$ is a facet with potentially a non-primitive normal $u_{A} = k_{A}\widebar{u}_{A}$, where $k_{A} \in \ZZ_{>0}$ and $\widebar{u}_{A}$ is primitive.
This follows the convention in Subsection~\ref{subsection:subsection-3.3-flats-arrangement} where the cut face $\Pi_{A}^{\delta}$ has label $k_{A}$ and labelled normal $u_{A}$.
Since changing the sign of a normal does not affect the lattice it generates, we define the \textbf{local normal lattice} by
\begin{equation}
	\label{eqn:local-normal-lattice}
	\Lambda_{v} :=
	\begin{cases}
		U_{\mcI_{v}},
		 & \text{ if } v \text{ is an interior vertex},                            \\[1mm]
		U_{\mcI_{v}} + \ZZ u_{A},
		 & \text{ if } v \text{ is a boundary vertex and } v \in \Pi_{A}^{\delta}.
	\end{cases}
\end{equation}
At a boundary vertex, the local normal lattice $\Lambda_{v}$ is independent of the chosen cut face: if $v \in \Pi_{A}^{\delta} \cap \Pi_{B}^{\delta}$, then $u_{A} - u_{B} \in U_{\mcI_{v}}$ implies $U_{\mcI_{v}} + \ZZ u_{A} = U_{\mcI_{v}} + \ZZ u_{B}$.
By Theorem~\ref{thm:vertex-edges}, the vectors $\{u_{j} ~|~ j \in \mcI_{v}\}$ form an $\RR$-basis of $\mft^{n}$ at an interior vertex, whereas a boundary vertex yields such a basis after adjoining $u_{A}$.
Thus, $\Lambda_{v} \subseteq \mft_{\ZZ}^{n}$ has full rank and a finite index.

For the toric cut component $\mcE_{A}^{\leq \delta}$, \cite[Lemma~6.6]{LermanTolman1997} identifies the local orbifold group at $p$ with the quotient
\begin{equation}
	\label{eqn:local-orbifold-group}
	\Gamma_{p} \cong \mft_{\ZZ}^{n} / \Lambda_{v}.
\end{equation}
Restricting the locally free quotient construction of $M_{\nu}^{\leq \delta}$ to the invariant subspace defining $\mcE_{A}^{\leq \delta}$ does not change the stabiliser of a representative of $p$, and thus $\Gamma_{p}$ is also the local orbifold group of $M_{\nu}^{\leq \delta}$ at $p$.
The associated local covering torus $\widetilde{T}_{v}^{n} := \mft^{n} / \Lambda_{v}$ is a finite cover of $T^{n} = \mft^{n} / \mft_{\ZZ}^{n}$ with kernel $\Gamma_{p}$ and character lattice
\begin{equation}
	\label{eqn:local-dual-lattice}
	\Lambda_{v}^{\vee} := \{ \alpha \in ( \mft^{n} )^{\ast} ~|~ \langle \alpha ,\, \lambda \rangle \in \ZZ \text{ for every } \lambda \in \Lambda_{v} \}.
\end{equation}

\begin{lemma}[Local Group Action on a Weight Space]
	\label{lemma:local-cover-kernel-character}
	Consider the covering $\widetilde{T}_{v}^{n} = \mft^{n}/\Lambda_{v} \ra T^{n} = \mft^{n}/\mft_{\ZZ}^{n}$, whose deck group is $\Gamma_{p} \cong \mft_{\ZZ}^{n}/\Lambda_{v}$.
	Let $\alpha \in \Lambda_{v}^{\vee}$ be a weight of $\widetilde{T}_{v}^{n}$, and let $g \in \Gamma_{p}$ be represented by $\gamma_{g} \in \mft_{\ZZ}^{n}$.
	Then $g$ acts geometrically on the one-dimensional representation $V_{\alpha}$ of weight $\alpha$ by multiplication by
	\begin{equation}
		\label{eqn:deck-group-weight-character}
		\exp( 2 \pi \sqrt{-1} \, \langle \alpha ,\, \gamma_{g} \rangle ).
	\end{equation}
\end{lemma}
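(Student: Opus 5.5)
The plan is to reduce the statement to the definition of the character $\chi_{\alpha}$ of the covering torus, together with the period-one lattice convention from Section~\ref{section:section-2-hypertoric-varieties}. I would first make the deck group explicit. An element $g \in \Gamma_{p} \cong \mft_{\ZZ}^{n}/\Lambda_{v}$ with representative $\gamma_{g} \in \mft_{\ZZ}^{n}$ corresponds to the point $[\gamma_{g}] \in \widetilde{T}_{v}^{n} = \mft^{n}/\Lambda_{v}$. This point lies in the kernel of $\widetilde{T}_{v}^{n} \ra T^{n}$, since $\gamma_{g} \in \mft_{\ZZ}^{n}$. Changing the representative by $\lambda \in \Lambda_{v}$ does not change this point, so the identification is well defined.

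Next, I would recall how a weight $\alpha \in \Lambda_{v}^{\vee}$ defines the one-dimensional representation $V_{\alpha}$. Applying the period-one convention to the lattice $\Lambda_{v}$ gives the formula
\[
\chi_{\alpha}([\eta]) = \exp\bigl(2\pi\sqrt{-1}\langle \alpha ,\, \eta\rangle\bigr), \qquad \eta \in \mft^{n}.
\]
This is well defined exactly because $\langle \alpha, \Lambda_{v}\rangle \subseteq \ZZ$, by \eqref{eqn:local-dual-lattice}. The geometric action of $[\eta]$ on $V_{\alpha}$ is multiplication by this scalar; this is the same convention that fixes the geometric tangent eigenvalues $\chi_{p,j}^{\tang}$ in Theorem~\ref{thm:equivariant-KRR-index-theorem}. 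Evaluating at $\eta = \gamma_{g}$ then gives \eqref{eqn:deck-group-weight-character}. The value does not depend on the choice of $\gamma_{g}$, because replacing it by $\gamma_{g}+\lambda$ with $\lambda\in\Lambda_{v}$ changes the exponent by an integer multiple of $2\pi\sqrt{-1}$. Since $\langle\alpha,\gamma_g\rangle\in\QQ$, the value is a root of unity. It is trivial whenever $\alpha \in (\mft_{\ZZ}^{n})^{\ast}$, which is consistent with $\Gamma_{p}$ acting trivially on genuine $T^{n}$-characters.

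The main point needing care is the claim that the action of $\Gamma_{p}$ on the uniformising chart really is the restriction of the covering-torus action, and not some other lift of the deck transformation. Here I would invoke the discussion after Theorem~\ref{thm:equivariant-KRR-index-theorem} and the Lerman--Tolman identification \eqref{eqn:local-orbifold-group}. In the toric local model $\mft^{n}/\Lambda_{v}$ acting on $\CC^{n}$ through the basis dual to the generators of $\Lambda_{v}$, the stabiliser of the quotient construction is exactly the kernel $\mft_{\ZZ}^{n}/\Lambda_{v}$, and it acts by the torus formula. For boundary points I would also check that the doubled direction $V_{\vartheta_p}$ follows the same rule. This holds because Lemma~\ref{lemma:local-boundary-cut-model}, together with Proposition~\ref{prop:isotropy-weights}, realises both copies as covering-torus weight spaces.
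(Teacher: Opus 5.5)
Your proposal is correct and follows the paper's proof: identify $g$ with the class $[\gamma_g]$ in the kernel of $\widetilde{T}_v^n \to T^n$, evaluate the weight-$\alpha$ character at that class, and use $\langle \alpha, \Lambda_v\rangle \subseteq \ZZ$ to show the result does not depend on the representative. Your last paragraph, on why $\Gamma_p$ acts on the chart through the covering torus, goes beyond what the lemma asserts; the paper takes this from the discussion following Theorem~\ref{thm:equivariant-KRR-index-theorem}, so that paragraph is harmless but not needed here.
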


\begin{proof}
	The kernel of the covering consists of the classes of $\mft_{\ZZ}^{n}$ modulo $\Lambda_{v}$, so $g$ is the class of $\gamma_{g}$ in $\widetilde{T}_{v}^{n}$.
	Evaluating the character with weight $\alpha$ on this class gives \eqref{eqn:deck-group-weight-character}, as in the local covering-torus construction of \cite[\S4]{KarshonSternbergWeitsman2007}.
	If $\gamma_{g}^{\prime}$ is another representative, then $\gamma_{g}^{\prime} - \gamma_{g} \in \Lambda_{v}$ and hence $\langle \alpha ,\, \gamma_{g}^{\prime} \rangle - \langle \alpha ,\, \gamma_{g} \rangle \in \ZZ$, so the exponential is independent of this choice and \eqref{eqn:deck-group-weight-character} is well-defined.
\end{proof}

Let us check that the tangent weights in Proposition~\ref{prop:isotropy-weights} belong to $\Lambda_{v}^{\vee}$.
The normalisations and edge directions give $\langle \varrho_{p , j} ,\, u_{j} \rangle = 1$ and $\langle \varsigma_{p , j} ,\, u_{j} \rangle = -1$, while both weights vanish on $u_{k}$ for $k \in \mcI_{v} \setminus \{j\}$.
They therefore pair integrally with $U_{\mcI_{v}}$, which proves the claim at an interior vertex.

At a boundary vertex, take a non-cutting weight $\alpha \in \{\varrho_{p , j} , \varsigma_{p , j}\}$ and choose a cut face $\Pi_{B}^{\delta}$ containing the corresponding edge.
Then $\langle \alpha ,\, u_{B} \rangle = 0$, and $u_{A} - u_{B} \in U_{\mcI_{v}}$ gives
\[
	\langle \alpha ,\, u_{A} \rangle = \langle \alpha ,\, u_{B} \rangle + \langle \alpha ,\, u_{A} - u_{B} \rangle = \langle \alpha ,\, u_{A} - u_{B} \rangle \in \ZZ.
\]
By \eqref{eqn:cutting-weight-normalisation}, the cutting weight satisfies $\langle \vartheta_{p} ,\, u_{j} \rangle = 0$ for every $j \in \mcI_{v}$ and $\langle \vartheta_{p} ,\, u_{A} \rangle = -1$.
Each tangent weight therefore pairs integrally with the generators of $\Lambda_{v}$ and hence belongs to $\Lambda_{v}^{\vee}$, as defined in \eqref{eqn:local-dual-lattice}.

For $g \in \Gamma_{p}$, choose a representative $\gamma_{g} \in \mft_{\ZZ}^{n}$.
By Lemma~\ref{lemma:local-cover-kernel-character}, the action of $g$ on $V_{\varrho_{p , j}}$ and $V_{\varsigma_{p , j}}$ is multiplication by the respective scalars
\begin{equation}
	\label{eqn:group-characters}
	\rho_{p , j}(g) := e^{2\pi\sqrt{-1}\langle \varrho_{p , j} ,\, \gamma_{g} \rangle}, \qquad
	\sigma_{p , j}(g) := e^{2\pi\sqrt{-1}\langle \varsigma_{p , j} ,\, \gamma_{g} \rangle},
\end{equation}
and, at a boundary fixed point, its action on both copies of $V_{\vartheta_{p}}$ is multiplication by
\begin{equation}
	\label{eqn:group-character-boundary}
	\theta_{p}(g) := e^{2\pi\sqrt{-1}\langle \vartheta_{p} ,\, \gamma_{g} \rangle}.
\end{equation}
These eigenvalues are independent of the choice of representative $\gamma_{g}$, since all the weights belong to $\Lambda_{v}^{\vee}$.
Moreover, they are the geometric tangent eigenvalues $\chi_{p , j}^{\tang}(g)$, along with their multiplicities, that appear in Theorem~\ref{thm:equivariant-KRR-index-theorem}.

Together with the tangent weights, these eigenvalues determine every denominator factor in the Kawasaki localisation formula \eqref{eqn:equivariant-KRR-character-formula}.
Each one-dimensional summand in \eqref{eqn:isotropy-decomposition} contributes a factor $1 - \chi_{p , j}^{\tang}(g) e^{\langle \alpha_{p , j} ,\, \xi \rangle}$ to the denominator, so the two copies of $V_{\vartheta_{p}}$ at a boundary fixed point contribute $(1 - \theta_{p}(g) e^{\langle \vartheta_{p} ,\, \xi \rangle} )^{2}$.
The genericity condition \eqref{eqn:orbifold-generic-xi} therefore requires $\chi_{p , j}^{\tang}(g)e^{\langle \alpha_{p , j} ,\, \xi \rangle} \neq 1$ for every fixed point $p$, every tangent summand, and every $g \in \Gamma_{p}$, rather than merely $e^{\langle \alpha_{p , j} ,\, \xi \rangle} \neq 1$.
The next subsection determines the fibre data needed to complete the localisation formulae \eqref{eqn:cut-localisation-interior} and \eqref{eqn:cut-localisation-boundary}.


\subsection{Pre-Quantum Line Orbibundle}
\label{subsection:subsection-5.3-pre-quantum-cut-space}

To complete the local data for the Kawasaki localisation formula, we now construct the $T^{n}$-equivariant pre-quantum line orbibundle on the cut and determine its fibre characters.
Fix a regular integral cut level $d > 0$, and retain the integral lift $\lambda$ from Section~\ref{section:section-2-hypertoric-varieties}, together with the pre-quantum line orbibundle $\mcL_{\nu} \ra M_{\nu}$ of \eqref{eqn:prequantum-orbibundle}, whose Chern curvature is $F_{\nabla_{\nu}} = ( 2\pi / \sqrt{-1} ) \w_{\RR}$.
The fibre-rotation action lifts to the trivial line bundle $L_{\nu} = \mcN \times \CC \ra \mcN$ via $\tau \cdot ( (z , w) , a ) = ( (z , \tau w) , a )$ for $\tau \in U_{1}$, which descends to $\mcL_{\nu} \ra M_{\nu}$ since it commutes with the $K$-action.

The auxiliary line $\CC_{\zeta}$ carries the K\"ahler form $\w_{\aux} = ( \sqrt{-1} / 4\pi ) d\zeta\wedge d\bar\zeta$ from Section~\ref{section:section-4-cutting-hypertoric-variety}.
Since $d$ is integral, $\varpi_{d}(\tau) = \tau^{d}$ defines a character $\varpi_{d} : U_{1} \ra U_{1}$, which we use to twist the geometric fibre action on the auxiliary line bundle $\CC_{\zeta} \times \CC_{Y} \ra \CC_{\zeta}$.
We equip this line bundle with the Hermitian metric $\| ( \zeta , Y ) \|^{2} = |Y|^{2} \exp( -|\zeta|^{2} / 2 )$ and the lift $\tau \cdot ( \zeta , Y ) = ( \tau \zeta , \tau^{d} Y )$ for $\tau \in U_{1}$, and denote the resulting pre-quantum line bundle by $\mcO_{\CC}(\varpi_{d}) \ra \CC_{\zeta}$; its Chern connection $\nabla_{\aux}$ has curvature $F_{\nabla_{\aux}} = ( 2\pi / \sqrt{-1} ) \w_{\aux}$.

Taking the product of these lifts gives the diagonal cutting-circle action on $\mcL_{\nu} \boxtimes \mcO_{\CC}( \varpi_{d} ) \ra M_{\nu} \times \CC_{\zeta}$, which commutes with the residual $T^{n}$-action.
The untwisted product lift is compatible with the cutting moment map $\rho : M_{\nu} \times \CC_{\zeta} \ra \RR$, given by $\rho( [z , w] , \zeta ) = \Phi[ z , w ] + \tfrac{1}{2}|\zeta|^{2}$.
Twisting the auxiliary fibre action by $\varpi_{d}$ changes the compatible moment-map normalisation from $\rho$ to $\rho - d$ by the Kostant formula.

Since $(\rho - d)|_{ \rho^{-1}(d) } = 0$, the product connection restricted to this level set is basic for the cutting circle and descends under reduction.
Furthermore, the regularity of $d$ ensures that the cutting $U_{1}$-action is locally free on $\rho^{-1}(d)$, so the quotient line bundle
\begin{equation}
	\label{eqn:cut-prequantum-orbibundle}
	\mcL_{\nu}^{\leq d} := \left.
	\left( \left. \left( \mcL_{\nu} \boxtimes \mcO_{\CC}(\varpi_{d}) \right) \middle|_{\rho^{-1}(d)} \right. \right) \middle/ U_{1}
	\right. \longrightarrow M_{\nu}^{\leq d}
\end{equation}
is a $T^{n}$-equivariant holomorphic pre-quantum line orbibundle.
The product Hermitian metric also descends, and the descended connection $\nabla^{\leq d}$ satisfies $F_{\nabla^{\leq d}} = ( 2\pi / \sqrt{-1} ) \w_{\RR}^{\leq d}$; compare the pre-quantum cutting constructions in \cite[\S3]{DGMW1995} and \cite[\S4]{Meinrenken1998}.

\begin{lemma}
	\label{lemma:cut-bundle-locality}
	Under the canonical $T^{n}$-equivariant symplectomorphism $M_{\nu}^{< d} \cong \{ q \in M_{\nu} ~|~ \Phi(q) < d \}$, there is a canonical $T^{n}$-equivariant isomorphism of Hermitian pre-quantum line orbibundles with connection,
	\[
		\left. \mcL_{\nu}^{\leq d} \right|_{ M_{\nu}^{< d} } \cong \left. \mcL_{\nu} \right|_{ \{ \Phi < d \} }.
	\]
\end{lemma}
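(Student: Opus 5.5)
The argument is Lerman's standard identification of the cut interior with the open sublevel set, carried out on the level of line orbibundles rather than just spaces. On the open set $\rho^{-1}(d) \cap \{\zeta \neq 0\}$, the cutting circle acts freely on the $\zeta$-factor, so every orbit contains a unique representative with $\zeta$ real and positive. Explicitly, the map
\[
	s : \{ q \in M_{\nu} ~|~ \Phi(q) < d \} \longrightarrow \rho^{-1}(d), \qquad
	s(q) = \bigl( q , \sqrt{2(d - \Phi(q))} \bigr),
\]
is a smooth global slice. It commutes with the residual $T^{n}$-action, because $T^{n}$ preserves $\Phi$ and acts trivially on $\zeta$. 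Composing $s$ with the quotient map gives the canonical symplectomorphism onto $M_{\nu}^{<d}$, and I will take this as given from Subsection~\ref{subsection:subsection-4.2-cut-spaces}.

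First, I build the bundle map. The restriction of $(\mcL_{\nu} \boxtimes \mcO_{\CC}(\varpi_{d}))|_{\rho^{-1}(d)}$ to the slice is $\mcL_{\nu}|_{\{\Phi<d\}} \otimes \CC_{Y}$. I define
\[
	\mcL_{\nu}|_{\{\Phi<d\}} \to \mcL_{\nu}^{\leq d}|_{M_{\nu}^{<d}}, \qquad
	\ell \mapsto [\ell \otimes 1_{Y}],
\]
where $1_{Y}$ is the constant unit frame of the auxiliary line. This is a fibrewise linear isomorphism: every point of the quotient over the interior has a representative over the slice, and the only circle elements preserving the slice are those fixing $\zeta$, which is the trivial element since $\zeta \neq 0$. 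The map is $T^{n}$-equivariant because the residual lift acts trivially on $\CC_{Y}$. In the orbifold case the same reasoning is applied on local uniformising charts, using the fact that the local groups of $M_{\nu}$ and of the cut interior agree (Lemma~\ref{lemma:cut-space-fixed-points} and the stabiliser discussion in the paper).

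Second, I check compatibility with the Hermitian metrics. Along the slice, the auxiliary factor has squared norm $\exp(-|\zeta|^{2}/2) = \exp(-(d-\Phi))$, so the identification multiplies the metric of $\mcL_{\nu}$ by the positive function $e^{\Phi - d}$. A naive identification along a slice is therefore not isometric, and the factor must be removed. I would use the rescaled map $\ell \mapsto [\ell \otimes e^{(d-\Phi)/2} 1_{Y}]$, which is $T^{n}$-invariant and isometric by construction. I expect it is equivalent to the statement that the descended product metric restricted to the slice is exactly the metric of $\mcL_{\nu}$.

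Third, I check connections and holomorphicity, which I expect to be the main obstacle. The descended connection is the product connection restricted to $\rho^{-1}(d)$, where it is basic. Its pullback along $s$ is $\nabla_{\nu}$ plus the pullback of the auxiliary Chern connection along $\zeta = \sqrt{2(d-\Phi)}$. The auxiliary Chern connection form is $\partial \log h = -\tfrac12 \bar\zeta\, d\zeta$, and its imaginary part $\tfrac{1}{2}\operatorname{Im}(\bar\zeta\, d\zeta)$ vanishes along real $\zeta$. The remaining real part is absorbed by the unitary rescaling from the second step, which leaves exactly $\nabla_{\nu}$.

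Because the identification is then a unitary, connection-preserving isomorphism of Hermitian bundles, it intertwines the $(0,1)$-parts of the connections. It is therefore holomorphic for the complex structure on the cut interior that is transported from $\{\Phi<d\}$, which is the structure with respect to which the symplectomorphism is Kähler. Canonicity follows because no choices enter beyond the positive real slice and the unit frame, both determined by the data. If the sign or normalisation bookkeeping in this last step fails, the fallback is the equivalent holomorphic description. On $\zeta \neq 0$, the cutting circle complexifies, so one can take the $\CC^{\ast}$-slice $\zeta = 1$ and identify the bundles using the holomorphic frame $Y$ there. This settles holomorphicity directly, and leaves only a metric comparison by a Kempf--Ness-type rescaling.
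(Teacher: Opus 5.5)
Your slice construction and connection computation are correct, and they give a complete proof of the lemma as stated. The paper does not carry this out: it simply invokes the locality property of pre-quantum cutting, \cite[Theorem~4.5]{Meinrenken1998}. Your route is a hands-on verification of that theorem in this setting, and the steps check out.
\begin{itemize}
\item You rightly notice that the naive map $\ell \mapsto [\ell \otimes 1_{Y}]$ scales the metric by $e^{\Phi - d}$.
\item Your rescaled map amounts to using the unit-norm frame $e^{|\zeta|^{2}/4} 1_{Y}$ of the auxiliary line. In that frame the Chern connection form is $-\tfrac{\sqrt{-1}}{2}\operatorname{Im}(\bar\zeta\, d\zeta)$, which vanishes along the positive real slice.
\item The product connection is basic on $\rho^{-1}(d)$, so the descended connection may be computed along the lift $s$. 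This leaves exactly $\nabla_{\nu}$.
\end{itemize}
What your approach buys is explicitness, for instance the fact that the rescaling is forced. For the orbifold case, a cleaner justification than citing Lemma~\ref{lemma:cut-space-fixed-points} is to run the slice argument $K$-equivariantly on $\phi_{\HK}^{-1}(\nu , 0) \times \CC$. There, a point with $\zeta \neq 0$ has $K \times U_{1}$-stabiliser equal to its $K$-stabiliser. Also, the rescaling is multiplication by a positive function, not a unitary one.

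You should delete the holomorphicity paragraph. The lemma does not assert holomorphicity. Its later use, the pullback fibre weight at interior fixed points via Kostant, needs only the equivariant Hermitian connection. As written, the claim is misleading. The slice map $s$ is a symplectomorphism onto $M_{\nu}^{<d}$, but it is not a biholomorphism for the reduced complex structure. Already for the cut of $\CC$ by rotation, it carries a disc diffeomorphically onto $\CC\PP^{1}$ minus a point. So ``holomorphic for the transported structure'' is a tautology about a complex structure that is not the one on the cut, and intertwining $(0,1)$-parts adds nothing.

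The fallback does not prove this statement either. The $\CC^{\ast}$-slice $\zeta = 1$ identifies $M_{\nu}^{<d}$ holomorphically with the open set of $p \in M_{\nu}$ for which $\CC^{\ast} \cdot (p , 1)$ meets $\rho^{-1}(d)$. That set is all of $M_{\nu}$ when $d \in \Cout$, as the paper notes. The underlying base map differs from $s$ by a point-dependent $\RR_{>0}$-flow, $q \mapsto r(q)^{-1} \cdot q$ with $r = \sqrt{2(d - \Phi)}$. Consequently no fibrewise Kempf--Ness-type rescaling can turn it into an isomorphism covering the symplectomorphism with $\{ \Phi < d \}$.
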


\begin{proof}
	This is the locality property of the pre-quantum symplectic-cut construction \cite[Theorem~4.5]{Meinrenken1998}.
\end{proof}

Let us now determine the local fibre characters of $\mcL_{\nu}^{\leq d}$ at a residual fixed point $p \in (M_{\nu}^{\leq d})^{T^{n}}$, with $v = \mu_{\RR}(p) \in (\mft^{n})^{\ast}$.
Recall from \eqref{eqn:local-dual-lattice} that $\Lambda_{v}^{\vee}$ is the character lattice of the local covering torus $\widetilde{T}_{v}^{n} = \mft^{n} / \Lambda_{v}$.
The integrality of $\lambda$ and $d$ gives
\begin{equation}
	\label{eqn:vertex-in-local-dual-lattice}
	v \in \Lambda_{v}^{\vee} = \{ \alpha \in (\mft^{n})^{\ast} ~|~ \langle \alpha ,\, \eta \rangle \in \ZZ	\text{ for every } \eta \in \Lambda_{v}	\}.
\end{equation}
Indeed, for every $j \in \mcI_{v}$, the equality $\ell_{j}(v) = 0$ gives $\langle v ,\, u_{j} \rangle = -\lambda_{j} \in \ZZ$.
At a boundary vertex $v \in \Pi_{A}^{d}$, the cutting equality also gives $\langle v,\, u_{A} \rangle = d + \lambda_{A} \in \ZZ$.
The vectors $u_{j}$ for $j \in \mcI_{v}$, together with $u_{A}$ at a boundary vertex, generate the local normal lattice $\Lambda_{v}$ of \eqref{eqn:local-normal-lattice} and hence $v$ pairs integrally with $\Lambda_{v}$, proving \eqref{eqn:vertex-in-local-dual-lattice}.

Pre-quantum reduction preserves the normalisation of the residual moment map $\mu_{\RR}$, so the Kostant formula implies that the local covering torus $\widetilde{T}_{v}^{n}$ acts on the local fibre of $\mcL_{\nu}^{\leq d}$ at $p$ with geometric weight $-v$.
Under the pullback convention \eqref{eqn:pullback-action-on-sections}, the weight entering the Kawasaki localisation formula is therefore $\beta_{p} = v \in \Lambda_{v}^{\vee}$.
This applies to both interior and boundary fixed points; for interior ones, it also follows from Lemma~\ref{lemma:cut-bundle-locality}.

For $g \in \Gamma_{p}$ represented by $\gamma_{g} \in \mft_{\ZZ}^{n}$, Lemma~\ref{lemma:local-cover-kernel-character}, applied to the weight $-v$, shows that $g$ acts geometrically on this local fibre by multiplication by $\exp( -2 \pi \sqrt{-1} \langle v ,\, \gamma_{g} \rangle )$.
Similar to before, under the pullback action on sections, the corresponding value of the fibre character in \eqref{eqn:equivariant-KRR-character-formula} is the inverse, namely
\begin{equation}
	\label{eqn:local-cut-fibre-character}
	\chi_{p}^{\fib}(g) = \exp( 2 \pi \sqrt{-1} \langle \mu_{\RR}(p) ,\, \gamma_{g} \rangle ),
\end{equation}
and this value is independent of the representative $\gamma_{g}$ because $v \in \Lambda_{v}^{\vee}$.
Together with the tangent decomposition in Proposition~\ref{prop:isotropy-weights} and the local group $\Gamma_{p}$ in \eqref{eqn:local-orbifold-group}, these fibre characters finally complete the local data required by Theorem~\ref{thm:equivariant-KRR-index-theorem} for $(M_{\nu}^{\leq d} , \mcL_{\nu}^{\leq d})$.

%% file: sections/6-hypertoric-equivariant-quantisation.tex
\section{Equivariant Quantisation of Hypertoric Cut Spaces}
\label{section:section-6-equivariant-quantisation-hypertoric-algebraic-cutting}


The local data established in Section~\ref{section:section-5-localisation-formulae-isotropy-data} finally allows us to compute the equivariant indices of the compact hypertoric cuts.
Although the full cut need not be toric for the residual $T^{n}$-action, the moment polyptych determines its fixed points and the local representations that enter the orbifold Kawasaki--Riemann--Roch formula.
First, we assemble the localisation formula and then evaluate it in the examples of Subsection~\ref{subsection:subsection-4.4-worked-cut-spaces-moment-polyptychs}.
Section~\ref{section:section-7-algebraic-cutting-combinatorial-quantisation} relates these indices to the circle-graded quantisation of the original non-compact hypertoric variety, proves the higher-cohomology vanishing needed to identify them with characters of section spaces, and describes the resulting filtration combinatorially.

\subsection{Equivariant Localisation Formula}
\label{subsection:subsection-6.1-equivariant-quantisation-hypertoric}

At a regular integral cut level $d > 0$, the cut space $M_{\nu}^{\leq d}$ is a compact K\"ahler orbifold of complex dimension $2n$, with K\"ahler form $\w_{\RR}^{\leq d}$.
It is equipped with the residual $T^{n}$-action and the $T^{n}$-equivariant pre-quantum line orbibundle $\mcL_{\nu}^{\leq d} \ra M_{\nu}^{\leq d}$.
We write $\chi_{T^{n}}(M_{\nu}^{\leq d} ; \mcL_{\nu}^{\leq d}) \in R(T^{n})$ for its equivariant holomorphic Euler characteristic, as in \eqref{eqn:holomorphic-Euler-characteristic}.
Although this is initially a virtual $T^{n}$-representation, Corollary~\ref{cor:outer-chamber-vanishing} in Section~\ref{section:section-7-algebraic-cutting-combinatorial-quantisation} identifies it with the representation on $H^{0}(M_{\nu}^{\leq d} ; \mcL_{\nu}^{\leq d})$ when $d$ lies in the outer cut chamber.

We now obtain one of the main results of the paper: an explicit formula for the equivariant holomorphic Euler characteristic $\chi_{T^{n}}(M_{\nu}^{\leq d} ; \mcL_{\nu}^{\leq d})$ in terms of the moment polyptych $\Delta_{\nu}^{\leq d}$ and its associated lattice data.
The following theorem brings together the fixed-point, tangent, and fibre calculations of Section~\ref{section:section-5-localisation-formulae-isotropy-data} to determine every interior and boundary contribution, including the actions of the finite local groups $\Gamma_{p}$ and the multiplicity-two cutting weight at boundary fixed points.

\begin{theorem}[Equivariant Localisation Formula for Orbifold Cut Spaces]
	\label{theorem:character-hypertoric-orbifold}
	Let $M_{\nu}$ be the non-empty hypertoric orbifold constructed in Section~\ref{section:section-2-hypertoric-varieties} at a generic parameter $\nu$, with the arrangement cooriented so that the distinguished base region $\Delta_{\emptyset}$ is non-empty and bounded as in Subsection~\ref{subsection:subsection-3.1-hyperplane-arrangements}.
	Assume that the fixed lift $\lambda$ is integral, $\lambda \in ( \mft_{\ZZ}^{N} )^{\ast}$, so that $\nu = \imath_{\ZZ}^{\ast}(\lambda) \in \mfk_{\ZZ}^{\ast}$.
	Let $d \in \ZZ_{>0}$ be a regular value of the cutting moment map, let $\mcL_{\nu}^{\leq d} \to M_{\nu}^{\leq d}$ be the cut pre-quantum line orbibundle from \eqref{eqn:cut-prequantum-orbibundle}, and let $\xi \in \mft_{\CC}^{n}$ be generic in the sense of \eqref{eqn:orbifold-generic-xi}, so that every denominator factor displayed below is non-zero.

	Then the equivariant Kawasaki character is
	\begin{equation}
		\label{eqn:character-hypertoric-orbifold}
		\chi_{T^{n}}(M_{\nu}^{\leq d} ; \mcL_{\nu}^{\leq d})( e^{\xi} ) = \sum\nolimits_{p \in ( M_{\nu}^{\leq d} )^{T^{n}}} \chi_{p}(e^{\xi}),
	\end{equation}
	where $\chi_{p}(e^{\xi})$ denotes the local orbifold contribution of the fixed point $p$.
	For a fixed point $p$, write $v := \mu_{\RR}(p)$ so that $\Gamma_{p} \cong \mft_{\ZZ}^{n} / \Lambda_{v}$ and, for each $g \in \Gamma_{p}$, let $\gamma_{g} \in \mft_{\ZZ}^{n}$ be a representative.
	If $p \in M_{\nu}^{<d}$ is an interior fixed point, then
	\begin{equation}
		\label{eqn:cut-localisation-interior}
		\chi_{p}(e^{\xi}) = \frac{1}{|\Gamma_{p}|}
		\sum_{g \in \Gamma_{p}}
		\frac{
			e^{2 \pi \sqrt{-1}\langle v ,\, \gamma_{g} \rangle}
			e^{\langle v ,\, \xi \rangle}
		}{
			\prod\nolimits_{j \in \mcI_{v}}
			\left( 1 - \rho_{p , j}(g)e^{\langle \varrho_{p , j} ,\, \xi \rangle} \right)
			\left( 1 - \sigma_{p , j}(g)e^{\langle \varsigma_{p , j} ,\, \xi \rangle} \right)
		}
	\end{equation}
	and, if $p \in Z_{\nu}^{d}$ is a boundary fixed point, then
	\begin{equation}
		\label{eqn:cut-localisation-boundary}
		\chi_{p}(e^{\xi}) = \frac{1}{|\Gamma_{p}|}
		\sum_{g \in \Gamma_{p}}
		\frac{
			e^{2 \pi \sqrt{-1}\langle v ,\, \gamma_{g} \rangle}
			e^{\langle v ,\, \xi \rangle}
		}{
			\left( 1 - \theta_{p}(g)e^{\langle \vartheta_{p} ,\, \xi \rangle} \right)^{2}
			\prod\nolimits_{j \in \mcI_{v}}
			\left( 1 - \rho_{p , j}(g)e^{\langle \varrho_{p , j} ,\, \xi \rangle} \right)
			\left( 1 - \sigma_{p , j}(g)e^{\langle \varsigma_{p , j} ,\, \xi \rangle} \right)
		}.
	\end{equation}
	Note that these expressions are independent of the choice of $\gamma_{g} \in \mft_{\ZZ}^{n}$.
	Here, $\rho_{p , j}(g)$, $\sigma_{p , j}(g)$, and $\theta_{p}(g)$ are the geometric tangent eigenvalues from \eqref{eqn:group-characters} and \eqref{eqn:group-character-boundary}.
	Moreover, under the pullback action on sections, the fibre factor $e^{ 2\pi\sqrt{-1} \langle v ,\, \gamma_{g} \rangle}$ is the character value $\chi_{p}^{\fib}(g)$ from \eqref{eqn:local-cut-fibre-character}.
\end{theorem}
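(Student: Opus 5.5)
The plan is to obtain \eqref{eqn:character-hypertoric-orbifold} by applying Theorem~\ref{thm:equivariant-KRR-index-theorem} to $M = M_{\nu}^{\leq d}$, $T = T^{n}$, $\mcL = \mcL_{\nu}^{\leq d}$, and then substituting the local data computed in Section~\ref{section:section-5-localisation-formulae-isotropy-data}. I will check the hypotheses of that theorem in turn. First, $M_{\nu}^{\leq d}$ is a compact K\"ahler orbifold of complex dimension $2n$ by Lemma~\ref{lemma:proper-moment-map} and the regularity of $d$. Second, $\mcL_{\nu}^{\leq d}$ is a $T^{n}$-equivariant holomorphic line orbibundle by \eqref{eqn:cut-prequantum-orbibundle}; this uses the integrality of $\lambda$ and $d$. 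Third, the fixed-point set is finite because Lemma~\ref{lemma:cut-space-fixed-points} puts it in bijection with $\Verts(\Delta_{\nu}^{\leq d})$, and a finite polytopal complex has finitely many vertices. This bijection also splits the sum into interior and boundary contributions.

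Next, at each fixed point $p$ with $v = \mu_{\RR}(p)$, I will identify the local uniformising data.
\begin{enumerate}[(i)]
	\item The local group is $\Gamma_{p} \cong \mft_{\ZZ}^{n}/\Lambda_{v}$ by \eqref{eqn:local-orbifold-group}. The covering torus is $\widetilde{T}_{v}^{n} = \mft^{n}/\Lambda_{v}$, whose deck group $\Gamma_{p}$ commutes with it because both are quotients of the abelian group $\mft^{n}$. This gives the commuting lift required in Theorem~\ref{thm:equivariant-KRR-index-theorem}.
	\item The tangent space splits into weight lines by Proposition~\ref{prop:isotropy-weights}. There are $2n$ lines, since $|\mcI_{v}| = n$ at interior vertices and $|\mcI_{v}| = n-1$ plus the doubled $V_{\vartheta_{p}}$ at boundary vertices (Theorem~\ref{thm:vertex-edges}). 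Each weight is non-zero, since each is normalised to pair to $\pm 1$ with some $u_{j}$ or with $u_{A}$.
	\item The geometric eigenvalues of $g \in \Gamma_{p}$ on these lines are $\rho_{p,j}(g)$, $\sigma_{p,j}(g)$ and $\theta_{p}(g)$ (twice), by Lemma~\ref{lemma:local-cover-kernel-character} together with the verification that all weights lie in $\Lambda_{v}^{\vee}$.
	\item The pullback fibre weight is $\beta_{p} = v$, and its character is $\chi_{p}^{\fib}(g) = e^{2\pi\sqrt{-1}\langle v,\gamma_{g}\rangle}$ by \eqref{eqn:local-cut-fibre-character}. This is well-defined by \eqref{eqn:vertex-in-local-dual-lattice}.
\end{enumerate}

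Substituting (i)--(iv) into \eqref{eqn:equivariant-KRR-character-formula} gives \eqref{eqn:cut-localisation-interior} for interior points. For boundary points it gives \eqref{eqn:cut-localisation-boundary}, with the squared factor coming from the multiplicity two of $V_{\vartheta_{p}}$. Independence of the choice of representative $\gamma_{g}$ holds because every exponent pairs a weight in $\Lambda_{v}^{\vee}$ with $\gamma_{g}$, and $\Lambda_{v}^{\vee}$ pairs integrally with $\Lambda_{v}$. Genericity of $\xi$ in the sense of \eqref{eqn:orbifold-generic-xi} is exactly the non-vanishing of the displayed denominators.

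The main obstacle is hypothesis (ii) of Theorem~\ref{thm:equivariant-KRR-index-theorem}: the weight decomposition must be simultaneously a decomposition for $\widetilde{T}_{v}^{n}$ and for $\Gamma_{p}$ on one and the same chart. The subtlety is that Proposition~\ref{prop:isotropy-weights} builds the decomposition from restricted subvarieties whose effective groups differ from $\Gamma_{p}$. My first step there is to invoke the argument already in that proof. Lemma~\ref{lemma:flat-local-group-comparison}, applied with $W = \ZZ u_{A}$ and with $W = \ZZ u_{j} + \ZZ u_{A}$, pulls the effective characters back to the inherited covering torus $\mft^{n}/\Lambda_{v}$. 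Since $\Gamma_{p}$ acts through the restriction of that torus, the one-dimensional summands are automatically $\Gamma_{p}$-eigenlines, with the eigenvalues given in (iii).
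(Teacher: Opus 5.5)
Your proposal is correct and follows essentially the same route as the paper: apply Theorem~\ref{thm:equivariant-KRR-index-theorem} to the cut, with the local data supplied by Lemma~\ref{lemma:cut-space-fixed-points}, \eqref{eqn:local-orbifold-group}, Proposition~\ref{prop:isotropy-weights}, Lemma~\ref{lemma:local-cover-kernel-character} and \eqref{eqn:local-cut-fibre-character}, and with the key observation that $\Gamma_{p}$ acts on the chart through the restricted action of the covering torus $\mft^{n}/\Lambda_{v}$, so the weight lines are simultaneous eigenlines. One wording fix: $\Gamma_{p} = \mft_{\ZZ}^{n}/\Lambda_{v}$ is the kernel \emph{subgroup} of $\mft^{n}/\Lambda_{v}$, not a quotient of $\mft^{n}$, but the required commutation still holds because $\Gamma_{p}$ sits inside that abelian torus and acts by restriction of its action.
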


\begin{proof}
	By Lemma~\ref{lemma:cut-space-fixed-points}, the residual fixed points are isolated and correspond to the vertices $v$ of $\Delta_{\nu}^{\leq d}$ and hence, at the point $p$ above $v$, the local group is $\Gamma_{p} \cong \mft_{\ZZ}^{n}/\Lambda_{v}$ by \eqref{eqn:local-orbifold-group}.
	Lemma~\ref{lemma:local-cover-kernel-character} identifies $\Gamma_{p}$ with the kernel of the local covering $\widetilde{T}_{v}^{n} = \mft^{n}/\Lambda_{v} \ra T^{n}$.
	The action of $\Gamma_{p}$ on the local chart is obtained by restricting the covering-torus action, so each element of $\Gamma_{p}$ acts by a scalar on every isotropy weight space in \eqref{eqn:isotropy-decomposition}, which gives the required decomposition by Theorem~\ref{thm:equivariant-KRR-index-theorem}.

	Proposition~\ref{prop:isotropy-weights} gives the tangent decomposition \eqref{eqn:isotropy-decomposition}, whose summands, together with the geometric eigenvalues in \eqref{eqn:group-characters} and \eqref{eqn:group-character-boundary}, determine the denominator factors.
	At a boundary fixed point, the summand $V_{\vartheta_{p}}^{\oplus 2}$ gives the squared factor $(1 - \theta_{p}(g) e^{\langle \vartheta_{p} ,\, \xi \rangle})^{2}$, the pre-quantum normalisation gives the pullback fibre weight $\beta_{p} = v = \mu_{\RR}(p)$, and \eqref{eqn:local-cut-fibre-character} gives $\chi_{p}^{\fib}(g) = e^{2\pi\sqrt{-1} \langle v,\, \gamma_{g} \rangle}$.

	Substituting these data into \eqref{eqn:equivariant-KRR-character-formula} proves \eqref{eqn:cut-localisation-interior} and \eqref{eqn:cut-localisation-boundary}.
	The expressions are independent of the representatives $\gamma_{g}$ because all the tangent and fibre weights involved lie in $\Lambda_{v}^{\vee}$.
\end{proof}

For the smooth cut spaces characterised by Corollary~\ref{corollary:smooth-cut-space-characterisation}, the local groups $\Gamma_{p}$ are trivial, and the previous theorem simplifies to the following localisation formula.

\begin{corollary}[Equivariant Localisation Formula for Smooth Cut Spaces]
	\label{corollary:character-smooth-hypertoric-cut}
	Under the hypotheses of Theorem~\ref{theorem:character-hypertoric-orbifold}, suppose that $M_{\nu}^{\leq d}$ is a smooth manifold with trivial local orbifold groups.
	Then $\mcL_{\nu}^{\leq d} \ra M_{\nu}^{\leq d}$ is an ordinary pre-quantum line bundle, and the contribution of a residual fixed point $p$, with $v = \mu_{\RR}(p)$, to \eqref{eqn:character-hypertoric-orbifold} is
	\begin{equation}
		\label{eqn:smooth-cut-local-contribution}
		\chi_{p}(e^{\xi}) =
		\begin{cases}
			\displaystyle
			\frac{
				e^{\langle v ,\, \xi \rangle}
			}{
				\prod\nolimits_{j \in \mcI_{v}}
				\left(1 - e^{\langle \varrho_{p , j} ,\, \xi \rangle} \right)
				\left(1 - e^{\langle \varsigma_{p , j},\, \xi \rangle} \right)
			},
			\qquad & p \in M_{\nu}^{<d}, \\[5mm]
			\displaystyle
			\frac{
				e^{\langle v ,\, \xi \rangle}
			}{
				\left(1 - e^{\langle \vartheta_{p} ,\, \xi \rangle} \right)^2
				\prod\nolimits_{j \in \mcI_{v}}
				\left(1-e^{\langle \varrho_{p , j}, \, \xi \rangle} \right)
				\left(1-e^{\langle \varsigma_{p , j} ,\, \xi \rangle} \right)
			},
			\qquad & p \in Z_{\nu}^{d}.
		\end{cases}
	\end{equation}
	Moreover, $\Lambda_{v} = \mft_{\ZZ}^{n}$ at every vertex, so the local covering torus is the residual torus $T^{n}$.
	In particular, this formula applies for every cut level $d > 0$ whenever the equivalent conditions of Corollary~\ref{corollary:smooth-cut-space-characterisation} hold.
\end{corollary}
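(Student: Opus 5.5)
The plan is to specialise Theorem~\ref{theorem:character-hypertoric-orbifold}, checking three things in turn. The first is that every local group is trivial, equivalently $\Lambda_{v} = \mft_{\ZZ}^{n}$ at every vertex. The second is that $\mcL_{\nu}^{\leq d}$ is then an honest line bundle. The third is that the Kawasaki average collapses to the single term $g = 1$.

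First, by hypothesis every local orbifold group is trivial, and $\Gamma_{p} \cong \mft_{\ZZ}^{n}/\Lambda_{v}$ by \eqref{eqn:local-orbifold-group} together with Lemma~\ref{lemma:cut-space-fixed-points}. It follows that $\Lambda_{v} = \mft_{\ZZ}^{n}$ at every vertex $v$ of $\Delta_{\nu}^{\leq d}$, so the local covering torus $\widetilde{T}_{v}^{n} = \mft^{n}/\Lambda_{v}$ is $T^{n}$ itself. Since $\Lambda_{v}^{\vee} = (\mft_{\ZZ}^{n})^{\ast}$, every tangent weight $\varrho_{p,j}$, $\varsigma_{p,j}$, $\vartheta_{p}$ and the fibre weight $v$ is a genuine $T^{n}$-character. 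This uses the membership checks after Lemma~\ref{lemma:local-cover-kernel-character} and \eqref{eqn:vertex-in-local-dual-lattice}.

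Next, with trivial isotropy on the locally free level set $\rho^{-1}(d)$, the cutting circle acts freely there. Hence the quotient \eqref{eqn:cut-prequantum-orbibundle} is a holomorphic line bundle on the manifold $M_{\nu}^{\leq d}$, with the same curvature and lift, so it is an ordinary pre-quantum line bundle.

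In \eqref{eqn:cut-localisation-interior} and \eqref{eqn:cut-localisation-boundary} the sum over $\Gamma_{p}$ then has only $g = 1$, represented by $\gamma_{g} = 0$. All the characters $\rho_{p,j}(1)$, $\sigma_{p,j}(1)$, $\theta_{p}(1)$ and $\chi_{p}^{\fib}(1)$ equal $1$, and $|\Gamma_{p}| = 1$. This gives \eqref{eqn:smooth-cut-local-contribution} directly; alternatively it agrees with the smooth formula \eqref{eqn:equivariant-HRR} applied to the tangent decomposition of Proposition~\ref{prop:isotropy-weights}. The genericity condition \eqref{eqn:orbifold-generic-xi} likewise reduces to the usual condition $e^{\langle \alpha_{p,j}, \xi\rangle} \neq 1$.

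For the final sentence, assume the equivalent conditions of Corollary~\ref{corollary:smooth-cut-space-characterisation} hold. Condition (ii) then says that every $d > 0$ is regular and gives a smooth cut. The step needing the most care is ensuring that smoothness of the coarse space gives trivial local groups. A manifold cut could a priori carry a non-effective or pseudo-reflection orbifold structure, and then the formula would not apply. For this I use the proof of (iii)$\implies$(ii) in Corollary~\ref{corollary:smooth-cut-space-characterisation}, which shows directly that the cutting circle acts freely on every positive level. Hence all stabilisers, and so all $\Gamma_{p}$, are trivial, and the formula applies at every integral $d > 0$.
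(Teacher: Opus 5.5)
Your proposal is correct and follows the paper's proof: trivial $\Gamma_{p}$ forces $\Lambda_{v} = \mft_{\ZZ}^{n}$ via \eqref{eqn:local-orbifold-group}, and the Kawasaki average collapses to the single term $g = 1$, represented by $\gamma_{g} = 0$, so every character value equals $1$. The final assertion likewise comes from Corollary~\ref{corollary:smooth-cut-space-characterisation}, and your extra care there fills in points the paper leaves implicit: the descent to an honest line bundle, and using the freeness established in the proof of (iii)$\implies$(ii) to get trivial local groups rather than merely a smooth coarse space.
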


\begin{proof}
	Since $\Gamma_{p} = \{1\}$, equation~\eqref{eqn:local-orbifold-group} gives $\Lambda_{v} = \mft_{\ZZ}^{n}$.
	The unique element of $\Gamma_{p}$ is represented by $0 \in \mft_{\ZZ}^{n}$, so all finite-group tangent and fibre character values in \eqref{eqn:cut-localisation-interior} and \eqref{eqn:cut-localisation-boundary} equal $1$, and the averages reduce to the expressions in \eqref{eqn:smooth-cut-local-contribution}.
	The final assertion follows from Corollary~\ref{corollary:smooth-cut-space-characterisation}.
\end{proof}

We now return to the boundary contributions in the orbifold localisation formula \eqref{eqn:cut-localisation-boundary}.
When $n = 1$, the tangent representation at a boundary fixed point consists of two copies of the cutting weight, so each summand in the finite-group average has a squared denominator.
The local group is cyclic of order $k$, and although the individual summands may involve fractional powers arising from the local covering circle, these cancel upon averaging.

Lemma~\ref{lemma:boundary-vertex-closed-form} evaluates this average as a rational function in the residual character variable $t \in T^{1}$, determined by $k$ and the coordinate $v$ of the respective boundary vertex.
This expression allows us to calculate the boundary contributions in the worked examples below without repeating the finite-group summation.

To state the lemma, which uses Theorem~\ref{theorem:character-hypertoric-orbifold}, we continue to assume that the lift is integral and fix a regular integral cut level $d > 0$.
For a boundary vertex $v \in \Pi_{A}^{d}$, write $p_{v} \in Z_{\nu}^{d}$ for the corresponding residual fixed point, with $\mu_{\RR}(p_{v}) = v$.
Henceforth, including in the worked examples, we index its local data by $v$, writing $\Gamma_{v}$, $\vartheta_{v}$, $\theta_{v}$, $\chi_{v}$, $\chi_{v}^{\fib}$, and $\chi_{v , j}^{\tang}$.

\begin{lemma}[Rank-One Boundary Contribution]
	\label{lemma:boundary-vertex-closed-form}
	Suppose that $n = 1$, and let $v \in \Pi_{A}^{d}$ be a boundary vertex.
	Choose the primitive lattice generator $\widebar{u} \in \mft_{\ZZ}^{1}$ such that $u_{A} = k\widebar{u}$, with $k \in \ZZ_{>0}$, and identify $(\mft^{1})^{\ast}$ with $\RR$ via the linear isomorphism $x \mapsto \langle x ,\, \widebar{u} \rangle$, and continue to write $v$ for the real coordinate $\langle v ,\, \widebar{u} \rangle$ of the vertex.
	Let $t = e^{\xi}$ be the $T^{1}$-character variable for this choice of generator.
	Write
	\begin{equation}
		\label{eqn:boundary-vertex-division}
		d + \lambda_{A} = ak + b, \qquad a , b \in \ZZ, \quad 0 \leq b < k,
	\end{equation}
	so that $v = a + b/k$ and $a = \lfloor v \rfloor$.
	Then $\Gamma_{v} \cong \ZZ / k\ZZ$, and
	\begin{equation}
		\label{eqn:boundary-vertex-closed-form}
		\chi_{v}(t) = \sum\nolimits_{\ell \geq 0} ( k\ell + b + 1 )\,t^{a - \ell} = t^{a} \, \frac{ ( b + 1 ) + ( k - b - 1 )\,t^{-1} }{ ( 1 - t^{-1} )^{2} }.
	\end{equation}
	The contribution $\chi_{v}(t)$ contains only integral powers of $t$: the fractional powers appearing in the individual terms cancel in the average.
\end{lemma}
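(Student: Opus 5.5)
We would prove Lemma~\ref{lemma:boundary-vertex-closed-form} by specialising the boundary formula \eqref{eqn:cut-localisation-boundary} of Theorem~\ref{theorem:character-hypertoric-orbifold} to $n = 1$ and then evaluating the finite-group average directly.

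\textbf{Step 1: the local data.} Since $v$ is a boundary vertex and $n = 1$, Theorem~\ref{thm:vertex-edges}\ref{theorem:vertex-edges-2} gives $\mcI_{v} = \varnothing$. Hence the product over $j \in \mcI_{v}$ is empty, and \eqref{eqn:local-normal-lattice} gives $\Lambda_{v} = \ZZ u_{A} = k\ZZ\widebar{u}$. By \eqref{eqn:local-orbifold-group} this yields $\Gamma_{v} \cong \ZZ/k\ZZ$, and we represent its elements by $\gamma_{m} = m\widebar{u}$ for $0 \leq m < k$.

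In the coordinate $x \mapsto \langle x ,\, \widebar{u} \rangle$ we compute three quantities.
\begin{itemize}
\item The normalisation \eqref{eqn:cutting-weight-normalisation} reduces to $\langle \vartheta_{v} ,\, u_{A} \rangle = -1$, so $\vartheta_{v} = -1/k$. By \eqref{eqn:group-character-boundary}, $\theta_{v}(g_{m}) = \zeta^{-m}$, where $\zeta := e^{2\pi\sqrt{-1}/k}$.
\item The cutting equality $\langle v ,\, u_{A} \rangle = d + \lambda_{A}$ gives $kv = ak + b$, so $v = a + b/k$ and $a = \lfloor v \rfloor$ because $0 \leq b < k$.
\item By \eqref{eqn:local-cut-fibre-character}, $\chi_{v}^{\fib}(g_{m}) = e^{2\pi\sqrt{-1}\,m(a + b/k)} = \zeta^{mb}$.
\end{itemize}

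Write $s := e^{-\xi/k}$. This is the character of the local covering circle $\mft^{1}/\Lambda_{v}$ of weight $\vartheta_{v}$, and it satisfies $s^{k} = t^{-1}$. Substituting into \eqref{eqn:cut-localisation-boundary} gives
\[
\chi_{v}(t) = \frac{1}{k}\sum_{m=0}^{k-1} \frac{\zeta^{mb}\, t^{a} s^{-b}}{(1 - \zeta^{-m}s)^{2}} .
\]

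\textbf{Step 2: averaging.} Take $|t| > 1$, so that $|s| < 1$, and expand $(1 - \zeta^{-m}s)^{-2} = \sum_{r \geq 0}(r+1)\zeta^{-mr}s^{r}$. The orthogonality relation $\frac{1}{k}\sum_{m}\zeta^{m(b-r)} = \mathbf{1}_{r \equiv b \ (k)}$ keeps exactly the terms with $r = k\ell + b$ for $\ell \geq 0$. Each surviving term is
\[
(k\ell + b + 1)\, t^{a} s^{k\ell} = (k\ell + b + 1)\, t^{a - \ell},
\]
which is the series in \eqref{eqn:boundary-vertex-closed-form}. In particular, all fractional powers of $t$ cancel.

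\textbf{Step 3: closed form.} With $x = t^{-1}$, summing the series gives
\[
\sum_{\ell \geq 0}(k\ell + b + 1)x^{\ell} = \frac{b+1}{1-x} + \frac{kx}{(1-x)^{2}} = \frac{(b+1) + (k-b-1)x}{(1-x)^{2}}.
\]

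\textbf{Main obstacle.} The expected difficulty is bookkeeping rather than analysis. First, the signs of the weights must be tracked correctly: the geometric tangent eigenvalue is $\zeta^{-m}$ and the pullback fibre character is $\zeta^{+mb}$. Second, the formal expansion must be identified with the meromorphic Kawasaki character. For the latter, both sides are rational in $s$ and agree on the open set $|s| < 1$, so they agree identically and hence at every generic $\xi$. We would also check that the choice of branch $s = t^{-1/k}$ is immaterial, since replacing $s$ by $\zeta s$ merely permutes the summands over $m$.
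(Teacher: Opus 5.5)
Your proposal is correct and follows essentially the same route as the paper: you read off $\mcI_{v} = \varnothing$ and $\Gamma_{v} \cong \ZZ/k\ZZ$, compute $\theta_{v}(g_{m}) = e^{-2\pi\sqrt{-1}\,m/k}$ and $\chi_{v}^{\fib}(g_{m}) = e^{2\pi\sqrt{-1}\,mb/k}$, expand $(1-z)^{-2}$, and use character orthogonality to keep only the exponents $r \equiv b \pmod{k}$. Your treatment of convergence on $|t| > 1$, followed by meromorphic continuation, is a slightly more careful version of the paper's formal interchange of the finite sum with the power series; the one point you leave implicit, which the paper states, is that $\lambda_{A} \in \ZZ$ by integrality of the lift, so $d + \lambda_{A} = ak + b$ is a Euclidean division of integers.
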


\begin{proof}
	By Theorem~\ref{thm:vertex-edges}\ref{theorem:vertex-edges-2}, we have $\mcI_{v} = \varnothing$, so $\Lambda_{v} = \ZZ u_{A}$ and $\Gamma_{v} = \mft_{\ZZ}^{1} / \ZZ u_{A} \cong \ZZ / k\ZZ$ is generated by the class of $\widebar{u}$.
	For $j = 0 , \ldots , k - 1$, let $g_{j} := [ j\widebar{u} ] \in \Gamma_{v}$ and choose $\gamma_{j} := j\widebar{u} \in \mft_{\ZZ}^{1}$ as its representative.
	The normalisation \eqref{eqn:cutting-weight-normalisation} gives $\langle \vartheta_{v} ,\, u_{A} \rangle = -1$, so $\vartheta_{v} = -1/k$ relative to the chosen coordinate, and hence $t^{\vartheta_{v}} = t^{-1/k}$.
	Moreover, $v \in \Pi_{A}^{d}$ gives $kv = \langle v ,\, u_{A} \rangle = d + \lambda_{A}$, whence $v = a + b/k$.
	By \eqref{eqn:U1-action-generator}, $\lambda_{A} = \sum_{i \in A} \lambda_{i}$.
	Since the fixed lift $\lambda$ is integral, $\lambda_{A}$ is an integer, so \eqref{eqn:boundary-vertex-division} is a Euclidean division of integers.
	It follows that
	\[
	\theta_{v}(g_{j}) = e^{-2\pi\sqrt{-1} \, j/k}, \qquad
	\chi_{v}^{\fib}(g_{j}) = e^{2\pi\sqrt{-1} \langle v ,\, \gamma_{j} \rangle} = e^{2\pi\sqrt{-1} \, jb/k},
	\]
	so substituting these character values into the boundary localisation formula \eqref{eqn:cut-localisation-boundary} of Theorem~\ref{theorem:character-hypertoric-orbifold}, and using $\mcI_{v} = \varnothing$, gives
	\[
		\chi_{v}(t) = \frac{t^{v}}{|\Gamma_{v}|} \sum_{g \in \Gamma_{v}} \frac{ \chi_{v}^{\fib}(g) }{ ( 1 - \theta_{v}(g) e^{ \langle \vartheta_{v} , \xi  \rangle } )^{2} }
		= \frac{ t^{ \, a + b/k} }{ k } \sum_{j = 0}^{k - 1} \frac{ e^{ 2 \pi \sqrt{-1} \, jb/k } }{ \big( 1 - e^{ -2 \pi \sqrt{-1} \, j/k } \, t^{-1/k} \big)^{2} }.
	\]
	Expanding $(1 - z)^{-2} = \sum_{m \geq 0}(m + 1)z^{m}$ and interchanging the finite sum with the formal power series, we find by character orthogonality for $\ZZ / k\ZZ$ that 	the inner average vanishes unless $m \equiv b \pmod{k}$.
	Writing the surviving indices as $m = b + k\ell$ with $\ell \geq 0$, and using $t^{\, a + b/k} \, t^{-( b + k\ell ) / k} = t^{\, a - \ell}$, gives the first equality in \eqref{eqn:boundary-vertex-closed-form}; summing $\sum_{\ell \geq 0}t^{-\ell}$ and $\sum_{\ell \geq 0} \ell \, t^{-\ell}$ gives the second.
\end{proof}

\begin{remark}
	\label{remark:boundary-vertex-specialisations}
	For $k = 1$, we have $b = 0$ and $a = v$, so \eqref{eqn:boundary-vertex-closed-form} reduces to $\chi_{v}(t) = t^{v}(1 - t^{-1})^{-2}$, the contribution of a boundary vertex with trivial local group.
	More generally, $b = 0$ holds if and only if $v$ is integral, or equivalently when $\chi_{v}^{\fib}$ is trivial.
	The residue $b$ therefore measures the non-integrality of the vertex, rather than a failure of fractional powers to cancel: the averaged contribution has integral powers for every $b$, with leading exponent $\lfloor v \rfloor$, equal to $v$ when $b = 0$.
\end{remark}

\begin{remark}[Geometric and Pullback Character Conventions]
	\label{remark:convention-sensitivity}
	The rank-one calculation provides a consistency check on the use of the geometric action and the induced pullback action on sections in \eqref{eqn:cut-localisation-boundary}.
	As fixed in Subsection~\ref{subsection:subsection-5.1-equivariant-localisation-formulae}, the denominator uses the geometric tangent eigenvalue $\theta_{v}(g)$, whereas the numerator uses the pullback fibre character value $\chi_{v}^{\fib}(g)$.
	By \eqref{eqn:pullback-action-on-sections}, this fibre character value is the inverse of the geometric eigenvalue of $g$ on the pre-quantum fibre.
	Replacing only $\chi_{v}^{\fib}(g)$ by its inverse would therefore insert the geometric fibre eigenvalue where the formula requires its pullback counterpart, while leaving the tangent factors and the torus factor $t^{v}$ unchanged.

	In the proof of Lemma~\ref{lemma:boundary-vertex-closed-form}, the surviving congruence changes from $m \equiv b \pmod{k}$ to $m \equiv - b \pmod{k}$.
	The expression is unchanged only when $2b \equiv 0 \pmod{k}$, that is, when $b = 0$ or when $k$ is even and $b = k/2$.
	In particular, local groups of order at most two cannot detect this substitution.
	If $2b \not\equiv 0 \pmod{k}$, every exponent in the formal expansion of the altered boundary contribution would lie in $2b / k + \ZZ$ and therefore be non-integral.
	Since the overall index belongs to $R(T^{1}) = \ZZ[t , t^{-1}]$ by \eqref{eqn:holomorphic-Euler-characteristic}, these fractional-power terms would have to cancel against contributions from other vertices for the altered fixed-point sum to give the correct index.

	The $A_{r}$ family of Example~\ref{example:A2-resolution-example-1} shows that such cancellation need not be possible.
	At a positive regular integral level $d \in \Cout$, the rightmost boundary vertex $v$ has local group $\Gamma_{v} \cong \ZZ / r\ZZ$, so $k = r$, whereas all other vertices have trivial local groups and their contributions are rational expressions in integral powers of $t$.
	For $r \geq 3$ and $2b \not\equiv 0 \pmod{r}$, no other vertex contribution can therefore cancel the non-integral powers introduced at the right boundary.
	The altered sum subsequently fails to lie in $\ZZ[t , t^{-1}]$, due to the geometric fibre eigenvalue being used in the place of the required pullback fibre character value.

	On the other hand, simultaneously inverting all finite-group character values, while leaving $t^{v}$ and $t^{-1/k}$ unchanged, merely reindexes the average by $g \mapsto g^{-1}$ and leaves it unchanged.
	This simultaneous inversion is only a change of indexing, whereas the fibre-only substitution mixes the geometric and pullback conventions.
\end{remark}


\subsection{Worked Equivariant Characters}
\label{subsection:subsection-6.2-worked-equivariant-character}

We now apply Theorem~\ref{theorem:character-hypertoric-orbifold} to the cut spaces in Subsection~\ref{subsection:subsection-4.4-worked-cut-spaces-moment-polyptychs}.
For the smooth cotangent-bundle examples, we use Corollary~\ref{corollary:character-smooth-hypertoric-cut}; the $A_{r}$-resolution family will show how non-trivial local groups enter the boundary contributions for $r \geq 2$, and requires Theorem~\ref{theorem:character-hypertoric-orbifold}.
For each vertex $v \in \Verts(\Delta_{\nu}^{\leq d})$, let $p_{v}$ be the unique residual fixed point with $\mu_{\RR}(p_{v}) = v$.
As in Lemma~\ref{lemma:boundary-vertex-closed-form}, we index its local data by $v$ and write $\chi_{v}$ for its contribution.
Thus,
\begin{equation}
	\label{eqn:cut-character-shorthand}
	\chi_{d}(e^{\xi}) := \chi_{T^{n}}(M_{\nu}^{\leq d} ; \mcL_{\nu}^{\leq d})(e^{\xi}) = \sum\nolimits_{v \in \Verts(\Delta_{\nu}^{\leq d})} \chi_{v}(e^{\xi}).
\end{equation}

For a local tangent weight $\alpha \in \Lambda_{v}^{\vee}$, write $t^{\alpha} := e^{\langle \alpha ,\, \xi \rangle}$.
The weight $\alpha$ determines a character of the local covering torus $\widetilde{T}_{v}^{n} = \mft^{n} / \Lambda_{v}$, whose character lattice is given in \eqref{eqn:local-dual-lattice}, and if $\alpha \in (\mft_{\ZZ}^{n})^{\ast}$, this character descends further to the residual torus $T^{n}$.

The individual terms in the finite-group average defining $\chi_{v}$ may involve fractional powers, but averaging over $\Gamma_{v}$ gives a rational expression in the characters of the residual torus $T^{n}$.
The remaining poles cancel when the vertex contributions are summed in \eqref{eqn:cut-character-shorthand}, giving a Laurent polynomial since the total index belongs to $R(T^{n})$.
In rank one, we write $t = e^{\xi}$, and in rank two, we write $(t_{1} , t_{2}) = (e^{\xi_{1}} , e^{\xi_{2}})$.
After simplifying the total character, evaluation at the identity means setting $t = 1$ or $(t_{1} , t_{2}) = (1 , 1)$, respectively.

Throughout the examples, $d > 0$ is a regular integral level in the outer cut chamber.
Corollary~\ref{cor:outer-chamber-vanishing} in Section~\ref{section:section-7-algebraic-cutting-combinatorial-quantisation} will show that $H^{q}( M_{\nu}^{\leq d} ; \mcL_{\nu}^{\leq d} ) = 0$ for every $q > 0$.
Hence, $\chi_{d}$ is the character of $H^{0}( M_{\nu}^{\leq d} ; \mcL_{\nu}^{\leq d} )$, and its value at the identity is the dimension of that section space.
Corollary~\ref{cor:weighted-lattice-point-character} later computes the same character alternatively as a weighted lattice-point sum on the level-$d$ moment polyptych.

For the first example, we use the rank-one Brion identity for the interval $[0 , m]$, where $m \in \ZZ_{\geq0}$:
\begin{equation}
	\label{eqn:rank-one-Brion-sign-check}
	\frac{1}{1 - t} + \frac{t^{m}}{1 - t^{-1}} = 1 + t + \cdots + t^{m}.
\end{equation}


\begin{example}[$T^{\ast}\CC\PP^{1}$, continued]
	\label{example:TCP1-example-2}
	We continue Example~\ref{example:TCP1-example-1} with $m , d \in \ZZ_{>0}$ and $\delta = d$.
	The cut moment polyptych has components $[-d , 0]$, $[0 , m]$, and $[m , m + d]$, with support $|\Delta_{\nu}^{\leq d}| = [-d , m + d]$.
	Figure~\ref{fig:isotropy-TCP1} shows the local tangent weights at the four residual fixed points of the cut; at each boundary fixed point, the cutting weight occurs with multiplicity two.

	\begin{figure}[ht]
		\centering
		\begin{tikzpicture}[x=1cm , y=1cm]

			\tikzset{
				polyboundary/.style={line width=0.9pt},
				weight/.style={->,line width=0.8pt},
				cutweight/.style={->,line width=0.8pt},
				fixedpoint/.style={font=\scriptsize},
				weightlabel/.style={font=\scriptsize},
			}

			\coordinate (b1) at (-4 , 0);
			\coordinate (v1) at (-1.5 , 0);
			\coordinate (v2) at ( 1.5 , 0);
			\coordinate (b2) at ( 4 , 0);

			\foreach \p in {v1, v2}
			\draw[dashed, dash pattern=on 1.5pt off 1.5pt, line width=0.35pt, black!55]
			($(\p)+(0 , 0.25)$) -- ($(\p)+(0 , -0.52)$);

			\foreach \p in {b1, b2}
			\draw[dashed, dash pattern=on 1.5pt off 1.5pt, line width=0.35pt, black!55]
			($(\p)+(0 , 0.25)$) -- ($(\p)+(0 , -0.52)$);

			\draw[polyboundary] (b1) -- (b2);
			\foreach \p in {b1,v1,v2,b2}
			\fill (\p) circle (1.35pt);

			\draw[weight]
			($(v1)+(0 , 0.25)$) -- ++(0.85 , 0)
			node[midway , above , weightlabel] {$+1$};

			\draw[weight]
			($(v1)+(0 , -0.25)$) -- ++(-0.85 , 0)
			node[midway , below , weightlabel] {$-1$};

			\draw[weight]
			($(v2)+(0 , 0.25)$) -- ++(-0.85 , 0)
			node[midway , above , weightlabel] {$-1$};

			\draw[weight]
			($(v2)+(0 , -0.25)$) -- ++(0.85 , 0)
			node[midway , below , weightlabel] {$+1$};

			\draw[cutweight]
			($(b1)+(0 , 0.22)$) -- ++(0.85 , 0);
			\draw[cutweight]
			($(b1)+(0 , -0.22)$) -- ++(0.85 , 0);

			\node[weightlabel , above=3pt] at ($(b1)+(0.43 , 0.22)$)
			{$\vartheta_{-d} = +1$};

			\draw[cutweight]
			($(b2)+(0 , 0.22)$) -- ++(-0.85 , 0);
			\draw[cutweight]
			($(b2)+(0 , -0.22)$) -- ++(-0.85 , 0);

			\node[weightlabel , above=3pt] at ($(b2)+(-0.43 , 0.22)$)
			{$\vartheta_{m + d} = -1$};

			\node[fixedpoint , below=17pt] at (b1) {$-d$};
			\node[fixedpoint , below=17pt] at (v1) {$0$};
			\node[fixedpoint , below=17pt] at (v2) {$m$};
			\node[fixedpoint , below=17pt] at (b2) {$m + d$};

		\end{tikzpicture}

		\caption{
			Local tangent weights at the four residual fixed points of the cut of $T^{\ast}\CC\PP^{1}$.
			At each boundary vertex, the cutting weight occurs with multiplicity two, represented by the pair of parallel arrows.
			}
		\label{fig:isotropy-TCP1}
	\end{figure}

	All four local groups are trivial, so Corollary~\ref{corollary:character-smooth-hypertoric-cut} gives the vertex contributions
	\[
		\chi_{-d}(t) = \frac{t^{-d}}{( 1 - t )^{2}}, \quad
		\chi_{0}(t) = \frac{1}{( 1 - t )( 1 - t^{-1} )}, \quad \chi_{m}(t) = \frac{t^{m}}{( 1 - t )( 1 - t^{-1} )}, \quad
		\chi_{m + d}(t) = \frac{t^{m + d}}{( 1 - t^{-1} )^{2}},
	\]
	and regrouping their sum gives
	\begin{equation}
		\label{eqn:TCP1-example-2-character}
		\chi_{d}(t) =
		\left( \frac{1}{1 - t} + \frac{t^{m + d}}{1 - t^{-1}} \right)
		\left( \frac{t^{-d}}{1 - t} + \frac{1}{1 - t^{-1}} \right) =
		\left( \sum_{r = 0}^{m + d} t^{r} \right)
		\left( \sum_{q = 0}^{d} t^{-q} \right).
	\end{equation}
	The two rational factors in \eqref{eqn:TCP1-example-2-character} are the Brion sums for $[0 , m + d]$ and $[-d , 0]$, respectively, so \eqref{eqn:rank-one-Brion-sign-check} gives the finite sums in the second equality.
	Evaluating the resulting Laurent polynomial at $t = 1$ gives $\dim_{\CC} H^{0}(M_{\nu}^{\leq d} ; \mcL_{\nu}^{\leq d}) = (m + d + 1)(d + 1)$.
	Note for later that the factorisation in \eqref{eqn:TCP1-example-2-character} is the localisation counterpart of the base--fibre description of the cut polarisation which will be discussed in Remark~\ref{remark:projective-bundle-TCP1-computation}.
\end{example}


\begin{example}[$A_{2}$-Resolution, continued]
	\label{example:A2-resolution-example-2}
	We continue the $A_{2}$-resolution example from Example~\ref{example:A2-resolution-example-1}, with $m_{1} , m_{2} , d \in \ZZ_{>0}$, $0 < m_{1} < m_{2}$, and $\delta = d$.
	The arrangement vertices satisfy $\mcS(0) = \mcS(m_{1}) = 0$ and $\mcS(m_{2}) = m_{2} - m_{1}$, so $c_{\max} = m_{2} - m_{1}$ and the outer-chamber condition is $d \in \Cout = ( m_{2} - m_{1} , \infty )$.

	To avoid floor functions in this calculation, we further assume $d - (m_{2} - m_{1}) = 2s$, with $s \in \ZZ_{>0}$.
	The rightmost boundary vertex is then $(m_{1} + m_{2} + d) / 2 = m_{2} + s \in \ZZ$, so the pullback fibre character at the corresponding fixed point is trivial.
	Note however that this parity assumption is not required for the existence of the cut pre-quantum line orbibundle or for the cancellation of fractional powers in the local average: Lemma~\ref{lemma:boundary-vertex-closed-form} gives a rational expression in integral powers of $t$ for every residue $b$.

	The cut moment polyptych has the components $[ -d , 0 ]$, $[ 0 , m_{1} ]$, $[ m_{1} , m_{2} ]$, and $[ m_{2} , m_{2} + s ]$, and its support is $|\Delta_{\nu}^{\leq d}| = [ -d , m_{2} + s ]$.
	At the rightmost boundary, the non-primitive cutting normal $u_{\{ 2 , 3 \}} = 2$ gives $\Gamma_{m_{2} + s} \cong \ZZ / 2\ZZ$; Figure~\ref{fig:isotropy-A2} shows the local tangent weights at all five residual fixed points.

	\begin{figure}[H]
		\centering
		\begin{tikzpicture}[x=1cm , y=1cm]

			\tikzset{
				polyboundary/.style={line width=0.9pt},
				weight/.style={->,line width=0.8pt},
				cutweight/.style={->,line width=0.8pt},
				fixedpoint/.style={font=\scriptsize},
				weightlabel/.style={font=\scriptsize},
				grouplabel/.style={font=\scriptsize},
			}

			\coordinate (b1)  at (-4.8 , 0);
			\coordinate (v1)  at (-2.3 , 0);
			\coordinate (v2)  at ( 0.0 , 0);
			\coordinate (v3)  at ( 2.0 , 0);
			\coordinate (b23) at ( 4.8 , 0);

			\foreach \p in {v1,v2,v3}
			\draw[dashed, dash pattern=on 1.5pt off 1.5pt, line width=0.35pt, black!55]
			($(\p)+(0 , 0.25)$) -- ($(\p)+(0 , -0.52)$);

			\foreach \p in {b1,b23}
			\draw[dashed, dash pattern=on 1.5pt off 1.5pt, line width=0.35pt, black!55]
			($(\p)+(0 , 0.22)$) -- ($(\p)+(0 , -0.52)$);

			\draw[polyboundary] (b1) -- (b23);

			\foreach \p in {b1,v1,v2,v3,b23}
			\fill (\p) circle (1.35pt);

			\draw[cutweight]
			($(b1)+(0 , 0.22)$) -- ++(0.85 , 0);
			\draw[cutweight]
			($(b1)+(0 , -0.22)$) -- ++(0.85 , 0);

			\node[weightlabel , above=3pt]
			at ($(b1)+(0.43 , 0.22)$)
			{$\vartheta_{-d} = +1$};

			\draw[weight]
			($(v1)+(0 , 0.25)$) -- ++(0.85 , 0)
			node[midway , above , weightlabel] {$+1$};

			\draw[weight]
			($(v1)+(0 , -0.25)$) -- ++(-0.85 , 0)
			node[midway , below , weightlabel] {$-1$};

			\draw[weight]
			($(v2)+(0 , 0.25)$) -- ++(-0.85 , 0)
			node[midway , above , weightlabel] {$-1$};

			\draw[weight]
			($(v2)+(0 , -0.25)$) -- ++(0.85 , 0)
			node[midway , below , weightlabel] {$+1$};

			\draw[weight]
			($(v3)+(0 , 0.25)$) -- ++(-0.85 , 0)
			node[midway , above , weightlabel] {$-1$};

			\draw[weight]
			($(v3)+(0 , -0.25)$) -- ++(0.85 , 0)
			node[midway , below , weightlabel] {$+1$};

			\draw[cutweight]
			($(b23)+(0 , 0.22)$) -- ++(-0.425 , 0);
			\draw[cutweight]
			($(b23)+(0 , -0.22)$) -- ++(-0.425 , 0);

			\node[weightlabel , above=3pt]
			at ($(b23)+(-0.43 , 0.22)$)
			{$\vartheta_{m_{2} + s} = -1/2$};

			\node[grouplabel, anchor=west] at ($(b23)+(0.55 , 0.05)$)
			{$\Gamma_{m_{2} + s} \cong \ZZ/ 2\ZZ$};

			\node[fixedpoint , below=17pt] at (b1)  {$-d$};
			\node[fixedpoint , below=17pt] at (v1)  {$0$};
			\node[fixedpoint , below=17pt] at (v2)  {$m_{1}$};
			\node[fixedpoint , below=17pt] at (v3)  {$m_{2}$};
			\node[fixedpoint , below=17pt] at (b23) {$m_{2} + s$};

		\end{tikzpicture}

		\caption{
				Local tangent weights at the five residual fixed points of the cut of the $A_{2}$-resolution.
				At the fixed point above the right boundary vertex, the cutting normal $u_{\{ 2 , 3 \}} = 2$ gives the tangent weight $\vartheta_{m_{2} + s} = -1/2$ with multiplicity two and the local orbifold group $\Gamma_{m_{2} + s} \cong \ZZ / 2\ZZ$.
			}
		\label{fig:isotropy-A2}
	\end{figure}

	The first four vertices $-d$, $0$, $m_{1}$, and $m_{2}$ have trivial local groups, so their contributions are
	\[
		\chi_{-d}(t) = \frac{t^{-d}}{(1 - t)^{2}}, \qquad
		\chi_{v}(t) = \frac{t^{v}}{(1 - t)(1 - t^{-1})}, \qquad
		v \in \{ 0 , m_{1} , m_{2} \}.
	\]
	At the remaining vertex $m_{2} + s$, the non-identity element $g \in \Gamma_{m_{2} + s} \cong \ZZ / 2\ZZ$ has tangent eigenvalue $\theta_{m_{2} + s}(g) = -1$.
	Since $m_{2} + s \in \ZZ$, the pullback fibre character value is $\chi_{m_{2} + s}^{\fib}(g) = e^{2\pi\sqrt{-1} (m_{2} + s)} = 1$, so the finite-group average gives
	\[
		\chi_{m_{2} + s}(t) = \frac{1}{2}\frac{t^{m_{2} + s}}{( 1 - t^{-1 / 2} )^{2}} + \frac{1}{2}\frac{t^{m_{2} + s}}{( 1 + t^{-1 / 2} )^{2}} = \frac{t^{m_{2} + s}(1 + t^{-1})}{( 1 - t^{-1} )^{2}},
	\]
	and summing the five contributions gives
	\[
		\chi_{d}(t) = \frac{t^{-d}}{( 1 - t )^{2}} + \frac{1 + t^{m_{1}} + t^{m_{2}}}{( 1 - t )( 1 - t^{-1} )} + \frac{t^{m_{2} + s}(1 + t^{-1})}{( 1 - t^{-1} )^{2}}.
	\]
	To compare this character with the base--fibre factorisation in the previous $T^{\ast}\CC\PP^{1}$ example, let $\widetilde{\chi}_{d}(t)$ denote the character in \eqref{eqn:TCP1-example-2-character} at the same cut level $d$, with $m$ replaced by $m_{1}$.
	The contributions at $-d$, $0$, and $m_{1}$ agree, so they cancel in the difference; using $m_{1} + d = m_{2} + 2s$ in the remaining terms gives
	\[
		\begin{split}
			\chi_{d}(t) - \widetilde\chi_{d}(t)
			 & = \frac{t^{m_{2}}}{(1 - t)(1 - t^{-1})} +\frac{t^{m_{2} + s}(1 + t^{-1}) - t^{m_{2} + 2s}}{(1 - t^{-1})^{2}} \\
			 & = -t^{m_{2} + 1}\frac{(1 - t^{s})(1 - t^{s + 1})}{(1 - t)^{2}}.
		\end{split}
	\]
	Applying the finite geometric-series identity to $(1 - t^{s}) / (1 - t)$ and $(1 - t^{s + 1}) / (1 - t)$ yields
	\[
		\chi_{d}(t) = \left( \sum_{r = 0}^{m_{1} + d}t^{r} \right)
		\left( \sum_{q = 0}^{d} t^{-q} \right) - t^{m_{2}}
		\left( \sum_{k = 1}^{s} t^{k} \right)
		\left( \sum_{l = 0}^{s} t^{l} \right).
	\]
	Finally, evaluating at $t = 1$, we obtain
	\[
		\begin{split}
			\dim_{\CC} H^{0}( M_{\nu}^{\leq d} ; \mcL_{\nu}^{\leq d} )
			 & = ( m_{1} + d + 1 )( d + 1 ) - s( s + 1 )                                                  \\
			 & = ( m_{1} + d + 1 )( d + 1 ) - \frac{ ( d + m_{1} - m_{2} )( d + m_{1} - m_{2} + 2 ) }{4}.
		\end{split}
	\]
	The term $(m_{1} + d + 1)(d + 1) = \widetilde{\chi}_{d}(1)$ is the dimension for the comparison cut of $T^{\ast}\CC\PP^{1}$, whose moment-polyptych support is $[-d , m_{1} + d] = [-d , m_{2} + 2s]$.
	In the $A_{2}$ example, the cutting normal $u_{\{2 , 3\}} = 2$ instead places the rightmost endpoint at $m_{2} + s$.
	The difference $\chi_{d} - \tilde{\chi}_{d}$ records changes in both the weight multiplicities and their support; evaluation at $t = 1$ shows that the dimension decreases by $s(s + 1)$.
\end{example}

\begin{remark}[The $A_{r}$-Resolution Family, continued]
	\label{remark:Ar-localisation-family}
	The previous calculation is the $r = 2$ case of the $A_{r}$-family in Example~\ref{example:A2-resolution-example-1}.
	For $r \geq 1$ and a positive regular integral level $d \in \Cout$, the leftmost boundary at $-d$ has $|u_{\{1\}}|=1$, while the arrangement vertices $0 , m_{1} , \ldots , m_{r}$ have trivial local groups and tangent weights $\pm 1$.
	The additional arrangement vertices therefore contribute terms of the same form, while the rightmost boundary contribution depends on $r$ through its local group $\ZZ / r\ZZ$.
	For the rightmost region, take $A = \{ 2 , \ldots , r + 1 \}$, so that $\lambda_{A} = \sum_{i = 1}^{r} m_{i}$.
	Write $d + \lambda_{A} = ar + b$, with $a, b \in \ZZ$ and $0 \leq b < r$.
	Lemma~\ref{lemma:boundary-vertex-closed-form}, with $k = r$, gives the rightmost boundary contribution, and hence
	\begin{equation}
		\label{eqn:Ar-character}
		\chi_{d}^{(r)}(t) = \frac{t^{-d}}{( 1 - t )^{2}} + \frac{1 + \sum_{i = 1}^{r} t^{m_{i}}}{( 1 - t )( 1 - t^{-1} )} + t^{a} \, \frac{( b + 1 ) + ( r - b - 1 )t^{-1}}{( 1 - t^{-1} )^{2}}.
	\end{equation}

	For $r = 1$, one has $b = 0$ and $a = m_{1} + d$, and \eqref{eqn:Ar-character} recovers the $T^{\ast}\CC\PP^{1}$ character of Example~\ref{example:TCP1-example-2}.
	For $r = 2$, the parity assumption in Example~\ref{example:A2-resolution-example-2} is just the condition $b = 0$, and \eqref{eqn:Ar-character} recovers the character computed there.

	For $r = 3$, the rightmost boundary vertex $v$ has local group $\Gamma_{v} \cong \ZZ / 3\ZZ$.
	Write $d + m_{1} + m_{2} + m_{3} = 3a + b$, with $a , b \in \ZZ$ and $0 \leq b < 3$.
	If $b \neq 0$, or equivalently $d + m_{1} + m_{2} + m_{3} \not\equiv 0 \pmod{3}$, Remark~\ref{remark:convention-sensitivity} shows that substituting the geometric fibre eigenvalue for the pullback character value $\chi_{v}^{\fib}(g)$, while leaving the other factors unchanged, produces exponents in $2b / 3 + \ZZ$.
	Since all other vertex contributions are rational expressions in integral powers of $t$, these non-integral powers cannot cancel, so $A_{3}$ is the smallest member of the family that detects this substitution.
\end{remark}


\begin{example}[$T^{\ast}\CC\PP^{2}$, continued]
	\label{example:TCP2-example-2}
	We continue Example~\ref{example:TCP2-example-1} with $m , d \in \ZZ_{>0}$ and $\delta = d$.
	The local tangent weights at the nine residual fixed points are shown in Figure~\ref{fig:isotropy-TCP2}.

	\begin{figure}[ht]
		\centering
		\begin{tikzpicture}[scale=0.88]

			\tikzset{
				polyboundary/.style={draw=gray!65, line width=0.72pt},
				polyedge/.style={draw=gray!55, line width=0.42pt},
				weight/.style={->,draw=black , line width=0.82pt , shorten <=1.2pt},
				fixedpoint/.style={font=\scriptsize},
				weightlabel/.style={font=\scriptsize},
			}

			\def\mval{4.8}
			\def\dval{1.5}

			\def\alen{0.64}
			\def\diag{0.45}

			\def\alenTwo{0.55}
			\def\diagTwo{0.39}

			\coordinate (v12) at (0 , 0);
			\coordinate (v23) at (\mval , 0);
			\coordinate (v13) at (0,\mval);

			\coordinate (bL)  at (-\dval , 0);
			\coordinate (bB)  at (0 , -\dval);
			\coordinate (bBR) at ({\mval+\dval} , -\dval);
			\coordinate (bR)  at ({\mval+\dval},0);
			\coordinate (bT)  at (0,{\mval+\dval});
			\coordinate (bTL) at (-\dval,{\mval+\dval});

			\fill[gray!10]
			(v12) -- (v23) -- (v13) -- cycle;

			\draw[polyboundary]
			(bL) -- (bB) -- (bBR) -- (bR)
			-- (bT) -- (bTL) -- cycle;

			\draw[polyedge] (bB)  -- (bT);
			\draw[polyedge] (bL)  -- (bR);
			\draw[polyedge] (bTL) -- (bBR);

			\node[fixedpoint , above right=5pt]       at (v12)  {$( 0 , 0 )$};
			\node[fixedpoint , below=5pt]       at ($(v23)+(-0.35 , 0)$)  {$( m , 0 )$};
			\node[fixedpoint , left=5pt]       at ($(v13)+(0 , -0.35)$)  {$( 0 , m )$};
			\node[fixedpoint , left=5pt]       at (bL)  {$( -d , 0 )$};
			\node[fixedpoint , left=5pt]      at (bB)  {$( 0 , -d )$};
			\node[fixedpoint , right=5pt] at (bBR) {$( m + d , -d )$};
			\node[fixedpoint , right=5pt]      at (bR)  {$( m + d , 0 )$};
			\node[fixedpoint , right=5pt] at (bT)  {$( 0 , m + d )$};
			\node[fixedpoint , left=5pt] at (bTL) {$( -d , m + d )$};

			\draw[weight] (v12) -- ++(\alen , 0);
			\draw[weight] (v12) -- ++(0,\alen);
			\draw[weight] (v12) -- ++(-\alen , 0);
			\draw[weight] (v12) -- ++(0 , -\alen);

			\draw[weight] (v23) -- ++(\alen , 0);
			\draw[weight] (v23) -- ++(-\alen , 0);
			\draw[weight] (v23) -- ++(-\diag,\diag);
			\draw[weight] (v23) -- ++(\diag , -\diag);

			\draw[weight] (v13) -- ++(0,\alen);
			\draw[weight] (v13) -- ++(0 , -\alen);
			\draw[weight] (v13) -- ++(-\diag,\diag);
			\draw[weight] (v13) -- ++(\diag , -\diag);

			\draw[weight] (bL) -- ++(\alen , 0);
			\draw[weight] (bL) -- ++(\alenTwo , 0);
			\draw[weight] (bL) -- ++(0,\alen);
			\draw[weight] (bL) -- ++(\diag , -\diag);

			\draw[weight] (bB) -- ++(0,\alen);
			\draw[weight] (bB) -- ++(0,\alenTwo);
			\draw[weight] (bB) -- ++(\alen , 0);
			\draw[weight] (bB) -- ++(-\diag,\diag);

			\draw[weight] (bR) -- ++(-\alen , 0);
			\draw[weight] (bR) -- ++(-\alenTwo , 0);
			\draw[weight] (bR) -- ++(-\diag,\diag);
			\draw[weight] (bR) -- ++(0 , -\alen);

			\draw[weight] (bBR) -- ++(-\diag,\diag);
			\draw[weight] (bBR) -- ++(-\diagTwo,\diagTwo);
			\draw[weight] (bBR) -- ++(0,\alen);
			\draw[weight] (bBR) -- ++(-\alen , 0);

			\draw[weight] (bT) -- ++(0 , -\alen);
			\draw[weight] (bT) -- ++(0 , -\alenTwo);
			\draw[weight] (bT) -- ++(\diag , -\diag);
			\draw[weight] (bT) -- ++(-\alen , 0);

			\draw[weight] (bTL) -- ++(\diag , -\diag);
			\draw[weight] (bTL) -- ++(\diagTwo , -\diagTwo);
			\draw[weight] (bTL) -- ++(\alen , 0);
			\draw[weight] (bTL) -- ++(0 , -\alen);

			\coordinate (L) at (7.55 , 4.35);

			\node[font=\small , anchor=west] at (7.05 , 5.55) {weight directions};

			\draw[weight] (L) -- ++(0.70 , 0);
			\node[weightlabel , anchor=west] at ($(L)+(0.82 , 0)$) {$t_1$};

			\draw[weight] (L) -- ++(0 , 0.70);
			\node[weightlabel , anchor=west] at ($(L)+(0.16 , 0.62)$) {$t_2$};

			\coordinate (D1) at (7.55 , 3.05);
			\coordinate (D2) at (8.05 , 1.85);

			\draw[weight] (D1) -- ++(0.50 , -0.50);
			\node[weightlabel , anchor=west]
			at (8.45 , 2.80) {$t_1t_2^{-1}$};

			\draw[weight] (D2) -- ++(-0.50 , 0.50);
			\node[weightlabel , anchor=west]
			at (8.45 , 2.10) {$t_1^{-1}t_2$};

			\foreach \p in {v12,v23,v13,bL,bB,bBR,bR,bT,bTL}
			\fill[black] (\p) circle (1.35pt);
		\end{tikzpicture}

		\caption{
			Local tangent weights at the nine residual fixed points of the cut of $T^{\ast}\CC\PP^{2}$.
			The arrows indicate the geometric tangent-weight directions.
			At each boundary vertex, the cutting weight occurs with multiplicity two and is represented by two closely spaced arrowheads on the same ray.
		}
		\label{fig:isotropy-TCP2}
	\end{figure}

	To express the factorisation of the total character conveniently, we write each vertex contribution as a product of one $\mathsf{A}$-term and one $\mathsf{B}$-term.
	We first define
	\[
		\mathsf{A}_{12} := \frac{1}{( 1 - t_{1} )( 1 - t_{2} )}, \quad
		\mathsf{A}_{23} := \frac{t_{1}^{m + d}}{( 1 - t_{1}^{-1} )( 1 - t_{1}^{-1}t_{2} )}, \quad
		\mathsf{A}_{13} := \frac{t_{2}^{m + d}}{( 1 - t_{2}^{-1} )( 1 - t_{1}t_{2}^{-1} )},
	\]
	whose sum is the Brion expression for the lattice-point character of the triangle with vertices $(0 , 0)$, $(m+d , 0)$, and $(0 , m+d)$:
	\[
		P_{m + d}(t_{1} , t_{2})
		:= \mathsf{A}_{12} + \mathsf{A}_{23} + \mathsf{A}_{13} = \sum\nolimits_{\substack{r , q \geq 0 \\ r + q \leq m + d}}t_{1}^{r}t_{2}^{q}.
	\]
	For the triangle with vertices $(0 , 0)$, $(-d , 0)$, and $(0 , -d)$, we similarly define
	\[
		\mathsf{B}_{0}(t_{1} , t_{2}) := \frac{1}{( 1 - t_{1}^{-1} )( 1 - t_{2}^{-1} )}, \quad
		\mathsf{B}_{1}(t_{1} , t_{2}) := \frac{t_{1}^{-d}}{( 1 - t_{1} )( 1 - t_{1}t_{2}^{-1} )}, \quad
		\mathsf{B}_{2}(t_{1} , t_{2}) := \frac{t_{2}^{-d}}{( 1 - t_{2} )( 1 - t_{1}^{-1}t_{2} )},
	\]
	and set
	\begin{equation}
		\label{eqn:Q-rational-function}
		Q_{d}(t_{1} , t_{2})
		:= \mathsf{B}_{0}(t_{1} , t_{2}) + \mathsf{B}_{1}(t_{1} , t_{2}) + \mathsf{B}_{2}(t_{1} , t_{2}) = \sum\nolimits_{\substack{k , l \geq 0\\ k + l \leq d}}t_{1}^{-k}t_{2}^{-l}.
	\end{equation}
	With the tangent weights in Figure~\ref{fig:isotropy-TCP2}, Corollary~\ref{corollary:character-smooth-hypertoric-cut} gives the following factorisations of the nine vertex contributions:
	\begin{align*}
		\chi_{( 0 , 0 )} & = \mathsf{A}_{12}\mathsf{B}_{0}, & \chi_{( -d , 0 )}    & = \mathsf{A}_{12}\mathsf{B}_{1}, & \chi_{( 0 , -d )}     & = \mathsf{A}_{12}\mathsf{B}_{2}, \\
		\chi_{( m , 0 )} & = \mathsf{A}_{23}\mathsf{B}_{1}, & \chi_{( m + d , 0 )} & = \mathsf{A}_{23}\mathsf{B}_{0}, & \chi_{( m + d , -d )} & = \mathsf{A}_{23}\mathsf{B}_{2}, \\
		\chi_{( 0 , m )} & = \mathsf{A}_{13}\mathsf{B}_{2}, & \chi_{( 0 , m + d )} & = \mathsf{A}_{13}\mathsf{B}_{0}, & \chi_{( -d , m + d )} & = \mathsf{A}_{13}\mathsf{B}_{1},
	\end{align*}
	where the arguments $(t_{1},t_{2})$ have been suppressed.
	Summing the nine products gives the factorisation $\chi_{d}(t_{1} , t_{2}) = P_{m + d}(t_{1} , t_{2}) Q_{d}(t_{1} , t_{2})$.
	As in Example~\ref{example:TCP1-example-2}, this regrouping gives the localisation form of the base--fibre description of the cut polarisation; compare the projective-bundle computation in Remark~\ref{remark:projective-bundle-TCP2-computation}.
	Evaluating these finite sums at $(t_{1},t_{2})=(1 , 1)$ yields the dimension formula for the $T^{\ast}\CC\PP^{2}$ cut:
	\[
		\dim_{\CC} H^{0}( M_{\nu}^{\leq d} ; \mcL_{\nu}^{\leq d} ) = \binom{m + d + 2}{2} \binom{d + 2}{2}.
	\]
\end{example}


\begin{example}[$T^{\ast}(\CC\PP^{1} \times \CC\PP^{1})$, continued]
	\label{example:TCP1xCP1-example-2}
	We continue Example~\ref{example:TCP1xCP1-example-1} with $m_{1},m_{2},d \in \ZZ_{>0}$ and $\delta = d$, using the function $Q_{d}$ from \eqref{eqn:Q-rational-function} in the previous calculation.
	Here, the moment polyptych $\Delta_{\nu}^{\leq d}$ has octagonal support and twelve vertices rather than nine.
	The local tangent weights at these vertices are shown in Figure~\ref{fig:isotropy-TCP1xCP1}.

\begin{figure}[ht]
    \centering
    \begin{tikzpicture}[scale=0.88]
        \tikzset{
            polyboundary/.style={draw=gray!65 , line width=0.72pt},
            polyedge/.style={draw=gray!55 , line width=0.42pt},
            weight/.style={->,draw=black , line width=0.82pt , shorten <=1.2pt},
            fixedpoint/.style={font=\scriptsize},
            weightlabel/.style={font=\scriptsize},
        }

        \def\moneval{4.8}
        \def\mtwoval{3.35}
        \def\dval{1.35}

        \def\alen{0.60}
        \def\alenTwo{0.51}
        \def\diag{0.42}
        \def\diagTwo{0.36}

        \coordinate (v00) at (0 , 0);
        \coordinate (v10) at (\moneval , 0);
        \coordinate (v11) at (\moneval,\mtwoval);
        \coordinate (v01) at (0,\mtwoval);

        \coordinate (bL1) at (-\dval , 0);
        \coordinate (bB1) at (0 , -\dval);
        \coordinate (bB2) at (\moneval , -\dval);
        \coordinate (bR1) at ({\moneval+\dval},0);
        \coordinate (bR2) at ({\moneval+\dval},\mtwoval);
        \coordinate (bT2) at (\moneval,{\mtwoval+\dval});
        \coordinate (bT1) at (0,{\mtwoval+\dval});
        \coordinate (bL2) at (-\dval,\mtwoval);

        \fill[gray!10] (v00) -- (v10) -- (v11) -- (v01) -- cycle;
        \draw[polyboundary]
            (bL1) -- (bB1) -- (bB2) -- (bR1) -- (bR2)
            -- (bT2) -- (bT1) -- (bL2) -- cycle;
        \draw[polyedge] (bB1) -- (bT1);
        \draw[polyedge] (bB2) -- (bT2);
        \draw[polyedge] (bL1) -- (bR1);
        \draw[polyedge] (bL2) -- (bR2);

        \node[fixedpoint , anchor=south west] at ($(v00)+(0.18 , 0.18)$) {$(0 , 0)$};
        \node[fixedpoint , anchor=south east] at ($(v10)+(-0.18 , 0.18)$) {$(m_1 , 0)$};
        \node[fixedpoint , anchor=north east] at ($(v11)+(-0.18 , -0.18)$) {$(m_1 , m_2)$};
        \node[fixedpoint , anchor=north west] at ($(v01)+(0.18 , -0.18)$) {$(0 , m_2)$};

        \node[fixedpoint , left=6pt]  at (bL1) {$(-d , 0)$};
        \node[fixedpoint , left=6pt] at (bB1) {$(0 , -d)$};
        \node[fixedpoint , right=6pt] at (bB2) {$(m_1 , -d)$};
        \node[fixedpoint , right=6pt] at (bR1) {$(m_1+d , 0)$};
        \node[fixedpoint , right=6pt] at (bR2) {$(m_1+d , m_2)$};
        \node[fixedpoint , right=6pt] at (bT2) {$(m_1 , m_2+d)$};
        \node[fixedpoint , left=6pt] at (bT1) {$(0 , m_2+d)$};
        \node[fixedpoint , left=6pt]  at (bL2) {$(-d , m_2)$};

        \foreach \p in {v00,v10,v11,v01}{
            \draw[weight] (\p) -- ++(\alen , 0);
            \draw[weight] (\p) -- ++(-\alen , 0);
            \draw[weight] (\p) -- ++(0,\alen);
            \draw[weight] (\p) -- ++(0 , -\alen);
        }

        \draw[weight] (bL1) -- ++(\alen , 0);
        \draw[weight] (bL1) -- ++(\alenTwo , 0);
        \draw[weight] (bL1) -- ++(0 , \alen);
        \draw[weight] (bL1) -- ++(\diag , -\diag);

        \draw[weight] (bB1) -- ++(0 , \alen);
        \draw[weight] (bB1) -- ++(0 , \alenTwo);
        \draw[weight] (bB1) -- ++(\alen , 0);
        \draw[weight] (bB1) -- ++(-\diag , \diag);

        \draw[weight] (bB2) -- ++(0 , \alen);
        \draw[weight] (bB2) -- ++(0 , \alenTwo);
        \draw[weight] (bB2) -- ++(-\alen , 0);
        \draw[weight] (bB2) -- ++(\diag , \diag);

        \draw[weight] (bR1) -- ++(-\alen , 0);
        \draw[weight] (bR1) -- ++(-\alenTwo , 0);
        \draw[weight] (bR1) -- ++(0 , \alen);
        \draw[weight] (bR1) -- ++(-\diag , -\diag);

        \draw[weight] (bR2) -- ++(-\alen , 0);
        \draw[weight] (bR2) -- ++(-\alenTwo , 0);
        \draw[weight] (bR2) -- ++(0 , -\alen);
        \draw[weight] (bR2) -- ++(-\diag , \diag);

        \draw[weight] (bT2) -- ++(0 , -\alen);
        \draw[weight] (bT2) -- ++(0 , -\alenTwo);
        \draw[weight] (bT2) -- ++(-\alen , 0);
        \draw[weight] (bT2) -- ++(\diag , -\diag);

        \draw[weight] (bT1) -- ++(0 , -\alen);
        \draw[weight] (bT1) -- ++(0 , -\alenTwo);
        \draw[weight] (bT1) -- ++(\alen , 0);
        \draw[weight] (bT1) -- ++(-\diag , -\diag);

        \draw[weight] (bL2) -- ++(\alen , 0);
        \draw[weight] (bL2) -- ++(\alenTwo , 0);
        \draw[weight] (bL2) -- ++(0 , -\alen);
        \draw[weight] (bL2) -- ++(\diag , \diag);

        \node[font=\small , anchor=west] at (8.65 , 4.45) {weight directions};
        \coordinate (L) at (9.05 , 3.45);
        \draw[weight] (L) -- ++(0.68 , 0);
        \node[weightlabel , anchor=west] at ($(L)+(0.80 , 0)$) {$t_1$};
        \draw[weight] (L) -- ++(0 , 0.68);
        \node[weightlabel , anchor=west] at ($(L)+(0.16 , 0.60)$) {$t_2$};

        \coordinate (D1) at (9.07 , 2.67);
        \coordinate (D2) at (9.47 , 1.65);
        \coordinate (D3) at (9.07 , 0.80);
        \coordinate (D4) at (9.47 , -0.20);

        \draw[weight] (D1) -- ++(0.45 , 0.45);
        \node[weightlabel , anchor=west] at (9.83 , 3.00) {$t_1t_2$};
        \draw[weight] (D2) -- ++(-0.45 , 0.45);
        \node[weightlabel , anchor=west] at (9.83 , 1.98) {$t_1^{-1}t_2$};
        \draw[weight] (D3) -- ++(0.45 , -0.45);
        \node[weightlabel , anchor=west] at (9.83 , 0.47) {$t_1t_2^{-1}$};
        \draw[weight] (D4) -- ++(-0.45 , -0.45);
        \node[weightlabel , anchor=west] at (9.83 , -0.53) {$t_1^{-1}t_2^{-1}$};

        \foreach \p in {v00,v10,v11,v01,bL1,bB1,bB2,bR1,bR2,bT2,bT1,bL2}
            \fill[black] (\p) circle (1.35pt);
    \end{tikzpicture}

    \caption{
        Local tangent weights at the twelve residual fixed points of the cut of $T^{\ast}(\CC\PP^{1} \times \CC\PP^{1})$.
        The arrows indicate geometric tangent-weight directions.
        At each boundary vertex, the cutting weight occurs with multiplicity two, represented by two closely spaced arrowheads on the same ray.
    }
    \label{fig:isotropy-TCP1xCP1}
\end{figure}

	The vertex contributions can be grouped into four triples associated with the base vertices $(0 , 0)$, $(m_{1} , 0)$, $(0 , m_{2})$, and $(m_{1} , m_{2})$, whose rational factors are
	\[
		\begin{split}
			 & \mathsf{A}_{00} := \frac{1}{( 1 - t_{1} )( 1 - t_{2} )},
			\qquad
			\mathsf{A}_{10} := \frac{t_{1}^{m_{1}}}{( 1 - t_{1}^{-1} )( 1 - t_{2} )},    \\
			 & \mathsf{A}_{01} := \frac{t_{2}^{m_{2}}}{( 1 - t_{1} )( 1 - t_{2}^{-1} )},
			\qquad
			\mathsf{A}_{11} := \frac{t_{1}^{m_{1}}t_{2}^{m_{2}}}{( 1 - t_{1}^{-1} )( 1 - t_{2}^{-1} )}.
		\end{split}
	\]
	In the fibre sum $Q_{d}$ of \eqref{eqn:Q-rational-function}, a base vertex with first coordinate $m_{1}$ requires the substitution $t_{1} \mapsto t_{1}^{-1}$, while a base vertex with second coordinate $m_{2}$ requires $t_{2} \mapsto t_{2}^{-1}$.
	These substitutions follow the signs of the tangent weights on the base rectangle; summing the three contributions in each group gives
	\[
		\chi_{d}(t_{1} , t_{2}) = \mathsf{A}_{00}Q_{d}(t_{1} , t_{2}) + \mathsf{A}_{10}Q_{d}(t_{1}^{-1} , t_{2})
		+ \mathsf{A}_{01}Q_{d}(t_{1} , t_{2}^{-1}) + \mathsf{A}_{11}Q_{d}(t_{1}^{-1} , t_{2}^{-1}).
	\]
	Expanding each $Q_{d}$ using \eqref{eqn:Q-rational-function} and grouping the terms for each fixed pair $(k , l)$ yields
	\[
		\begin{split}
			\chi_{d}( t_{1} , t_{2} )
			 & = \sum\nolimits_{ \substack{k , l \geq 0 \\ k + l \leq d} }
			\left(
			\frac{ t_{1}^{-k} }{1 - t_{1}} + \frac{t_{1}^{m_{1} + k}}{1 - t_{1}^{-1} }
			\right)
			\left(
			\frac{ t_{2}^{-l} }{ 1 - t_{2} } + \frac{ t_{2}^{m_{2} + l} }{ 1 - t_{2}^{-1} }
			\right)                                                        \\
			 & = \sum\nolimits_{ \substack{k , l \geq 0 \\ k + l \leq d} }
			t_{1}^{-k}t_{2}^{-l}
			\left( \sum_{r = 0}^{m_{1} + 2k} t_{1}^{r} \right)
			\left( \sum_{q = 0}^{m_{2} + 2l} t_{2}^{q} \right),
		\end{split}
	\]
	where each rational factor on the first line is a translated rank-one Brion sum, evaluated using \eqref{eqn:rank-one-Brion-sign-check}.
	Evaluating at $( t_{1} , t_{2} ) = ( 1 , 1 )$ gives
	\[
		\begin{split}
			\dim_{\CC} H^{0}( M_{\nu}^{\leq d} ; \mcL_{\nu}^{\leq d} )
			 & = \sum_{k = 0}^{d} \sum_{l = 0}^{d - k}
			( m_{1} + 2k + 1 )( m_{2} + 2l + 1 )                                                                          \\
			 & = \frac{ ( d + 1 )( d + 2 ) ( d^{2} + 2d(m_{1} + m_{2}) + 3d + 3m_{1}m_{2} + 3m_{1} + 3m_{2} + 3 ) }{ 6 }.
		\end{split}
	\]
	In both rank-two examples, every arrangement vertex lies in $\Delta_{\emptyset}$, so $M_{\nu} = U_{\nu} \cong T^{\ast}X_{\nu}$ \cite[Theorems~7.1 and~7.2]{BielawskiDancer2000} and the cuts are smooth by Corollary~\ref{corollary:smooth-cut-space-characterisation}.
	These cuts are projective bundles over their distinguished zero sections and have trivial local groups; the calculations above assemble their characters from the corresponding base and fibre Brion sums.
	For $T^{\ast}\CC\PP^{2}$, this gives the product $P_{m + d}Q_{d}$, whereas for $T^{\ast}(\CC\PP^{1} \times \CC\PP^{1})$ it gives the displayed sum of products indexed by $k + l \leq d$.
	On the other hand, in Example~\ref{example:A2-resolution-example-2}, the additional interior and rightmost orbifold boundary vertex contributions give a correction to the character of $T^{\ast}\CC\PP^{1}$ in comparison.
\end{example}

%% file: sections/7-algebraic-cutting-combinatorial-quantisation.tex
\section{Algebraic Cutting and Combinatorial Quantisation}
\label{section:section-7-algebraic-cutting-combinatorial-quantisation}

At positive regular integral levels in the outer cut chamber, the cut quantisation admits three descriptions: the localisation index of Section~\ref{section:section-6-equivariant-quantisation-hypertoric-algebraic-cutting}, an algebraic description, and a combinatorial description.
Here, we consider the other two by realising the cut as a GIT quotient, identifying its section space with a finite filtration piece of $H^{0}( M_{\nu} ; \mcL_{\nu} )$, and then proving the higher-cohomology vanishing needed to identify that space with the index.
We then compute the residual-weight multiplicities from the coordinate ring, obtaining a weighted lattice-point formula for the same equivariant character computed by the localisation formula of Theorem~\ref{theorem:character-hypertoric-orbifold}.

\subsection{Algebraic Cutting}
\label{subsection:subsection-7.1-algebraic-cutting}

The localisation formula computes the equivariant index of each compact cut.
To relate these cuts to the quantisation of the original non-compact hypertoric variety $M_{\nu}$, we decompose $H^{0}( M_{\nu} ; \mcL_{\nu} )$ into its fibre-rotation weight spaces, which we call the \emph{fibre-rotation grading}.
The algebraic cutting method used here expresses the cut level $d \in \ZZ_{> 0}$ as a condition on these weights, producing a filtration whose pieces are identified with $H^{0}(M_{\nu}^{\leq d} ; \mcL_{\nu}^{\leq d})$ at the positive regular integral levels in the outer cut chamber.

In \cite{EdidinGraham1998}, Edidin and Graham formulate an algebraic counterpart to Lerman's symplectic cut \cite{Lerman1995}, and they were the first ones to call it algebraic cutting.
In their construction, one forms a GIT quotient of $M \times \mathbb{A}^{1}$ by an anti-diagonal one-dimensional torus action, with the cut level encoded by a choice of linearisation.
Here, we use the diagonal cutting convention used in Subsection~\ref{subsection:subsection-4.2-cut-spaces} (i.e., opposite to theirs), so that the sub-level region $\{ \Phi \leq d \}$ is retained.
Since $M_{\nu}$ is already realised as a GIT quotient and fibre rotation admits a holomorphic $\CC^{\ast}$-complexification, its algebraic cut can be formulated using graded section rings.
This description allows us to compare the cutting-invariant sections on $M_{\nu} \times \CC$ with the sections of the pre-quantum line orbibundle on the cut space.

Fix a generic integral parameter $\nu \in \mfk_{\ZZ}^{\ast}$, retaining the fixed integral lift $\lambda$ and the pre-quantum line orbibundle $\mcL_{\nu} \ra M_{\nu}$ from Section~\ref{section:section-2-hypertoric-varieties}.
We consider its section ring
\[
	R := \bigoplus\nolimits_{m \geq 0} H^{0}( M_{\nu} ; \mcL_{\nu}^{\otimes m} ),
\]
whose degree $m$ records the tensor power.
The hypertoric variety $M_{\nu}$ is semiprojective in the sense fixed in Section~\ref{section:section-2-hypertoric-varieties}, and $\mcL_{\nu}$ is relatively ample.
By Proposition~\ref{prop:positive-ray-semi-invariants}~---~proved in Subsection~\ref{subsection:subsection-7.3-combinatorial-interpretation} independently of what follows~---~$R$ is isomorphic as a graded $\CC$-algebra to the semi-invariant ring $R_{\nu}$ of \eqref{eqn:GIT-hypertoric-definition}.
It follows that $R$ is a finitely generated graded $\CC$-algebra and $M_{\nu} \cong \Proj R$ as a projective scheme over $\Spec R_{0}$.

The fibre-rotation action extends holomorphically to $\CC^{\ast}$ and lifts to $\mcL_{\nu}$ by the complexification of the $U_{1}$-lift fixed in Subsection~\ref{subsection:subsection-5.3-pre-quantum-cut-space}.
On the trivial line bundle $L_{\nu} = \mcN \times \CC \ra \mcN$, the lift acts by $\tau \cdot ((z , w) , a)=((z , \tau w) , a)$ for $\tau \in \CC^{\ast}$, leaving the fibre coordinate $a$ unchanged.
After descent, it acts on each $R_{m} = H^{0}(M_{\nu} ; \mcL_{\nu}^{\otimes m})$ by the pullback convention \eqref{eqn:pullback-action-on-sections}, and commutes with the residual $T^{n}$-action.

On the auxiliary affine line $\CC_{\zeta}$, the geometric action is $\tau \cdot \zeta = \tau\zeta$.
For an integer $d$, let $\varpi_{d} : \CC^{\ast} \ra \CC^{\ast}$ be the character $\varpi_{d}(\tau) = \tau^{d}$, and equip the trivial line bundle $\mcO_{\CC}(\varpi_{d}) \ra \CC_{\zeta}$ with geometric fibre weight $+d$, as in the auxiliary construction of Subsection~\ref{subsection:subsection-5.3-pre-quantum-cut-space}.
Write $y(\zeta) = \zeta$ for the coordinate function and $\one_{d}$ for the nowhere-vanishing constant section of $\mcO_{\CC}(\varpi_{d})$ in its standard trivialisation so that, under the pullback action \eqref{eqn:pullback-action-on-sections}, they have weights $+1$ and $-d$, respectively.
The section ring of this line bundle is
\[
	S := \bigoplus\nolimits_{m \geq 0} H^{0}( \CC ; \mcO_{\CC}(\varpi_{d})^{\otimes m} ),
\]
and the section $\one_{d}^{\otimes m}$ trivialises the $m$-th tensor power, so $S_{m} \cong \CC[y]\cdot\one_{d}^{\otimes m}$.
Multiplication is induced by tensor product, making $S$ a polynomial algebra over $\CC[y]$ generated by $\one_{d}$ in section-ring degree one; note that the coefficient $y$ has section-ring degree zero.
In particular, $\Proj S \cong \Spec S_{0} = \Spec \CC[y] \cong \CC$.

To form the section ring of the product line orbibundle $\mcL_{\nu} \boxtimes \mcO_{\CC}(\varpi_{d}) \ra M_{\nu} \times \CC$, we pair the same tensor power in each factor by forming the \textbf{Segre product}
\[
	R~\#_{\CC}~S :=\bigoplus\nolimits_{m \geq 0}R_{m} \otimes_{\CC} S_{m}.
\]
For its invariant-ring description, let an auxiliary $\CC^{\ast}$ act on $R \otimes_{\CC} S$ with weight $a - b$ on $R_{a} \otimes_{\CC} S_{b}$.
Clearly its invariants are the terms with $a = b$, so $R~\#_{\CC}~S = (R \otimes_{\CC} S)^{\CC^{\ast}}$ and it is graded by this common degree.
Moreover, since $R \otimes_{\CC} S$ is finitely generated and the auxiliary torus is reductive, the Segre product is finitely generated as well.
The construction is studied under the name Segre product in \cite[Chapter~4]{GotoWatanabe1978}; see also \cite[\S2.2]{BrionProcesi1990} and, as the \emph{Cartesian product}, \cite[Exercise~II.5.11]{Hartshorne1977}.
Note, however, that those references impose degree-zero hypotheses in the statements cited, whereas here $R_{0}$ need not be equal to $\CC$; only the definition above and invariant-ring description are used below.

For the section rings $R$ and $S$ above, the common degree-$m$ piece is
\[
	( R~\#_{\CC}~S )_{m} \cong H^{0}( M_{\nu} \times \CC ; \mcL_{\nu}^{\otimes m} \boxtimes \mcO_{\CC}(\varpi_{d})^{\otimes m} ).
\]
We now take the invariants for the cutting $\CC^{\ast}$-action:
\[
	( R~\#_{\CC}~S )^{\CC^{\ast}} = \bigoplus_{m \geq 0}
	( R~\#_{\CC}~S )_{m}^{\CC^{\ast}} = \bigoplus_{m \geq 0}
	H^{0}
	\left( M_{\nu} \times \CC ; \big( \mcL_{\nu} \boxtimes \mcO_{\CC}( \varpi_{d} ) \big)^{\otimes m} \right)^{\CC^{\ast}},
\]
which is the invariant section ring used to define the algebraic cut.

\begin{definition}[Algebraic Cut]
	\label{def:algebraic-cut}
	For an integer $d$, the \textbf{algebraic cut} of $M_{\nu}$ at level $d$ is the GIT quotient of $M_{\nu} \times \CC$ for the cutting $\CC^{\ast}$-action linearised on $\mcL_{\nu} \boxtimes \mcO_{\CC}(\varpi_{d})$,
	\begin{equation}
		\label{eqn:algebraic-cut}
		M_{\nu}^{\alg , \leq d} := \Proj \left( ( R~\#_{\CC}~S )^{\CC^{\ast}} \right).
	\end{equation}
\end{definition}
Since the cutting $\CC^{\ast}$ is reductive, the invariant ring $( R~\#_{\CC}~S )^{\CC^{\ast}}$ is finitely generated as well.
For a regular integral level $d>0$, the algebraic cut agrees with the symplectic cut of Subsection~\ref{subsection:subsection-4.2-cut-spaces}.
Indeed, the cutting $\CC^{\ast}$-action is the complexification of the $U_{1}$-action on $M_{\nu}\times\CC$, and the character $\varpi_{d}$ specifies the moment-map normalisation $\rho-d$ used in Subsection~\ref{subsection:subsection-5.3-pre-quantum-cut-space}.
The positive reals $\RR_{>0} \subset \CC^{\ast}$ act by the gradient flow of $\rho$, so $\rho$ is strictly increasing along every non-trivial $\RR_{>0}$-orbit.
Since $\rho$ is proper by Lemma~\ref{lemma:proper-moment-map} and the orbit maps are algebraic, the limit of $\tau \cdot (p , \zeta)$ as $\tau \to 0$ exists for every point and is a fixed point, at which $\rho$ is a critical value of $\Phi$; as $\tau \to \infty$, the limit exists if and only if $\zeta = 0$ and $p \in \mcC$ \cite[Corollary~3.6]{HaradaProudfoot2004}, and otherwise $\rho \to \infty$ along the orbit.
By Proposition~\ref{prop:positive-ray-semi-invariants}, $( R~\#_{\CC}~S )^{\CC^{\ast}}$ is the ring of semi-invariants on $\mcN \times \CC$ for the torus $K_{\CC} \times \CC^{\ast}$ and the characters $m(\nu , d)$, so $M_{\nu}^{\alg , \leq d}$ is the corresponding GIT quotient of the affine variety $\mcN \times \CC$.
By the form of the Kempf--Ness theorem for characters due to King \cite[Proposition~3.21 and Corollary~3.22]{Nakajima1999}, applied to the closed invariant subvariety $\mcN \times \CC$ of $T^{\ast}\CC^{N} \times \CC$ as in \cite[Theorem~3.24]{Nakajima1999}, a point of $\mcN \times \CC$ is semistable if and only if the closure of its $K_{\CC} \times \CC^{\ast}$-orbit meets $( \phi_{\RR}^{-1}(\nu) \times \CC ) \cap \rho^{-1}(d)$; reducing first by $K$ through \eqref{eqn:hypertoric-kempf-ness}, this says that the closure of the $\CC^{\ast}$-orbit of $(p , \zeta)$ in $M_{\nu} \times \CC$ meets $\rho^{-1}(d)$.
This closure is the orbit together with its limits, at which $\rho \neq d$, so $(p , \zeta)$ is semistable if and only if its orbit meets $\rho^{-1}(d)$, in which case it meets $\rho^{-1}(d)$ in a single $U_{1}$-orbit.
The limits of a semistable orbit are fixed points with $\rho \neq d$, hence unstable, so every semistable orbit is closed in the semistable locus, the GIT quotient is geometric, and the inclusion of $\rho^{-1}(d)$ into the semistable locus induces
\[
	M_{\nu}^{\alg , \leq d} \cong \rho^{-1}(d)/U_{1} = M_{\nu}^{\leq d},
\]
with the complex structure on $\rho^{-1}(d)/U_{1}$ induced from this quotient, as in \cite[Theorems~3.2 and~3.4]{Hausel1998}.
Under this identification, the descended linearised orbibundle is the cut pre-quantum line orbibundle $\mcL_{\nu}^{\leq d}$ constructed in Subsection~\ref{subsection:subsection-5.3-pre-quantum-cut-space}.
We henceforth write $M_{\nu}^{\leq d}$ for the common algebraic and symplectic cut at a positive regular integral level $d$.
At an integral critical level, \eqref{eqn:algebraic-cut} still defines the corresponding algebraic GIT quotient, although it need not be a compact K\"ahler orbifold.

\begin{remark}
	The construction above applies more generally to a polarised semiprojective variety equipped with an algebraic $\CC^{\ast}$-action and a compatible linearisation, provided that the associated section ring is finitely generated.
	Our section-space and combinatorial results that follow, however, use the additional structure of hypertoric varieties.
	In particular, we show below that the unstable locus at an outer-chamber cut level is $\mcC \times \{0\}$, whose codimension provides the condition needed for the extension isomorphism used in Theorem~\ref{theorem:algebraic-cut-weight-space}.
	The hypertoric coordinate ring then allows us to compute the residual $T^{n}$-weight multiplicities explicitly in Theorem~\ref{theorem:combinatorial-multiplicity}.
\end{remark}

The cutting linearisation makes the bound on fibre-rotation weights explicit.
For $s_{i} \in H^{0}(M_{\nu} ; \mcL_{\nu})_{i}$ and $j \in \ZZ_{\geq 0}$, the product $s_{i}y^{j} \otimes \one_{d}$ is a section of $\mcL_{\nu} \boxtimes \mcO_{\CC}(\varpi_{d})$ on $M_{\nu} \times \CC$.
Since $y$ and $\one_{d}$ have pullback weights $+1$ and $-d$, its cutting weight is $i + j - d$, and hence $s_{i} y^{j} \otimes \one_{d}$ has cutting weight zero if and only if $i + j - d = 0$.
A non-negative exponent $j$ exists only when $i \leq d$, in which case $j = d - i$.
For every integer $d \geq 0$, we therefore define the \textbf{cut-filtration piece}
\[
	F_{d} H^{0}( M_{\nu} ; \mcL_{\nu} ) := \bigoplus\nolimits_{i \leq d} H^{0}( M_{\nu} ; \mcL_{\nu} )_{i}.
\]
These spaces form an increasing filtration that is exhaustive because the $\CC^{\ast}$-action is rational, so every section is a finite sum of weight vectors.
Theorem~\ref{theorem:combinatorial-multiplicity} will show that the grading is non-negative and that each $F_{d}$ is finite-dimensional.
For $d \geq 1$, the successive quotients recover the individual fibre-rotation weight spaces:
\[
	\frac{ F_{d}H^{0}( M_{\nu} ; \mcL_{\nu} ) }{ F_{d - 1}H^{0}( M_{\nu} ; \mcL_{\nu} ) } \cong H^{0}( M_{\nu} ; \mcL_{\nu} )_{d}.
\]

To compare global invariant sections with sections on the cut, we first show that sections extend uniquely from the semistable locus to $M_{\nu} \times \CC$ in the outer cut chamber.
Fix $d \in \ZZ_{>0} \cap \Cout$, where $\Cout = ( c_{\max} , \infty )$ is defined in \eqref{eqn:outer-cut-chamber}, and write
\[
	U_{d} := ( M_{\nu} \times \CC )^{\sstable}(\varpi_{d}),
	\qquad
	\mcG_{d} := \mcL_{\nu} \boxtimes \mcO_{\CC}(\varpi_{d}) \lra M_{\nu} \times \CC.
\]
Here, $U_{d}$ is the semistable locus for the cutting $\CC^{\ast}$-action; the variation of this locus with the cut level is discussed further in Subsection~\ref{subsection:subsection-7.2-higher-cohomology-vanishing}.
As shown after Definition~\ref{def:algebraic-cut}, a point is semistable if and only if its $\CC^{\ast}$-orbit meets $\rho^{-1}(d)$.
For a cut above the highest critical value, Hausel identifies the unstable locus with the downward Morse flow of the circle moment map, contained in the slice $\{ \zeta = 0 \} \subseteq M_{\nu} \times \CC$; see \cite[Remark following Theorem~3.4]{Hausel1998}.
Harada and Proudfoot identify this downward flow with the hypertoric core $\mcC \subset M_{\nu}$ \cite[Corollary~3.6]{HaradaProudfoot2004}, so the unstable locus is
\[
	( M_{\nu} \times \CC )^{\ustable}(\varpi_{d}) = ( M_{\nu} \times \CC ) \setminus U_{d} = \mcC \times \{0\}.
\]
Since $\dim_{\CC}M_{\nu} = 2n$ and $\dim_{\CC} \mcC = n$, the unstable locus has complex codimension $(2n + 1) - n = n + 1 \geq 2$.

To show that sections of $\mcG_{d}$ over $U_{d}$ extend uniquely across the unstable locus $( M_{\nu} \times \CC )^{\ustable}(\varpi_{d})$ to $M_{\nu} \times \CC$, we work with the sheaf of locally invariant sections on the coarse space.
By Subsection~\ref{subsection:subsection-2.2-algebraic-hypertoric}, $M_{\nu}^{\coarse}$ is normal; since $\CC$ is smooth and carries no orbifold structure, the product $( M_{\nu} \times \CC )^{\coarse} \cong M_{\nu}^{\coarse} \times \CC$ is normal as well.
Let $q : M_{\nu} \times \CC \lra M_{\nu}^{\coarse} \times \CC$ be the coarse-space map, set $U_{d}^{\coarse} := q(U_{d})$, and let $\mcG_{d}^{\coarse}$ be the sheaf of locally invariant sections of $\mcG_{d}$ on $M_{\nu}^{\coarse} \times \CC$.
On a local uniformising chart with a finite structure group $\Gamma$, the restriction of $\mcG_{d}$ is a $\Gamma$-equivariant locally free rank-one sheaf, and $\mcG_{d}^{\coarse}$ is its sheaf of $\Gamma$-invariant sections.
Since finite-group invariants of a locally free sheaf on a normal chart are reflexive over the normal invariant ring \cite[Proposition~4]{BroerChuai2010}, $\mcG_{d}^{\coarse}$ is a rank-one reflexive sheaf on $M_{\nu}^{\coarse} \times \CC$.

The coarse-space map is locally a finite quotient, so it preserves the codimension of the unstable locus.
Therefore,
\[
	\codim_{ M_{\nu}^{\coarse} \times \CC }	\left( (M_{\nu}^{\coarse} \times \CC) \setminus U_{d}^{\coarse} \right) = n + 1 \geq 2.
\]
Writing $\iota: U_{d}^{\coarse} \hookrightarrow M_{\nu}^{\coarse} \times \CC$ for the open inclusion, the extension of reflexive sheaves across a closed subset of codimension at least two in a normal variety gives $\mcG_{d}^{\coarse} \cong \iota_{\ast} \big( \mcG_{d}^{\coarse}|_{U_{d}^{\coarse}} \big)$ by \cite[Proposition~1.6]{Hartshorne1980}.
Thus, restriction of sections gives the following $\CC^{\ast}\times T^{n}$-equivariant isomorphism in orbifold notation:
\[
	H^{0}( M_{\nu} \times \CC ; \mcG_{d} ) \overset{\sim}{\longrightarrow} H^{0}( U_{d} ; \mcG_{d}|_{U_{d}} ),
\]
and then taking the cutting $\CC^{\ast}$-invariants yields
\begin{equation}
	\label{eqn:extension-isomorphism}
	H^{0}(M_{\nu} \times \CC ; \mcG_{d})^{\CC^{\ast}} \overset{\sim}{\longrightarrow} H^{0}(U_{d} ; \mcG_{d}|_{U_{d}})^{\CC^{\ast}}.
\end{equation}

The following theorem identifies the cut-filtration pieces with invariant sections and, at positive regular integral levels in the outer cut chamber, with sections of the cut pre-quantum line orbibundles.
For consecutive such levels, it recovers the individual fibre-rotation weight space $H^{0}(M_{\nu} ; \mcL_{\nu})_{d}$ as a quotient of the section spaces at levels $d$ and $d-1$.

\begin{theorem}
	\label{theorem:algebraic-cut-weight-space}
	Let $M_{\nu}$ be the non-empty hypertoric orbifold constructed in Section~\ref{section:section-2-hypertoric-varieties} at a generic integral parameter $\nu$, with the fixed integral lift $\lambda$.
	Assume that the distinguished base region $\Delta_{\emptyset}$ is non-empty and bounded as in Subsection~\ref{subsection:subsection-3.1-hyperplane-arrangements}.
	Equip $M_{\nu}$ with the fibre-rotation $\CC^{\ast}$-action and $\mcL_{\nu}$ with the compatible linearisation fixed above.
	Then, for every integer $d \geq 0$, there is a natural $T^{n}$-equivariant isomorphism
	\begin{equation}
		\label{eqn:filtration-invariant-sections}
		F_{d}H^{0}(M_{\nu} ; \mcL_{\nu}) \overset{\sim}{\longrightarrow} H^{0} \left( M_{\nu} \times \CC ; \mcL_{\nu} \boxtimes \mcO_{\CC}(\varpi_{d}) \right)^{\CC^{\ast}}.
	\end{equation}
	If, in addition, $d > 0$ is a regular integral level in the outer cut chamber, then restriction to the semistable locus and descent induce a natural $T^{n}$-equivariant isomorphism
	\begin{equation}
		\label{eqn:filtration-cut-sections}
		\Psi_{d} : F_{d}H^{0}(M_{\nu} ; \mcL_{\nu}) \overset{\sim}{\longrightarrow} H^{0}(M_{\nu}^{\leq d} ; \mcL_{\nu}^{\leq d}).
	\end{equation}

	Moreover, if $d$ and $d-1$ are both positive regular integral levels in the outer cut chamber, define $j_{d} := \Psi_{d} \circ \big( F_{d - 1} \hookrightarrow F_{d} \big) \circ \Psi_{d - 1}^{-1}$.
	Then
	\[
		j_{d}: H^{0}(M_{\nu}^{\leq d - 1} ; \mcL_{\nu}^{\leq d - 1}) \hookrightarrow H^{0}(M_{\nu}^{\leq d} ; \mcL_{\nu}^{\leq d})
	\]
	is a natural $T^{n}$-equivariant injection.
	After identifying the two cuts with their common complex orbifold in the outer cut chamber, Lemma~\ref{lemma:variation-polarisation-lemma} below identifies $j_{d}$ with multiplication by the canonical section of the cut divisor.
	Whence,
	\begin{equation}
		\label{eqn:algebraic-cut-quantisation}
		H^{0}(M_{\nu} ; \mcL_{\nu})_{d} \cong \frac{ H^{0}( M_{\nu}^{\leq d} ; \mcL_{\nu}^{\leq d} ) }{ j_{d} H^{0}( M_{\nu}^{\leq d - 1} ; \mcL_{\nu}^{\leq d - 1} ) }
	\end{equation}
	as residual $T^{n}$-representations.
\end{theorem}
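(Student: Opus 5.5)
The proof has three parts: the invariant-section computation for \eqref{eqn:filtration-invariant-sections}, descent to the cut for \eqref{eqn:filtration-cut-sections}, and the comparison of consecutive levels for \eqref{eqn:algebraic-cut-quantisation}.

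\textbf{First isomorphism.} I would start from the degree-one piece $(R\,\#_{\CC}\,S)_{1} = R_{1}\otimes_{\CC} S_{1}$. Because $S_{1} = \CC[y]\cdot\one_{d}$, every section of $\mcG_{d}$ can be written uniquely as a finite sum $\sum_{i,j} s_{i,j}\, y^{j}\otimes\one_{d}$, where each $s_{i,j}$ lies in the weight space $H^{0}(M_{\nu};\mcL_{\nu})_{i}$; this uses that the fibre-rotation action on $R_{1}$ is rational, so $R_{1}$ decomposes into weight spaces. The weight computation made before the theorem shows that $s_{i}y^{j}\otimes\one_{d}$ has cutting weight $i+j-d$. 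Hence the $\CC^{\ast}$-invariant part is spanned by the terms with $j = d-i \geq 0$, and $s\mapsto s\,y^{d-i}\otimes\one_{d}$ on $H^{0}(M_{\nu};\mcL_{\nu})_{i}$, $i\le d$, is the required map. It is injective and surjective because the decomposition is unique, and it is $T^{n}$-equivariant because $y$ and $\one_{d}$ carry trivial $T^{n}$-weight and $T^{n}$ commutes with fibre rotation. This gives \eqref{eqn:filtration-invariant-sections} for every $d\ge0$.

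\textbf{Second isomorphism.} Now take $d\in\ZZ_{>0}\cap\Cout$ regular. Compose with the extension isomorphism \eqref{eqn:extension-isomorphism}, which identifies the invariant sections above with $H^{0}(U_{d};\mcG_{d}|_{U_{d}})^{\CC^{\ast}}$. The identification after Definition~\ref{def:algebraic-cut} shows that $U_{d}\to M_{\nu}^{\leq d}$ is a geometric quotient with finite stabilisers, and that $\mcG_{d}$ descends to $\mcL_{\nu}^{\leq d}$. Descent for line orbibundles along a geometric quotient then identifies invariant sections upstairs with sections of the descended orbibundle; I would verify this locally on slice charts of the form $\CC^{\ast}\times_{\Gamma}V$. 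This produces $\Psi_{d}$, and it is $T^{n}$-equivariant because every step commutes with the residual torus. I expect this descent step to be the main technical point in the orbifold setting. The part that genuinely depends on the outer chamber, namely the codimension estimate, has already been settled in \eqref{eqn:extension-isomorphism}.

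\textbf{Consecutive levels.} Suppose $d-1$ and $d$ both lie in $\Cout$. The filtration inclusion $F_{d-1}\hookrightarrow F_{d}$ is injective, so $j_{d}$ is an injective $T^{n}$-equivariant map. Through \eqref{eqn:filtration-invariant-sections}, this inclusion is multiplication by $y$: it sends $s\,y^{d-1-i}\otimes\one_{d-1}$ to $s\,y^{d-i}\otimes\one_{d}$. The function $y$ cuts out the divisor $\{\zeta=0\}$, which descends to the cut divisor $Z_{\nu}^{d}$, and this is the content of Lemma~\ref{lemma:variation-polarisation-lemma}. Finally, the quotient $F_{d}/F_{d-1}$ is $H^{0}(M_{\nu};\mcL_{\nu})_{d}$ by definition of the filtration. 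Transporting this through $\Psi_{d}$ and $\Psi_{d-1}$ gives \eqref{eqn:algebraic-cut-quantisation} as residual $T^{n}$-representations.
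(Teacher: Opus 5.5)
Your proposal is correct and follows essentially the same route as the paper: Künneth and weight bookkeeping (via $(R\,\#_{\CC}\,S)_{1}$) for \eqref{eqn:filtration-invariant-sections}, the extension isomorphism \eqref{eqn:extension-isomorphism} followed by descent for $\Psi_{d}$, and identifying the inclusion $F_{d-1}\hookrightarrow F_{d}$ with the canonical section of the cut divisor. The one point the paper states more carefully is that the multiplier is the section $y\otimes\one_{1}$ of $\mcO_{\CC}(\varpi_{1})$, via $\mcO_{\CC}(\varpi_{d-1})\otimes\mcO_{\CC}(\varpi_{1})\cong\mcO_{\CC}(\varpi_{d})$, rather than $y$ alone: this section has cutting weight $1-1=0$, and that is what lets it descend on the common semistable locus $U_{d-1}=U_{d}$ to $s_{\zeta}$.
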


\begin{proof}
	We first prove \eqref{eqn:filtration-invariant-sections} by describing the invariant sections on $M_{\nu} \times \CC$.
	Recall that $\one_{d}$ is the constant section of $\mcO_{\CC}(\varpi_{d})$, with pullback weight $-d$.
	Since the auxiliary factor $\CC$ is affine, the K\"unneth formula \cite[Proposition~9.2.4]{Kempf1993} gives
	\[
		H^{0}( M_{\nu} \times \CC ; \mcL_{\nu} \boxtimes \mcO_{\CC}(\varpi_{d}) ) \cong
		H^{0}( M_{\nu} ; \mcL_{\nu} ) \otimes_{\CC} ( \CC[y] \cdot \one_{d} ).
	\]
	Under the pullback convention \eqref{eqn:pullback-action-on-sections}, the auxiliary section $y$ has weight $+1$.
	Hence, if $s_{i} \in H^{0}( M_{\nu} ; \mcL_{\nu} )_{i}$, then the product section $s_{i}y^{j} \otimes \one_{d}$ has cutting weight $i + j - d$.
	It is therefore invariant if and only if $j = d - i \geq 0$, and every invariant section is uniquely a finite sum of terms of this form, so
	\[
		H^{0}( M_{\nu} \times \CC ; \mcL_{\nu} \boxtimes \mcO_{\CC}(\varpi_{d}) )^{\CC^{\ast}} \cong
		\bigoplus\nolimits_{i \leq d} H^{0}( M_{\nu} ; \mcL_{\nu} )_{i} y^{d - i} \otimes \one_{d}.
	\]
	Thus, the map defined on weight vectors by $s_{i} \mapsto s_{i}y^{d - i} \otimes \one_{d}$ gives the isomorphism \eqref{eqn:filtration-invariant-sections}.

	For \eqref{eqn:filtration-cut-sections}, suppose that $d>0$ is a regular integral level in the outer cut chamber.
	By \eqref{eqn:extension-isomorphism}, restricting from $M_{\nu} \times \CC$ to the semistable locus $U_{d}$ does not change the invariant section space.
	The equivariant line orbibundle $\mcG_{d}|_{U_{d}} = ( \mcL_{\nu} \boxtimes \mcO_{\CC}(\varpi_{d}) )|_{U_{d}}$ descends to $\mcL_{\nu}^{\leq d} \ra M_{\nu}^{\leq d}$
	from \eqref{eqn:cut-prequantum-orbibundle}, so its invariant sections can be identified with $H^{0}( M_{\nu}^{\leq d} ; \mcL_{\nu}^{\leq d} )$.
	Composing this restriction and descent with \eqref{eqn:filtration-invariant-sections} gives the isomorphism $\Psi_{d}$ in \eqref{eqn:filtration-cut-sections}.

	It remains to identify the inclusion between consecutive filtration pieces geometrically.
	Suppose that $d - 1$ and $d$ are both positive regular integral levels in the outer cut chamber.
	Under the $T^{n}$-equivariant isomorphism \eqref{eqn:filtration-invariant-sections} at these two levels, a weight vector $s_{i}$, with $i \leq d - 1$, is represented by $s_{i}y^{d - 1 - i} \otimes \one_{d - 1}$ and $s_{i}y^{d - i} \otimes \one_{d}$, respectively.
	Under the natural identification $\mcO_{\CC}(\varpi_{d - 1}) \otimes \mcO_{\CC}(\varpi_{1}) \cong \mcO_{\CC}(\varpi_{d})$, the two representatives satisfy
	\[
		s_{i}y^{d - i} \otimes \one_{d} = ( s_{i}y^{d - 1 - i} \otimes \one_{d - 1} ) \otimes ( y \otimes \one_{1} ),
	\]
	and multiplication by $y \otimes \one_{1}$ represents the inclusion $F_{d - 1} \hookrightarrow F_{d}$ before descent.
	The section $y \otimes \one_{1}$ has cutting weight $1 - 1 = 0$ and hence descends on the common semistable locus.
	Since $y(\zeta) = \zeta$, the proof of Lemma~\ref{lemma:variation-polarisation-lemma} identifies this descent with the canonical section $s_{\zeta}$ of the cut divisor.
	Under the isomorphisms $\Psi_{d - 1}$ and $\Psi_{d}$, the inclusion therefore becomes multiplication by $s_{\zeta}$, which is the map $j_{d}$.

	Finally, $F_{d}H^{0}( M_{\nu} ; \mcL_{\nu} ) / F_{d - 1}H^{0}( M_{\nu} ; \mcL_{\nu} ) \cong H^{0}( M_{\nu} ; \mcL_{\nu} )_{d}$.
	Applying the isomorphisms $\Psi_{d-1}$ and $\Psi_{d}$ from \eqref{eqn:filtration-cut-sections} to the denominator and numerator gives \eqref{eqn:algebraic-cut-quantisation}.
	Since each construction commutes with the residual $T^{n}$-action, the identifications are $T^{n}$-equivariant.
\end{proof}

\begin{remark}
	The isomorphism \eqref{eqn:filtration-invariant-sections} between $F_{d}H^{0}(M_{\nu} ; \mcL_{\nu})$ and the invariant section space is valid for every integer $d \geq 0$.
	The outer-chamber hypothesis is used only in the extension step \eqref{eqn:extension-isomorphism}, which allows us to identify those invariant sections with sections on the geometric cut.
	We make no corresponding claim about extensions for arbitrary inner chambers; in rank one, the unstable complement may have codimension one.
\end{remark}

\begin{remark}
	The truncation of circle-weight multiplicities under symplectic cutting was established for compact pre-quantised Hamiltonian circle actions by Duistermaat, Guillemin, Meinrenken, and Wu \cite[Theorem~1]{DGMW1995}; see also Meinrenken \cite[Corollary~4.6]{Meinrenken1998}.
	In our current non-compact hypertoric setting, Theorem~\ref{theorem:algebraic-cut-weight-space} establishes a corresponding result at the level of section spaces: for positive regular integral levels in the outer cut chamber, restriction and descent identify $H^{0}( M_{\nu}^{\leq d} ; \mcL_{\nu}^{\leq d} )$ with the finite filtration piece $F_{d}H^{0}( M_{\nu} ; \mcL_{\nu} )$, as residual $T^{n}$-representations.
	Thus, rather than only determining the multiplicities in an equivariant index, the construction gives an explicit identification of the cut section space with the kept fibre-rotation weight spaces.
\end{remark}


\subsection{Cut Chambers and Higher-Cohomology Vanishing}
\label{subsection:subsection-7.2-higher-cohomology-vanishing}

Theorem~\ref{thm:equivariant-KRR-index-theorem} computes the equivariant holomorphic Euler characteristic $\chi_{T^{n}}(M_{\nu}^{\leq d} ; \mcL_{\nu}^{\leq d})$ defined in \eqref{eqn:holomorphic-Euler-characteristic}.
To identify this class in $R(T^{n})$ with $[H^{0}(M_{\nu}^{\leq d} ; \mcL_{\nu}^{\leq d})]$, it suffices to prove that $H^{q}(M_{\nu}^{\leq d} ; \mcL_{\nu}^{\leq d})=0$ for every $q>0$.
In order to accomplish this, a sufficient condition is obtained by comparing the cut polarisations at levels $d$ and $d + n + 1$ in the same cut chamber.

Throughout this subsection, let $d>0$ be a regular integral cut level, and recall that $\dim_{\CC}M_{\nu} = 2n$.
For convenience, write
\[
	Y_{d}:=M_{\nu}^{\leq d}, \qquad
	D_{d}:=Z_{\nu}^{d}, \qquad
	\mcL_{d}:=\mcL_{\nu}^{\leq d}, \qquad
	\w_{d}:=\w_{\RR}^{\leq d}.
\]
Thus, $Y_{d}$ is a compact K\"ahler orbifold of complex dimension $2n$, with K\"ahler form $\w_{d}$, cut divisor $D_{d}\subset Y_{d}$, and pre-quantum line orbibundle $\mcL_{d}\ra Y_{d}$.

To compare the cut polarisations at different levels, we work within a fixed cut chamber.
Recall from Subsection~\ref{subsection:subsection-4.3-moment-polyptychs} that these chambers are the connected components of $( 0 , \infty ) \setminus \CritVal(\Phi)$.
By Proposition~\ref{prop:critical-values-slack}, the walls are the finitely many values $\mcS(v)$ at the arrangement vertices, and the outer cut chamber is $\Cout = ( c_{\max} , \infty )$, as in \eqref{eqn:outer-cut-chamber}.
Recall also that $U_{d}:=(M_{\nu}\times\CC)^{\sstable}(\varpi_{d})$ is the semistable locus for the cutting linearisation.
For $u\in M_{\nu}\times\CC$, the criterion used in Subsection~\ref{subsection:subsection-7.1-algebraic-cutting} states that $u \in U_{d}$ if and only if $( \CC^{\ast} \cdot u ) \cap \rho^{-1}(d) \neq \varnothing$.
This condition remains unchanged as the level varies within a cut chamber; compare \cite[Theorems~3.2--3.3]{Hausel1998} and \cite[\S\S1.2--1.3]{BrionProcesi1990}, and see \cite{Thaddeus1996, DolgachevHu1998} for variation of geometric invariant theory more generally.

For positive regular integral levels $d_{0}$ and $d_{1}$ in the same chamber, we therefore have $U_{d_{0}} = U_{d_{1}}$ and an identification $Y_{d_{0}}\cong Y_{d_{1}}$ of complex orbifolds, although the descended line orbibundles $\mcL_{d_{0}}$ and $\mcL_{d_{1}}$ can differ.
On this common quotient, Lemma~\ref{lemma:variation-polarisation-lemma} below describes the change in polarisation by tensoring with a power of $\mcO_{Y_{d}}(D_{d})$, while Lemma~\ref{lemma:canonical-bundle-formula} that follows expresses the orbifold canonical bundle in terms of the same divisor.
Together, these identities provide the line-orbibundle comparison needed for the vanishing argument.

\begin{lemma}
	\label{lemma:variation-polarisation-lemma}
	Let $d > 0$ and $k \geq 0$ be integers such that $d$ and $d + k$ are regular cut levels lying in the same cut chamber.
	Then, under the identification $Y_{d} \cong Y_{d+k}$, the cut line orbibundles satisfy $\mcL_{d+k} \cong \mcL_{d}\otimes\mcO_{Y_{d}}(kD_{d})$.
\end{lemma}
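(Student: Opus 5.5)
We work on the common semistable locus $U := U_{d} = U_{d+k}$ and compare the two linearised line orbibundles $\mcG_{d} = \mcL_{\nu}\boxtimes\mcO_{\CC}(\varpi_{d})$ and $\mcG_{d+k} = \mcL_{\nu}\boxtimes\mcO_{\CC}(\varpi_{d+k})$ before descent. Since $\mcO_{\CC}(\varpi_{d+k}) \cong \mcO_{\CC}(\varpi_{d})\otimes\mcO_{\CC}(\varpi_{1})^{\otimes k}$ as $\CC^{\ast}\times T^{n}$-equivariant line bundles on $\CC_{\zeta}$, we have $\mcG_{d+k} \cong \mcG_{d}\otimes \mathrm{pr}_{2}^{\ast}\mcO_{\CC}(\varpi_{1})^{\otimes k}$ equivariantly on $M_{\nu}\times\CC$. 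Descent of equivariant line orbibundles through the geometric quotient $U\to U/\CC^{\ast}$ is a tensor functor. It therefore suffices to identify the descent of $\mathrm{pr}_{2}^{\ast}\mcO_{\CC}(\varpi_{1})|_{U}$ with $\mcO_{Y_{d}}(D_{d})$.

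For that identification, consider the section $s := y\otimes\one_{1}$ of $\mathrm{pr}_{2}^{\ast}\mcO_{\CC}(\varpi_{1})$. By the weight count in Subsection~\ref{subsection:subsection-7.1-algebraic-cutting}, $y$ has pullback weight $+1$ and $\one_{1}$ has weight $-1$, so $s$ is cutting-invariant and $T^{n}$-invariant. It therefore descends to a $T^{n}$-invariant holomorphic section $s_{\zeta}$ of the descended orbibundle $\mcD$ on $Y_{d}$.

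Its zero locus upstairs is $U\cap\{\zeta=0\}$, a reduced divisor. On a local uniformising chart at a point of $\{\zeta = 0\}\cap\rho^{-1}(d)$, use the slice for the locally free $U_{1}$-action. The coordinate $\zeta$ restricts to a transverse coordinate on the chart, since $d\zeta\neq0$ and the orbit direction is complementary, as in the Kähler-reduction description used in Lemma~\ref{lemma:one-dimensional-reducible-core-orbifold}. So $s_{\zeta}$ vanishes to order exactly one along the image $D_{d} = \{\zeta=0\}/\CC^{\ast}\cap U \cong \Phi^{-1}(d)/U_{1}$ and nowhere else.

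A line orbibundle carrying a section whose divisor, computed chartwise with orbifold multiplicities, is $D_{d}$ is isomorphic to $\mcO_{Y_{d}}(D_{d})$. Hence $\mcD\cong\mcO_{Y_{d}}(D_{d})$, and tensoring gives $\mcL_{d+k}\cong\mcL_{d}\otimes\mcO_{Y_{d}}(kD_{d})$. This also yields the canonical section $s_{\zeta}$ used in Theorem~\ref{theorem:algebraic-cut-weight-space}.

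Two compatibility points must be checked. First, the identification $Y_{d}\cong Y_{d+k}$ is the one induced by $U_{d}=U_{d+k}$ (both equal $U/\CC^{\ast}$), and under it $D_{d}$ corresponds to $D_{d+k}$, both being the image of $\{\zeta=0\}\cap U$. Second, the descended bundles $\mcL_{d}$ and $\mcL_{d+k}$ are the GIT descents of $\mcG_{d}$ and $\mcG_{d+k}$, as recorded after Definition~\ref{def:algebraic-cut}.

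The main obstacle is the order-one vanishing in orbifold charts. At points with nontrivial stabiliser $\Gamma_{p}$, one must confirm that $\zeta$ is a genuine chart coordinate on which $\Gamma_{p}$ acts by a character, not a coordinate on the coarse space, where it might be a power. I would handle this using the explicit slice $\rho^{-1}(d)\cap\{$transversal$\}$ with $\Gamma_{p}\subseteq U_{1}$ acting linearly, as in the tangent computations of Proposition~\ref{prop:isotropy-weights}. In those computations the $\CC_{\zeta}$ direction appears as one of the $V_{\vartheta_{p}}$ summands, and $\zeta$ is a linear coordinate on it. With the divisor understood in the orbifold (chartwise) sense, $\mcO_{Y_{d}}(D_{d})$ is then defined and the isomorphism is canonical.
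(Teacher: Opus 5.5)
Your proposal is correct and is essentially the paper's argument. The paper writes $\mathrm{pr}_{2}^{\ast}\mcO_{\CC}(\varpi_{1})|_{U}$ as $U \times \CC_{1}$ and descends it to $\mcP_{1} = U \times_{\CC^{\ast}} \CC_{1}$. It then uses the section $s_{\zeta}$ induced by the equivariant map $\zeta : U \ra \CC_{1}$ (your $y \otimes \one_{1}$), whose first-order vanishing along $D_{d}$ gives $\mcP_{1} \cong \mcO_{Y_{d}}(D_{d})$, and tensors via $\varpi_{d+k} = \varpi_{d}\varpi_{k}$. Your remark that $\zeta$ restricts to a genuine transverse coordinate on each orbifold slice, because $d\zeta$ annihilates the orbit direction, spells out a step the paper states without elaboration.
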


\begin{proof}
	Let $U:=U_{d} = U_{d+k}$ be the common semistable locus, so that $U/\CC^{\ast}\cong Y_{d}\cong Y_{d+k}$.
	For $r \in \ZZ$, let $\CC_{r}$ denote the one-dimensional representation with the geometric action $\tau\cdot v=\tau^{r}v$ for $\tau \in \CC^{\ast}$.
	The equivariant trivial line bundle $U\times\CC_{r}\ra U$ descends to the line orbibundle $\mcP_{r}:=U\times_{\CC^{\ast}}\CC_{r}\ra Y_{d}$, with $(u , v)\sim(\tau u,\tau^{r}v)$.

	The auxiliary coordinate restricts to an equivariant map $\zeta:U\ra\CC_{1}$, with $\zeta(\tau\cdot u) = \tau\zeta(u)$, and therefore defines a section $s_{\zeta}\in H^{0}(Y_{d} ; \mcP_{1})$ by $s_{\zeta}([u])=[u,\zeta(u)]$.
	On each local uniformising chart, $\zeta$ is a coordinate transverse to the invariant hypersurface $\{\zeta=0\}$, so $s_{\zeta}$ vanishes to first order along $D_{d}$ and $\div(s_{\zeta})=D_{d}$ as an orbifold divisor.
	The orbifold divisor--line-bundle correspondence therefore gives $\mcP_{1} \cong \mcO_{Y_{d}}(D_{d})$, and hence $\mcP_{k} \cong \mcP_{1}^{\otimes k} \cong \mcO_{Y_{d}}(kD_{d})$.

	The cut line orbibundle $\mcL_{d}$ is obtained by descending $(\mcL_{\nu}\boxtimes\mcO_{\CC}(\varpi_{d}))|_{U}$.
	Since $\varpi_{d+k} = \varpi_{d}\varpi_{k}$, replacing $d$ by $d+k$ tensors this equivariant line orbibundle with $U\times\CC_{k}$, whose descent is $\mcP_{k}$, and descending to the common quotient gives $\mcL_{d+k} \cong \mcL_{d} \otimes \mcP_{k} \cong \mcL_{d} \otimes \mcO_{Y_{d}}(kD_{d})$, as required.
\end{proof}

Taking first Chern classes in Lemma~\ref{lemma:variation-polarisation-lemma} and using the pre-quantum normalisation $c_{1}(\mcL_{d})=[\w_{d}]$ gives $[\w_{d + k}] = [\w_{d}] + k[D_{d}]$ under the identification $Y_{d} \cong Y_{d + k}$ within the chamber.
This is the Duistermaat--Heckman variation formula, with the change in class measured by the cut divisor; compare \cite[Theorem~7.10 with Corollary~7.7]{Hausel1998}.

For the vanishing argument, we next express the orbifold canonical bundle $K_{Y_{d}}^{\orb}$ in terms of the cut divisor $D_{d}$.
A corresponding formula appears in Hausel's Higgs-bundle compactification \cite[Corollary~7.7]{Hausel1998}, where the proof uses the contact structure on the cut divisor together with adjunction.
Here, we obtain the identity by contracting a holomorphic top form on $M_{\nu} \times \CC$ with the vector field generated by the cutting action, whilst working on local orbifold uniformising charts.

\begin{lemma}
	\label{lemma:canonical-bundle-formula}
	The orbifold canonical bundle of $Y_{d}$ satisfies $K_{Y_{d}}^{\orb} \cong \mcO_{Y_{d}}( -( n + 1 )D_{d} )$.
\end{lemma}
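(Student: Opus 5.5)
The plan is to build a nowhere-vanishing holomorphic volume form on $M_{\nu}\times\CC$, contract it with the generator of the cutting action, and read off how the result descends to $Y_{d}$. Start with $\w_{\CC}$ on $M_{\nu}$. It is non-degenerate, so $\W := \w_{\CC}^{\wedge n}$ is a nowhere-vanishing holomorphic $2n$-form on local uniformising charts. The fibre-rotation identity $\tau^{\ast}\w_{\CC}=\tau\w_{\CC}$ gives $\tau^{\ast}\W = \tau^{n}\W$. On $M_{\nu}\times\CC$, set $\W' := \W\wedge d\zeta$. This is a nowhere-vanishing holomorphic $(2n+1)$-form on charts, and under the diagonal cutting action $\tau^{\ast}\W' = \tau^{n+1}\W'$. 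Thus $K_{M_{\nu}\times\CC}$ is equivariantly trivialised by a section of cutting weight $n+1$, with the sign fixed by the geometric convention of Lemma~\ref{lemma:variation-polarisation-lemma}.

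Next, restrict to the common semistable locus $U=U_{d}$ and let $X$ be the holomorphic vector field generating the $\CC^{\ast}$-action. Since the action is locally free on $U$, $X$ is nowhere zero there. The contraction $\iota_{X}\W'$ is a holomorphic $2n$-form that is basic in the sense that it annihilates $X$. It also has the same weight $n+1$, because $X$ is invariant under the abelian group action. On a local slice chart for the quotient $U\to Y_{d}$, which is compatible with the orbifold charts at points with finite stabiliser, $\iota_{X}\W'$ restricts to a nowhere-vanishing form. So $\iota_{X}\W'$ descends to a nowhere-vanishing section of $K^{\orb}_{Y_{d}}\otimes\mcP_{r}$, where $\mcP_{r}$ is the line orbibundle from the proof of Lemma~\ref{lemma:variation-polarisation-lemma}. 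The descent uses the canonical exact sequence $0\to T_{\mathrm{orbit}}\to TU\to \pi^{\ast}TY_{d}\to0$ and the isomorphism $K_{U}\cong \pi^{\ast}K_{Y_{d}}$ induced by contraction with $X$. Here $r=\pm(n+1)$ is fixed by the weight computation. This yields $K^{\orb}_{Y_{d}}\cong \mcP_{-(n+1)}$, and since $\mcP_{1}\cong\mcO_{Y_{d}}(D_{d})$ by that same lemma, we obtain $K^{\orb}_{Y_{d}}\cong\mcO_{Y_{d}}(-(n+1)D_{d})$.

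As a consistency check, and to fix the sign, I will verify the formula on Example~\ref{example:TCP1-example-1}, where $n=1$ and $Y_{d}\cong\FF_{2}$. On the Hirzebruch surface the divisor at infinity $D$ satisfies $D^{2}=2$ and $D\cdot F=1$, and $K_{\FF_{2}}=-2D$ is the standard identity. The elementary model of Lemma~\ref{lemma:local-boundary-cut-model} gives a second check.

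The main obstacle is the orbifold bookkeeping in the descent step. One must check that the local structure groups $\Gamma_{p}$ act on the slice compatibly with how $\iota_{X}\W'$ transforms, so that the descended object is a genuine nowhere-vanishing section of the orbifold bundle $K^{\orb}_{Y_{d}}\otimes\mcP_{r}$. I will handle this chartwise. On a chart $\widetilde{U}$ at $p$ with group $\Gamma_{p}\subseteq U_{1}$ equal to the stabiliser, $\Gamma_{p}$ acts on the slice through the restriction of the cutting action. Hence the weight-$(n+1)$ transformation law of $\iota_{X}\W'$ is exactly the $\Gamma_{p}$-equivariance defining a section of $K^{\orb}\otimes\mcP_{-(n+1)}$ on that chart.

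This argument also uses that $\W$ exists on orbifold charts of $M_{\nu}$. That holds because $\w_{\CC}$ descends from $T^{\ast}\CC^{N}$ and is non-degenerate on every uniformising chart at the regular value $(\nu,0)$.
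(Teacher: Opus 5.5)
Your argument is essentially the paper's proof. You trivialise the canonical bundle of $M_{\nu}\times\CC$ on charts by $\w_{\CC}^{n}\wedge d\zeta$, which has cutting weight $n+1$. You contract it with the nowhere-vanishing generator of the cutting action on $U_{d}$, descend the resulting horizontal weight-$(n+1)$ form to a trivialising section of $K_{Y_{d}}^{\orb}\otimes\mcP_{n+1}$, and conclude using $\mcP_{1}\cong\mcO_{Y_{d}}(D_{d})$.

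One correction concerns the sign. With the convention $(u,v)\sim(\tau u,\tau^{r}v)$, write the contracted form locally as $f$ times the pullback of a local frame $\sigma$ of $K_{Y_{d}}^{\orb}$. Then $f(\tau u)=\tau^{n+1}f(u)$, so the weight computation alone forces $r=n+1$. No $\pm$ is needed, and you need not appeal to the $\FF_{2}$ check to fix the sign. By contrast, the bundle $K_{Y_{d}}^{\orb}\otimes\mcP_{-(n+1)}$ named in your final paragraph would give the wrong sign, $K_{Y_{d}}^{\orb}\cong\mcO_{Y_{d}}((n+1)D_{d})$.
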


\begin{proof}
	Let $\W:=\w_{\CC}$ be the holomorphic-symplectic form on $M_{\nu}$.
	We work on local uniformising charts of $M_{\nu}$ and their products with $\CC$, using forms invariant under the finite local groups.
	Observe that the pullback by the cutting action satisfies $\tau^{\ast}\W = \tau\W$ and $\tau^{\ast}d\zeta = \tau d\zeta$.
	Furthermore, since $\dim_{\CC}M_{\nu} = 2n$, the form $\W^{n}$ is a nowhere-vanishing holomorphic top form on $M_{\nu}$ of weight $n$.
	Thus, $\W^{n} \wedge d\zeta$ is a nowhere-vanishing holomorphic top form on $M_{\nu} \times \CC$, and satisfies $\tau^{\ast}( \W^{n} \wedge d\zeta ) = \tau^{n + 1}( \W^{n} \wedge d\zeta )$.

	Let $\mcV$ be the holomorphic vector field generated by the cutting $\CC^{\ast}$-action on $U_{d}$.
	Local freeness implies that $\mcV$ is nowhere zero, so contracting the top form gives the nowhere-vanishing holomorphic $2n$-form $\beta := \imath_{\mcV}(\W^{n} \wedge d\zeta)$ on $U_{d}$.
	Its horizontality and cutting weight are expressed by $\imath_{\mcV}\beta = 0$ and $\tau^{\ast}\beta = \tau^{n + 1} \beta$.

	With $\mcP_{n + 1} := U_{d} \times_{\CC^{\ast}}\CC_{n + 1}$, the descent convention in the proof of Lemma~\ref{lemma:variation-polarisation-lemma} therefore identifies $\beta$ with a nowhere-vanishing holomorphic section of $K_{Y_{d}}^{\orb} \otimes \mcP_{n + 1}$, and therefore $K_{Y_{d}}^{\orb} \otimes \mcP_{n + 1} \cong \mcO_{Y_{d}}$.
	The same proof gives $\mcP_{n + 1} \cong \mcO_{Y_{d}}((n + 1)D_{d})$, and hence $K_{Y_{d}}^{\orb} \cong \mcP_{n + 1}^{-1} \cong \mcO_{Y_{d}}(-(n + 1)D_{d})$.
	Equivalently, $(K_{Y_{d}}^{\orb})^{-1} \cong \mcO_{Y_{d}}((n + 1)D_{d})$, so $c_{1}^{\orb}(TY_{d})=(n + 1)[D_{d}]$.
\end{proof}

The shift by $n + 1$ now has a precise role: if $d$ and $d + n + 1$ lie in the same cut chamber, then Lemmas~\ref{lemma:variation-polarisation-lemma}~and~\ref{lemma:canonical-bundle-formula} give $\mcL_{d} \cong K_{Y_{d}}^{\orb} \otimes \mcL_{d + n + 1}$ on the common complex orbifold quotient.
This expresses $\mcL_{d}$ as an adjoint line orbibundle, so the positivity of the shifted polarisation $\mcL_{d + n + 1}$ allows us to apply Braun's orbifold Nadel vanishing theorem \cite[\S8, Theorem~3, pp.~869--870]{Braun2021} in the smooth-metric case.
We use the orbifold formulation because the cut pre-quantum line orbibundle need not descend to an ordinary line bundle on the coarse space.

\begin{proposition}
	\label{prop:Nadel-vanishing}
	With $\dim_{\CC} M_{\nu} = 2n$, let $d > 0$ be an integral regular cut level.
	Then, if $d$ and $d + n + 1$ lie in the same cut chamber, we have $H^{q}( M_{\nu}^{\leq d} ; \mcL_{\nu}^{\leq d} ) = 0$ for every $q > 0$.
	Hence,
	\[
		\chi_{T^{n}}( M_{\nu}^{\leq d} ; \mcL_{\nu}^{\leq d} ) =
		[ H^{0}( M_{\nu}^{\leq d} ; \mcL_{\nu}^{\leq d} ) ],
	\]
	in $R(T^{n})$, and so $\chi_{T^{n}}( M_{\nu}^{\leq d} ; \mcL_{\nu}^{\leq d} )( 1 ) = \dim_{\CC} H^{0}( M_{\nu}^{\leq d} ; \mcL_{\nu}^{\leq d} )$.
\end{proposition}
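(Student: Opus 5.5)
The plan is to realise $\mcL_{d}$ as an adjoint line orbibundle on the common complex orbifold, and then apply Braun's orbifold Nadel vanishing with a smooth positively curved metric, exactly as the paragraph preceding the proposition anticipates. Write $Y := Y_{d} \cong Y_{d+n+1}$. This identification of complex orbifolds holds because $d$ and $d+n+1$ lie in the same cut chamber, so $U_{d} = U_{d+n+1}$.

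\emph{Step one: the adjoint form.} Lemma~\ref{lemma:variation-polarisation-lemma} with $k = n+1$ gives $\mcL_{d+n+1} \cong \mcL_{d} \otimes \mcO_{Y}((n+1)D_{d})$. Lemma~\ref{lemma:canonical-bundle-formula} gives $\mcO_{Y}((n+1)D_{d}) \cong (K_{Y}^{\orb})^{-1}$. Combining the two yields
\[
\mcL_{d} \cong K_{Y}^{\orb} \otimes \mcL_{d+n+1}.
\]
I will check that both isomorphisms are taken in the same orbifold category, that is, on local uniformising charts with the full local groups. The reason is that Braun's theorem is stated for orbifold line bundles, not for reflexive sheaves on the coarse space.

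\emph{Step two: positivity.} The orbibundle $\mcL_{d+n+1}$ is the pre-quantum line orbibundle of the cut at the regular level $d+n+1$. By Subsection~\ref{subsection:subsection-5.3-pre-quantum-cut-space}, its descended Hermitian metric has Chern curvature $(2\pi/\sqrt{-1})\,\w_{d+n+1}$, and $\w_{d+n+1}$ is the reduced K\"ahler form, hence positive. There is a subtlety here. The K\"ahler form $\w_{d+n+1}$ is compatible with the complex structure on $Y_{d+n+1}$ coming from the reduction at level $d+n+1$. I therefore need the identification $Y_{d}\cong Y_{d+n+1}$ to be a biholomorphism of complex orbifolds, not merely a diffeomorphism.

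I would obtain this from the GIT description after Definition~\ref{def:algebraic-cut}. Both cuts are $U/\CC^{\ast}$ with the complex structure induced from the quotient, as in \cite[Theorems~3.2 and~3.4]{Hausel1998}. The Kempf--Ness identification then makes $\w_{d+n+1}$ a K\"ahler form for this common complex structure. Under the identification, the smooth metric on $\mcL_{d+n+1}$ has positive curvature on $Y$.

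\emph{Step three: vanishing.} I would apply \cite[\S8, Theorem~3]{Braun2021} on the compact K\"ahler orbifold $Y$ to the line orbibundle $\mcL_{d+n+1}$ with this smooth metric. Its multiplier ideal is trivial, so the theorem gives
\[
H^{q}(Y;K_{Y}^{\orb}\otimes\mcL_{d+n+1}) = 0 \quad\text{for } q>0.
\]
By step one this says $H^{q}(Y;\mcL_{d})=0$ for $q>0$. Substituting into \eqref{eqn:holomorphic-Euler-characteristic} gives $\chi_{T^{n}} = [H^{0}]$ in $R(T^{n})$, and evaluating the characters at the identity gives the dimension statement.

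The identification in step one is $T^{n}$-equivariant. Equivariance is not actually needed for the vanishing itself, since vanishing of a cohomology group is a non-equivariant statement. It matters only that the $T^{n}$-action on each $H^{q}$ is the one entering $\chi_{T^{n}}$.

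The main obstacle is step two: matching the complex structure for which $\w_{d+n+1}$ is K\"ahler with the one underlying $Y_{d}$. A secondary point is confirming that the canonical-bundle identity of Lemma~\ref{lemma:canonical-bundle-formula} is the orbifold canonical bundle that appears in Braun's statement, rather than the canonical sheaf of the coarse space.
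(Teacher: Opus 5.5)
Your proposal is correct and follows the paper's proof step for step: Lemma~\ref{lemma:variation-polarisation-lemma} with $k=n+1$ and Lemma~\ref{lemma:canonical-bundle-formula} give $\mcL_{d}\cong K_{Y_{d}}^{\orb}\otimes\mcL_{d+n+1}$ on the common quotient, and Braun's orbifold Nadel vanishing, applied with the smooth pre-quantum metric of curvature $2\pi\w_{d+n+1}$ and trivial multiplier ideal, finishes it. The point you flag, that $\w_{d+n+1}$ is K\"ahler for the common complex structure on $U/\CC^{\ast}$, is the same fact the paper uses when it identifies $Y_{d}$ and $Y_{d+n+1}$ through their common semistable locus.
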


\begin{proof}
	Since $d$ and $d + n + 1$ lie in the same cut chamber, their common semistable locus identifies $Y_{d}$ with $Y_{d + n + 1}$ as a complex orbifold.
	Applying Lemma~\ref{lemma:variation-polarisation-lemma} with $k = n + 1$ and using Lemma~\ref{lemma:canonical-bundle-formula} gives $\mcL_{d}\cong K_{Y_{d}}^{\orb}\otimes\mcL_{d + n + 1}$ on this common quotient.
	The reduced form $\w_{d + n + 1}$ is K\"ahler for the common complex structure, and the descended smooth Hermitian metric on $\mcL_{d + n + 1}$ has Chern curvature $\sqrt{-1} F_{\nabla^{\leq d + n + 1}} = 2\pi \w_{d + n + 1}$ by \eqref{eqn:prequantum-curvature-convention} and the construction in Subsection~\ref{subsection:subsection-5.3-pre-quantum-cut-space}.
	This satisfies Braun's positive lower curvature bound with the background K\"ahler form $\w_{d + n + 1}$ and positive constant $2\pi$.

	Since the metric is smooth, its singular weight can be taken to be $\phi=0$, so the associated multiplier ideal is $\mcO_{Y_{d}}$.
	Braun's orbifold Nadel vanishing theorem \cite[\S8, Theorem~3, pp.~869--870]{Braun2021} therefore gives $H^{q}( Y_{d} ; K_{Y_{d}}^{\orb} \otimes \mcL_{d + n + 1} ) = 0$ for every $q > 0$.
	Moreover, the identification of $\mcL_{d}$ above gives $H^{q}(Y_{d} ; \mcL_{d})=0$ for every $q > 0$.
	Lastly, the definition \eqref{eqn:holomorphic-Euler-characteristic} then provides the stated identities for the equivariant Euler characteristic and its value at the identity.
\end{proof}

Proposition~\ref{prop:Nadel-vanishing} gives a sufficient criterion for the vanishing: the shift $d \mapsto d + n + 1$ must remain within the same cut chamber.
Clearly, in the outer cut chamber $\Cout = ( c_{\max} , \infty )$ of \eqref{eqn:outer-cut-chamber}, this condition is automatic, since $d \in \Cout$ implies that $d + n + 1 \in \Cout$.
In particular, cutting at a sufficiently large positive integral level yields a pre-quantum line orbibundle whose higher cohomology vanishes.
The following corollary therefore identifies the localisation index of Section~\ref{section:section-6-equivariant-quantisation-hypertoric-algebraic-cutting} with the character of the section space at every positive regular integral level in the outer cut chamber.

\begin{corollary}
	\label{cor:outer-chamber-vanishing}
	If $d > 0$ is a regular integral level in the outer cut chamber, then $H^{q}(M_{\nu}^{\leq d} ; \mcL_{\nu}^{\leq d}) = 0$ for every $q > 0$.
	In particular, $\chi_{T^{n}}(M_{\nu}^{\leq d} ; \mcL_{\nu}^{\leq d}) = [ H^{0}(M_{\nu}^{\leq d} ; \mcL_{\nu}^{\leq d}) ]$ in $R(T^{n})$.
\end{corollary}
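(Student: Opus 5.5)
The corollary should follow from Proposition~\ref{prop:Nadel-vanishing} once we check its single hypothesis, namely that $d$ and $d + n + 1$ lie in the same cut chamber.

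First, fix a regular integral level $d > 0$ with $d \in \Cout = (c_{\max}, \infty)$, as in \eqref{eqn:outer-cut-chamber}. By Proposition~\ref{prop:critical-values-slack}, $\CritVal(\Phi)$ is the finite set $\{\mcS(v) \mid v \in \Verts(\mcA)\}$, so $c_{\max}$ is well defined. Since $n \geq 1$, we have $d + n + 1 > d > c_{\max}$. Hence $d + n + 1$ is again an integer in $\Cout$.

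Second, observe that $\Cout$ is a single interval containing no critical value. The segment $[d, d + n + 1]$ therefore lies in one connected component of $(0, \infty) \setminus \CritVal(\Phi)$. In particular, both levels are regular and belong to the same cut chamber. Proposition~\ref{prop:Nadel-vanishing} now gives $H^{q}(M_{\nu}^{\leq d} ; \mcL_{\nu}^{\leq d}) = 0$ for all $q > 0$.

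Third, the identity $\chi_{T^{n}}(M_{\nu}^{\leq d} ; \mcL_{\nu}^{\leq d}) = [H^{0}(M_{\nu}^{\leq d} ; \mcL_{\nu}^{\leq d})]$ in $R(T^{n})$ follows directly from the definition \eqref{eqn:holomorphic-Euler-characteristic}. Every term with $q > 0$ in the alternating sum vanishes, leaving only the $H^{0}$ class.

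No step here is a genuine obstacle. The only point to check is that the chamber identification of $Y_{d}$ with $Y_{d+n+1}$ used inside Proposition~\ref{prop:Nadel-vanishing} applies. This holds because the criterion for the semistable locus is unchanged throughout $\Cout$. Concretely, the unstable locus is $\mcC \times \{0\}$ for every level above $c_{\max}$, as shown in Subsection~\ref{subsection:subsection-7.1-algebraic-cutting}.
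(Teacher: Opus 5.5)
Your proposal is correct and is essentially the paper's argument: apply Proposition~\ref{prop:Nadel-vanishing}, whose chamber hypothesis holds automatically because $d \in \Cout$ implies $d + n + 1 \in \Cout$. The identity in $R(T^{n})$ then follows directly from \eqref{eqn:holomorphic-Euler-characteristic}.
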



\subsection{Combinatorial Interpretation}
\label{subsection:subsection-7.3-combinatorial-interpretation}

At positive regular integral levels in the outer cut chamber, Theorem~\ref{theorem:algebraic-cut-weight-space} identifies $F_{d}H^{0}(M_{\nu} ; \mcL_{\nu})$ with $H^{0}(M_{\nu}^{\leq d} ; \mcL_{\nu}^{\leq d})$.
In this subsection, we compute the residual $T^{n}$-weight multiplicities in this filtration directly from the coordinate ring of the hypertoric quotient, obtaining a weighted lattice-point description on the moment polyptych.

The following proposition provides the comparison between the polynomial semi-invariants and global sections used in Subsection~\ref{subsection:subsection-7.1-algebraic-cutting}.
Its proof uses the ambient Lawrence toric quotient and does not depend on the algebraic-cut construction.

\begin{proposition}[Semi-Invariant Comparison]
	\label{prop:positive-ray-semi-invariants}
	Let $P := \CC[ z_{1} , \ldots , z_{N} , w_{1} , \ldots , w_{N} ]$, and let
	\[
		J_{\CC}P := I( \phi_{\CC}^{-1}(0) ) = \langle \langle \phi_{\CC} ,\, \kappa \rangle ~|~ \kappa \in \mfk_{\CC} \rangle_{P} \subseteq P
	\]
	be the ideal generated by components of the complex moment map, and observe that $\CC[\mcN] \cong P / J_{\CC}P$, where $\mcN = \phi_{\CC}^{-1}(0)$.
	For every $m \geq 0$, set
	\[
		\CC[\mcN]_{m\nu} := ( P / J_{\CC}P )_{m\nu} :=
		\{ \bar{f} \in P / J_{\CC}P ~|~ k^{\ast} \bar{f} = \chi_{\nu}(k)^{m} \bar{f} \quad \! \text{for every } k \in K_{\CC} \}.
	\]
	Then there exists a natural $T^{n}$-equivariant isomorphism
	\begin{equation}
		\label{eqn:positive-ray-semi-invariants}
		\CC[\mcN]_{m\nu} \cong ( P / J_{\CC}P )_{m\nu}
		\overset{\sim}{\longrightarrow}
		H^{0}(M_{\nu} ; \mcL_{\nu}^{\otimes m}).
	\end{equation}
	Furthermore, these isomorphisms are compatible with multiplication, so they assemble into an isomorphism of graded $\CC$-algebras
	\[
		\bigoplus\nolimits_{m \geq 0} ( P / J_{\CC}P )_{m\nu}
		\overset{\sim}{\lra}
		\bigoplus\nolimits_{m \geq 0} H^{0}( M_{\nu} ; \mcL_{\nu}^{\otimes m} ).
	\]
\end{proposition}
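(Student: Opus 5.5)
The plan is to identify both sides with invariant sections on the Lawrence toric variety and then restrict to the closed subvariety cut out by the moment map. First, I construct the map \eqref{eqn:positive-ray-semi-invariants} directly. A semi-invariant $\bar f \in (P/J_{\CC}P)_{m\nu}$, restricted to the semistable locus $\mcN^{\sstable}(\nu)$, defines a $K_{\CC}$-invariant section $s_{f}(x) = (x, f(x))$ of $L_{\nu}^{\otimes m}$, by Notation~\ref{notation:representation-convention}. Such a section descends through the geometric quotient \eqref{eqn:hypertoric-kempf-ness} to a section of $\mcL_{\nu}^{\otimes m}$. This map is clearly multiplicative, and it is $T^{n}$-equivariant, because $T^{N}$ commutes with $K_{\CC}$ and the lift by $\chi_{\lambda}$ is compatible with the pullback convention.

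Injectivity holds because $\mcN$ is reduced and irreducible \cite[Lemma~4.7]{BellamyKuwabara2012} and $\mcN^{\sstable}(\nu)$ is a non-empty open subset. A regular function vanishing there therefore vanishes on $\mcN$.

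For surjectivity, a section of $\mcL_{\nu}^{\otimes m}$ pulls back to a $K_{\CC}$-semi-invariant regular function $g$ on $\mcN^{\sstable}(\nu)$, and I must extend $g$ to all of $\mcN$. I will use the Proj description $M_{\nu} \cong \Proj R_{\nu}$ from \eqref{eqn:GIT-hypertoric-definition}, so that $\mcL_{\nu}^{\otimes m}$ becomes the descended sheaf $\mcO(m)$. Then $H^{0}(M_{\nu};\mcO(m))$ is the degree-$m$ part of the saturation of $R_{\nu}$ with respect to the irrelevant ideal. The claim reduces to showing that $R_{\nu}$ is already saturated in each degree.

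To prove this saturation, I plan to pass through the Lawrence toric variety $\mathfrak{X} = (T^{\ast}\CC^{N})^{\sstable}(\nu)/K_{\CC}$. On $\mathfrak{X}$ the analogous statement is standard toric geometry: global sections of the descended $\mcO(m)$ are spanned by the monomials $z^{a}w^{b}$ with $\imath_{\ZZ}^{\ast}(a-b) = m\nu$. These monomials are exactly $P_{m\nu}$, because the unstable locus has codimension at least two, or alternatively by the polyhedral description of sections on a semiprojective toric variety.

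Next, $M_{\nu} \subset \mathfrak{X}$ is the complete intersection cut out by the $n' = \dim K$ descended components of $\phi_{\CC}$. The Koszul resolution of $\mcO_{M_{\nu}}$ on $\mathfrak{X}$, twisted by $\mcO(m)$, gives the restriction map $H^{0}(\mathfrak{X};\mcO(m)) \to H^{0}(M_{\nu};\mcO(m))$, with kernel the image of the moment-map ideal. Surjectivity follows once $H^{i}(\mathfrak{X};\mcO(m)) = 0$ for $i > 0$ and $m \geq 0$. This vanishing holds because $\mathfrak{X}$ is a semiprojective toric variety with $\mcO(m)$ nef, and relatively ample when $m > 0$, over the affine base. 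The components of $\phi_{\CC}$ have $K_{\CC}$-weight zero, so all Koszul terms are again twists $\mcO(m)$ with the same $m$, and the same vanishing applies to each.

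Diagram chasing along the Koszul complex then identifies $H^{0}(M_{\nu};\mcO(m))$ with $P_{m\nu}/(J_{\CC}P)_{m\nu}$. Since $K_{\CC}$ is reductive, taking the $m\nu$-isotypic part is exact, so this quotient equals $(P/J_{\CC}P)_{m\nu}$. This also shows that the Koszul sequence, and not only its degree-zero part, is exact on the relevant cohomology. The main obstacle is precisely this step: establishing higher-cohomology vanishing on the orbifold Lawrence toric variety for the relevant twists, and checking that the Koszul complex stays exact after descent in the orbifold setting.
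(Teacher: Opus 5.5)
Your proposal is correct and follows essentially the same route as the paper. You identify sections on the ambient Lawrence quotient with $P_{m\nu}$ using the codimension-two unstable locus, then restrict along the Koszul resolution twisted by $\mcA_{\nu}^{\otimes m}$, using relative toric vanishing for the nef twists and the fact that the components of $\phi_{\CC}$ have character zero.

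The points you leave as assertions or flag as obstacles are exactly where the paper does the work. It proves the codimension-two claim by a Hilbert--Mumford argument showing that the generic point of each coordinate hyperplane is semistable. It obtains the higher-cohomology vanishing from Fujino's relative toric vanishing, applied to the reflexive $\QQ$-Cartier coarse pushforwards. Exactness of finite-group invariants then lets it compare coarse and orbifold cohomology. It also cites the regular-sequence property of the moment-map components from Hausel--Sturmfels and Bellamy--Kuwabara, which is needed for the Koszul complex to be a resolution.
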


\begin{proof}
	The unstable locus in the complex moment fibre $\mcN$ may have codimension one, as in \cite[Example~1.2]{BellamyCrawSchedler2024}, so we cannot argue that sections extend directly across that locus.
	Therefore, we first instead prove that the complement of $(T^{\ast}\CC^{N})^{\sstable}(\nu)$ has codimension at least two, and then work on the ambient Lawrence quotient.

	The Hilbert--Mumford criterion \cite[Lemmas~4.2--4.3]{BellamyKuwabara2012} describes instability in terms of the cone generated by the weights of the non-zero coordinates.
	A codimension-one unstable component would force the generic point of a coordinate hyperplane to be unstable.
	Consider a coordinate hyperplane, say $\{z_{i} = 0\}$; the case $\{w_{i} = 0\}$ is identical.
	At its generic point, all coordinates $z_{j}$ and $w_{j}$ with $j \neq i$ are non-zero, so the weights $\pm \alpha_{j}$ for $j \neq i$ occur.
	Moreover, the weights $\{ \alpha_{j} ~|~ j \neq i \}$ still span $\mfk^{\ast}$.
	Indeed, if $\kappa \in \mfk$ is annihilated by every $\alpha_{j}$ with $j \neq i$, then $\imath(\kappa) = ce_{i}$ for some $c \in \RR$.
	Since $\pi \circ \imath = 0$, we obtain $cu_{i} = 0$, which forces $c = 0$, and hence $\kappa = 0$.
	Since both signs $\pm \alpha_{j}$ occur among the remaining cotangent coordinates, these weights generate $\mfk^{\ast}$ as a convex cone.
	In particular, this cone contains the linearisation character $\nu$, so the generic point of each coordinate hyperplane is semistable.
	Since the unstable locus is a union of coordinate subspaces, it follows that it has codimension at least two.

	Now consider the ambient Lawrence toric quotient
	\[
		W_{\nu} := T^{\ast}\CC^{N} \sslash_{\nu} K_{\CC} = \Proj \left( \bigoplus\nolimits_{m \geq 0} P_{m\nu} \right) \! ,
	\]
	and let $\mcA_{\nu}\ra W_{\nu}$ be the line orbibundle obtained by descending the trivial line bundle on $(T^{\ast}\CC^{N})^{\sstable}(\nu)$, with geometric fibre action given by $\chi_{\nu}$.
	Recall that descent identifies sections of $\mcA_{\nu}^{\otimes m}$ with semi-invariants of character $m\nu$ on this semistable locus.
	Since its complement has codimension at least two in the normal affine space $T^{\ast}\CC^{N}$, those semi-invariants extend uniquely to polynomial semi-invariants, and hence we get $H^{0}( W_{\nu} ; \mcA_{\nu}^{\otimes m} ) \cong P_{m\nu}$ for every $m \geq 0$.

	We also need the higher cohomology to vanish for these tensor powers on $W_{\nu}$.
	For each $m \geq 0$, the tensor power $\mcA_{\nu}^{\otimes m}$ is a line orbibundle on the canonical toric orbifold $W_{\nu}$, and its coarse pushforward is the rank-one reflexive sheaf $\mcO(E_{m})$, where $E_{m}$ is an integral torus-invariant Weil divisor; it is $\QQ$-Cartier because the coarse Lawrence toric variety is simplicial.
	Since the local stabilisers are finite and we work in characteristic zero, taking invariants is exact, so the coarse and orbifold cohomology groups agree.

	Let $\pi_{\aff} : W_{\nu} \ra \Spec P^{K_{\CC}}$ denote the projective toric morphism to the affinisation, which is surjective since it is also dominant.
	Under $W_{\nu} = \Proj ( \bigoplus\nolimits_{m \geq 0} P_{m\nu} )$, the sheaf $\mcO(E_{m})$ is the twisting sheaf $\mcO_{W_{\nu}}(m)$, and since this graded ring is finitely generated over $P^{K_{\CC}}$, there is a $k > 0$ for which $\mcO(E_{k})$ is invertible and $\pi_{\aff}$-ample \cite[Proposition~4.6.18]{Grothendieck1961}.
	As $kE_{m}$ is linearly equivalent to $mE_{k}$, every $E_{m}$ with $m \geq 0$ is $\pi_{\aff}$-nef.
	Hence, Fujino's relative toric vanishing theorem \cite[Theorem~5.3]{Fujino2007} gives $R^{q}(\pi_{\aff})_{\ast} \mcO(E_{m}) = 0$ for every $q > 0$.
	Since the affinisation is affine, the Leray spectral sequence and the previous cohomology comparison give $H^{q}( W_{\nu} ; \mcA_{\nu}^{\otimes m} ) = 0$ for every $q > 0$ and $m \geq 0$.

	Now write $r = \dim_{\CC}K_{\CC} = N - n$.
	After choosing a basis of $\mfk_{\CC}$, let $f_{1} , \ldots , f_{r}$ be the corresponding components of $\phi_{\CC}$.
	Each $f_{a}$ is a linear combination of the products $z_{i}w_{i}$, and therefore has $K_{\CC}$-character zero.
	Hausel and Sturmfels show that these moment-map equations generate the homogeneous prime complete-intersection ideal denoted by $\operatorname{Circ}(B)$ in \cite[\S6]{HauselSturmfels2002}.
	The regular-sequence property is also recorded in \cite[Lemma~4.7]{BellamyKuwabara2012}, where the condition $u_{i} \neq 0$ is the non-zero-row hypothesis used there.
	It follows that the sequence $f_{1} , \ldots , f_{r}$ forms a regular sequence, and $M_{\nu} \hookrightarrow W_{\nu}$ is the complete intersection cut out by their descended sections.

	Reductivity of $K_{\CC}$ makes the restriction $P_{m\nu}\ra(P/J_{\CC}P)_{m\nu}$ surjective for every $m \geq 0$, and therefore we have $\mcN^{\sstable}(\nu) = \mcN \cap ( T^{\ast}\CC^{N} )^{\sstable}(\nu)$.
	The inclusion of these semistable loci induces $M_{\nu}\hookrightarrow W_{\nu}$.
	Furthermore, since $\mcA_{\nu}$ and $\mcL_{\nu}$ both descend via $\chi_{\nu}$, we have $\mcL_{\nu} = \mcA_{\nu}|_{M_{\nu}}$.

	We regard the descended moment map as a section of the trivial vector bundle with fibre $\mfk_{\CC}^{\ast}$, so its Koszul complex uses the exterior powers of $(\mfk_{\CC}^{\ast})^{\vee} = \mfk_{\CC}$.
	Tensoring the Koszul resolution of $\mcO_{M_{\nu}}$ with $\mcA_{\nu}^{\otimes m}$ then gives
	\[
		0 \longrightarrow \mcA_{\nu}^{\otimes m} \otimes \bigwedge^{r}\mfk_{\CC} \longrightarrow \cdots \longrightarrow \mcA_{\nu}^{\otimes m} \otimes \mfk_{\CC} \longrightarrow \mcA_{\nu}^{\otimes m} \longrightarrow \mcL_{\nu}^{\otimes m} \longrightarrow 0.
	\]
	All the terms involve the same tensor power $\mcA_{\nu}^{\otimes m}$ because each $f_{a}$ has trivial $K_{\CC}$-character.
	The preceding toric vanishing therefore makes the global-section functor exact on this resolution, so the restriction map $H^{0}( W_{\nu} ; \mcA_{\nu}^{\otimes m} ) \ra H^{0}( M_{\nu} ; \mcL_{\nu}^{\otimes m} )$ is surjective with kernel $( J_{\CC}P )_{m\nu}$ and yields $H^{0}( M_{\nu} ; \mcL_{\nu}^{\otimes m} ) \cong P_{m\nu} / (J_{\CC}P)_{m\nu} = ( P / J_{\CC}P )_{m\nu}$.

	The restriction and descent maps commute with the residual $T^{n}$-action and with the fibre-rotation action, using the lift from Subsection~\ref{subsection:subsection-5.3-pre-quantum-cut-space} that leaves the fibre coordinate of $L_{\nu} = \mcN \times \CC$ unchanged.
	Thus, \eqref{eqn:positive-ray-semi-invariants} is $T^{n}$-equivariant and preserves fibre-rotation weights.
	Moreover, descent and restriction also preserve multiplication: the product of two semi-invariants of characters $m\nu$ and $m^{\prime}\nu$ corresponds to the product of two sections of $\mcL_{\nu}^{\otimes m}$ and $\mcL_{\nu}^{\otimes m^{\prime}}$.
	The isomorphisms therefore assemble into the stated graded-algebra isomorphism.
\end{proof}

\begin{remark}
	Proposition~\ref{prop:positive-ray-semi-invariants} concerns the characters $m\nu$ with $m \in \ZZ_{\geq 0}$.
	Descent identifies global sections with semi-invariants on the semistable locus, but these need not extend to polynomial semi-invariants on the level set $\mcN = \phi_{\CC}^{-1}(0)$.
	Indeed, already for the hypertoric presentation of $T^{\ast}\CC\PP^{1}$, the unstable locus can have codimension one, and its removal can enlarge the semi-invariant ring; see \cite[Example~1.2]{BellamyCrawSchedler2024}.
	The proposition does not claim that the semi-invariants of arbitrary $K_{\CC}$-characters extend from the semistable locus to $\mcN$.
\end{remark}

We now compute $F_{d}H^{0}( M_{\nu} ; \mcL_{\nu} )$ using the polynomial semi-invariants in Proposition~\ref{prop:positive-ray-semi-invariants}.
Consider a monomial $z^{a}w^{b} \in P$, with $a, b \in \ZZ_{\geq 0}^{N}$, and retain the fixed integral lift $\lambda$ of $\nu$ from Section~\ref{section:section-2-hypertoric-varieties}, as well as the affine-linear functionals $\ell_{i}$ defined in \eqref{eqn:hyperplane}.
A monomial $z^{a}w^{b}$ has $K_{\CC}$-character $\nu$ if and only if its exponent difference satisfies $\imath_{\ZZ}^{\ast}(a - b) = \nu$.
Since $\nu = \imath_{\ZZ}^{\ast}(\lambda)$, this is equivalent to $a - b - \lambda \in \ker( \imath_{\ZZ}^{\ast} ) = \Image(\pi_{\ZZ}^{\ast})$.
Thus, there is a unique residual weight $x \in ( \mft_{\ZZ}^{n} )^{\ast}$ such that
\begin{equation}
	\label{eqn:exponent-differences}
	a_{i} - b_{i} = \langle x ,\, u_{i} \rangle + \lambda_{i} = \ell_{i}(x), \qquad
	\text{for each } i = 1 , \ldots , N.
\end{equation}
By \eqref{eqn:positive-ray-semi-invariants}, the corresponding class in $P / J_{\CC}P$ is then a section of $\mcL_{\nu}$ of residual $T^{n}$-weight $x$.

So fix a residual weight $x$.
For each $i$, the coordinatewise minimal non-negative exponents satisfying $a_{i}-b_{i} = \ell_{i}(x)$ are $a_{i}^{\min} = \max(0,\ell_{i}(x))$ and $b_{i}^{\min} = \max(0 , -\ell_{i}(x))$.
Substituting these choices into $z^{a}w^{b}$ gives the distinguished monomial
\begin{equation}
	\label{eqn:minimal-monomial}
	p_{x} := \prod_{i = 1}^{N} z_{i}^{\max( 0 , \, \ell_{i}(x) ) } w_{i}^{\max( 0 , \, -\ell_{i}(x) ) },
\end{equation}
which we call the \textbf{minimal monomial} associated with the residual weight $x$.

Any other pair of exponents $( a_{i} , b_{i} ) \in \ZZ_{\geq 0}^{2}$ satisfying $a_{i} - b_{i} = \ell_{i}(x)$ must differ from the minimal choice by adding the same non-negative integer to both entries.
That is to say, for each $i = 1 , \ldots , N$, there is a unique integer $s_{i} \in \ZZ_{\geq 0}$ such that
\[
	a_{i} = \max( 0 , \, \ell_{i}(x) ) + s_{i}
	\qquad \text{and} \qquad
	b_{i} = \max( 0 , \, -\ell_{i}(x) ) + s_{i}.
\]
Then, using the identity $\ell_{i}(x) = \max( 0 , \ell_{i}(x) ) - \max( 0 , -\ell_{i}(x) )$ and \eqref{eqn:exponent-differences}, we may write
\[
	s_{i} = a_{i} - \max( 0 , \, \ell_{i}(x) ) = b_{i} - \max( 0 , \, -\ell_{i}(x) ).
\]
Every monomial of $K_{\CC}$-character $\nu$ and residual weight $x$ therefore factors uniquely as $p_{x}$ times a monomial in the residual-invariant products $z_{i}w_{i}$:
\[
	z^{a}w^{b} = \prod_{i = 1}^{N}
	z_{i}^{\max( 0 , \, \ell_{i}(x) ) + s_{i} } w_{i}^{\max( 0 , \, -\ell_{i}(x) ) + s_{i} } = p_{x} \cdot \prod_{i = 1}^{N} ( z_{i}w_{i} )^{s_{i}},
\]
and we see that $p_{x}$ records the part of the monomial forced by the residual weight $x$, while the exponents $s_{i}$ measure the additional invariant factors.

The same factorisation also describes the entire residual-weight-$x$ subspace before imposing the moment-map equations.
Set $h_{i}:=z_{i}w_{i}$ and $B_{0}:=\CC[h_{1}, \ldots ,h_{N}]$, and let $P_{x}\subseteq P$ be the span of the monomials satisfying \eqref{eqn:exponent-differences}.
Since multiplication by $h_{i}$ preserves the exponent differences, $P_{x}$ is a $B_{0}$-module.
Furthermore, the unique factorisation identifies its monomial basis with $p_{x}$ times the monomial basis of $B_{0}$, giving the $B_{0}$-module isomorphism $B_{0} \overset{\sim}{\ra} P_{x}$, $f \mapsto p_{x}f$.
Thus, $P_{x} = p_{x}B_{0}$ is free of rank one, a description we will use when quotienting by the complex moment-map equations in the variables $h_{i}$.

To impose the cutting condition, adjoin the auxiliary coordinate $y$.
Then a monomial of residual weight $x$ on $T^{\ast}\CC^{N} \times \CC$ has the unique normal form
\begin{equation}
	\label{eqn:monomial-normal-form}
	Q = z^{a}w^{b} y^{c} = p_{x} \cdot \left( \prod_{i = 1}^{N} ( z_{i} w_{i} )^{s_{i}} \right) y^{c}, \qquad c \in \ZZ_{\geq 0}.
\end{equation}
Under the pullback convention \eqref{eqn:pullback-action-on-sections}, the cutting weights are $\wt_{\CC^{\ast}}(z_{i}) = 0$ and $\wt_{\CC^{\ast}}(w_{i}) = \wt_{\CC^{\ast}}(y) = 1$.
On $P[y]$, we call this weight the \textbf{cut degree},
$\deg_{\cut}:=\wt_{\CC^{\ast}}$; note that it is distinct from the tensor-power degree in the section rings of Subsection~\ref{subsection:subsection-7.1-algebraic-cutting}.
In particular, each $h_{i} = z_{i}w_{i}$ has residual weight zero but cut degree one.

The exponent of $w_{i}$ in $p_{x}$ is $\max\{ 0 ,\, -\ell_{i}(x) \}$, and therefore
\[
	\deg_{\cut}(p_{x}) = \sum\nolimits_{i = 1}^{N} \max\{ 0 ,\, -\ell_{i}(x) \} = \mcS(x),
\]
where $\mcS$ is the slack function of \eqref{eqn:slack-function}.
Thus, $\mcS(x)$ is the minimum cut degree required to realise the residual weight $x$; every additional $h_{i}$- or $y$-factor increases the degree by one.
For the monomial in \eqref{eqn:monomial-normal-form}, this gives
\[
	\deg_{\cut}(Q) = \wt_{\CC^{\ast}}(Q) = \mcS(x) + \sum\nolimits_{i = 1}^{N} s_{i} + c.
\]
Recall that the constant section $\one_{d}$ of $\mcO_{\CC}(\varpi_{d})$ has pullback weight $-d$.
The section $Q\otimes\one_{d}$ is therefore cutting-invariant only if $s_{1} + \ldots + s_{N} + c = d - \mcS(x)$.
Clearly, there are no solutions when $\mcS(x) > d$; otherwise, counting monomials of degree $d - \mcS(x)$ in the $N + 1$ variables $h_{1} , \ldots , h_{N} , y$ gives $\tbinom{d - \mcS(x) + N}{N}$.
This count is taken before imposing the complex moment-map equations; to obtain the section multiplicity, we must determine the remaining independent classes.

The complex moment-map equations are linear in the $h_{i}$ variables.
Let $J_{\CC} \subseteq B_{0}$ be the ideal generated by the components of $\phi_{\CC}(z , w) = \imath^{\ast}(h_{1} , \ldots , h_{N}) = \sum_{i = 1}^{N} h_{i}\alpha_{i}$.
That is, after choosing a basis $\kappa_{1} , \ldots , \kappa_{r}$ of $\mfk_{\CC}$, where $r = \dim_{\CC} \mfk_{\CC}$, the ideal $J_{\CC}$ is generated by the homogeneous linear forms
\[
	g_{a} := \langle \phi_{\CC} ,\, \kappa_{a} \rangle = \sum\nolimits_{i = 1}^{N} \langle \alpha_{i} ,\, \kappa_{a} \rangle h_{i}, \qquad a = 1 , \ldots , r,
\]
where we recall that the extension $J_{\CC}P$ is the complex moment-map ideal used in Proposition~\ref{prop:positive-ray-semi-invariants}.

Now equip $P$ with the exponent-difference grading given by the character lattice $( \mft_{\ZZ}^{N} )^{\ast}$, determined by $\deg_{\diff}(z_{i}) = \epsilon_{i}$ and $\deg_{\diff}(w_{i}) = -\epsilon_{i}$, where $\epsilon_{1} , \ldots , \epsilon_{N}$ is the dual basis introduced in Subsection~\ref{subsection:subsection-2.1-differential-hypertoric}.
In this grading, we have $\deg_{\diff}(z^{a}w^{b}) = \sum_{i = 1}^{N} ( a_{i} - b_{i} )\epsilon_{i}$, and $P_{x}$ is the homogeneous component of degree $\lambda + \pi_{\ZZ}^{\ast}(x)$.
Since the generators of $J_{\CC}$ have exponent-difference degree zero, the component of degree $\lambda + \pi_{\ZZ}^{\ast}(x)$ satisfies $(J_{\CC}P) \cap P_{x} = J_{\CC}P_{x} = p_{x}J_{\CC}$, where the last equality uses $P_{x} = p_{x}B_{0}$.
Multiplication by $p_{x}$ therefore induces the isomorphism $B_{0}/J_{\CC} \overset{\sim}{\ra} (P / J_{\CC}P)_{x}$ given by $\widebar{f} \mapsto \widebar{p_{x}f}$.
Thus, $(P / J_{\CC}P )_{x}$, the component corresponding to the residual weight $x$, is a free rank-one module over $B_{0} / J_{\CC}$, generated by the class of $p_{x}$.

Since $\imath^{\ast} : ( \mft^{N} )^{\ast} \ra \mfk^{\ast}$ is surjective and $\dim_{\CC} \mfk_{\CC} = r$, the generators $g_{1} , \ldots , g_{r}$ are $r$ independent linear forms in the $h_{1} , \ldots , h_{N}$ variables.
Choose classes $q_{1}, \ldots , q_{n}$ to form a basis of the quotient $\Span\{ h_{1} , \ldots , h_{N} \} / \Span\{ g_{1} , \ldots , g_{r} \}$, whose dimension is $N - r = n$.
Then $B_{0}/J_{\CC} \cong \CC[q_{1},\ldots , q_{n}]$, where each $q_{j}$ is represented by a homogeneous linear combination of the variables $h_{i} = z_{i}w_{i}$ and therefore has cut degree one.
After adjoining the auxiliary coordinate function $y$, we obtain $( ( P / J_{\CC} P )[y] )_{x} \cong p_{x} \CC[ q_{1} , \ldots , q_{n} , y ]$.
Hence, the residual-weight-$x$, cut-degree-$d$ subspace has the basis
\begin{equation}
	\label{eqn:weight-x-basis}
	\{ p_{x} q_{1}^{k_{1}} \cdots q_{n}^{k_{n}} y^{c} \mid k_{1} , \ldots , k_{n} , c \in \ZZ_{\geq 0} , ~k_{1} + \cdots + k_{n} + c = d - \mcS(x) \},
\end{equation}
and this set is empty when $\mcS(x) > d$.

It is the basis \eqref{eqn:weight-x-basis} that determines both which residual weights occur as well as their multiplicities.
The minimum cut degree is $\mcS(x)$, and the remaining degree is distributed among the $n + 1$ variables $q_{1},\ldots,q_{n},y$.
The following theorem also establishes the non-negativity and finite-dimensionality used in Subsection~\ref{subsection:subsection-7.1-algebraic-cutting}.

\begin{theorem}
	\label{theorem:combinatorial-multiplicity}
	For each $d \in \ZZ_{\geq 0}$, the residual $T^{n}$-weight $x \in ( \mft_{\ZZ}^{n} )^{\ast}$ occurs in the filtration piece $F_{d}H^{0}( M_{\nu} ; \mcL_{\nu} )$ with multiplicity
	\begin{equation}
		\label{eqn:multiplicity}
		\mult_{d}(x) = \begin{cases}
			\binom{d - \mcS(x) + n}{n}, \qquad & \mcS(x) \leq d, \\
			0, \qquad                          & \mcS(x) > d,
		\end{cases}
	\end{equation}
	and no negative fibre-rotation weights occur in $H^{0}( M_{\nu} ; \mcL_{\nu} )$.
	Furthermore, the support of $\mult_{d}$ is the finite set\break $\supp(\mult_{d}) = \{ x \in (\mft_{\ZZ}^{n})^{\ast} \mid \mcS(x) \leq d \}$, and
	\[
		F_{d} H^{0}( M_{\nu} ; \mcL_{\nu} ) \cong
		\bigoplus_{x \in ( \mft_{\ZZ}^{n} )^{\ast}} \CC_{x}^{\oplus \mult_{d}(x)},
		\quad \text{hence} \quad
		\dim_{\CC} F_{d} H^{0}( M_{\nu} ; \mcL_{\nu} ) =
		\sum_{x \in ( \mft_{\ZZ}^{n} )^{\ast}} \mult_{d}(x),
	\]
	where $\CC_{x}$ denotes the one-dimensional residual $T^{n}$-representation of weight $x$.

	When $d > 0$ is a regular cut level, Proposition~\ref{prop:slack-support} identifies this support with $|\Delta_{\nu}^{\leq d}| \cap ( \mft_{\ZZ}^{n} )^{\ast}$.
	In particular, if $d > 0$ is an integral regular cut level in the outer cut chamber, then Theorem~\ref{theorem:algebraic-cut-weight-space} identifies $F_{d} H^{0}( M_{\nu} ; \mcL_{\nu} )$ with $H^{0}( M_{\nu}^{\leq d} ; \mcL_{\nu}^{\leq d} )$, and the same multiplicity and dimension formulae hold for sections of $\mcL_{\nu}^{\leq d} \ra M_{\nu}^{\leq d}$.
\end{theorem}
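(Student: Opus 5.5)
The plan is to assemble the theorem from the monomial analysis carried out just before the statement, passing through the isomorphism \eqref{eqn:filtration-invariant-sections} of Theorem~\ref{theorem:algebraic-cut-weight-space} and the semi-invariant comparison of Proposition~\ref{prop:positive-ray-semi-invariants}.

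The first step is to identify the filtration piece with a graded piece of a quotient ring. By Proposition~\ref{prop:positive-ray-semi-invariants} with $m = 1$, $H^{0}(M_{\nu};\mcL_{\nu}) \cong (P/J_{\CC}P)_{\nu}$, equivariantly for $T^{n}$ and for fibre rotation. Here the fibre-rotation weight is the cut degree $\deg_{\cut}$ restricted to $P$, since $w_{i}$ has weight one and $z_{i}$ has weight zero. This space decomposes over residual weights $x$ as $\bigoplus_{x}(P/J_{\CC}P)_{x}$. The text shows $(P/J_{\CC}P)_{x} = p_{x}\cdot(B_{0}/J_{\CC}) \cong p_{x}\CC[q_{1},\dots,q_{n}]$. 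Each $q_{j}$ has cut degree one and $\deg_{\cut}(p_{x}) = \mcS(x)$, so the weight-$i$ part in residual weight $x$ has dimension $\binom{i-\mcS(x)+n-1}{n-1}$ when $i \ge \mcS(x)$, and is zero otherwise.

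Non-negativity then follows immediately, because $\mcS \ge 0$. Summing over $i \le d$ gives the multiplicity. Alternatively, one can use the basis \eqref{eqn:weight-x-basis} of the cut-invariant space with the extra variable $y$, which counts monomials of degree $d-\mcS(x)$ in the $n+1$ variables $q_{1},\dots,q_{n},y$. Either way the hockey-stick identity gives $\binom{d-\mcS(x)+n}{n}$.

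The point I expect to need the most care is the freeness claim $B_{0}/J_{\CC} \cong \CC[q_{1},\dots,q_{n}]$ together with $J_{\CC}P \cap P_{x} = p_{x}J_{\CC}$. The first holds because $J_{\CC}$ is generated by $r$ independent linear forms in the polynomial ring $B_{0}$, and the quotient of a polynomial ring by linear forms is again polynomial. The second uses homogeneity of the generators in the exponent-difference grading, so that the ideal decomposes degreewise, combined with $P_{x} = p_{x}B_{0}$. Both steps are already established in the text, so I will cite them rather than reprove them.

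Finite support comes next. Since $\Delta_{\emptyset}$ is bounded, the normals $u_{i}$ positively span $\mft^{n}$. Hence $\mcS(x) \to \infty$ as $|x| \to \infty$: along any ray some $\ell_{i}$ decreases linearly, and $\mcS$ is convex piecewise linear. This makes $\{\mcS \le d\}$ compact, so it contains finitely many lattice points. The direct-sum decomposition and the dimension formula follow from this.

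For regular $d>0$, the support identification is Proposition~\ref{prop:slack-support} intersected with the lattice. In the outer chamber, the section-space statement is the isomorphism $\Psi_{d}$ of Theorem~\ref{theorem:algebraic-cut-weight-space}, which is $T^{n}$-equivariant.
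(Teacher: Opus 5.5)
Your proof is correct and follows the paper's main line: both rest on $(P/J_{\CC}P)_{x} = p_{x}\,\CC[q_{1},\ldots,q_{n}]$ with $\deg_{\cut}(p_{x}) = \mcS(x)$, and your alternative count via the basis \eqref{eqn:weight-x-basis} with the extra variable $y$ is exactly the paper's count. Two local steps differ.

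First, your weight-by-weight count followed by the hockey-stick identity reaches the same multiplicity. It also gives slightly more: the residual-weight-$x$ part of $H^{0}(M_{\nu};\mcL_{\nu})_{i}$ has dimension $\binom{i-\mcS(x)+n-1}{n-1}$ for $i \geq \mcS(x)$, which makes the successive filtration quotients explicit.

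Second, for finiteness of $\{\mcS \leq d\}$ the paper argues geometrically. It chooses a regular level $\delta' > d$, possible because $\CritVal(\Phi)$ is finite, and invokes Proposition~\ref{prop:slack-support}. Compactness is then inherited from the compact cut $M_{\nu}^{\leq \delta'}$, and ultimately from properness of $\Phi$. Your argument is purely combinatorial instead. Boundedness of $\Delta_{\emptyset}$ forces some $\langle y , u_{i}\rangle < 0$ for every direction $y \neq 0$. Hence the convex piecewise-linear function $\mcS$ has no direction of recession and all its sublevel sets are bounded. Your version is self-contained and avoids the cut geometry entirely; the paper's is shorter given results it has already proved.
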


\begin{proof}
	Proposition~\ref{prop:positive-ray-semi-invariants} with $m = 1$ identifies $(P/J_{\CC}P)_{\nu}$ with $H^{0}(M_{\nu} ; \mcL_{\nu})$.
	By \eqref{eqn:filtration-invariant-sections}, adjoining $y$ and tensoring the cut-degree-$d$ component with $\one_{d}$ gives the invariant sections that correspond to $F_{d}H^{0}(M_{\nu} ; \mcL_{\nu})$.

	Fix a residual weight $x$.
	The basis \eqref{eqn:weight-x-basis} shows that its multiplicity is the number of non-negative integer solutions of $k_{1} + \cdots + k_{n} + c = d - \mcS(x)$.
	Clearly, if $\mcS(x) > d$, there are no solutions; otherwise, counting monomials of degree $d - \mcS(x)$ in $n + 1$ variables gives
	\[
		\mult_{d}(x) = \binom{d - \mcS(x) + n}{n},
	\]
	proving \eqref{eqn:multiplicity}.
	The same description shows that no negative fibre-rotation weights occur: $\mcS(x) \geq 0$ always holds, and the additional generators have positive cut degree.

	Choose a positive regular cut level $\delta^{\prime} > d$, which is possible because $\Phi$ has only finitely many critical values by Proposition~\ref{prop:critical-values-slack}.
	By Proposition~\ref{prop:slack-support}, the set $\{ \mcS \leq \delta^{\prime} \}$ is compact.
	Since $\mcS$ is continuous, $\{ \mcS \leq d \}$ is a closed subset of this compact set and therefore contains only finitely many lattice points.
	The filtration piece is consequently finite-dimensional, and summing its weight spaces gives the stated decomposition and dimension formula.
	At a positive regular level $d$, Proposition~\ref{prop:slack-support} identifies its weight support with the lattice points in $|\Delta_{\nu}^{\leq d}|$ and, at a positive regular integral level in the outer cut chamber, Theorem~\ref{theorem:algebraic-cut-weight-space} identifies the filtration piece with $H^{0}( M_{\nu}^{\leq d} ; \mcL_{\nu}^{\leq d} )$.
	The same multiplicity and dimension formulae therefore apply there.
\end{proof}

The following corollary expresses the character of $H^{0}(M_{\nu}^{\leq d} ; \mcL_{\nu}^{\leq d})$ as a weighted sum over the lattice points of $|\Delta_{\nu}^{\leq d}|$.
In the outer cut chamber, Corollary~\ref{cor:outer-chamber-vanishing} identifies this character with the equivariant index computed by the localisation formula in Section~\ref{section:section-6-equivariant-quantisation-hypertoric-algebraic-cutting}.

\begin{corollary}[Weighted Lattice-Point Character]
	\label{cor:weighted-lattice-point-character}
	Let $d > 0$ be a regular integral cut level in the outer cut chamber.
	Then
	\begin{equation}
		\label{eqn:weighted-lattice-point-character}
		\operatorname{char}_{T^{n}}
		H^{0}(M_{\nu}^{\leq d} ; \mcL_{\nu}^{\leq d})(e^{\xi}) =
		\sum_{x \in |\Delta_{\nu}^{\leq d}| \cap (\mft_{\ZZ}^{n})^{\ast}}
		\binom{d - \mcS(x) + n}{n} e^{\langle x,\, \xi \rangle}.
	\end{equation}
	Moreover, setting $\xi = 0$, this gives
	\[
		\dim_{\CC} H^{0}(M_{\nu}^{\leq d} ; \mcL_{\nu}^{\leq d}) =
		\sum_{ x \in |\Delta_{\nu}^{\leq d}| \cap ( \mft_{\ZZ}^{n} )^{\ast} }
		\binom{d - \mcS(x) + n}{n},
	\]
	and hence, by Corollary~\ref{cor:outer-chamber-vanishing}, the character in \eqref{eqn:weighted-lattice-point-character} agrees with the Kawasaki localisation formula in
	Theorem~\ref{theorem:character-hypertoric-orbifold}.
\end{corollary}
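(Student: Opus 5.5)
The corollary follows by chaining three earlier results. Fix a positive regular integral level $d\in\Cout$. First, Theorem~\ref{theorem:algebraic-cut-weight-space} supplies the $T^{n}$-equivariant isomorphism $\Psi_{d}:F_{d}H^{0}(M_{\nu};\mcL_{\nu})\to H^{0}(M_{\nu}^{\leq d};\mcL_{\nu}^{\leq d})$. Consequently the two spaces have the same character.

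Second, Theorem~\ref{theorem:combinatorial-multiplicity} decomposes $F_{d}H^{0}(M_{\nu};\mcL_{\nu})$ as $\bigoplus_{x}\CC_{x}^{\oplus\mult_{d}(x)}$. Here $\mult_{d}(x)=\binom{d-\mcS(x)+n}{n}$ when $\mcS(x)\leq d$, and $\mult_{d}(x)=0$ otherwise. The support is finite, so the character is the finite sum $\sum_{x:\,\mcS(x)\leq d}\binom{d-\mcS(x)+n}{n}e^{\langle x,\xi\rangle}$. This uses the convention $\chi_{x}(e^{\xi})=e^{\langle x,\xi\rangle}$ from Subsection~\ref{subsection:subsection-5.1-equivariant-localisation-formulae}, applied to pullback weights as fixed in Notation~\ref{notation:representation-convention}.

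Third, I would rewrite the index set of this sum. Since $d>0$ is regular, Proposition~\ref{prop:slack-support} gives $\{x\mid\mcS(x)\leq d\}=|\Delta_{\nu}^{\leq d}|$. Intersecting with the lattice $(\mft_{\ZZ}^{n})^{\ast}$ turns the index set into exactly the one in \eqref{eqn:weighted-lattice-point-character}.

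The dimension formula then follows by setting $\xi=0$, since the sum is finite.

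For the final clause, Corollary~\ref{cor:outer-chamber-vanishing} gives $\chi_{T^{n}}(M_{\nu}^{\leq d};\mcL_{\nu}^{\leq d})=[H^{0}(M_{\nu}^{\leq d};\mcL_{\nu}^{\leq d})]$ in $R(T^{n})$. By Theorem~\ref{theorem:character-hypertoric-orbifold}, the left-hand side evaluated at generic $\xi$ is the Kawasaki sum. Both sides are then Laurent polynomials, equivalently entire functions of $\xi$, that agree on the dense set of generic $\xi$. So the weighted lattice-point sum equals the Kawasaki character wherever the latter is defined, and it extends that character regularly to all $\xi$.

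I expect no real obstacle. The only point needing care is the weight-sign bookkeeping: one must check that the weight $x$ attached to $p_{x}$ in Theorem~\ref{theorem:combinatorial-multiplicity} is the pullback weight under which the character is computed, and that this matches the convention $\beta_{p}=\mu_{\RR}(p)$ used in the localisation formula. Both are fixed consistently by Notation~\ref{notation:representation-convention} and Notation~\ref{notation:weight-conventions}, so I would simply cite them.
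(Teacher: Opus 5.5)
Your proposal is correct and follows the paper's route. You transport the weight decomposition of Theorem~\ref{theorem:combinatorial-multiplicity} along the equivariant isomorphism $\Psi_{d}$ and identify $\{\mcS \leq d\}$ with the polyptych support via Proposition~\ref{prop:slack-support}. You then combine Corollary~\ref{cor:outer-chamber-vanishing} with Theorem~\ref{theorem:character-hypertoric-orbifold} to match the Kawasaki character, exactly as the paper does.

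Your extra density argument is harmless but not needed. The localisation theorem already evaluates the class in $R(T^{n})$ at every generic $\xi$, and that class equals $[H^{0}(M_{\nu}^{\leq d};\mcL_{\nu}^{\leq d})]$. Also, only the index character and the lattice sum are Laurent polynomials; the Kawasaki sum itself is only a meromorphic expression.
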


\begin{proof}
	By \eqref{eqn:filtration-cut-sections}, the representation $H^{0}( M_{\nu}^{\leq d} ; \mcL_{\nu}^{\leq d} )$ has the weight decomposition given by Theorem~\ref{theorem:combinatorial-multiplicity}.
	Summing the corresponding characters gives \eqref{eqn:weighted-lattice-point-character}, and setting $\xi=0$ gives the dimension formula.

	Corollary~\ref{cor:outer-chamber-vanishing} identifies this section-space character with the equivariant index, which Theorem~\ref{theorem:character-hypertoric-orbifold} computes by orbifold localisation.
\end{proof}

\begin{remark}[Enlarged Moment Image]
	\label{remark:enlarged-cut-moment-image}
	Suppose $d > 0$ is a regular integral cut level in the outer cut chamber, and let $n = 1$.
	In this case, the multiplicity $\mult_{d}(x)$ counts the lattice points above $x$ in a two-dimensional moment polygon, obtained by using the auxiliary-circle moment map as a second coordinate.
	Rotation of the auxiliary coordinate $\zeta$ descends to a Hamiltonian circle action on $M_{\nu}^{\leq d}$, with moment map $h$ induced by $\tfrac{1}{2}|\zeta|^{2} = d - \Phi$ on $\rho^{-1}(d)$.
	Together with the residual $T^{1}$-action, this gives an effective Hamiltonian $T^{1} \times U_{1}$-action on the four-real-dimensional cut, making it a symplectic toric orbifold.

	After identifying the residual weight lattice with $\ZZ$, the joint moment map $(\mu_{\RR},h)$ has image
	\begin{equation}
		\label{eqn:enlarged-cut-moment-image}
		P_{\nu}^{\leq d} = \{ ( x , h ) \in \RR^{2} ~|~ 0 \leq h \leq d - \mcS(x) \}.
	\end{equation}
	Indeed, Lemma~\ref{lemma:slack-function} gives $h \leq d - \mcS(x)$, and Proposition~\ref{prop:cut-component-image} shows that the graph $h = d - \mcS(x)$ is attained on the cut extended core.
	Its endpoints lie on $h = 0$, so orbifold convexity \cite[Theorem~5.2]{LermanTolman1997} fills the region between this graph and the horizontal axis.
	For each integral $x$ with $\mcS(x)\leq d$, the lattice points in the vertical slice are
	\[
		\{ (x , j) ~|~ j \in \ZZ ,~ 0 \leq j \leq d - \mcS(x) \},
	\]
	whose number is $d - \mcS(x) + 1 = \mult_{d}(x)$.
	Thus, the multiplicity at $x$ counts the lattice points that project to $x$, and the weighted count becomes $\dim_{\CC}H^{0}(M_{\nu}^{\leq d} ; \mcL_{\nu}^{\leq d}) = \#( P_{\nu}^{\leq d} \cap \ZZ^{2} )$.
	We illustrate this interpretation in Figure~\ref{figure:rank-one-enlarged-moment-polytope}, using the $M_{\nu} \cong T^{\ast}\CC\PP^{1}$ case as an example.

	For $n \geq 2$, the same moment-image description holds in $( \mft^{n} )^{\ast} \times \RR$, but the enlarged torus has dimension $n + 1 < 2n$ and so this action is not toric.
	Hence the $d - \mcS(x) + 1$ lattice heights do not in general account for the full multiplicity $\binom{d - \mcS(x) + n}{n}$ when $n \geq 2$.

	\begin{figure}[htbp]
		\centering
		\begin{tikzpicture}[
			x=0.72cm,
			y=0.72cm,
			font=\small
			]

			\filldraw[
			fill=gray!12,
			draw=gray!65,
			line width=1.2pt
			]
			(-4 , 0) -- (0 , 4) -- (6 , 4) -- (10,0) -- cycle;

			\foreach \X/\H in {
				-2/2,
				2/4,
				4/4,
				6/4,
				8/2
			}{
				\draw[
				gray!75!black,
				dashed,
				line width=0.85pt
				] (\X , 0) -- (\X,\H);
			}

			\draw[->, line width=0.85pt]
			(-5.7 , 0) -- (11.1 , 0);
			\draw[->, line width=0.85pt]
			(0 , -0.10) -- (0 , 5.1);

			\node[left] at (-0.12 , 5.35) {$h$};
			\node[below] at (11.35 , -0.12) {$x$};

			\foreach \X/\Y in {
				-4/0,
				-2/0, -2/2,
				0/0,  0/2,  0/4,
				2/0,  2/2,  2/4,
				4/0,  4/2,  4/4,
				6/0,  6/2,  6/4,
				8/0,  8/2,
				10/0
			}{
				\filldraw[
				fill=black!78,
				draw=black!90,
				line width=0.3pt
				] (\X,\Y) circle[radius=1.65pt];
			}

			\node[text=gray!75!black]
			at (-2 , -0.55) {$\Delta_{1}^{\leq d}$};
			\node[text=gray!75!black]
			at (3 , -0.55) {$\Delta_{\emptyset}^{\leq d}$};
			\node[text=gray!75!black]
			at (8 , -0.55) {$\Delta_{2}^{\leq d}$};

		\end{tikzpicture}
		\caption{
			The enlarged moment polygon $P_{\nu}^{\leq d}$ for $M_{\nu} \cong T^{\ast}\CC\PP^{1}$, whose upper boundary is the graph $h = d - \mcS(x)$.
			For each integral residual weight $x \in |\Delta_{\nu}^{\leq d}|$, the vertical slice contains $\mult_{d}(x) = d - \mcS(x) + 1$ lattice points, and summing these multiplicities gives the dimension of the cut section space.
		}
		\label{figure:rank-one-enlarged-moment-polytope}
	\end{figure}
\end{remark}


\subsection{Examples}
\label{subsection:subsection-7.4-combinatorial-quantisation-examples}

We finish this section by returning to the examples of Subsections~\ref{subsection:subsection-4.4-worked-cut-spaces-moment-polyptychs}~and~\ref{subsection:subsection-6.2-worked-equivariant-character}.
We compute their quantisations from the combinatorial multiplicity formula of Theorem~\ref{theorem:combinatorial-multiplicity}, before comparing the resulting weighted lattice-point sums with the characters obtained earlier by the orbifold localisation formula.
In the cotangent-bundle examples, we also show how the same dimensions can be checked independently using the projective-bundle description of the cut space.

Throughout this subsection, $d$ is a positive regular integral level in the outer cut chamber, so that Theorem~\ref{theorem:algebraic-cut-weight-space} identifies $F_{d}H^{0}( M_{\nu} ; \mcL_{\nu} )$ with $H^{0}( M_{\nu}^{\leq d} ; \mcL_{\nu}^{\leq d} )$.
In each example, we determine the slack function $\mcS$ and identify the $n$ independent generators $q_{1} , \ldots , q_{n}$ of cut degree one obtained after imposing the complex moment-map equations.
Together with the auxiliary coordinate $y$, these give the basis \eqref{eqn:weight-x-basis} used to calculate the multiplicities.

\begin{example}[$T^{\ast}\CC\PP^{1}$, continued]
	\label{example:TCP1-example-3}
	We continue Example~\ref{example:TCP1-example-2}, identifying $( \mft^{1} )^{\ast} \cong \RR$, and writing $x$ for the residual moment coordinate.
	From Example~\ref{example:TCP1-example-1}, the arrangement functionals are $\ell_{1}(x) = x$ and $\ell_{2}(x) = m - x$, so that $\mcS(x) = \max( 0 , -x ) + \max( 0 , x - m )$ and $|\Delta_{\nu}^{\leq d}| = \{ \mcS \leq d \} = [ -d , m + d ]$, which agrees with the moment-map image computed there.
	For $x \in \ZZ$, the minimal monomial \eqref{eqn:minimal-monomial} is $p_{x} = z_{1}^{\max( 0 , x )} z_{2}^{\max( 0 , m - x )} w_{1}^{\max( 0 , -x )} w_{2}^{\max( 0 , x - m )}$, which has cut degree $\mcS(x)$, so the remaining cut degree at the residual weight $x$ is $d - \mcS(x)$.

	The complex moment-map relation $z_{1}w_{1} + z_{2}w_{2} = 0$ leaves one independent generator $q$ of cut degree one.
	By \eqref{eqn:weight-x-basis}, the monomials $p_{x}q^{a}y^{c}$ with $a , c \in \ZZ_{\geq 0}$ and $a + c = d - \mcS(x)$ form a basis of the residual-weight-$x$, cut-degree-$d$ subspace, and counting these monomials gives $\mult_{d}(x) = d - \mcS(x) + 1$ when $\mcS(x) \leq d$, and zero otherwise, as in \eqref{eqn:multiplicity} with $n = 1$.
	Substituting the piecewise formula for $\mcS(x)$ gives
	\[
		\mult_{d}(x) = \begin{cases}
			d + x + 1, \qquad     & -d \leq x < 0,     \\
			d + 1, \qquad         & 0 \leq x \leq m,   \\
			m + d - x + 1, \qquad & m < x \leq m + d,
		\end{cases}
	\]
	and $\mult_{d}(x) = 0$ otherwise.
	So the multiplicity is constant on the compact base region $\Delta_{\emptyset} = [0 , m]$ and decreases linearly from its endpoints towards the cut boundaries at $-d$ and $m+d$.

	Summing over the integral weights in the three ranges gives
	\[
		\dim_{\CC} H^{0}( M_{\nu}^{\leq d} ; \mcL_{\nu}^{\leq d} ) = \sum_{x \in \ZZ} \mult_{d}(x) = 2 \cdot \frac{d( d + 1 )}{2} + ( m + 1 )( d + 1 ) = ( d + 1 )( m + d + 1 ),
	\]
	and Corollary~\ref{cor:weighted-lattice-point-character} gives the full residual character $\chi_{d}(t) = \sum_{x = -d}^{m + d}(d - \mcS(x) + 1)t^{x}$, which is the Laurent polynomial obtained by the orbifold localisation formula in Example~\ref{example:TCP1-example-2}.
\end{example}


For the independent dimension checks in the cotangent-bundle examples, let us first fix the projective-bundle convention and identify the cut polarisation.
Write $\pi:\PP(E) \ra X_{\nu}$ for the projection, with $\PP(E)$ parametrising the one-dimensional subspaces of the fibres of $E$.
Then $\mcO_{\PP(E)}(-1)$ is the tautological subbundle, and $\pi_{\ast}\mcO_{\PP(E)}(d) = \Sym^{d}(E^{\vee})$ for $d \geq 0$.
The cut divisor is the relative hyperplane divisor: the section $s_{\zeta}$ from Lemma~\ref{lemma:variation-polarisation-lemma} identifies $\mcO(D_{d}) \cong \mcO_{\PP(E)}(1)$.

The restriction of $\mcL_{\nu}^{\leq d}$ to the distinguished zero section determines the line bundle pulled back from $X_{\nu}$.
To determine the relative factor, consider a projective line in a fibre of $\pi$.
It is the level-$d$ symplectic cut of a complex line and has reduced symplectic area $d$ in our period-one normalisation.
Since $c_{1}(\mcL_{\nu}^{\leq d})=[\w_{d}]$, the cut line bundle has degree $d$ on that projective line, so the relative factor is $\mcO_{\PP(E)}(d)$~--~these observations determine the polarisations used in Remarks~\ref{remark:projective-bundle-TCP1-computation},~\ref{remark:projective-bundle-TCP2-computation},~and~\ref{remark:projective-bundle-TCP1xCP1-computation} below.

\begin{remark}
	\label{remark:projective-bundle-TCP1-computation}
	With this established, let us show how the dimension obtained in Example~\ref{example:TCP1-example-3} can also be recovered via the projective-bundle description of the cut space.
	Write $\pi : \PP(E) \ra X_{\nu} = \CC\PP^{1}$ with $E = \Omega_{\CC\PP^{1}}^{1} \oplus \mcO_{\CC\PP^{1}} \cong \mcO_{\CC\PP^{1}}(-2) \oplus \mcO_{\CC\PP^{1}}$, so that we have $M_{\nu}^{\leq d} \cong \PP(E)$ and $\mcL_{\nu}^{\leq d} \cong \pi^{\ast}\mcO_{\CC\PP^{1}}(m) \otimes \mcO_{\PP(E)}(d)$.
	Since $\pi_{\ast} \mcO_{\PP(E)}(d) \cong \Sym^{d}( \mcO_{\CC\PP^{1}}(2) \oplus \mcO_{\CC\PP^{1}} ) \cong \bigoplus_{a = 0}^{d} \mcO_{\CC\PP^{1}}(2a)$, this gives
	\[
		H^{0}( M_{\nu}^{\leq d} ; \mcL_{\nu}^{\leq d} ) \cong \bigoplus\nolimits_{a = 0}^{d} H^{0}( \CC\PP^{1} ; \mcO_{\CC\PP^{1}}(m + 2a) ),
	\]
	and taking dimensions gives $\sum_{a = 0}^{d} ( m + 2a + 1 ) = ( d + 1 )( m + d + 1 )$, as above.
\end{remark}

\begin{remark}
	\label{remark:lattice-point-count-manifold}
	Remark~\ref{remark:enlarged-cut-moment-image} gives a geometric interpretation of the multiplicities in Example~\ref{example:TCP1-example-3}.
	The enlarged moment polygon is $P_{\nu}^{\leq d} = \Conv \{ (-d , 0) , (0 , d) , (m , d) , (m + d , 0) \}$.
	For each integral $x \in [-d , m + d]$, its vertical slice runs from the height $0$ to $d - \mcS(x)$, which contains $d - \mcS(x) + 1$ lattice points, equalling the multiplicity already computed.

	Thus, keeping the auxiliary-circle moment coordinate $h$ turns the weighted count on the interval into the ordinary lattice-point count $\dim_{\CC}H^{0}(M_{\nu}^{\leq d} ; \mcL_{\nu}^{\leq d}) = \#( P_{\nu}^{\leq d} \cap \ZZ^{2} )$.
\end{remark}


\begin{example}[$A_{2}$-Resolution, continued]
	\label{example:A2-resolution-example-3}
	We return to Example~\ref{example:A2-resolution-example-2} with $m_{1}, m_{2}, d \in \ZZ_{> 0}$, $0 < m_{1} < m_{2}$, and $d - (m_{2} - m_{1}) = 2s$ with $s \in \ZZ_{> 0}$.
	Note that the condition $d - (m_{2} - m_{1}) = 2s > 0$ places $d$ in the outer cut chamber and makes the right boundary vertex $m_{2} + s$ integral, so the pullback fibre character at that vertex is trivial.
	Furthermore, we continue using the parity assumption so that the result we obtain here can be compared directly with the localisation calculation in Example~\ref{example:A2-resolution-example-2}.
	From Example~\ref{example:A2-resolution-example-1}, the arrangement functionals are $\ell_{1}(x) = x$, $\ell_{2}(x) = m_{1} - x$, and $\ell_{3}(x) = m_{2} - x$. The bounded regions are $\Delta_{\emptyset} = [0 , m_{1}]$ and $\Delta_{\{2\}} = [m_{1} , m_{2}]$, the latter being the moment polytope of the additional compact core component $\mcE_{\{2\}}$, and the rightmost unbounded region is $\Delta_{\{2 , 3\}} = [m_{2} , +\infty)$.
	Hence, $\mcS(x) = \max( 0 , -x ) + \max( 0 , x - m_{1} ) + \max( 0 , x - m_{2} )$ and $|\Delta_{\nu}^{\leq d}| = \{ \mcS \leq d \} = [ -d , m_{2} + s ]$, where the rightmost endpoint is $m_{2} + s = (m_{1} + m_{2} + d) / 2$.

	The complex moment-map relations $z_{1}w_{1} + z_{2}w_{2} = 0$ and $z_{1}w_{1} + z_{3}w_{3} = 0$ leave one independent generator $q$ of cut degree one.
	With $p_{x}$ as in \eqref{eqn:minimal-monomial}, the basis \eqref{eqn:weight-x-basis} consists of $p_{x}q^{a}y^{c}$ with $a , c \in \ZZ_{\geq 0}$ and $a + c = d - \mcS(x)$.
	Thus, as in the previous rank-one example, $\mult_{d}(x) = d - \mcS(x) + 1$ for $\mcS(x) \leq d$, and substituting the piecewise expression for $\mcS(x)$ gives
	\[
		\mult_{d}(x) = \begin{cases}
			d + x + 1, \qquad                  & -d \leq x < 0,             \\
			d + 1, \qquad                      & 0 \leq x \leq m_{1},       \\
			m_{1} + d - x + 1, \qquad          & m_{1} < x \leq m_{2},      \\
			m_{1} + m_{2} + d - 2x + 1, \qquad & m_{2} < x \leq m_{2} + s ,
		\end{cases}
	\]
	and $\mult_{d}(x) = 0$ otherwise.
	The multiplicity is constant on $\Delta_{\emptyset}$, decreases with slope $-1$ across $\Delta_{\{2\}}$, and with slope $-2$ on $\Delta_{\{2 , 3\}}^{\leq d}$.
	On the rightmost region, $\mcS(x) = 2x - m_{1} - m_{2}$, so solving for $\mcS(x) = d$ gives the right endpoint $(m_{1} + m_{2} + d) / 2$.

	The dimension is obtained by summing the multiplicities over the four ranges; the range $m_{2} < x \leq m_{2} + s$ contributes $\sum_{x = m_{2} + 1}^{m_{2} + s} (m_{1} + m_{2} + d - 2x + 1) = s^{2}$, where we have used $d - (m_{2} - m_{1}) = 2s$.
	Adding the sums over the other three ranges gives
	\[
		\begin{aligned}
			\dim_{\CC}H^{0}(M_{\nu}^{\leq d} ; \mcL_{\nu}^{\leq d})
			 & =
			\sum\nolimits_{x \in \ZZ}\mult_{d}(x) \\
			 & =
			(m_{1} + d + 1)(d + 1) - s(s + 1),
		\end{aligned}
	\]
	which agrees with the dimension obtained in Example~\ref{example:A2-resolution-example-2}.
	At the level of characters, Corollary~\ref{cor:weighted-lattice-point-character} identifies the character $\chi_{d}(t) = \sum_{x = -d}^{m_{2} + s} \mult_{d}(x) t^{x}$ with the one computed there by the orbifold localisation formula.
\end{example}


\begin{remark} \label{remark:lattice-point-count-orbifold}
	By Remark~\ref{remark:enlarged-cut-moment-image}, the enlarged moment polygon in the $A_{2}$ example is
	\[
		P_{\nu}^{\leq d} = \Conv \{(-d , 0) , (0 , d) , (m_{1} , d) , (m_{2} , d - (m_{2} - m_{1})) , (m_{2} + s , 0)\}.
	\]
	The interpretation of Remark~\ref{remark:lattice-point-count-manifold} that considers the vertical slices again gives the quantisation dimension as its ordinary lattice-point count~--~what changes here is the local lattice geometry at the rightmost vertex.

	At $(m_{2} + s , 0)$, the adjacent primitive edge directions $(-1 , 0)$ and $(-1 , 2)$ have determinant of absolute value $2$, so the polygon is not Delzant there.
	Rather, the corresponding point lying above the rightmost vertex has the local orbifold group $\ZZ/2\ZZ$ by \cite[\S6]{LermanTolman1997}, agreeing with $\Gamma_{m_{2} + s}$ in Example~\ref{example:A2-resolution-example-2}.
\end{remark}

The weighted lattice-point description allows us to compare successive members of the $A_{r}$-family explicitly: adding the arrangement point $m_{r}$ shortens the support and reduces the multiplicities to its right.
For $r \geq 2$, let us compare the $A_{r}$- and $A_{r - 1}$-resolutions at the same positive integral cut level $d$.
The $A_{r}$-arrangement is obtained from the $A_{r - 1}$ one by adding the point $m_{r}$, so the slack functions satisfy
\begin{equation}
	\label{eqn:Ar-slack-recursion}
	\mcS^{(r)}(x) = \mcS^{(r - 1)}(x) + \max\{ 0 ,\, x - m_{r} \}.
\end{equation}
Set $c_{\max}^{(r)} := (r - 1)m_{r} - \sum_{i < r} m_{i}$ and assume that $d > c_{\max}^{(r)}$.
Since $c_{\max}^{(r)} - c_{\max}^{(r - 1)} = (r - 1)(m_{r} - m_{r - 1}) > 0$, the same level lies in the outer cut chambers of both resolutions.

For $s \in \{ r , r - 1 \}$, write $x_{s}^{+} := (d + \sum_{i = 1}^{s} m_{i}) / s$ for the rightmost endpoint of the $A_{s}$-support, as in Example~\ref{example:A2-resolution-example-1}.
The outer-chamber condition gives $m_{r} < x_{r}^{+} < x_{r - 1}^{+}$, so the $A_{r}$-support is contained in the $A_{r - 1}$-support, with its rightmost endpoint still lying beyond $m_{r}$.
Let $a_{r} := \lfloor x_{r}^{+} \rfloor$ be the largest integral weight in the $A_{r}$-support.

By the rank-one multiplicity formula \eqref{eqn:multiplicity}, equation \eqref{eqn:Ar-slack-recursion} gives $\mult_{d}^{(r)}(x) = \mult_{d}^{(r - 1)}(x) - \max \{ 0 ,\, x - m_{r} \}$ for every integral weight $x$ in the $A_{r}$-support.
Therefore, the $A_{r}$-character is obtained from the $A_{r - 1}$ one by truncating at $a_{r}$ and reducing the remaining multiplicities for weights $x > m_{r}$:
\begin{equation}
	\label{eqn:Ar-character-recursion}
	\chi^{(r)}_{d}(t) = \big[ \chi^{(r - 1)}_{d}(t) \big]_{\leq a_{r}} - \sum\nolimits_{x = m_{r} + 1}^{a_{r}} ( x - m_{r} ) \, t^{x}.
\end{equation}

Starting with the $T^{\ast}\CC\PP^{1}$ character at $r = 1$ from Example~\ref{example:TCP1-example-2}, applying \eqref{eqn:Ar-character-recursion} repeatedly computes every member of the family by these two operations.
Thus, the changes in the Kawasaki boundary contributions in
Subsection~\ref{subsection:subsection-6.2-worked-equivariant-character} are reflected here by truncating the weight support and reducing the retained multiplicities.


\begin{example}[$T^{\ast}\CC\PP^{2}$, continued]
	\label{example:TCP2-example-3}
	We continue Example~\ref{example:TCP2-example-2}, identifying $( \mft^{2} )^{\ast} \cong \RR^{2}$ and writing $x = ( x_{1} , x_{2} )$.
	From Example~\ref{example:TCP2-example-1}, the arrangement functionals are $\ell_{1}(x) = x_{1}$, $\ell_{2}(x) = x_{2}$, and $\ell_{3}(x) = m - x_{1} - x_{2}$, so that $\mcS(x) = \max( 0 , -x_{1} ) + \max( 0 , -x_{2} ) + \max( 0 , x_{1} + x_{2} - m )$, and $|\Delta_{\nu}^{\leq d}| = \{ \mcS \leq d \}$ is the hexagonal support.
	The complex moment-map relation $z_{1}w_{1} + z_{2}w_{2} + z_{3}w_{3} = 0$ leaves two independent generators $q_{1} , q_{2}$ of cut degree one, hence the basis in \eqref{eqn:weight-x-basis} consists of the monomials $p_{x}q_{1}^{a}q_{2}^{b}y^{c}$ where $a, b, c \in \ZZ_{\geq 0}$ and $a + b + c = d - \mcS(x)$.
	Distributing the remaining degree among $q_{1},q_{2},y$ gives $\mult_{d}(x) = \tbinom{d-\mcS(x)+2}{2}$ when $\mcS(x) \leq d$, and zero otherwise, in agreement with \eqref{eqn:multiplicity}.
	Furthermore, the character is
	\[
		\chi_{d}(t_{1} , t_{2}) = \sum_{x \in |\Delta_{\nu}^{\le d}| \cap \ZZ^{2}} \binom{d - \mcS(x) + 2}{2} \, t_{1}^{x_{1}}t_{2}^{x_{2}},
	\]
	which agrees with the character computed by the orbifold localisation formula in Example~\ref{example:TCP2-example-2}, using Corollary~\ref{cor:weighted-lattice-point-character}.
	Unlike the two previous rank-one examples, the count here is now over a two-dimensional region, and the multiplicity at $x$ is quadratic rather than linear in the remaining cut degree $d - \mcS(x)$.
\end{example}


\begin{remark}
	\label{remark:projective-bundle-TCP2-computation}
	The projective-bundle model also gives an independent calculation of $\dim_{\CC} H^{0}(M_{\nu}^{\leq d} ; \mcL_{\nu}^{\leq d})$.
	Set $E:=\Omega_{\CC\PP^{2}}^{1}\oplus\mcO_{\CC\PP^{2}}$, and let $\pi:\PP(E)\ra\CC\PP^{2}$ be the projection, so that the cut space and its line bundle are $M_{\nu}^{\leq d} \cong \PP(E)$ and $\mcL_{\nu}^{\leq d}\cong \pi^{\ast}\mcO_{\CC\PP^{2}}(m) \otimes\mcO_{\PP(E)}(d)$.
	Using $\pi_{\ast}\mcO_{\PP(E)}(d) = \Sym^{d}(E^{\vee})$, we get
	\[
		H^{0}( M_{\nu}^{\leq d} ; \mcL_{\nu}^{\leq d} )
		\cong
		\bigoplus\nolimits_{a = 0}^{d} H^{0}\!\left( \CC\PP^{2} ; \mcO_{\CC\PP^{2}}(m) \otimes \Sym^{a}(T\CC\PP^{2}) \right).
	\]
	The Euler sequence $0 \to \mcO \to \mcO(1)^{\oplus 3} \to T\CC\PP^{2} \to 0$ gives, for $a \ge 0$,
	\[
		\dim_{\CC} H^{0}\!\left( \CC\PP^{2} ; \mcO_{\CC\PP^{2}}(m) \otimes \Sym^{a} (T\CC\PP^{2}) \right) = \binom{a + 2}{2}\binom{m + a + 2}{2} - \binom{a + 1}{2}\binom{m + a + 1}{2},
	\]
	and the sum over $a = 0, \ldots, d$ telescopes, giving $\dim_{\CC} H^{0}( M_{\nu}^{\leq d} ; \mcL_{\nu}^{\leq d} ) = \binom{d + 2}{2} \binom{m + d + 2}{2}$.
	This agrees with the dimension obtained by the localisation formula in Example~\ref{example:TCP2-example-2}, and hence with the weighted lattice-point count over the hexagonal support.
\end{remark}


\begin{example}[$T^{\ast}(\CC\PP^{1} \times \CC\PP^{1})$, continued]
	\label{example:TCP1xCP1-example-3}
	Lastly, we continue Example~\ref{example:TCP1xCP1-example-2}, using the same three-variable monomial count as in the previous $T^{\ast}\CC\PP^{2}$ example.
	From Example~\ref{example:TCP1xCP1-example-1}, the arrangement functionals are $\ell_{1}(x) = x_{1}$, $\ell_{2}(x) = x_{2}$, $\ell_{3}(x) = m_{1} - x_{1}$, $\ell_{4}(x) = m_{2} - x_{2}$, so that $\mcS(x) = \sum_{i = 1}^{4}\max( 0 , -\ell_{i}(x) )$ and $|\Delta_{\nu}^{\leq d}| = \{ \mcS \leq d \}$ is the octagonal support.
	The relations $z_{1}w_{1} + z_{3}w_{3} = 0$ and $z_{2}w_{2} + z_{4}w_{4} = 0$ leave two independent generators $q_{1} , q_{2}$ of cut degree one.
	Together with $y$, they give $\mult_{d}(x) = \binom{d - \mcS(x) + 2}{2}$ for $\mcS(x) \leq d$, and zero otherwise.
	Corollary~\ref{cor:weighted-lattice-point-character} therefore gives the character as the weighted lattice-point sum over this support, and also agrees with the character computed in Example~\ref{example:TCP1xCP1-example-2}.
\end{example}


\begin{remark}
	\label{remark:projective-bundle-TCP1xCP1-computation}
	Set $E := \mcO(-2 , 0) \oplus \mcO(0 , -2) \oplus \mcO$, and let $\pi : \PP(E) \ra \CC\PP^{1} \times \CC\PP^{1}$ be the projection.
	The projective-bundle model gives $M_{\nu}^{\leq d} \cong \PP(E)$ and $\mcL_{\nu}^{\leq d} \cong \pi^{\ast}\mcO(m_{1} , m_{2}) \otimes \mcO_{\PP(E)}(d)$.
	Here, $\pi_{\ast} \mcO_{\PP(E)}(d) = \Sym^{d}(E^{\vee})$ decomposes into the line bundles $\mcO(2a , 2b)$ with $a , b \geq 0$ and $a + b \leq d$, so
	\[
		\dim_{\CC} H^{0}(M_{\nu}^{\leq d} ; \mcL_{\nu}^{\leq d}) = \sum\nolimits_{\substack{a , b \geq 0 \\ a + b \leq d}} (m_{1} + 2a + 1)(m_{2} + 2b + 1).
	\]
	Evaluating this sum once again recovers the dimension formula in Example~\ref{example:TCP1xCP1-example-2}, and hence the projective-bundle model provides an independent check of the weighted lattice-point count over the octagonal support.
\end{remark}


The projective-bundle computations in Remarks~\ref{remark:projective-bundle-TCP1-computation},~\ref{remark:projective-bundle-TCP2-computation}, and~\ref{remark:projective-bundle-TCP1xCP1-computation} provide independent dimension checks in the cotangent-bundle examples.
The two descriptions organise the sections differently: the monomial basis \eqref{eqn:weight-x-basis} distributes the remaining cut degree among $q_{1} , \ldots , q_{n} , y$, while the projective-bundle calculation uses the symmetric powers in $\pi_{\ast}\mcO_{\PP(E)}(d) = \Sym^{d}(E^{\vee})$.
Note, however, that the weighted character formula \eqref{eqn:weighted-lattice-point-character} also applies when the compact core is reducible, as in Example~\ref{example:A2-resolution-example-3}, whereas the projective-bundle calculations can only be done in a few select cases.
It therefore computes the cut quantisations in both settings without requiring a projective-bundle description.

%% file: sections/8-conclusion.tex
\section{Conclusion}
\label{section:section-8-conclusion}

At positive regular integral levels in the outer cut chamber, the compact cut spaces provide exact finite stages of the hypertoric quantisation with the fibre-rotation grading.
Theorem~\ref{theorem:algebraic-cut-weight-space} identifies $H^{0}(M_{\nu}^{\leq d} ; \mcL_{\nu}^{\leq d})$ with the finite-dimensional filtration piece $F_{d}H^{0}(M_{\nu} ; \mcL_{\nu})$, while Corollary~\ref{cor:outer-chamber-vanishing} identifies the equivariant index with the class of this representation in $R(T^{n})$.
The successive filtration quotients recover the individual fibre-rotation weight spaces.
The construction therefore relates the non-compact quantisation to compact geometric quantisation without requiring an asymptotic limit of the fixed-point expressions.

The moment polyptych makes these finite stages computable in two complementary ways.
Its vertices, incident edges, and local lattice data, together with the pre-quantum fibre data, determine the contributions to the Kawasaki localisation formula in Theorem~\ref{theorem:character-hypertoric-orbifold}, while the slack function gives the residual-weight multiplicities in Theorem~\ref{theorem:combinatorial-multiplicity}.
By Corollary~\ref{cor:weighted-lattice-point-character}, the weighted lattice-point formula \eqref{eqn:weighted-lattice-point-character} computes the same character, and the worked examples exhibit this agreement.
For a smooth hypertoric variety $M_{\nu}$, Corollary~\ref{corollary:smooth-cut-space-characterisation} shows that an outer-chamber cut is smooth only if the distinguished open subset identified with $T^{\ast}X_{\nu}$ coincides with the whole of $M_{\nu}$.

Three directions for further study arise directly from these results.
First, the multiplicity formula counts monomials in the generators $q_{1} , \ldots , q_{n}$ and the auxiliary coordinate $y$, all of cut degree one, as in \eqref{eqn:weight-x-basis}.
The question is whether this count has a geometric realisation in higher rank.
At a positive regular integral level $d$ in the outer cut chamber, Remark~\ref{remark:enlarged-cut-moment-image} provides such an interpretation when $n = 1$: rotating the auxiliary cut coordinate enlarges the residual action to a toric action, and the lattice points in vertical slices of $P_{\nu}^{\leq d}$ count the multiplicities, as in Remarks~\ref{remark:lattice-point-count-manifold}~and~\ref{remark:lattice-point-count-orbifold}.
For $n \geq 2$, the enlarged torus has dimension $n + 1 < 2n$, so its action on the cut space is no longer toric.
For a residual lattice weight $x$ with $\mcS(x) \leq d$, the $d - \mcS(x) + 1$ lattice heights do not in general account for the full multiplicity $\binom{d - \mcS(x) + n}{n}$.
This binomial coefficient counts the lattice points in the $(d - \mcS(x))$-fold dilate of the standard $n$-simplex.
It remains to investigate whether a completely integrable system extending the enlarged torus action, or a toric degeneration of the cut space, can realise this simplex count geometrically.
Such a construction could provide a closer comparison with the moment-polytope geometry in the Jeffrey--Weitsman setting.

Second, what further geometric information is determined by the edge and lattice data of the moment polyptych, beyond the local representations and quantisation characters computed here?
Subsection~\ref{subsection:subsection-7.2-higher-cohomology-vanishing} describes variation within a cut chamber; a further problem is to describe how the polyptych and the cut quantisation change when the cut level crosses a wall.
One may also ask how the polyptych and quantisation change under reversal of the arrangement coorientations.
As Remark~\ref{remark:hyperkahler-analogues} explains, such a reversal can change the distinguished fibre-rotation action even when the underlying hypertoric variety remains unchanged up to $T^{n}$-equivariant diffeomorphism.

Third, the section-ring construction of the algebraic cut and the associated fibre-rotation filtration apply more generally to a polarised semiprojective variety equipped with a compatible algebraic $\CC^{\ast}$-action and a finitely generated section ring.
Our section-space and combinatorial results, however, use the additional properties of hypertoric varieties.
At outer-chamber levels, the unstable locus is $\mcC \times \{0\}$, of complex codimension $n + 1 \geq 2$.
Together with normality and reflexivity, this allows sections to extend uniquely across the unstable locus, thus allowing us to identify the filtration pieces with cut section spaces in Theorem~\ref{theorem:algebraic-cut-weight-space}.
Separately, the hypertoric coordinate ring provides an explicit monomial description of the filtration, yielding non-negative fibre-rotation weights and the residual $T^{n}$-weight multiplicity of Theorem~\ref{theorem:combinatorial-multiplicity}.
A natural question then is which other polarised semiprojective varieties admit analogous extension and combinatorial results.

The Higgs-bundle motivation mentioned in Section~\ref{section:section-1-introduction} raises a broader question: can analogous relations between the compact cut spaces, a circle grading, and weighted lattice-point counts be established for Higgs-bundle moduli?
Our results concern only hypertoric varieties and do not establish such a construction or an identification with the equivariant Verlinde formula.
A separate comparison concerns the brane approach to quantisation of Gukov and Witten and its development in mirror symmetry \cite{GukovWitten2009, Gukov2011}.
The discussion of the equivariant index and regularised traces in \cite[\S3.4]{Gukov2011} suggests a parallel with the quantisation and its fibre-rotation grading considered here; no identification of the brane Hilbert space with the virtual Dirac index is established in this paper.

In such a comparison, $M_{\nu}$ would play the role of the complex holomorphic-symplectic space rather than the real phase space to be complexified.
One would need to choose a suitable real form $M_{\RR}\subset M_{\nu}$, specify compatible symplectic and brane data, and determine how the residual $T^{n}$-action and fibre rotation would then interact with these choices~--~the real loci studied by Harada and Proudfoot \cite[\S5]{HaradaProudfoot2004} provide a possible starting point.
With those choices specified, one may ask whether the K\"ahler quantisation with its fibre-rotation grading, its cut filtration, and its weighted multiplicities admit compatible brane- or mirror-theoretic interpretations.